\DeclarePairedDelimiter\ceil{\lceil}{\rceil}
\newcommand{\ti}[1]{\tilde{#1}}
\newcommand{\und}[1]{\underline{#1}}
\newcommand{\ov}[1]{\overline{#1}}
\newcommand{\R}{\mathbbm{R}}
\newcommand{\C}{\mathbbm{C}}
\newcommand{\T}{\mathbbm{T}}
\newcommand{\Z}{\mathbbm{Z}}
\newcommand{\D}{\mathbbm{D}}
\newcommand{\bp}{\mathbb{P}}
\newcommand{\ve}{\varepsilon}
\newcommand{\om}{\omega}
\newcommand{\Om}{\Omega}
\newcommand{\CC}{\mathcal{C}}
\newcommand{\LL}{\mathcal{L}}
\newcommand{\A}{\mathbb{A}}
\newcommand{\ro}{\rho}
\newcommand{\te}{\theta}
\newcommand{\tb}{\underline{\theta}}
\newcommand{\ttb}{\tilde{\underline{\theta}}}
\newcommand{\si}{\mathcal{S}}
\newcommand{\uom}{\underline{\omega}}
\newcommand{\ute}{\underline{\theta}}
\newcommand{\diam}{\text{diam}}
\newcommand{\rV}{\right\rVert}
\newcommand{\lV}{\left\lVert}
\newcommand{\rv}{\right\rvert}
\newcommand{\lv}{\left\lvert}
\theoremstyle{plain}
\newtheorem{theorem}{Theorem}[section]
\theoremstyle{plain}
\newtheorem{lemma}[theorem]{Lemma}
\theoremstyle{plain}
\newtheorem{proposition}[theorem]{Proposition}
\theoremstyle{plain}
\newtheorem*{claim}{Claim}
\theoremstyle{plain}
\newtheorem*{theorem*}{Theorem}
\theoremstyle{remark}
\newtheorem{remark}[theorem]{Remark}
\theoremstyle{definition}
\newtheorem{definition}[theorem]{Definition}
\title[Stable intersections of conformal Cantor sets with large Hausdorff dimensions]{Stable intersections of conformal regular Cantor sets with large Hausdorff dimensions}
\author{Hugo Araújo}
\thanks{The paper was written while the first author was at IMPA and PUC, the second author at IMPA and the third author at UFRJ}
\thanks{This work was supported by CNPq and CAPES}
\author{Carlos Gustavo Moreira}
\author{Alex Zamudio Espinosa}
\date{}
\begin{document}

\maketitle

\begin{abstract}
In this paper we prove that among pairs $K,\,K' \subset \C$ of conformal dynamically defined Cantor sets with sum of Hausdorff dimensions $HD(K)+HD(K')>2$, there is an open and dense subset of such pairs verifying $\text{int}(K-K')\neq \emptyset$. This is motivated by the work \cite{MY}, where Moreira and Yoccoz proved a similar statement for dynamically defined Cantor sets in the real line. Here we adapt their argument to the context of conformal Cantor sets in the complex plane, this requires the introduction of several new concepts and a more detailed analysis in some parts of the argument.
\end{abstract}

\section{Introduction}

The aim of this paper is to prove a complex version of the main theorem in \cite{MY}. We will prove that among pairs $K,\,K' \subset \C$ of conformal dynamically defined Cantor sets with sum of Hausdorff dimensions $HD(K)+HD(K')>2$, there is an open and dense subset of such pairs verifying $\text{int}(K-K')\neq \emptyset$. The analogous statement for dynamically defined Cantor sets in the real line was proved by Moreira and Yoccoz in \cite{MY}. Here we adapt their argument to the context of conformal Cantor sets, this requires the introduction of several new concepts and a more detailed analysis in some parts of the argument. The main new difficulties come from the fact that smooth real maps are naturally conformal, in the sense that their derivative preserves angles, and this is not true for maps in dimension two. %In fact, in dimension two conformality at all points impose a strong restriction on the maps, they have to be holomorphic. Supposing that our maps 
A rough version of our main theorem is the following.

\begin{theorem}\label{thm:main0}
There exists an open and dense subset $\mathcal{V}$ of the space of pairs $(K_1,K_2)$ of conformal Cantor sets in the complex plane such that $HD(K_1)+HD(K_2) > 2$ satisfying the following. If $(K,K') \in \mathcal{V}$, then\
\[
\mathcal{I}_s(K,K') = \{\lambda\in \C: (K+\lambda,K')\text{ has stable intersection}\}
\]
is dense in
\[
\mathcal{I}=\{\lambda\in \C: (K+\lambda)\cap K'\neq \emptyset\}.
\]
In particular, $\text{int}(K-K')\neq \emptyset$.

%There is an open and dense set $U \subset \Omega_{\Sigma}\times \Omega_{\Sigma'}$ of pairs of conformal Cantor sets $(K,K')$, such that if $(K,K')\in U$ then\[\mathcal{I}_s(K,K') = \{\lambda\in \C: (K+\lambda,K')\text{ has stable intersection}\}\]is dense in\[\mathcal{I}=\{\lambda\in \C: (K+\lambda)\cap K'\neq \emptyset\}.\]In particular, $\text{int}(K-K')\neq \emptyset$.
\end{theorem}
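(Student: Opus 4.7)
The plan is to adapt the strategy of Moreira--Yoccoz~\cite{MY} to the conformal setting, organizing the argument around three pillars: (i) a \emph{scale (and rotation) recurrence lemma} for the renormalization dynamics on limit geometries; (ii) the construction of a \emph{recurrent compact set} of relative configurations under the hypothesis $HD(K)+HD(K')>2$; and (iii) the extraction of stable intersection plus openness/density from the existence of such a set. I would first fix Markov partitions for $K$ and $K'$ and, for each admissible infinite address $\underline{a}=(a_0,a_1,\ldots)$, define the \emph{limit geometry} $k^{\underline{a}}$ obtained by zooming in along the orbit with inverse branches. Because the defining maps are conformal, their derivatives act as multiplication by complex scalars $s_n\in\C^\ast$, whose modulus \emph{and} argument both carry dynamical meaning. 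A \emph{relative configuration} is then a triple (limit geometry of $K$, limit geometry of $K'$, relative translation $\lambda$), considered modulo the diagonal $\C^\ast$-action of simultaneous rotation/scaling. The renormalization operator zooms both Cantor sets simultaneously and acts on this space.

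The main analytic novelty is a scale recurrence lemma asserting that, for a generic class of conformal Cantor sets, there is a compact subset of the scale/rotation parameter in $\C^\ast$ that the orbit of any starting configuration revisits with positive lower density, uniformly over compact families of initial configurations. In~\cite{MY} one only needs to avoid arithmetic lattices on the additive line $\R$ (for $\log|\mathrm{scale}|$); in our setting the analogous statement must be proved on $\C\simeq\R\times S^1$ (for $\log|\mathrm{scale}|+i\arg(\mathrm{scale})$). I would handle this by Fourier analysis on $\C$ combined with non-resonance/transversality of the multipliers of different periodic orbits of $f$ and $f'$, checking that for an open and dense class of pairs $(K,K')$ these multipliers generate a dense subgroup of $\C^\ast$.

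Given the scale recurrence lemma and the hypothesis $HD(K)+HD(K')>2$, I would then construct a compact set $\mathcal{L}$ of relative configurations such that each configuration in $\mathcal{L}$ has, at the next renormalization level, a definite proportion of descendants again lying in $\mathcal{L}$, where proportion is measured against the natural Gibbs/Hausdorff measures on the symbolic spaces. The dimension inequality is precisely what guarantees enough descendants to outweigh the codimension-$2$ loss of passing from a product of Cantor sets to a translate in $\C$. Once $\mathcal{L}$ is built, stable intersection of $(K+\lambda,K')$ follows by the usual descent argument: at every renormalization scale one selects pieces of $K+\lambda$ and $K'$ whose relative configuration belongs to $\mathcal{L}$, and the defining properties of $\mathcal{L}$ are preserved under $C^1$-small perturbations of $K,K'$ and $C^0$-small perturbations of $\lambda$. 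Openness of $\mathcal{V}$ is then immediate, while density follows because the non-resonance conditions underlying the scale recurrence lemma hold on an open and dense set of pairs, and from any such pair one can perturb $\lambda$ to land in $\mathcal{I}_s(K,K')$ arbitrarily close to any point of $\mathcal{I}$.

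The hardest step, and the source of essentially all the new technicalities compared with~\cite{MY}, is the scale recurrence lemma in the conformal setting. In the real case recurrence takes place on the one-dimensional group of scales and can be obtained by elementary Fourier analysis plus non-arithmeticity of $\log|Df|$ on periodic orbits. In the conformal complex case one must prove \emph{joint} equidistribution and non-resonance of the modulus and argument of the multipliers, which forces a much more careful perturbation analysis ensuring that the argument is not confined to a rational subgroup of $S^1$. Arranging this simultaneously with control of the limit geometries — which are now conformal (not just $C^{1+\alpha}$) objects — is the principal obstacle and drives the new definitions and finer analysis advertised in the introduction.
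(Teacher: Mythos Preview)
Your outline identifies the correct architecture---scale recurrence, recurrent compact set, stable intersection---but it has a genuine gap at the second pillar: you do not explain how the recurrent compact set $\mathcal{L}$ is actually produced, and the mechanism you suggest (``the dimension inequality is precisely what guarantees enough descendants'') is not enough. In the paper (as in \cite{MY}) the scale recurrence lemma together with $HD(K)+HD(K')>2$ does \emph{not} directly yield a recurrent compact set for the given pair $(K,K')$; instead one builds a \emph{random} family of perturbations $g^{\uom}$ of $g$, indexed by $\uom\in\D^{\Sigma_1}$, defines candidate sets $\LL_{\uom}$ fibred over the scale-recurrent set $\widetilde{\LL}$, and proves a probabilistic estimate of the form $\bp(\Om^1(u)\setminus\Om^0(u))\le\exp(-c_7\rho^{-\frac{1}{2k}(d+d'-2)})$ for every relative configuration $u$. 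A discretization plus union bound then extracts a single $\uom_0$ for which $\LL_{\uom_0}$ is genuinely recurrent. The input $HD(K)+HD(K')>2$ enters precisely in the exponent of this probabilistic bound, via a Marstrand-type projection theorem controlling the $L^2$ norm of the density $\chi_{\ute,\ute',s}$; this Marstrand ingredient, and the whole random-perturbation layer, are absent from your proposal.

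Two further points. First, you locate the main difficulty in the scale recurrence lemma, but in this paper that lemma is imported essentially ready-made from \cite{MZ} and transported to the present setting by a semiconjugacy; the heavy lifting is Sections~\ref{sec:randpert}--\ref{sec:proofprobes}, i.e.\ the perturbation analysis (showing $g^{\uom}$ remains conformal and controlling limit geometries uniformly in $\uom$) and the proofs of Propositions~\ref{prop:probes} and~\ref{prop:bigL}. Second, density of $\mathcal{V}$ is not obtained by saying ``non-resonance holds generically'': one first perturbs to make $K,K'$ non-essentially affine and $g$ holomorphic near $K$ (Lemmas~\ref{lem:phol}, \ref{lemma:c8approx}), then runs the random-perturbation argument to get a nearby pair with a recurrent compact set, and finally perturbs the multipliers of a periodic orbit so that $\{DF^{\tb_0}_{\und a^i}/DF^{\tb_0'}_{\und a^{\prime j}}\}$ is dense in $\C^*$ (Lemma~\ref{lem:genhyp}), which is what lets one reach every relative scale by renormalization.
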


The work of Moreira and Yoccoz was motivated by a conjecture of Palis, according to which generic pairs of dynamically defined Cantor sets in the real line $K,\, K'$ either verify that their arithmetic difference $K-K'=\{x-y:x\in K,\, y\in K'\}$ has zero Lebesgue measure or non-empty interior. Palis conjecture emerged from his work with Takens (\cite{PT}, \cite{PT1}), where, in their study of homoclinic bifurcations for surface diffeomorphisms, they used the crucial fact that if $HD(K)+HD(K')<1$, then $K-K'$ has zero Lebesgue measure. Looking for a converse, Palis proposed its conjecture.

%In \cite{M} Moreira introduced the concept of stable intersection of Cantor sets in the real line. A pair of dynamically defined Cantor sets $K,\, K'\subset \R$ are said to have stable intersection if $\ti{K}\cap \ti{K}'\neq \emptyset$ for all $\ti{K},\, \ti{K}'$ close enough to $K,\, K'$, respectively.

The study of homoclinic bifurcations has proved to be fruitful in the understanding of dynamics for surface diffeomorphisms. Complicated dynamical phenomena arise from them. For example, arbitrarily close to any diffeomorphism exhibiting a generic homoclinic tangency, there are open regions in which any diffeomorphism belonging to a residual set has an infinite number of sinks- this is the so called Newhouse phenomenon. Looking for analogous results and using similar ideas to those of Newhouse, Buzzard \cite{B} proved the existence of an open set of automorphisms of $\C^2$ with stable homoclinic tangencies. Buzzard's strategy was very similar to the work of Newhouse [9], constructing a “very thick” horseshoe, such that the Cantor sets, this time living in the complex plane, associated to it would also be “very thick”. However, the concept of thickness does not have a simple extension to this complex setting and so the argument to guarantee intersections between the Cantor sets after a small pertubation is different.

Furthermore, Moreira and Yoccoz were able to use their solution to Palis conjecture in the study of homoclinic bifurcations for surface diffeomorphisms (see \cite{MY2}). They proved that given a surface diffeomorphism $F$ with a homoclinic quadratic tangency associated to a horseshoe with dimension larger than one, the set of diffeomorphisms close to $F$ presenting a stable tangency has positive density at $F$. 

The authors of this paper have been trying to push all this theory to the context of Cantor sets in the complex plane and apply it to the study of homoclinic bifurcations of automorphisms of $\C^2$. In \cite{AM}, Araújo and Moreira introduced the concept of conformal Cantor sets and also extended the concept of recurrent compact sets to this context, which serves as a tool to obtain stable intersection. More importantly, Araújo and Moreira showed that given a complex horseshoe of an automorphism of $\C^2$, one can associate to it a conformal Cantor set. Using all this, they were able to recast Buzzard's example in terms of the theory of conformal Cantor sets.

In the paper \cite{MZ}, Moreira and Zamudio proved a multidimensional version of the scale recurrence lemma for conformal Cantor sets. The real version of this lemma was a key step in the solution, by Moreira and Yoccoz, of Palis conjecture. Moreover, Moreira and Zamudio used the multidimensional conformal version of the scale recurrence lemma to prove a dimension formula relating the Hausdorff dimension of the image of a product of conformal Cantor sets $h(K_1\times ... \times K_n)$, where $h$ is a $C^1$ function, and the sum of the Hausdorff dimensions $HD(K_1)+...+HD(K_n)$.

In this paper we intend to use the results in \cite{AM}, \cite{MZ} and obtain a version of Palis conjecture for conformal Cantor sets. In a future paper we plan to use the concepts and ideas laid down in this paper to study homoclinic bifurcations for automorphisms of $\C^2$. We plan to obtain results analogous to the ones in \cite{MY2}. We expect that conformal regular Cantor sets in $\C$ will play a role in the study of homoclinic bifurcations of automorphisms of $\C^2$, similar to regular Cantor sets in $\R$ in the study of homoclinic bifurcations of surface diffeomorphisms.

The paper is organized as follows. In section \ref{sec:def} we fix the notation, present basic concepts and results which will be used later. Most of the proofs are omitted and references are given to the works \cite{AM}, \cite{MZ}, \cite{Z} and \cite{PT95}. We also reduce our work to the proof of theorem \ref{thm:main2}; its proof is long and contained in the remaining sections. In section \ref{sec:randpert} we define a random family of Cantor sets and analyze some of its geometrical properties; it is from this family that we will find the pair of Cantor sets in the conclusion of \ref{thm:main2}. In section \ref{sec:proof} we prove theorem \ref{thm:main2}, assuming propositions \ref{prop:probes} and \ref{prop:bigL}. The proof of these propositions is postponed to the last two sections (\ref{sec:bigL} and \ref{sec:proofprobes}) of this work.

\section{Notation and preliminaries}\label{sec:def}

In this section we give the definitions of the objects appearing in this paper and recall some results regarding them that have appeared on previous works \cite{AM}, \cite{MZ}, \cite{Z}. We also present new results and reduce the proof of the main theorem to the proof of theorem \ref{thm:main2}.

\begin{subsection}{Notations}
Here we introduce some of the notations we will use along the text:

\begin{itemize}
\item We will work with the space of complex numbers $\C$. We identify it with $\R^2$ in the usual way. In this space we will use the Euclidean metric, given by the norm $|(x,y)|=\sqrt{x^2+y^2}$.
\item We will also work with the space of non-zero complex numbers $\C^*$. We will sometimes identify $\C^*$ with $J=\R\times \T$, where $\T=\R/(2\pi \Z)$, through the map $(t,v)\to e^{t+iv}$. Note that $J$ has the structure of a commutative group. We will endow $J$ with the metric coming from the inclusion $J=\C^*\subset \C$.
\item Given a linear map $A:\R^2\to \R^2$ we denote its norm by $|A|$, its minimum norm by $m(A)$. They are given by
\[|A|=\sup_{v\neq 0} \frac{|Av|}{|v|},\,\, m(A)=\inf_{v\neq 0} \frac{|Av|}{|v|}.\]
We will say that $A$ is conformal if $|A|=m(A)$.
\item We will use $Id$ to denote the identity matrix, sometimes we will use the same symbol for the identity function. Each case will be clear from the context.
\item Let $(X,d)$ be a metric space and $A\subset X$. We will use the notation $V_{\delta}(A)$ for the open $\delta$ neighborhood around $A$, i.e. $V_{\delta}(A)=\{x\in X:\,d(x,A)< \delta\}$. For a point $x\in X$ and a positive real number $r$ we use the notation $B(x,r)=V_r(\{x\})$.
\item We will have to deal with many inequalities and several parameters. In order to reduce the number of constants introduced along the text we will use the following notations: Given expressions $f(x)$ and $g(x)$ depending in the parameter $x$, when we write $f(x)\lesssim g(x)$ this means that there is a constant $C>0$ such that $f(x)\leq Cg(x)$ for all $x$. The constant $C$ can only depend on other constants when those have already been fixed. This ensures that we will not get any contradiction between the different constants that will appear. $f\gtrsim g$ will mean $g\lesssim f$, $f\approx g$ will mean $f\lesssim g$ and $g\gtrsim f$ both hold.
\item Derivative of a $C^l$ function, we use two notations $D^jf(x)$ or $D^j_x f$, both mean derivative of order $j$ of $f$ at the point $x$. They are $j$-linear functions.
\item The uniform norm of functions $f: X \to Y$, where $X$ and $Y$ are subsets of normed vector spaces, will be denoted by $\lV f \rV \coloneqq \sup_{x \in X} \lv f(x) \rv$. In many occasions, $f$ will be the derivative of order $j$ of a $C^l $ function.
\item Given a real number $m$ we will denote by $[m]$ its integer part, this is $[m]=\sup\{r\in \Z:r\leq m\}$.
\end{itemize}
\end{subsection}

\begin{subsection}{The space of conformal regular Cantor Sets}

We begin by the very concept of \emph{conformal regular Cantor set}. First we remember that a $C^m$ \emph{regular Cantor set}, also called \emph{dynamically defined Cantor set}, is given by the following data.

\begin{itemize}

\item A finite set $\mathbb{A}$ of letters and a set $B \subset \mathbb{A} \times \mathbb{A} $ of admissible pairs.

\item For each $a\in \mathbb{A}$ a compact connected set $G(a)\subset \mathbb{C}$.
 
\item A $C^{m}$ map $g: V \to \mathbb{C}$, for $m>1$, defined on an open neighbourhood $V$ of $\bigsqcup_{a\in \mathbb{A}}G(a)$.
\end{itemize}

These data must verify the following assumptions:

\begin{itemize}
    
\item The sets $G(a)$, $a \in \mathbb{A}$, are pairwise disjoint.

\item $(a,b)\in B $ implies $G(b) \subset g(G(a))$, otherwise $G(b) \cap g(G(a)) = \emptyset$.

\item For each $a \in \mathbb{A}$, the restriction $g|_{G(a)}$ can be extended to a $C^{m}$ diffeomorphism from an open neighborhood of $G(a)$ onto its image such that $m(Dg) > \mu$ for some constant $\mu > 1$, where $m(A) := \displaystyle{\inf_{v \neq 0}\frac{\lvert Av \rvert}{\lvert v \rvert}}$ is the minimum norm of the linear operator $A$ on $\mathbb{R}^2$.

\item The subshift $(\Sigma, \sigma)$ induced by $B$, called the type of the Cantor set, 
\[ \Sigma=\{ \und{a}= (a_0, a_1, a_2, \dots  ) \in \mathbb{A}^{\mathbb{N}}:(a_i,a_{i+1}) \in B, \forall i \geq 0\},\]

$\sigma (a_0,a_1,a_2, \dots) = (a_1,a_2,a_3, \dots)$, is topologically mixing.

\end{itemize}

Once we have all these data we can define a Cantor set (i.e. a totally disconnected, perfect compact set) on the complex plane: 
\[ K=\bigcap_{n \geq 0} g^{-n}\left(  \bigsqcup_{a \in \mathbb{A}} G(a) \right). \] 

We say that such a set is \emph{conformal} if, for all $x \in K$, the derivative of $g$ at $x$, denoted by $Dg(x) : \R^2 \to \R^2$, is a conformal linear operator. 

Notice that we always consider the degree of differentiability of the map $g$, $m$, to be a real number larger than one. This means that $g$ has derivatives up to order $[m]$ and $D^{[m]}g$ is Hölder with exponent $m-[m]$. This hypothesis, as we will precise later in this section, allows us to control the geometry of small parts of the Cantor set $K$. All Cantor sets in this paper will be conformal regular Cantor sets. 

Besides, as we mentioned on the introduction, an important family of dynamically defined sets are contained in the class of $C^{m}$ conformal regular Cantor sets. Let $G$ be an automorphism of $\C^2$ exhibiting a horseshoe $\Lambda$ and $p$ be a hyperbolic periodic point in it. Then, there is a subset $U\subset W^s(p)$, open in the topology of $W^s(p)$ as an immersed manifold,  and some $\ve > 0$ sufficiently small such that $\Lambda \cap U$ is, after some parametrization, a $C^{1+\ve}$ conformal regular Cantor set. See \textbf{Theorem A} of \cite{AM}. 

We will usually write only $K$ to represent all the data that defines a particular dynamically defined Cantor set. Of course, the compact set $K$ can be described in multiple ways as a Cantor set constructed with the objects above, but whenever we say that $K$ is a Cantor set we assume that one particular set of data as above is fixed. In this spirit, we may represent the Cantor set $K$ by the map $g$ that defines it as described above, since all the data can be inferred if we know $g$.

Notice that in our definition we did not require the pieces $G(a)$ to have non-empty interior. To circumvent this, we introduce the following sets. 

\begin{lemma}\label{lemma:star}

There is $\delta> 0$ sufficiently small such that the sets $G^*(a) \coloneqq V_\delta(G(a))$ satisfy:
\begin{enumerate}[label=(\roman*)]

 \item  $G^*(a)$ is open and connected.
 
 \item $G(a) \subset G^*(a)$ and $g|_{G^*(a)}$ can be extended to an open neighbourhood of $\ov{G^*(a)}$, such that it is a $C^{m}$ embedding (with  $C^{m}$ inverse ) from this neighbourhood to its image and $m(Dg) > \mu$.
 
 \item The sets $\ov{G^*(a)}$, $a \in \mathbb{A}$, are pairwise disjoint.
 
 \item $(a,b) \in B$ implies $\ov{G^*(b)}\subset g(G^*(a))$, and $(a,b) \notin B$ implies  $\ov{G^*(b)}\cap \ov{g(G^*(a))} = \emptyset$.

\end{enumerate}
\end{lemma}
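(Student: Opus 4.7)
The plan is standard: choose $\delta$ smaller than finitely many positive thresholds, one arising from each of the four conditions. Parts (i) and (iii) are immediate. For any $\delta > 0$, the set $V_\delta(G(a)) = \bigcup_{x \in G(a)} B(x,\delta)$ is a union of balls each meeting the connected set $G(a)$, so it is open and connected. Since the finitely many compact sets $G(a)$ are pairwise disjoint, $d_0 \coloneqq \min_{a \neq a'} d(G(a), G(a')) > 0$, and any $\delta < d_0/3$ makes the closed neighborhoods $\overline{V_\delta(G(a))}$ pairwise disjoint.

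For (ii), the definition of a regular Cantor set provides, for each $a$, an open set $U_a \supset G(a)$ on which $g$ extends to a $C^m$ diffeomorphism with $m(Dg) > \mu$ on $G(a)$. Since $Dg$ is continuous and the minimum-norm function is continuous on invertible matrices, the set $W_a = \{x \in U_a : m(Dg(x)) > \mu\}$ is open and contains the compact $G(a)$, so there exists $\delta_a > 0$ with $\overline{V_{\delta_a}(G(a))} \subset W_a$. The $C^m$ extension, restricted to a slightly larger open neighborhood of $\overline{V_{\delta_a}(G(a))}$, still satisfies $m(Dg) > \mu$, as required.

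Condition (iv) is the delicate point. Fix a provisional $\delta$ already satisfying (ii) and (iii), and let $L$ be an upper bound for $|Dg|$ on $\bigcup_a \overline{V_\delta(G(a))}$. For admissible pairs $(a,b) \in B$ one argues
\[
g(V_\delta(G(a))) \supset V_{\mu\delta}(g(G(a))) \supset V_{\mu\delta}(G(b)) \supset \overline{V_\delta(G(b))}.
\]
The middle inclusion uses $G(b) \subset g(G(a))$, the last uses $\mu > 1$, and the first follows because any point within $\mu\delta$ of $g(G(a))$ has, via the local inverse of $g$ (whose derivative has norm bounded by $1/m(Dg) < 1/\mu$), a preimage within distance $\mu\delta / \mu = \delta$ of $G(a)$. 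For $(a,b) \notin B$, compactness yields $d_{ab} \coloneqq d(G(b), g(G(a))) > 0$; combined with the Lipschitz estimate $g(V_\delta(G(a))) \subset V_{L\delta}(g(G(a)))$, choosing $\delta < d_{ab}/(2(L+1))$ forces $\overline{V_\delta(G(b))}$ and $\overline{g(V_\delta(G(a)))}$ to be disjoint. Taking $\delta$ smaller than all of these finitely many thresholds simultaneously completes the proof.

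The main obstacle I anticipate is the first inclusion above, $g(V_\delta(G(a))) \supset V_{\mu\delta}(g(G(a)))$: one must ensure that the local inverses of $g$ provided by the inverse function theorem at each point of $g(G(a))$ are defined on neighborhoods whose union covers the entire $\mu\delta$-neighborhood in the image. This is a routine consequence of uniform continuity of $Dg$ on compact subsets of $U_a$ together with the expansion $m(Dg) > \mu$, but it is the step that ultimately pins down how small $\delta$ must be.
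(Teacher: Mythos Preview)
The paper does not give a proof of this lemma; it is one of the preliminary facts stated without argument and attributed to the earlier works \cite{AM}, \cite{MZ}, \cite{Z}. Your proposal is a correct and complete write-up of the standard proof, and the step you flag as the only delicate one is indeed the only place requiring care. To close it cleanly: since $g$ is a diffeomorphism from $U_a$ onto the open set $g(U_a)\supset g(G(a))$, compactness of $g(G(a))$ gives $\eta>0$ with $V_\eta(g(G(a)))\subset g(U_a)$; for $\delta<\eta/\mu$ and any $y\in V_{\mu\delta}(g(G(a)))$, the segment from $y$ to the nearest point $y_0\in g(G(a))$ lies in $g(U_a)$, so the inverse $h=g^{-1}$ is defined there with $|Dh|<1/\mu$, yielding $|h(y)-h(y_0)|<\delta$ and hence $h(y)\in V_\delta(G(a))$. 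This is exactly the ``routine consequence'' you anticipated, and with it your argument is complete.
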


The sets $\ov{G^*(a)}$ could substitute the pieces $G(a)$ in our definition as to make the hypothesis of open interiors be true. These changes do not enlarge the Cantor set. To see this, we introduce more notation and a previous result.

Associated to a Cantor set $K$ we define the sets 
\begin{align*}
    \Sigma^{fin} & = \{(a_0, \dots ,a_n): (a_i,a_{i+1}) \in B \ \forall i ,\, 0 \leq i < n \}, \\
\Sigma^- & = \{(\dots, a_{-n}, a_{-n+1},\dots,a_{-1},a_0): (a_{i-1},a_i) \in B \ \forall i \leq 0\}.
\end{align*} 
Given $\und{a}=(a_0, \dots, a_n)$, $\und{b}=(b_0, \dots , b_m)$, $\und{\theta}^1=(\dots,\theta^1_{-2},\theta^1_{-1},\theta^1_{0})$ and $\und{\theta}^2=(\dots,\theta^2_{-2},\theta^2_{-1},\theta^2_{0})$, we write:

\begin{itemize}
\item if $a_n=b_0$, $\und{ab}=(a_0, \dots,a_{n}, b_1, \dots, b_m)$;
\item if $\theta^1_0=a_0$, $\und{\theta^1a} = (\dots, \theta^1_{-2},\theta^1_{-1}, a_0, \dots, a_n )$
\item if $\und{\theta}^1 \neq \und{\theta}^2$ and $\theta ^1 _0 = \theta ^2_ 0$,   $\und{\theta}^1 \wedge \und{\theta}^2=(\theta_{-j}, \theta_{-j+1}, \dots , \theta_0)$, in which $\theta _{-i} =\theta^1 _{-i}= \theta^2 _{-i} $ for all $i=0, \dots, j $ and $\theta^1_{-j-1} \neq \theta^2_{-j-1}$.
\item Define the distance between $\tb^1$ and $\tb^2$ by $d(\tb^1,\tb^2)=diam(G(\tb^1\wedge \tb^2))$.
\item  if $\und{a}$ starts with $\und{b}$, we define $\und{a}/\und{b}$ as the unique finite word such that $\und{a}= \und{b}(\und{a}/\und{b})$.
%\item if $\theta^1_0=a_n$, $\und{\theta}^1 \wedge \und{a}=(a_{n-j}, \dots, a_n) $, in which $a_{n-i}=\theta^1_{-i}$ for all $i=0, \dots , j$ and $j\geq n$ or  $a_{n-j-1} \neq \theta^1_{-j-1}$. 
\end{itemize}

For $\und{a}=(a_0, a_1, \dots , a_n) \in \Sigma^{fin}$ we say that it has length $n$ and define:
$$G(\und{a})= \{x \in \bigsqcup_{a \in \mathbb{A}} G(a) , \; g^j(x) \in G(a_j), \;j=0,1,\dots, n \}$$
and the function $f_{\und{a}}: G(a_n) \to G(\und{a})$ by:

$$ f_{\und{a}} =  g|^{-1}_{G(a_0)} \circ g|^{-1}_{G(a_1)} \circ \dots \circ (g|^{-1}_{G(a_{n-1})})|_{G(a_n)} . $$

Notice that $f_{(a_i,a_{i+1})} = g|^{-1}_{G(a_{i})}$. Furthermore, we can consider the sets $G^*(\und{a})$ defined in the same way 
$$G^*(\und{a})= \{x \in \bigsqcup_{a \in \mathbb{A}} G^*(a) , \; g^j(x) \in G^*(a_j), \;j=0,1,\dots, n \}$$but using the $*$ version of the pieces and consider the function $f_{\und{a}}$ to be defined in the larger set $G^*(a_n)$ having image equal to $G^{*}(\und{a})$.

Now we have the following lemma.

\begin{lemma} \label{lemma: decay}
Let $K$ be a dynamically defined Cantor set and $G^*(\und{a})$ the sets defined above. There exists a constant $C>0$ such that:

$$diam(G^*(\und{a})) < C\mu^{-n}.$$

\end{lemma}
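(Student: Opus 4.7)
The plan is to reduce the claim to a uniform bound on $|Df_{\und{a}}|$ and then convert that bound to a diameter estimate by integrating along rectifiable paths in the open connected set $G^*(a_n)$.

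For the derivative estimate, I would use the factorization
\[
f_{\und{a}} = f_{(a_0, a_1)} \circ f_{(a_1, a_2)} \circ \cdots \circ f_{(a_{n-1}, a_n)},
\]
where each $f_{(a_i, a_{i+1})} = g|_{G^*(a_i)}^{-1}$ is furnished by Lemma \ref{lemma:star}(ii), (iv). The chain rule then gives
\[
Df_{\und{a}}(x) = \prod_{i=0}^{n-1} D\bigl(g|_{G^*(a_i)}^{-1}\bigr)\bigl(f_{(a_{i+1}, \ldots, a_n)}(x)\bigr),
\]
and since $m(Dg) > \mu$ on every $G^*(a_i)$, each factor has operator norm at most $\mu^{-1}$. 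Therefore $|Df_{\und{a}}(x)| \leq \mu^{-n}$ for every $x \in G^*(a_n)$.

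For the diameter step, given $y_1, y_2 \in G^*(\und{a})$ I would write $y_j = f_{\und{a}}(x_j)$ with $x_j \in G^*(a_n)$, join $x_1$ to $x_2$ by a rectifiable curve $\gamma \subset G^*(a_n)$, and integrate to obtain $|y_1 - y_2| \leq \mu^{-n} L(\gamma)$. The remaining task is a uniform upper bound $L(\gamma) \leq C_0$ independent of $x_1, x_2$ and of the symbol $a_n$. I expect this to be the only non-routine point, since a compact connected subset of $\C$ need not be path-connected, so one cannot travel inside $G(a_n)$ itself. However, $G^*(a_n) = V_\delta(G(a_n))$: covering the compact connected set $G(a_n)$ by finitely many balls of radius $\delta/2$, its connectedness forces the nerve graph of the cover to be connected, and concatenating segments between consecutive centers yields a polygonal path in $V_\delta(G(a_n))$ between any two prescribed endpoints whose length is bounded by a constant depending only on this finite cover. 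Taking $C_0$ to be the maximum such bound over $a \in \mathbb{A}$ (a finite set), the conclusion $\diam(G^*(\und{a})) \leq C_0 \mu^{-n}$ follows, so $C := C_0$ works.
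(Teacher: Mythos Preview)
Your argument is correct: the chain-rule bound $|Df_{\und a}|\le\mu^{-n}$ on $G^*(a_n)$ together with a uniform path-length bound in the $\delta$-neighbourhoods $G^*(a)=V_\delta(G(a))$ (obtained exactly as you describe, via a finite connected cover of the compact connected $G(a)$) yields the estimate. The paper does not supply its own proof of this lemma---it is one of the preliminary results quoted from \cite{AM}, \cite{Z}---so there is nothing to compare against; your route is the standard one.
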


As a consequence of this lemma we can see that 
\[K=\bigcap_{n \geq 0}g^{-n}\left(\bigsqcup_{a \in \mathbb{A}}G^*(a)\right)\]
since $G(\und{a})\subset G^*(\und{a})$ and $\text{diam}(G^*(\und{a})) \rightarrow 0$, and so the Cantor set has not been enlarged. Another consequence is that $K$ is contained in the interior of the union of the pieces $G^*(a)$. From now on we will work with the assumption that the sets $G(a)$ have non-empty interior and that they contain $K$ in the interior of their union. We keep the definition $G^*(a) \coloneqq V_{\delta}(G(a))$ as before because it will be useful in the definition of limit geometries.

The following definition will be useful in the future.

\begin{definition}\label{def:homeo}
For every Cantor set $K$ we define the homeomorphism 
\[
    H : K  \to \Sigma 
\]
that carries each point $x \in K $ to its itinerary along the pieces $G(a)$, that is 
\[
H(x) = (a_0, a_1, \dots, a_n, \dots)
\] 
if and only if $g^i(x) \in G(a_i) $ for all $i \ge 0$.
\end{definition}

Our main result concerns generic Cantor sets. So now we fix the topology on the space of Cantor sets. We remind that any Cantor set we are considering is given by a map $g$ that is, at least, $C^{1+\ve}$ for some $\ve > 0$.  %In order to carry out perturbations in the $C^m$ topology for $m \ge 2$ we will assume that our Cantor sets are {\em non essentially real}. This hypothesis was introduced in \cite{MZ}, it requires that the Cantor set is not contained in a $C^1$ curve. We will explain it in more detail in subsection \ref{sub:perC}. 

\begin{definition}{(The space $\Om^{m}_{\Sigma}$)} \label{topo} For a fixed symbolic space $\Sigma$ and real number $m>1$ (we also allow m=$\infty$). The set of all $C^m$ conformal regular Cantor sets $K$ with the type $\Sigma$ is defined as the set of all $C^m$ conformal Cantor sets described as above whose set of data includes the alphabet $\mathbb{A}$ and the set  $B$ of admissible pairs used in the construction of $\Sigma$. We denote it by $\Om^{m}_{\Sigma}$.

\end{definition} 

The topology on $\Om^m_{\Sigma}$ is generated by a basis of neighbourhoods $U_{K,\delta} \subset \Om^{m}_{\Sigma}$ where $ K $ is any $C^{m}$ Cantor set in $ \Om^m_{\Sigma}$ and $ \delta > 0  $.  The neighborhood $U_{K,\delta}$ is the set of all $C^m$ conformal regular Cantor sets $K'$ given by $g': V' \to \C, \, V' \supset \bigsqcup_{a \in \mathbb{A}} G'(a)$ such that $G(a) \subset V_{\delta}(G'(a))$, $G'(a) \subset V_{\delta}(G(a))$ (that is, the pieces are close in the Hausdorff topology) and the restrictions of $g'$ and $g$ to $V \cap V'$  are $\delta$ close in the $C^{m}$ metric. The topology on $\Omega^{\infty}_{\Sigma}$ is the one such that a sequence of $C^{\infty}$ Cantor sets $K_n$ converges to $K$ if and only if the sequence converges to $K$ in the topology of $\Omega^{m}_{\Sigma}$ for every $m \in (1, \infty)$.% Thus, the topology in $\Omega^{\infty}_{\Sigma}$ is generated by sets of the form $U\cap \Omega^{\infty}_{\Sigma}$ such that $U\subset \Omega^{m}_{\Sigma}$ is open in $\Omega^{m}_{\Sigma}$ 

We also consider the union $\Omega_{\Sigma}:=\bigcup_{m>1} \Omega^m_{\Sigma}$, the topology in $\Omega_{\Sigma}$ is the finest topology such that the inclusions $\Omega^m_{\Sigma}\subset \Omega_{\Sigma}$ are continuous maps, the so called inductive limit topology. Thus, a set $U\subset \Omega_{\Sigma}$ is open if and only if $U\cap \Omega^m_{\Sigma}$ is open in $\Omega^m_{\Sigma}$ for all $m>1$. It is not difficult to prove that an open set $U\subset \Omega_{\Sigma}$ can be written as a union $U=\bigcup_{m>1} U_m$, where each $U_m$ is open in $\Omega^m_{\Sigma}$ and $U_m\subset U_{m'}$ if $m>m'$.

%If we are interested only on Cantor sets with regularity at least $r$, for some $r>1$, we consider only $\Om_{\Sigma}^r$, defined in the same way as above only requiring $g$ to be $C^r$. The topology is defined in the analogous way, this time considering the distance between $g$ and $g'$ in the $C^r$ metric. We also allow the case $r=\infty$. The topology on $\Omega^{\infty}_{\Sigma}$ is the one such that a sequence of $C^{\infty}$ maps $g_n$ converges to $g$ if and only if the sequence converges to $g$ in the topology of $\Omega^{r}_{\Sigma}$ for every $r \in (1, \infty)$.

%\begin{remark}
%Notice that the space $\Om_{\Sigma}$ we are considering is the same one in {\bf Definition 2.3} of \cite{AM}, with the exception of the additional hypothesis that the Cantor sets are non essentially real. As pointed previously, this change is necessary to control perturbations of $C^m$ Cantor sets for $m \ge 2$. {\color{red} In the Appendix we will discuss the validity of our results when we also consider essentially real Cantor sets in the definition of  $\Om_{\Sigma}$.}
%\end{remark}

\end{subsection}

\begin{subsection}{Limit geometries}

To study the geometry of small parts of our Cantor sets, we introduce more objects. For each $\und{a} = (a_0, \, \dots,\,a_n) \in \Sigma^{fin}$, denote by $K(\und{a})$ the set $K \cap G(\und{a})$. For each $a \in \mathbb{A}$, fix a point $c_a\in K(a)$. We will refer to these points as base points. Define $c_{\und{a}} \in K(\und{a})$ by 
\[
c_{\und{a}}  \coloneqq f_{\und{a}}(c_{a_n}).
\]
Additionally, given $\und{\theta} = ( \dots, \theta_{-n}, \dots, \theta_0 ) \in \Sigma^- $ we write $\und{\theta}_n \coloneqq (\theta_{-n}, \dots, \theta_0  )$ and $r_{{\und{\theta}}_n}:= \text{diam}(G(\und{{\theta}}_n))$.

Given $\ute \in \Sigma^-$ and $n \ge 1$, define $\Phi_{\ute_n}$ as the unique map in 
\[
Aff(\C) \coloneqq \{\alpha z + \beta,\ \alpha \in \C^*,\,\beta \in \C\}
\]
such that 
\[
\Phi_{\ute_n}(c_{\ute_n}) = \left(\Phi_{\ute_n} \circ f_{\ute_n}\right) (c_{\te_0}) = 0 \qquad \text{and} \qquad D\left(\Phi_{\ute_n} \circ f_{\ute_n}\right) (c_{\te_0}) = Id.
\]
The maps $\Phi_{\ute_n}$ act as a \emph{normalization} of small parts of the Cantor set $K$. For that purpose, we define the maps $k^{\ute}_n$ by
\[
k^{\ute}_n \coloneqq \Phi_{\ute_n} \circ f_{\ute_n}.
\]
Through them we have the first result that allows control over the sets $G(\ute_n)$. 

In what follows we consider some $m > 1$ fixed and $K$ being a Cantor set in the space $\Omega_{\Sigma}^{m}$.

\begin{proposition}\label{lemma:limgeo} (Limit Geometries) For each $\und{\theta} \in \Sigma^-$ the sequence of $C^{m}$ embeddings $k^{\und{\theta}}_n: G^*(\theta_0) \to \C $ converges in the $C^{m}$ topology to a $C^m$ embedding $ k^{\und{\theta}}: G^*(\theta_0) \to \C $. The convergence is uniform over all $\und{\theta} \in \Sigma^-$ and in a small neighbourhood of $g$ in $\Om^{m}_{\Sigma}$. 
\end{proposition}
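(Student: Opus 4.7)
The plan is to establish convergence of $(k^{\ute}_n)_n$ by showing it is a Cauchy sequence in $C^m(G^*(\theta_0))$ with exponentially decaying increments. The starting point is the recursive relation
\[
k^{\ute}_{n+1} = \Psi_n \circ k^{\ute}_n, \qquad \Psi_n \coloneqq \Phi_{\ute_{n+1}} \circ g|_{G(\theta_{-n-1})}^{-1} \circ \Phi_{\ute_n}^{-1},
\]
obtained from the factorization $f_{\ute_{n+1}} = g|_{G(\theta_{-n-1})}^{-1}\circ f_{\ute_n}$. Since $g|_{G(\theta_{-n-1})}^{-1}(c_{\ute_n}) = c_{\ute_{n+1}}$, the map $\Psi_n$ fixes $0$; writing $A_n \coloneqq Df_{\ute_n}(c_{\theta_0})$, the chain-rule identity $A_{n+1} = Dg^{-1}(c_{\ute_n})A_n$ together with $D\Phi_{\ute_n}^{-1}(0)=A_n$ and $D\Phi_{\ute_{n+1}}(c_{\ute_{n+1}})=A_{n+1}^{-1}$ gives $D\Psi_n(0)=Id$.

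Next, I Taylor-expand $g^{-1}$ around $c_{\ute_n}$ up to order $[m]$, using that $D^{[m]}g$ is Hölder with exponent $m-[m]$. After conjugation by the affine maps, the linear term yields exactly $w$, and the $k$-th Taylor term becomes
\[
\tfrac{1}{k!}\,A_{n+1}^{-1}\,D^kg^{-1}(c_{\ute_n})\,(A_n w)^{\otimes k}.
\]
Because $g$ is conformal on $K$ and the base points lie in $K$, each $A_j$ is a conformal linear map with $|A_j|\approx \mu^{-j}$ and $|A_j^{-1}|\approx \mu^j$. Hence for $k\ge 2$ the $k$-th term has size $\lesssim \mu^{(1-k)n+O(1)}|w|^k$, and the Hölder Taylor remainder has size $\lesssim \mu^{(1-m)n+O(1)}|w|^m$. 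Since $m>1$, this yields
\[
\|\Psi_n - Id\|_{C^m(B)} \lesssim \mu^{-\alpha n}
\]
on any fixed bounded ball $B$, for some $\alpha=\alpha(m)>0$.

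A standard bounded distortion argument, based on the Hölder control of $\log|Dg^{-1}|$ and the exponential contraction $|f_{\ute_k}(x)-f_{\ute_k}(y)|\lesssim\mu^{-k}|x-y|$, shows that the $C^m$ norms of the $k^{\ute}_n$ are uniformly bounded and that $|Dk^{\ute}_n|$ is uniformly bounded below on $G^*(\theta_0)$; in particular their images lie in a common bounded region. Composition with $\Psi_n$ then gives $\|k^{\ute}_{n+1}-k^{\ute}_n\|_{C^m}\lesssim\mu^{-\alpha n}$, and summability yields a $C^m$ limit $k^{\ute}$, which is an embedding since its derivative inherits a uniform lower bound. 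All constants depend only on $\mu$, $m$ and $C^m$-bounds for $g$ on a fixed neighborhood of $\bigsqcup_a G(a)$, so the convergence is uniform both in $\ute\in\Sigma^-$ and over a small $C^m$-neighborhood of $g$ in $\Om^m_{\Sigma}$.

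The main technical obstacle will be controlling the Hölder $(m-[m])$-seminorm of $D^{[m]}$ through the composition $\Psi_n\circ k^{\ute}_n$: while $C^0$ through $C^{[m]}$ estimates follow directly from the Taylor expansion above, the Hölder increment of the top-order derivative requires a quantitative Hölder form of Taylor's remainder for $g^{-1}$, together with careful bookkeeping of how the affine rescalings $\Phi_{\ute_n},\Phi_{\ute_{n+1}}$ and the composition interact with the Hölder seminorm.
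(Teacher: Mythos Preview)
The paper does not supply its own proof of this proposition; it is one of the preliminary results whose proofs are explicitly omitted with references to \cite{AM}, \cite{MZ}, and \cite{Z}. Your approach---writing $k^{\ute}_{n+1}=\Psi_n\circ k^{\ute}_n$ with $\Psi_n$ the affinely rescaled inverse branch, checking $\Psi_n(0)=0$, $D\Psi_n(0)=Id$, and then using conformality of $A_n=Df_{\ute_n}(c_{\theta_0})$ plus the $C^m$ Taylor/H\"older expansion to get $\|\Psi_n-Id\|_{C^m}\lesssim\mu^{-\alpha n}$---is exactly the standard argument used in those references (ultimately going back to Sullivan's construction of limit geometries for expanding maps), so there is nothing substantively different to compare.

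Two small comments. First, writing $|A_j|\approx\mu^{-j}$ is imprecise: the hypothesis $m(Dg)>\mu$ only gives $|A_j|\le C\mu^{-j}$, not a matching lower bound. This does no harm, since the estimate you actually need is $|A_{n+1}^{-1}|\cdot|A_n|^k=|A_n|^{k-1}\le C\mu^{-(k-1)n}$ (using conformality of $A_n$, which you correctly justify via $c_{\ute_j}\in K$), and that uses only the upper bound. Second, the technical obstacle you flag at the end---propagating the H\"older seminorm of $D^{[m]}$ through the composition---is genuine but routine: one differentiates $\Psi_n$ $[m]$ times via Fa\`a di Bruno, observes that each term carries at least a factor $|A_n|^{m-[m]}$ after conjugation (the $[m]$-th derivative of the remainder inherits the H\"older exponent from $D^{[m]}g^{-1}$), and then uses that the H\"older seminorm of a composition $F\circ G$ is controlled by $[D^{[m]}F]_{m-[m]}\|DG\|^{m}+\|DF\|\,[D^{[m]}G]_{m-[m]}+\text{lower order}$. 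Your plan is complete and correct.
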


The $ k^{\und{\theta}}: G(\theta_0) \to \C $ defined for any $\und{\theta} \in \Sigma^-$ are called the \emph{limit geometries} of $K$.

\begin{remark}\label{rmk:dgl}
Define $\Sigma^-_a = \{\und{\theta} \in \Sigma^-, \,\und{\theta}_0 = a \}$ and consider in this set the topology given by the metric $d(\und{\theta}^1,\und{\theta} ^2) = \text{diam}(G(\und{\theta}^1 \wedge \und{\theta}^2)) $. Likewise, for $m > 1 $, let $\text{Emb}_{m}(G^*(a), \C)$ be the space  of $C^{m}$ embeddings from $G^*(a)$ to $\C$ with $C^{m}$ inverse equipped with the topology given by the $C^1$ metric
\[
d(g_1,g_2)= \max\{||g_1-g_2||, ||D g_1 - D g_2|| \}.
\]
For fixed $0 < \ve < 1$ and a $C^{1+\ve}$ Cantor set $K$, the map $k: \Sigma^-_a \to \text{Emb}_{1+\ve}(G^*(a), \C), \; \und{\theta} \mapsto k^{\und{\theta}}$ is $\ve$-Hölder, if we consider the metrics described above for both spaces. In case the Cantor set $K$ is $C^m$, for $m\geq 2$, then there is a constant $C>0$ such that $d(k^{\tb^1},k^{\tb^2})\leq C d(\tb^1, \tb^2)$. The constant $C$ can be chosen uniformly in a neighborhood of the Cantor set.

Since the convergence is uniform with respect to $\tb$ and in a neighborhood of $\Om^{m}_{\Sigma}$, the limit geometries $k^{\tb}$ depend continuously in $\tb$ and the Cantor set $K$.

We also remark that the derivative $Dk^{\tb}(x)$ is conformal for all $x\in K(\theta_0)$.
\end{remark}

\begin{remark}
It is important to observe that limit geometries depend on the choice of base points. Nonetheless, different choice of base points do not change the resultant limit geometries by much, only by an affine transformation that is bounded by some constant $C$ depending on $K$. Here we mean that such transformations are given by maps $A(z) = \alpha z + \beta$, where $\lv \alpha\rv, \lv \beta \rv < C$. This bound is, as before, uniform for Cantor sets $\ti{K}$ sufficiently close to $K$. See the paragraph after Corollary 3.2 of \cite{AM}.

For reasons that will become more clear in the future, from now on we assume that for each $a \in \mathbb{A}$ the corresponding base point $c_a$ is a pre-periodic point.

\end{remark}

%\begin{remark}
%The limit geometries defined in \cite{MZ} are a little bit different, in a way that suited better the purposes of that paper. There, the value of $\lv D\Phi_{\ute_n} \rv $ is chosen to make the diameter of $k^{\ute}_n(G(\te_0))$ to be equal to $1$. Again, the end result is not that different, only by multiplication by a bounded complex number (see subsection \ref{se:scrl}). This is the case because the diameter of the sets $G(\ute_n)$ have size comparable with $\| Df_{\ute_n}(c_{\te_0})\|$ and $m\left( Df_{\ute_n}(c_{\te_0})\right)$; see corollaries of \cite{}.
%\end{remark}

%\begin{remark}
%Proposition \ref{lemma:limgeo} remains true when we consider the limit geometries defined on the slightly larger sets $G^*(\te_0)$. In both cases, the rate of convergence of $k^{\ute}_n$ to $k^\ute$ is exponential, with error controlled by $\mu^{-n}$. This control is also the case for $\ti{K}$ in a sufficiently close to $K$.
%\end{remark}

Before proceeding, we fix some more notation. For $\ute \in \Sigma^-$ and $\und{a} \in \Sigma^{fin}$ we write 
\begin{align*}
    G^{\ute}(\und{a}) \coloneqq k^{\ute}(G(\und{a})), \qquad  K^{\ute}(\und{a}) \coloneqq k^{\ute}(K(\und{a})),  \qquad c^{\ute}_{\und{a}} \coloneqq k^{\ute}(c_{\und{a}}).
\end{align*}

Furthermore, to establish stable intersections, we are going to analyse very small parts of the Cantor sets, whose size will be controlled by a real number $\rho$. This number should be regarded as a variable that we are going to assume in various instances to be very small, as to make the various estimates we are going to find in the future to fit all together. This being said, let $c_0$ be a sufficiently large constant.
%as required in the statement of the \emph{scale recurrence lemma} (lemma \ref{lem:scl}) in subsection \ref{se:scrl}

\begin{definition}
For $0 < \rho < 1$, the set $\Sigma(\rho)$ is defined as the set of words $\und{a} \in \Sigma^{fin}$ such that
\[
c_0^{-1}\rho \le \diam(G(\und{a})) \le c_0\rho.
\]
We say that the set $G(\und{a})$ has an \emph{approximate size} $\rho$.
\end{definition}

Using standard techniques (see \cite{PT} and \cite{Z}), one can prove that there is a constant $C$ depending only in the Cantor set and the parameter $c_0$ such that
\begin{equation}\label{eq:sigrho}
C^{-1} \rho^{-d}\leq \# \Sigma(\rho)\leq C \rho^{-d},\end{equation}
where $d=HD(K)$ is the Hausdorff dimension of $K$.

\begin{remark}\label{remark:onsize}
 Notice that, because the set of limit geometries represent a compact subset of \linebreak $\cup_{a \in \mathbb{A}} {Emb_{m}(G(a))}$, every piece of \emph{approximate size} $\rho$ also contains the ball $B(c_{\und{a}}, (c_0^{\prime})^{-1}\rho)$ for some $c_0^\prime > 0$ depending only on $K$. The result remains valid for perturbations $\ti{K}$ sufficiently close to $K$. Even more, by maybe enlarging $c_0^\prime$ a little bit, because of Corollaries 3.3 and 3.4 of \cite{AM}, it follows that for any $\und{a} = ( a_0,\,\dots,\,a_n) \in \Sigma^{fin}$
\begin{equation}\label{eq:c0prima}
    {(c'_0)}^{-1} \le \frac{\lv Df_{\und{a}}(c_{a_n}) \rv}{\diam(G(\und{a}))} \le c'_0.
\end{equation}
This allows us to control the approximate size of the sets $G(\und{a})$ through the derivative of the map $f_{\und{a}}$ at $c_{a_n}$.
\end{remark}

\end{subsection}

\begin{subsection}{Recurrent compact criterion for stable intersections}
Our next objects are called \emph{configurations}. They are a way of moving a Cantor set in the plane without changing its internal structure.

 \begin{definition}
 A $C^m$-\emph{configuration} of a piece $G(a)$ of a Cantor set is a $C^m$, $m > 1$, diffeomorphism
\[
h: G(a) \to U \subset \C.\]
The space of all $C^m$ configurations of a piece $G(a)$ is denoted by $\mathcal{P}^m(a)$ and we equip it with the $C^{m}$ topology. The space of all configurations is denoted by $\mathcal{P}(a) = \cup_{m > 1} \mathcal{P}^m(a)$ and we equip it with the inductive limit topology. This is, $U\subset \mathcal{P}(a)$ is open if and only $U\cap \mathcal{P}^m(a)$ is open in the topology of $\mathcal{P}^m(a)$, for all $m>1$.
 \end{definition}
 
If $h$ is an affine map, we call it an \emph{affine configuration}. Observe that a limit geometry is a configuration of a piece. Configurations of the type $A \circ k^{\ute}$, where $A \in Aff(\C)$ and $\ute \in \Sigma^-$, are of great importance to our work and so are called \emph{affine configurations of limit geometries}.

The renormalization operators represent a way of looking into smaller parts of the Cantor set.

\begin{definition}
Let $K$ and $K'$ be two Cantor sets. Choose any pair of words $\und{a} = (a_0,\,a_1,\,\dots,\,a_n) \in \Sigma^{fin}$ and $\und{a}'= (a'_0,\,a'_1,\,\dots,\,a'_m) \in {\Sigma'}^{fin}$. Then, the renormalization operator $T_{\und{a}}T'_{\und{a}'}$ acts on any pair of configurations $h: G(a_0) \to \C$ and $h': G(a'_0) \to \C$ by 
\[
T_{\und{a}}T'_{\und{a}'}(h,h') \coloneqq (h \circ f_{\und{a}}, h' \circ f_{\und{a}'}).
\]
\end{definition}

The notation above clearly indicates that we can consider the operators $T_{\und{a}}$ and $T'_{\und{a'}}$ as separate, each acting on configurations of $K$ and $K'$ respectively. In a very similar way to the proposition \ref{lemma:limgeo}, the one defining limit geometries, one can show (see Lemma 3.11 of \cite{AM}) that the set of affine configurations of limit geometries form an attractor in the space of configurations under the action of renormalizations. Even more, see lemma 3.8 of \cite{AM}, the renormalization operators act in a very simple manner over limit geometries:

\begin{lemma}\label{simpleformula}
For any $\ute \in \Sigma^-$ and $\und{a} \in \Sigma^{fin}$, $\und{a}=(a_0,...,a_m)$, there is an affine transformation $F^{\ute}_{\und{a}} \in Aff(\C)$ such that 
\[
k^{\ute}\circ f_{\und{a}} = F^{\ute}_{\und{a}} \circ k^{\ute\und{a}}. 
\]
Moreover, this transformation can be calculated by
\begin{align*}
   DF^{\ute}_{\und{a}}  & = \lim_{n \rightarrow \infty} \left(Df_{\ute_n}(c_{\te_0})\right)^{-1} \cdot Df_{(\ute\und{a})_{n+m}}(c_{a_m})  \quad \text{and} \\
   F^{\ute}_{\und{a}}(0) & =  c^{\ute}_{\und{a}} = k^{\ute}(c_{\und{a}}).
\end{align*}
\end{lemma}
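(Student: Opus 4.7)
The plan is to establish the factorisation at the pre-limit level $k^{\ute}_n$ and then pass to the limit $n\to\infty$. First I would verify the composition identity
\[
f_{\ute_n}\circ f_{\und{a}} = f_{(\ute\und{a})_{n+m}},
\]
which is immediate from the definition of $f_{\cdot}$ as a chain of inverse branches, together with the fact that $\te_0=a_0$ so that the two chains concatenate correctly. Substituting into $k^{\ute}_n=\Phi_{\ute_n}\circ f_{\ute_n}$ and $k^{\ute\und{a}}_{n+m}=\Phi_{(\ute\und{a})_{n+m}}\circ f_{(\ute\und{a})_{n+m}}$ then gives
\[
k^{\ute}_n\circ f_{\und{a}} \;=\; \Phi_{\ute_n}\circ f_{(\ute\und{a})_{n+m}} \;=\; \bigl(\Phi_{\ute_n}\circ \Phi_{(\ute\und{a})_{n+m}}^{-1}\bigr)\circ k^{\ute\und{a}}_{n+m},
\]
so that the affine map $F^{\ute,n}_{\und{a}}:=\Phi_{\ute_n}\circ\Phi_{(\ute\und{a})_{n+m}}^{-1}\in Aff(\C)$ realises the desired factorisation at level $n$.

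Next I would pass to the limit. By Proposition \ref{lemma:limgeo}, $k^{\ute}_n\to k^{\ute}$ and $k^{\ute\und{a}}_{n+m}\to k^{\ute\und{a}}$ in the $C^m$ topology, and by Remark \ref{remark:onsize} the image of $k^{\ute\und{a}}_{n+m}$ contains a fixed ball around the origin for all large $n$. Rewriting the factorisation as $F^{\ute,n}_{\und{a}}=k^{\ute}_n\circ f_{\und{a}}\circ(k^{\ute\und{a}}_{n+m})^{-1}$, the right-hand side converges in the $C^1$ topology on such a ball to $k^{\ute}\circ f_{\und{a}}\circ(k^{\ute\und{a}})^{-1}$. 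Since each $F^{\ute,n}_{\und{a}}$ is affine, its derivative is constant; hence the limit has constant derivative on an open set and therefore extends uniquely to a complex affine map $F^{\ute}_{\und{a}}\in Aff(\C)$ satisfying $k^{\ute}\circ f_{\und{a}}=F^{\ute}_{\und{a}}\circ k^{\ute\und{a}}$.

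Finally, the explicit formulas drop out of the normalisations. The conditions defining $\Phi_{\ute_n}$ force $\Phi_{\ute_n}(z)=\bigl(Df_{\ute_n}(c_{\te_0})\bigr)^{-1}(z-c_{\ute_n})$, and the analogous identity for $\Phi_{(\ute\und{a})_{n+m}}$ (noting that the last letter of $\ute\und{a}$ is $a_m$, so $(\ute\und{a})_0=a_m$) yields
\[
DF^{\ute,n}_{\und{a}} \;=\; \bigl(Df_{\ute_n}(c_{\te_0})\bigr)^{-1}\cdot Df_{(\ute\und{a})_{n+m}}(c_{a_m}),
\]
which passes to the limit as stated. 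For the base point, $k^{\ute\und{a}}(c_{a_m})=0$, so $F^{\ute}_{\und{a}}(0)=k^{\ute}(f_{\und{a}}(c_{a_m}))=k^{\ute}(c_{\und{a}})=c^{\ute}_{\und{a}}$. The only subtle point is upgrading convergence of $F^{\ute,n}_{\und{a}}$ on the shrinking domains $k^{\ute\und{a}}_{n+m}(G^*(a_m))$ to convergence as affine maps on $\C$; this is precisely what the uniform lower bound from Remark \ref{remark:onsize} supplies, and I expect this to be the main (mild) technical point in the argument.
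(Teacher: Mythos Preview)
Your argument is correct and is exactly the natural direct proof: factor at the pre-limit stage via $f_{\ute_n}\circ f_{\und{a}}=f_{(\ute\und{a})_{n+m}}$, identify $F^{\ute,n}_{\und{a}}=\Phi_{\ute_n}\circ\Phi_{(\ute\und{a})_{n+m}}^{-1}$, and pass to the limit using the $C^m$ convergence of the $k^{\ute}_n$. The paper does not prove this lemma in the text but cites \cite{AM}, lemma 3.8; your write-up is essentially what that proof amounts to, with one minor remark: the uniform ball you invoke follows already from the fact that $k^{\ute\und{a}}$ is a $C^m$ embedding normalized by $k^{\ute\und{a}}(c_{a_m})=0$, $Dk^{\ute\und{a}}(c_{a_m})=Id$, together with the $C^m$ convergence of $k^{\ute\und{a}}_{n+m}$---Remark \ref{remark:onsize} is not strictly needed.
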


Now we properly establish the notion of \emph{stable intersection} between Cantor sets. Given two Cantor sets $K,\,K'$ and any pair of configurations $(h_a, h'_{a'}) \in \mathcal{P}(a) \times \mathcal{P}'(a') $  we say that it is:

\begin{itemize}
    \item \emph{linked} whenever $h_a(G(a)) \cap h'_{a'}(G(a')) \neq \emptyset$.
    \item \emph{intersecting} whenever $h_a(K(a)) \cap h'_{a'}(K'(a')) \neq \emptyset$.
    \item \emph{has stable intersections} whenever $\ti{h}_{a}(\ti{K}(a)) \cap \ti{h'}_{a'}(\ti{K'}(a')) \neq \emptyset$ for any pairs of Cantor sets $(\ti{K},\ti{K'}) \in \Om_\Sigma \times \Om_{\Sigma'}$ in a small neighbourhood of $(K,K')$ and any configuration pair $(\ti{h}_{a},\ti{h'}_{a'})$ that is sufficiently close to $(h_a,h'_{a'})$ in the topology of $\mathcal{P}(a) \times \mathcal{P}'(a')$.
\end{itemize} 

The set $\mathcal{I}_s(K, K')$ in the statement of theorem \ref{thm:main0} represents the set of all $\lambda \in \C$ such that $(\tau_{\lambda}, Id)$ is a pair of configurations having stable intersections in the sense just described, where $\tau_z$ is the translation by $z$ on $\C$. %To prove this theorem we will make use of the following two results: Theorem \ref{thm:main2} and Theorem \ref{thm:main}. 

%The main theorem in the introduction is a particular case of theorem \ref{thm:main}, it guarantees stable intersections for affine configurations of limit geometries of Cantor sets. To state it, we need to fix some more notation.
The main theorem in the introduction is a consequence of theorem \ref{thm:main}, because it guarantees stable intersections for affine configurations of Cantor sets. In turn, theorem \ref{thm:main} is a consequence of theorem \ref{thm:main2}. The statement of this theorem requires that we recall some more concepts.
First, notice that the space of affine configurations of limit geometries of a Cantor set can be seen as the image of the continuous association
\begin{align*}
    I : \mathcal{A} \coloneqq Aff(\C) \times \Sigma^- &\to \mathcal{P} \\
    (A,\und{\theta}) &\mapsto A \circ k^{\und{\theta}}.
\end{align*}

\begin{definition}\label{c}The space of relative affine configurations of limit geometries will be denoted by $\mathcal{C}$. It is the quotient of $\mathcal{A} \times \mathcal{A'}$ by the action of the affine group by composition on the left, that is, $\left((A, \ute), (A', \ute')\right) \mapsto \left((B \circ A, \ute), (B \circ A', \ute')\right)$, where $B$ ranges in $Aff(\C)$.
\end{definition} 

The concepts of \emph{linking}, \emph{intersection} and \emph{stable intersection} were well defined for pairs of affine configurations of limit geometries, and since they are invariant by the action of $Aff(\C)$, they are also defined for relative configurations in $\mathcal{C}$.

Also, since the renormalization operator acts by composition on the right on $(A,\und{\theta})$, its action commutes with the multiplication on the left by affine transformations and so it can be naturally defined on $\mathcal{C}$. This space can be identified with $\Sigma^- \times {\Sigma'}^{-} \times Aff(\C)$ by the identification $[(A, \ute), (A', \ute')] \equiv (\ute, \ute', {A'}^{-1} \circ A)$ and, in this manner, the topology on $\mathcal{C}$ is the product topology on $\Sigma^- \times {\Sigma'}^{-} \times Aff(\C)$. The action of the renormalization operator on a relative configuration can be described by 
\[
T_{\und{a}}T'_{\und{a}'}(\ute, \ute', A) = (\ute\und{a}, \ute'\und{a}', \left(F^{\ute'}_{\und{a}'}\right)^{-1} \circ A \circ F^{\ute}_{\und{a}}),
\]
and if $A = s z + t$, then
\begin{equation}\label{eq:formularenorma}
    \left(F^{\ute'}_{\und{a}'}\right)^{-1} \circ A \circ F^{\ute}_{\und{a}} (z) = \frac{DF^{\ute}_{\und{a}}}{DF^{\ute'}_{\und{a}'}}\, s z + \frac{1}{DF^{\ute'}_{\und{a}'}} \left( s c^{\ute}_{\und{a}} + t - c^{{\ute}'}_{\und{a}'} \right). 
\end{equation}

It is more convenient to see the space $\mathcal{C}$ through one more identification: 
\begin{align*}
    \Sigma^- \times {\Sigma'}^{-} \times Aff(\C) & \equiv  \Sigma^- \times {\Sigma'}^{-} \times \C^* \times \C \\
    (\ute, \ute', A) & \equiv (\ute, \ute', s, t),
\end{align*}
where $A(z) = s z + t $. We will call $s$ the \emph{scale} part of the relative configuration and $t$ the \emph{translation} part.  The equation \eqref{eq:formularenorma} provides to us a formula for the renormalization under this identification if we analyse the scale and translation parts separately:
\[
s \mapsto \frac{DF^{\ute}_{\und{a}}}{DF^{\ute'}_{\und{a}'}}\, s \qquad \text{and} \qquad
t \mapsto \frac{1}{DF^{\ute'}_{\und{a}'}} \left( s c^{\ute}_{\und{a}} + t - c^{{\ute}'}_{\und{a}'}\right).
\]

The space of relative scales is given by $\si=\Sigma^{-} \times {\Sigma'}^- \times J$, where $J=\C^*$. We identify $J$ with $\R\times \T$ through the map $(t,v)\to e^{t+iv}$. It acts on $\C$ by complex multiplication. The space $\CC$ of relative configurations projects to $\si$ by the map
\begin{align*}
\CC &\to \si\\
[(\tb,A),(\tb',A')] &\to (\tb,\tb',DA/DA'),
\end{align*}
where $DA$ means derivative of the affine map $A$ (which is an element in $J$). We trivialize $\CC\to \si$ in the following way: we map $[(\tb,A),(\tb',A')]\in \CC$ to $(\tb,\tb',s,\lambda)$ such that $s=DA/DA'$ and $\lambda=(A')^{-1}\circ A(0)$. In this sense we can think of $\CC$ as $\si\times \C$. The renormalization operators act on the space of relative scales by
\[T_{\und{a}}T'_{\und{a}'}(\ute, \ute', s)=\left(\tb \und{a}, \tb'\und{a}', s \cdot (DF^{\ute}_{\und{a}}/DF^{\ute'}_{\und{a}'})\right).\]
Most of the time we will work with scales which are bounded away from zero and infinity, for this purpose we introduce the notation
\[J_R=\{s\in J:\, e^{-R}\leq |s|\leq e^R\},\]
where $R$ is a positive real number.

The object which we present in the following definition will play a central role in the proof of our main theorems. It is a useful tool to get stable intersection between pairs of Cantor sets.

\begin{definition}[Recurrent compact]\label{reccompact} Let $K$ and $K'$ be a pair of Cantor sets. Let $\mathcal{L}$ be a compact set in $\mathcal{C}$. We say that $\mathcal{L}$ is \emph{recurrent} (for the pair $(K,K')$) if for any relative affine configuration of limit geometries $v \in \mathcal{L}$, there are finite words $\und{a}$, $\und{a}'$ such that $u=T_{\und{a}}T'_{\und{a'}} (v) $ satisfies $u \in \text{int }\mathcal{L}$, where the $T_{\und{a}}T'_{\und{a'}}$ are renormalization operators associated to the pair of Cantor sets $K$ and $K'$.

If such a renormalization can be done using words $\und{a}$ and $\und{a}'$ such that their total size combined is equal to one, we say that such a set is \textit{immediately recurrent}. 

\end{definition}

\textbf{Theorem B} of \cite{AM} states that if $u$ belongs to a recurrent compact set associated to a pair of Cantor sets $K$ and $K'$, then it represents pairs of affine configurations of limit geometries of these Cantor sets that have stable intersections. For the convenience of the reader, we copy its statement below.

\begin{theorem*} The following properties are true:
\begin{enumerate}
    \item Every recurrent compact set is contained in an immediately recurrent compact set.
    \item Given a recurrent compact set $\mathcal{L}$ (resp. immediately recurrent) for $g$, $g'$, for any $\ti{g}$, $\ti{g}'$ in a small neighbourhood of  $(g,g') \in \Om_{\Sigma} \times \Om_{{\Sigma}'}$ we can choose base points $\ti{c}_a \in \ti{G}(a) \cap \ti{K}$ and  $\ti{c}_{a'} \in \ti{G}(a') \cap \ti{K}'$ respectively close to the pre-fixed $c_a$ and $c_{a'}$, for all $a \in \mathbb{A}$ and $a' \in \mathbb{A'}$, in a manner that $\mathcal{L}$ is also a recurrent compact set for $\ti{g}$ and $\ti{g}'$.
    \item Any relative configuration contained in a recurrent compact set has stable intersections.
\end{enumerate}

\begin{remark}
For each pair of maps $(\ti{g},\ti{g}')$ in the small neighbourhood of $(g,g')$ in the theorem above, let $\ti{H}$ and $\ti{H}'$ be the corresponding homeomorphisms defined in \ref{def:homeo}. The base points $\ti{c}_a \in \ti{G}(a) \cap \ti{K}$ and  $\ti{c}_{a'} \in \ti{G}(a') \cap \ti{K}'$ in the theorem above are chosen so that $\ti{H}(\ti{c}_a) =  H(c_a)$ and $\ti{H}'(\ti{c}_{a'}) =  H'(c_{a'})$ for all $a \in \mathbb{A}$ and all $a' \in \mathbb{A}'$, meaning that their itineraries under the action of $\ti{g}$ and $\ti{g}'$ are the same for all pairs of maps in this neighbourhood. In subsequent contexts, the base points will be chosen in the same way.

\end{remark}    

\end{theorem*}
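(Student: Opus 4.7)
The plan is to treat the three parts in order, relying throughout on the continuity of limit geometries (Proposition \ref{lemma:limgeo}) and the explicit renormalization formula of Lemma \ref{simpleformula}, together with the fact that pre-periodic base points persist under small perturbations of $g$.

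For part (1), starting from a recurrent compact $\mathcal{L}$, I would fatten it to an immediately recurrent set as follows. For each $v \in \mathcal{L}$ pick finite words $\und{a}(v), \und{a}'(v)$ with $T_{\und{a}(v)}T'_{\und{a}'(v)}(v) \in \text{int}\,\mathcal{L}$. By continuity of renormalization, this condition persists on an open neighborhood $U_v$ of $v$; extract a finite subcover $U_{v_1}, \dots, U_{v_N}$ of $\mathcal{L}$. Let $\mathcal{L}^*$ be the union of $\mathcal{L}$ with all iterated images $T_{\und{b}}T'_{\und{b}'}(\overline{U_{v_k}})$ where $\und{b}$ ranges over prefixes of $\und{a}(v_k)$ and $\und{b}'$ over prefixes of $\und{a}'(v_k)$. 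Then $\mathcal{L}^*$ is a finite union of continuous images of compacts, hence compact. To check immediate recurrence, any point $u \in \mathcal{L}^*$ lies on some partial path: if either prefix can still be extended by one letter, doing so keeps us in $\mathcal{L}^*$; if both are maximal, $u$ lies near $T_{\und{a}(v_k)}T'_{\und{a}'(v_k)}(v_k) \in \text{int}\,\mathcal{L}$, from where one restarts from some $v_\ell$.

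For part (2), the decisive observation is that both the renormalization coefficient $DF^{\ute}_{\und{a}}/DF^{\ute'}_{\und{a}'}$ and the base-point data $c^{\ute}_{\und{a}}, c^{\ute'}_{\und{a}'}$ in equation \eqref{eq:formularenorma} depend continuously on $g, g'$, uniformly for $\tb \in \Sigma^-$ and words $\und{a}$ of bounded length, as follows from the uniform convergence of $k^{\ute}_n \to k^{\ute}$ in a neighborhood of $\Omega^m_\Sigma$ in Proposition \ref{lemma:limgeo}. Since the base points $c_a$ are pre-periodic, for any $\tilde{g}$ close to $g$ the associated periodic orbits persist by hyperbolicity, yielding canonical continuations $\tilde{c}_a$ with the same itinerary, i.e.\ $\tilde{H}(\tilde{c}_a) = H(c_a)$. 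Because $\mathcal{L}$ is compact and covered by finitely many neighborhoods $U_{v_k}$ using bounded-length words, intersecting the corresponding openness conditions produces a single neighborhood of $(g,g')$ on which $\mathcal{L}$ remains recurrent with the new base points.

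For part (3), a relative configuration $v_0 \in \mathcal{L}$ produces, by iterated recurrence, finite words $\und{a}^{(n)}, {\und{a}'}^{(n)}$ and $v_n = T_{\und{a}^{(n)}}T'_{{\und{a}'}^{(n)}}(v_{n-1}) \in \mathcal{L}$, whose concatenations define backward-infinite words $\ute \in \Sigma^-$ and $\ute' \in {\Sigma'}^-$. Since $v_n$ stays in the compact $\mathcal{L}$, the scale coordinates remain in some $J_R$, so the diameters of the pieces $G(\und{a}^{(1)}\cdots \und{a}^{(n)})$ and $G'({\und{a}'}^{(1)}\cdots {\und{a}'}^{(n)})$ shrink comparably and tend to zero by Lemma \ref{lemma: decay}. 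The linking that is part of membership in $\mathcal{L}$ then transfers along renormalizations to a nested family of linked pieces, whose unique limit point realizes an intersection of the two configurations. Stable intersection is obtained by combining with part (2): for nearby $(\tilde{g},\tilde{g}')$ the same $\mathcal{L}$ is still recurrent, and the combinatorial sequence of renormalization words is common to $(K,K')$ and $(\tilde{K},\tilde{K}')$, producing an intersection for the perturbed pair as well.

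The main obstacle I expect is in part (3), in the passage from abstract recurrence in $\mathcal{C}$ to an honest point in $\C$. One must argue that the bounded distortion coming from $v_n \in \mathcal{L}$, in particular boundedness of scales in some $J_R$, forces the nested images of the pieces to have nonempty intersection, which requires tracking how the translation part $t_n$ of $v_n$ evolves and why linking is inherited under each renormalization step. In part (1) there is a subtler combinatorial point: one has to ensure that the "detour" through intermediate renormalizations defining $\mathcal{L}^*$ does not create points that escape $\mathcal{L}^*$, which can be arranged by shrinking the neighborhoods $U_{v_k}$ and, if necessary, performing one extra layer of fattening.
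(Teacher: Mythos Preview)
This theorem is not proved in the present paper: it is quoted verbatim as \textbf{Theorem B} of \cite{AM}, with the explicit remark ``For the convenience of the reader, we copy its statement below.'' There is therefore no proof here to compare against; the paper treats the result as a black box and builds on it.

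That said, your sketch for parts (1) and (2) follows the expected lines and is essentially sound. In part (3), however, there is a genuine gap. First, you write ``the linking that is part of membership in $\mathcal{L}$'', but nothing in Definition~\ref{reccompact} requires elements of $\mathcal{L}$ to be linked; what compactness of $\mathcal{L}$ gives you is boundedness of the translation coordinate $t$ along the renormalization orbit, and it is \emph{that} bound (together with $s\in J_R$) which forces the nested pieces to shrink around a common point. Second, and more seriously, \emph{stable} intersection requires intersection to persist not only under perturbation of $(g,g')$ but also under perturbation of the \emph{configurations} $(h,h')$ in $\mathcal{P}(a)\times\mathcal{P}'(a')$. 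Your argument invokes only part (2), which handles perturbations of the Cantor sets. To close the gap you must use the attractor property mentioned just before Lemma~\ref{simpleformula}: under repeated renormalization, an arbitrary $C^r$ configuration converges to an affine configuration of limit geometries, so a small $C^r$ perturbation of $(A\circ k^{\ute},A'\circ k^{\ute'})$ is, after enough renormalizations, again represented by a point of $\mathcal{C}$ close to some $v_n\in\mathcal{L}$, hence in $\operatorname{int}\mathcal{L}$. Without this step the argument does not establish stability.
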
 

%The main objective of this paper is, given a pair of Cantor sets determined by the maps $g$ and $g'$ whose sum of Hausdorff dimensions is larger than $1$, to construct a family of Cantor sets determined by maps $g^{\uom}$ close to $g$, for $\uom$ belonging to some parameter space $\Omega$, such that for some $\uom$ the pair of Cantor sets given by the maps $g^{\uom}$ and $g'$ have stable intersections.

\end{subsection}

\begin{subsection}{Perturbation of Conformal Cantor Sets}\label{sub:perC}
Let $K$ be a conformal Cantor set defined by a $C^m$ map $g$. We show that if $K$ is non-essentially real then arbitrarily close to $K$, in the $C^{[m]}$ topology, there is a $C^{\infty}$ conformal Cantor set $\ti{K}$ defined by a map $\ti{g}$ that is holomorphic on a small open neighbourhood of $\ti{K}$. This is an important property that will allow us to perturb more freely the conformal Cantor sets and adapt the random perturbation argument from \cite{MY} to our context.

We begin with the following lemma:

\begin{lemma}\label{lem:conder}
Let $K$ be a $C^m$ conformal Cantor set given by $g$. For $x \in K$ consider the set
\[K^{dir}_x :=\bigcap_{\delta>0} \ov{\left\{\frac{y-x}{|y-x|}:y\in B(x,\delta)\cap (K\setminus \{x\})\right\}}.\]
Assume that, for all $x\in K$, $K^{dir}_x$ has two linearly independent vectors (over $\R$). Then, for all $1\leq l\leq [m]$ and $x\in K$ the $l$-linear map $D^l_x g:\R^2\times ...\times \R^2 \to \R^2$ is conformal, i.e. there is a complex number $c^l_x$ such that
\[D^l_x g(z_1,...,z_l)=c^l_x\cdot z_1 \cdot z_2 \cdot \dots \cdot z_l.\]
The operation $\cdot$ in the right hand side of the last equality corresponds to complex multiplication.
\end{lemma}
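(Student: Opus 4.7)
My plan is to prove the statement by induction on $l$. The base case $l=1$ is immediate from the definition of a conformal Cantor set: the assumption that $Dg(x)$ is a conformal $\R$-linear operator for every $x \in K$ is exactly the statement $D^1_x g(z_1) = c^1_x \cdot z_1$ for some $c^1_x \in \C$. (In particular, if $[m]=1$ we are done; the Hausdorff-dimension hypothesis on $K^{dir}_x$ is not needed yet.)

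For the inductive step with $2 \le l \le [m]$, assume $D^{l-1}_y g(z_1,\dots,z_{l-1}) = c^{l-1}_y \cdot z_1 \cdots z_{l-1}$ for all $y \in K$. The key observation is that the set $\mathcal{M}^{conf}_{l-1}$ of \emph{conformal} $(l-1)$-multilinear maps, namely those of the form $(w_1,\dots,w_{l-1}) \mapsto c \cdot w_1 \cdots w_{l-1}$ with $c \in \C$, is a $2$-real-dimensional, hence closed, subspace of the space of all $\R$-$(l-1)$-multilinear maps $\R^2 \times \cdots \times \R^2 \to \R^2$. Fix $x \in K$. Since $D^{l-1}g$ is $C^1$ on a neighbourhood of $K$ (with derivative $D^l g$ merely Hölder in the borderline case $l=[m]$), we have the first-order Taylor expansion
\[
D^{l-1}g(y) - D^{l-1}_x g = D^l_x g(y-x,\,\cdot,\,\dots,\,\cdot) + o(|y-x|) \qquad (y \to x).
\]
For $y \in K$, the inductive hypothesis places both $D^{l-1}g(y)$ and $D^{l-1}_x g$ in $\mathcal{M}^{conf}_{l-1}$, so the left-hand side lies in $\mathcal{M}^{conf}_{l-1}$. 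Dividing by $|y-x|$ and letting $y_n \to x$ in $K$ with $(y_n-x)/|y_n-x| \to u_0 \in K^{dir}_x$, closedness of $\mathcal{M}^{conf}_{l-1}$ yields $D^l_x g(u_0,\,\cdot,\,\dots,\,\cdot) \in \mathcal{M}^{conf}_{l-1}$.

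By hypothesis $K^{dir}_x$ contains two $\R$-linearly independent vectors, and by $\R$-linearity of $D^l_x g$ in its first slot this extends to every $v \in \R^2$: we may write
\[
D^l_x g(v_1,v_2,\dots,v_l) = c(v_1) \cdot v_2 \cdots v_l
\]
where $c \colon \R^2 \to \C$ is $\R$-linear. To upgrade this to full complex multilinearity I would invoke the symmetry of higher derivatives (Schwarz): comparing $D^l_x g(v_1,v_2,1,\dots,1) = D^l_x g(v_2,v_1,1,\dots,1)$ gives $c(v_1)\,v_2 = c(v_2)\,v_1$ for all $v_1,v_2 \in \C$, and setting $v_2 = 1$ forces $c(v) = c(1)\cdot v$. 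Hence $D^l_x g(v_1,\dots,v_l) = c^l_x \cdot v_1 \cdots v_l$ with $c^l_x = c(1)$, closing the induction.

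The only delicate point I anticipate is handling the Taylor remainder at the top order $l=[m]$, where $D^l g$ is only Hölder with exponent $m-[m]$. Here one writes the remainder via the fundamental theorem of calculus as $\int_0^1 [D^l g(x + t(y-x)) - D^l_x g](y-x)\,dt$ and uses the Hölder estimate to bound it by $O(|y-x|^{1+(m-[m])}) = o(|y-x|)$, which is all that is needed. Beyond this, the argument is purely algebraic (closedness of a finite-dimensional subspace plus Schwarz symmetry), and I do not foresee other obstacles.
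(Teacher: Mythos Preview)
Your proof is correct and follows essentially the same route as the paper's: induction on $l$, reading off the conformality of $D^l_x g(w,\cdot,\dots,\cdot)$ from difference quotients of $D^{l-1}g$ along directions $w\in K^{dir}_x$, then using Schwarz symmetry together with two independent directions to conclude. The only cosmetic difference is that you first package the inductive hypothesis as membership in the closed linear subspace $\mathcal{M}^{conf}_{l-1}$ and extend to all $v\in\R^2$ by $\R$-linearity before invoking symmetry, whereas the paper applies symmetry directly to two vectors of $K^{dir}_x$ and then extends; the content is the same. Your concern about the top order $l=[m]$ is unnecessary caution: since $D^{l-1}g$ is still $C^1$, the first-order Taylor remainder is $o(|y-x|)$ by definition of differentiability, without any appeal to the H\"older exponent.
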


\begin{proof}
Notice that the case $l=1$ is just the definition of conformality for the Cantor set. Now we proceed by induction, assume the result for $l-1$. Let $w \in K^{dir}_x$, then there are sequences $t_n \to 0$ and $w_n \to w$ such that $x+t_n w_n \in K$. Hence
\begin{align*}
D^l_x g(w,z_1...,z_{l-1})&= \lim_{n\to \infty} \frac{D^{l-1}_{x+t_n w_n}g(z_1,...,z_{l-1})-D^{l-1}_x g(z_1,...,z_{l-1})}{t_n}\\
&= \lim_{n\to \infty} \frac{c^{l-1}_{x+t_n w_n}\cdot z_1\cdots z_{l-1}-c^{l-1}_x \cdot z_1 \cdots z_{l-1}}{t_n}\\
&= \left(\lim_{n\to \infty} \frac{c^{l-1}_{x+t_n w_n}-c^{l-1}_x}{t_n}\right) \cdot z_1 \cdots z_{l-1}.
\end{align*}
This shows that the limit $\lim_{n\to \infty} \frac{c^{l-1}_{x+t_n w_n}-c^{l-1}_x}{t_n}$ exists, denote it by $c^{l}_x(w)$. Moreover
\[D^l_x g(w,z_1...,z_{l-1})=c^{l}_x(w)\cdot z_1\cdots z_{l-1}.\]
If we take another vector $\ti{w}\in K^{dir}_x$, and using the symmetry of the operator $D^l_x g$, we would have
\[c^{l}_x(w)\ti{w}=D^l_x g(w,\ti{w},1,...,1)=D^l_x g(\ti{w},w,1,...,1)= c^l_x(\ti{w}) w.\]
This shows that $\frac{c^{l}_x(w)}{w}$ does not depend on $w$, denote it by $c^l_x$. Since we can choose $w,\tilde{w}$ generating $\R^2$, we conclude that
\[D^l_x g(z_1...,z_{l})=c^{l}_x\cdot z_1 \cdots z_{l}.\]
\end{proof}

To use this lemma we need to consider Cantor sets that are indeed two dimensional. This concept is precised by the following definition. 

\begin{definition}
We will say that a Cantor set $K$ is {\em essentially real} if there exists $\ute \in \Sigma^-$ such that the limit Cantor set $K^{\ute}(\te_0)$ is contained in a straight line. Otherwise, we say it is {\em non-essentially real}.
\end{definition}

It is not difficult to prove that $K$ is essentially real if and only if for every $\ute \in \Sigma^-$ the limit Cantor set $K^{\ute}(\te_0)$ is contained in a straight line. Moreover, one can prove that $K$ being essentially real is equivalent to $K$ being contained in a $C^1$ one dimensional manifold embedded on the plane.% Indeed, let $\alpha$ be a $C^1$ local parametrization by arc length and $x=\alpha(t)\in K$. Using Taylor formula at $t$ one gets that $\alpha(t+s)=\alpha(t)+s\cdot \alpha'(t)+o(s)$. This shows that $\alpha|_{(t-s,t+s)}$ is contained inside a cone based at $x$ and with opening angle $\beta$ such that $\tan \beta = o(s)/s$. The compacity of $K$ allows us to chose the constants in the $o$ notation uniformly, that is, independent of $x$. Given $\tb \in \Sigma^-$, for all $s>0$, one can choose $n_0$ such that for all $n>n_0$ one has that $K\cap G(\tb_n) \subset \text{Im}(\alpha|_{(t-s,t+s)})$ for some parametrization by arc length $\alpha$. Therefore, for all $n>n_0$ we have that $\text{Im}(k_n^{\tb})$ is contained in a cone with opening angle $\beta$ such that $\tan \beta= o(s)/s$. Making $n$ go to infinity one gets that $K^{\tb}(\theta_0)$ is contained in a cone with opening angle $\beta$ such that $\tan \beta= o(s)/s$. Making $s$ go to zero we obtain that $K^{\tb}(\theta_0)$ is contained in a straight line.

Lemma 1.4.1 from \cite{Z} can be adapted to our context and it can be used to prove that every point $x$ belonging to a non-essentially real Cantor set $K$ verifies that $K^{dir}_x$ has two linearly independent vectors (over $\R$). We now show that being non-essentially real is an open property.% Indeed, to see that lemma 1.4.1 implies $K^{dir}_x$ has two linearly independent vectors we can proceed by contradiction. If this is not the case then there would be $\delta>0$ such that $K\cap B_{\delta}(x)$ is contained in a cone with opening angle less than $a$ (the constant from lemma 1.4.1). Now let $\tb \in \Sigma^-$ and an integer $n$ such that $x\in G(\tb^n)$ and $G(\tb^n)\subset B_{\delta(x)}$. Then $K\cap G(\tb^n)$ is contained in a cone with opening angle less than $a$, and from this we get that $k^{\tb}_n(K)$ is also contained in such a cone. Taking a a sequence of such $\tb$ and with the corresponding $n$ going to infinity, one can take a a converging subsequence and obtain that there is some $\ti{\tb}\in \Sigma^-$ such that $K^{\ti{\tb}}(\ti{\theta}_0)$ is contained in a cone with opening angle less than $a$. This implies that $G^{\ti{\tb}}(\und{a})$ intersects such cone, for all $\und{a}$, and this contradicts lemma 1.4.1. 

\begin{lemma}\label{lem:pesr}
Let $K$ be a $C^m$ non-essentially real conformal Cantor set. Every conformal Cantor set, close enough to $K$ in the $C^m$ topology, is also non-essentially real.
\end{lemma}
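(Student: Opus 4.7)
The plan is to use the equivalent characterization stated right after the definition: $K$ is essentially real if and only if for \emph{every} $\ute \in \Sigma^-$ the limit Cantor set $K^{\ute}(\te_0)$ is contained in a straight line. Contrapositively, $K$ is non-essentially real if and only if there exists some $\ute \in \Sigma^-$ such that $K^{\ute}(\te_0)$ is not contained in any line, which in turn is equivalent to the existence of three points $x_1, x_2, x_3 \in K^{\ute}(\te_0)$ that are not collinear. The idea is then to show that this geometric witness to non-essential realness persists under small $C^m$ perturbations by exploiting the continuity of limit geometries.

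First I fix such an $\ute$ and three non-collinear points $x_i = k^{\ute}(y_i)$ with $y_i \in K(\te_0)$. Let $\tb^i \coloneqq H(y_i) \in \Sigma$ be the symbolic itineraries of the $y_i$. Any perturbation $\tilde{K}$ close enough to $K$ shares the same type $\Sigma$, so I can define the corresponding points $\tilde{y}_i \coloneqq \tilde{H}^{-1}(\tb^i) \in \tilde{K}(\te_0)$. To see that $\tilde{y}_i$ is close to $y_i$, fix $\varepsilon > 0$, use Lemma \ref{lemma: decay} to pick $n$ so that $\diam(G(\und{a})) < \varepsilon/2$ for every $\und{a} \in \Sigma^{fin}$ of length $n$ (the same bound holding uniformly in a neighbourhood of $K$), and then choose $\tilde{K}$ close enough to $K$ so that each $\tilde{G}(\und{a})$ of length $n$ is Hausdorff within $\varepsilon/2$ of $G(\und{a})$. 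Since both $y_i$ and $\tilde{y}_i$ lie in the pieces indexed by the length-$n$ prefix of $\tb^i$, we get $|\tilde{y}_i - y_i| < \varepsilon$.

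Next I invoke the continuity of limit geometries from Proposition \ref{lemma:limgeo} and Remark \ref{rmk:dgl}: the map $\tilde{k}^{\ute}$ is $C^1$-close to $k^{\ute}$ on $G^*(\te_0)$, uniformly in $\ute \in \Sigma^-$, whenever $\tilde{K}$ is $C^m$-close enough to $K$. Combining with the previous step, the perturbed points $\tilde{x}_i \coloneqq \tilde{k}^{\ute}(\tilde{y}_i) \in \tilde{K}^{\ute}(\te_0)$ converge to $x_i$ as $\tilde{K} \to K$. Since non-collinearity of a triple is an open condition (for example, $\mathrm{Im}\bigl((x_2 - x_1)\overline{(x_3 - x_1)}\bigr) \neq 0$), the $\tilde{x}_i$ remain non-collinear for $\tilde{K}$ sufficiently close to $K$. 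Hence $\tilde{K}^{\ute}(\te_0)$ is not contained in any straight line, and $\tilde{K}$ is non-essentially real.

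The main obstacle is the continuous dependence of the symbolically-matched points $\tilde{y}_i$ on the perturbation. The two ingredients that make it work are the exponential shrinking of piece diameters (Lemma \ref{lemma: decay}), which is uniform across a neighbourhood of $K$, together with the fact that at each fixed finite depth $n$ the pieces $\tilde{G}(\und{a})$ depend continuously on $\tilde{g}$ in the $C^m$ topology. Once this is in hand, the rest of the argument is just the uniform continuity of limit geometries together with the openness of the non-collinearity condition in $\C^3$.
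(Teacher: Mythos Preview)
Your proof is correct. It uses the same essential ingredient as the paper's proof---continuity of limit geometries with respect to the Cantor set (Proposition~\ref{lemma:limgeo} and Remark~\ref{rmk:dgl})---but the organization is different. The paper argues by contradiction: it takes a sequence $K_n \to K$ of essentially real Cantor sets, notes that each $K_n^{\ute}(\theta_0)$ lies in a line through the origin (since $k^{\ute}(c_{\theta_0})=0$), extracts a subsequence so that the lines converge, and concludes that $K^{\ute}(\theta_0)$ also lies in a line, contradicting the hypothesis. Your argument is direct: you pick a concrete witness, namely three non-collinear points in $K^{\ute}(\theta_0)$, follow them symbolically into the perturbed Cantor set via $\tilde H^{-1}$, and use that non-collinearity is an open condition in $\C^3$. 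Your route is slightly more explicit and avoids talking about Hausdorff convergence of the whole set $K_n^{\ute}(\theta_0)$; the paper's route is shorter on the page because it does not need to track individual points or argue that $\tilde y_i \to y_i$. Both are perfectly valid executions of the same idea.
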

\begin{proof}
If the lemma does not hold, we would have a sequence $K_n$ of conformal Cantor sets converging to $K$ and such that every $K_n$ is essentially real. Let $\tb \in \Sigma^-$, denote by $k^{\tb, n}$ the limit geometry associated to $\tb$ and the Cantor set $K_n$. Since all $K_n$ are essentially real then, for all $n$, $K^{\tb,n}(\theta_0)=k^{\tb, n}(K_n)$ is contained in a line passing through the origin. Taking a subsequence we can assume that, as $n$ goes to infinity, $K^{\tb, n}(\theta_0)$ converges to a a set contained in a line passing through the origin. Using the fact that limit geometries depend continuously on the Cantor set, we conclude that $K^{\tb}(\theta_0)$ is contained in a line and therefore $K$ is essentially real, contradicting the hypothesis in the lemma.
\end{proof}

\begin{lemma}\label{lem:phol} Let $(K,g)$ be a $C^m$ non-essentially real conformal Cantor set. Arbitrarily close to $K$, in the $C^{[m]}$ topology, we can construct a $C^{\infty}$ conformal Cantor set $(\ti{K},\ti{g})$ such that $\ti{g}$ is holomorphic on a neighbourhood of $\ti{K}$.
\end{lemma}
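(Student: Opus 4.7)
The argument is based on Lemma \ref{lem:conder}: since $K$ is non-essentially real, that lemma forces every $D^l_x g$ with $x\in K$ and $1\le l\le[m]$ to be complex multiplication---equivalently, $\bar\partial g$ together with all its partial derivatives of order at most $[m]-1$ vanishes identically on $K$. Combined with $g\in C^m$, Taylor's theorem then delivers the pointwise decay
\[
|D^j(\bar\partial g)(z)|\;\lesssim\;d(z,K)^{\,m-1-j},\qquad 0\le j\le[m]-1,
\]
on a fixed neighbourhood of $K$; this is the only input we really use from the hypothesis.

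The strategy is a $\bar\partial$-correction. First, convolve $g$ with a $C^\infty$ mollifier at scale $\delta$ to get a globally $C^\infty$ map $g_0$ that is $C^{[m]}$-close to $g$; the decay above survives in the form $|D^j(\bar\partial g_0)(z)|\lesssim\max(\delta,d(z,K_0))^{m-1-j}$ on a neighbourhood of the (close) Cantor set $K_0$ defined by $g_0$. Next, for $\epsilon\gg\delta$ small, pick a smooth cutoff $\chi_\epsilon:\C\to[0,1]$ equal to $1$ on $V_{\epsilon/2}(K_0)$, supported in $V_\epsilon(K_0)$, with $|D^j\chi_\epsilon|\lesssim\epsilon^{-j}$. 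Set $\alpha_\epsilon:=\chi_\epsilon\cdot\bar\partial g_0$, a smooth compactly supported function, and solve $\bar\partial u_\epsilon=\alpha_\epsilon$ via the Cauchy transform
\[
u_\epsilon(z)\;:=\;-\frac{1}{\pi}\int_\C\frac{\alpha_\epsilon(w)}{w-z}\,dA(w).
\]
Define $\ti g:=g_0-u_\epsilon$. Then $\bar\partial\ti g=(1-\chi_\epsilon)\bar\partial g_0$ vanishes identically on the open set $V_{\epsilon/2}(K_0)$, so $\ti g$ is holomorphic there, while being $C^\infty$ globally. Once we have $\|u_\epsilon\|_{C^{[m]}}\to 0$ as $\epsilon,\delta\to 0$, the map $\ti g$ is $C^{[m]}$-close to $g$; in particular $m(D\ti g)>\mu$ is preserved and the combinatorics $B$ is unchanged, so $\ti g$ defines a Cantor set $\ti K$ lying inside $V_{\epsilon/2}(K_0)$ on whose neighbourhood $\ti g$ is holomorphic. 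Holomorphicity automatically makes $D\ti g$ complex-linear, so $\ti K$ is the desired $C^\infty$ conformal Cantor set.

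The main obstacle is the estimate $\|u_\epsilon\|_{C^{[m]}}\to 0$, since the Cauchy transform is not bounded on $C^k$ in general. Two ingredients combine: by Leibniz, the factor $\epsilon^{-i}$ coming from $D^i\chi_\epsilon$ is absorbed by the extra $\epsilon^{i}$ vanishing of the remaining derivatives of $\bar\partial g_0$, giving $|D^j\alpha_\epsilon|\lesssim\epsilon^{m-1-j}$ on $V_\epsilon(K_0)$ for $j\le[m]-1$; and on the $\bar\partial$ side one has $\bar\partial^k u_\epsilon=\bar\partial^{k-1}\alpha_\epsilon$ directly, while integration by parts gives $\partial^k u_\epsilon=T(\partial^k\alpha_\epsilon)$ on the $\partial$ side, bounded by $\|D^k\alpha_\epsilon\|_\infty$ times the (universally bounded) diameter of $\mathrm{supp}\,\alpha_\epsilon$. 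Choosing $\delta$ as a suitable power of $\epsilon$ and exploiting the Hölder gain $m-[m]\ge 0$ at the top order then yields the required decay; the integer case $m\in\Z$ is handled analogously by using the continuity and vanishing on $K$ of $D^{[m]-1}(\bar\partial g)$ in place of a Hölder modulus.
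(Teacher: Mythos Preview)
Your $\bar\partial$-correction strategy is genuinely different from the paper's and the underlying idea is sound, but the top-order estimate has a real gap.

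The paper never touches the Cauchy transform. By Lemma~\ref{lem:conder} the order-$[m]$ Taylor polynomial of $g$ at any $x\in K$ is a polynomial in $z$ alone, hence holomorphic. The paper partitions $K$ by pieces $G(\und a)$ with $\und a\in\Sigma(\rho)$, takes the Taylor polynomial $p_{\und a}$ at a point of $K\cap G(\und a)$, and glues these against a $C^\infty$ background $\hat g$ with bump functions $\phi_{\und a}$: $\tilde g=\sum\phi_{\und a}p_{\und a}+(1-\sum\phi_{\und a})\hat g$. Closeness in $C^{[m]}$ reduces to showing $\|D^j\phi_{\und a}\|\cdot\|D^{k-j}(g-p_{\und a})\|\to0$ on $V_{c\rho}(G(\und a))$, which is just the Taylor remainder: the loss $\rho^{-j}$ from the cutoff is exactly cancelled by the gain $\rho^{j}$ in the remainder, with an extra $o(1)$ from continuity (or H\"older) of $D^{[m]}g$. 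This is elementary and treats integer and non-integer $m$ uniformly.

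In your sketch, the bound $|\partial^{[m]}u_\epsilon|=|T(\partial^{[m]}\alpha_\epsilon)|\lesssim\|D^{[m]}\alpha_\epsilon\|_\infty\cdot\diam(\mathrm{supp}\,\alpha_\epsilon)$ does \emph{not} give decay. The diameter of $\mathrm{supp}\,\alpha_\epsilon=V_\epsilon(K_0)$ is of order $1$, not $\epsilon$, while $\|D^{[m]}\alpha_\epsilon\|_\infty$ blows up: your Leibniz bound $|D^j\alpha_\epsilon|\lesssim\epsilon^{m-1-j}$ is stated (correctly) only for $j\le[m]-1$; at $j=[m]$ the exponent $m-1-[m]$ is negative, and indeed $D^{[m]}(\bar\partial g_0)$ involves one more derivative than $g$ possesses, so after mollification $|D^{[m]}(\bar\partial g_0)|\lesssim\delta^{m-[m]-1}\to\infty$. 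The phrase ``exploiting the H\"older gain $m-[m]$ at the top order'' does not close this as written. What would work (for $m\notin\Z$) is to replace the sup-norm bound on $T$ by a Schauder estimate $T:C^{0,\beta}_c\to C^{1,\beta}$ and show $[D^{[m]-1}\alpha_\epsilon]_{C^{0,\beta}}\to0$ for some $\beta<m-[m]$, e.g.\ by interpolating between $\|D^{[m]-1}\alpha_\epsilon\|_\infty\lesssim\epsilon^{m-[m]}$ and $\|D^{[m]}\alpha_\epsilon\|_\infty\lesssim\epsilon^{m-[m]-1}$ with $\delta\sim\epsilon$. That is a substantially different argument than the one you outlined, and the integer case $m\in\Z$ would need yet another treatment. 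The paper's polynomial-patching route sidesteps all of this.
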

\begin{proof}
Since $K$ is non-essentially real then the $l$-derivative $D^l_x g$, at a point $x$ in the Cantor set, is determined by a complex number, which we denote by $g^{(l)}(x)$, i.e.
\[D^l_x g(z_1...,z_{l})=g^{(l)}(x)\cdot z_1 \cdots z_{l}.\]
In this situation, the Taylor approximation of $g$ at the point $x$ is
\begin{align*}
    g(x+z) & =  g(x)+\sum_{j=1}^{[m]} \dfrac{1}{j!}\left(D^j_x g\right) (z, \dots, z) \;+ \\
    & + \int_{[0,1]^{[m]}}t^{[m]-1}_1 t^{[m]-2}_2 \cdots t_{[m]-1}\left(\left[D^{[m]}_{x+t_1 t_2 \dots t_{[m]} z} - D^{[m]}_x \right]g\right)(z,\dots,z) dt_1\dots dt_{[m]}\\
    & =  g(x)+\sum_{j=1}^{[m]} \dfrac{1}{j!} g^{(j)}(x) z^j \;+ \\
    & + \int_{[0,1]^{[m]}}t^{[m]-1}_1 t^{[m]-2}_2 \cdots t_{[m]-1}\left(\left[D^{[m]}_{x+t_1 t_2 \dots t_{[m]} z} - D^{[m]}_x \right]g\right)(z,\dots,z) dt_1\dots dt_{[m]}.
\end{align*}
Hence, $g$ is approximated (close to $x$) by a complex polynomial, which is an holomorphic function. Now, to globally aproximate $g$ by a function $\tilde{g}$ which is holomorphic in a neighborhood of its Cantor set $\tilde{K}$, we are going to take many of the previous polynomial approximations and glue them together.

Choose any real number $\rho$ larger than zero. Consider $\Lambda \subset \Sigma(\rho) $ such that $\{G(\und{a})\cap K\}_{\und{a}\in \Lambda}$ is a partition of $K$. For each $\und{a}\in \Lambda$ we choose a point $x_{\und{a}}\in G(\und{a})\cap K$ and define the polynomial
\[p_{\und{a}}(z)= \sum_{j=0}^{[m]} \dfrac{1}{j!} g^{(j)}(x_{\und{a}}) (z-x_{\und{a}})^j.\]
We can also consider $C^\infty$ functions $\phi_{\und{a}}:\C\to \R$, $\und{a}\in \Lambda$, with the following properties:
\begin{itemize}
    \item $\phi_{\und{a}}(z)=1$ for all $z\in G(\und{a})$.
    \item $supp(\phi_{\und{a}})\subset V_{\ti{c}\rho}(G(\und{a}))$, for a constant $\ti{c}$ independent of $\rho$.
    \item $supp(\phi_{\und{a}})\cap supp(\phi_{\und{b}})= \emptyset$, for all $\und{a}\neq \und{b}$.
    \item $\|D^j\phi_{\und{a}}\| \leq \ti{C} \rho^{-j}$, for a constant $\ti{C}$ independent of $\rho$.% To construct such function one can take convolution between the characteristic function of a $c\rho$-neighborhood of $G(\und{a})$ and a $C^{\infty}$ function, supported in a ball of radius $c\rho$ and with integral one. For more details see Hormander, the analysis of parial differential operators I, theorems 1.4.1 and 1.4.2.
\end{itemize}

Indeed, to be able to construct these bump functions, we only need to show that given $\und{a}^0 \neq \und{a}^1$ both in  $\Lambda \subset \Sigma(\rho)$, the distance between the pieces $G(\und{a}^0)$ and $G(\und{a}^1)$ is at least $3\ti{c} \rho$ for some constant $\ti{c} > 0$ independent of $\rho$. For that, we can suppose that $\und{a}^0 = \und{a}a_0$ and $\und{a}^1 = \und{a}a_1$ for some $a_0 \neq a_1 \in \mathbb{A}$, since this would be the worst scenario. If $\ute \in \Sigma^-$ ends with $\und{a}$, the distance between these sets is comparable to \[
\diam\left(G(\und{a})\right) \cdot d\left(k^{\ute} (G(a_0)), k^{\ute} (G(a_1))\right).
\]
Hence the existence of $\ti{c}$ follows from the compactness of the space of limit geometries.

Now, let $\hat{g}$ be $C^{\infty}$ and very close, in the $C^{[m]}$ topology, to $g$. Define $\ti{g}$, with the same domain as $g$, by
\[\ti{g}(z)=\sum_{\und{a}\in \Lambda} \phi_{\und{a}}(z) p_{\und{a}}(z)+ \left(1-\sum_{\und{a}\in \Lambda} \phi_{\und{a}}(z)\right) \hat{g}(z).\]
Notice that
\[\hat{g}-\ti{g}=\sum_{\und{a}\in \Lambda}\phi_{\und{a}} \cdot (\hat{g}-p_{\und{a}}).\]
Therefore, the $C^{[m]}$ norm of $\hat{g}-\ti{g}$ will be small provided 
\[\|D^j\phi_{\und{a}}\|\cdot \|D^{k-j}(\hat{g}-p_{\und{a}})|_{V_{\ti{c}\rho}(G(\und{a}))}\|\leq \|D^j\phi_{\und{a}}\|\cdot \left[\|D^{k-j}(g-p_{\und{a}})|_{V_{\ti{c}\rho}(G(\und{a}))}\|+\|D^{k-j}(g-\hat{g})|_{V_{\ti{c}\rho}(G(\und{a}))}\|\right]\] 
is small, for all $0\leq k\leq [m]$, $0\leq j\leq k$ (remember that support of $\phi_{\und{a}}$ is contained in $V_{\ti{c}\rho}(G(\und{a}))$). We already know that $\|D^j\phi_{\und{a}}\| \leq \ti{C} \rho^{-j}$. On the other hand, Taylor approximation implies that
\[\lim_{\rho \to 0} \sup_{\und{a}\in \Lambda} \frac{\|D^{k-j}(g-p_{\und{a}})|_{V_{\ti{c}\rho}(G(\und{a}))}\|}{\rho^{j}}=0.\]
We conclude that taking $\rho$ small enough and $\hat{g}$ close enough to $g$, we get $\ti{g}$ as $C^{[m]}$ close to $g$ as we want. Notice that thanks to the way in which we defined $\ti{g}$, it is $C^{\infty}$ and by lemma \ref{lem:pesr} we can suppose it is non-essentially real. Moreover, in the set $\sqcup_{\und{a}\in \Lambda}G(\und{a})$ the function $\ti{g}$ is holomorpic.
We can also guarantee that $\ti{g}$ verifies the hypothesis necessary to define a dynamically defined Cantor set (with the same sets $G(a)$, $a\in \mathbb{A}$), we just need to take $\ti{g}$ $C^1$-close enough to $g$. Even more, the Cantor set $\ti{K}$, generated by $\ti{g}$, is contained in $\sqcup_{\und{a}\in \Lambda}G(\und{a})$. 
\end{proof}

To prove our main theorems we will use the scale recurrence lemma (see subsection \ref{se:scrl}). To use this lemma we need that our Cantor sets are non-essentially affine. A $C^m$ Cantor set $K$, with $m\geq 2$, is said to be \emph{non-essentially affine} when there is a pair of limit geometries $\ute^0$ and $\ute^1$ in $\Sigma^-$ such that $\te^0_0=\te^1_0$ and a point $x_0 \in K^{\ute^0}(\te_0)$ such that
\[
D^2\left[k^{\ute^1}\circ(k^{\ute^0})^{-1}\right](x_0) \neq 0.
\]
The following lemma allow us to perturb and get a non-essentially affine Cantor set.
\begin{lemma}\label{lemma:c8approx}
Let $K$ be a $C^m$ non-essentially real conformal Cantor set. Arbitrarly close to $K$, in the $C^{[m]}$ topology, there is a $C^{\infty}$ Cantor set $\ti{K}$ which is non-essentially real, non-essentially affine and such that its expanding function $\ti{g}$ is holomorphic in a neighborhood of $\ti{K}$.
\end{lemma}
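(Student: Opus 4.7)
My approach is to combine Lemma \ref{lem:phol} with a secondary, arbitrarily small, holomorphic perturbation that destroys any residual essentially-affine structure. First I would invoke Lemma \ref{lem:phol} to produce a $C^{\infty}$ conformal Cantor set $(K_0, g_0)$, arbitrarily $C^{[m]}$-close to $(K,g)$, such that $g_0$ is holomorphic on a neighbourhood of $K_0$. By Lemma \ref{lem:pesr}, non-essential realness is an open property, so $K_0$ is still non-essentially real. If $K_0$ is already non-essentially affine, we are done; otherwise I would perturb $K_0$ one more time inside the class of Cantor sets with holomorphic expanding map.

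The crucial observation is that, since $g_0$ is holomorphic near $K_0$, every inverse branch $f_{\und{\theta}_n}$ is holomorphic, and the affine normalizations used in Proposition \ref{lemma:limgeo} preserve holomorphicity; hence each limit geometry $k^{\ute}$ extends to a holomorphic embedding on a neighbourhood of $G^*(\te_0)$ (as a uniform $C^m$ limit of holomorphic maps). Consequently every transition $k^{\ute^1}\circ (k^{\ute^0})^{-1}$ with $\te^0_0=\te^1_0$ is holomorphic on a connected open set. If $K_0$ is essentially affine then $D^2[k^{\ute^1}\circ (k^{\ute^0})^{-1}]$ vanishes on the perfect non-planar set $K^{\ute^0}(\te_0)$, and the identity principle forces this transition map to be genuinely affine on the whole connected domain. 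In particular, essentially-affine holomorphic Cantor sets are rigid enough that one expects an arbitrarily small holomorphic perturbation to break the condition.

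For the perturbation itself, I would fix a small $\rho>0$ and a partition $\Lambda\subset \Sigma(\rho)$ of $K_0$ by pieces as in the proof of Lemma \ref{lem:phol}. Pick one distinguished word $\und{a}^{*}\in \Lambda$ and consider
\[
g_{\ve}(z) = g_0(z) + \ve\, \psi(z)\,(z - c_{\und{a}^{*}})^{2},
\]
where $\psi$ is a smooth bump equal to $1$ on an open neighbourhood of $G(\und{a}^{*})$, supported inside $V_{\ti{c}\rho}(G(\und{a}^{*}))$, and disjoint from every other piece of $\Lambda$. Because $\psi\equiv 1$ on a neighbourhood of $G(\und{a}^{*})$ and its support meets only that one piece, $g_{\ve}$ is holomorphic on a neighbourhood of the Cantor set $K_{\ve}$ it defines, the $C^{[m]}$-size of the perturbation is $O(\ve)$, and by Lemma \ref{lem:pesr} we may choose $\ve$ small enough that $K_{\ve}$ stays non-essentially real and conformal.

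The main obstacle, which I expect to be the most delicate step, is verifying that for arbitrarily small nonzero $\ve$ the perturbed Cantor set $K_{\ve}$ becomes non-essentially affine. To this end I would choose two pre-periodic itineraries $\ute^{0},\ute^{1}\in \Sigma^{-}$ with $\te^{0}_{0}=\te^{1}_{0}$ whose orbits under $g_0$ visit the piece $G(\und{a}^{*})$ with different asymptotic combinatorics, and compute the first variation
\[
\left.\tfrac{d}{d\ve}\right|_{\ve=0} D^{2}\bigl[k^{\ute^{1}}_{\ve}\circ (k^{\ute^{0}}_{\ve})^{-1}\bigr]\!\left(c^{\ute^{0}}_{\te_{0}}\right)
\]
via the chain rule along the infinite compositions defining the limit geometries, using the decay estimate from Lemma \ref{lemma: decay} to ensure absolute convergence. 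The heart of the matter is to show that the combinatorial asymmetry between $\ute^{0}$ and $\ute^{1}$ (they see $G(\und{a}^{*})$ at different depths) prevents the contributions of the localized quadratic perturbation from cancelling, so the derivative above is nonzero. Granted this non-zero first variation, all but countably many $\ve$ in a small neighbourhood of $0$ yield a non-essentially affine $K_{\ve}$, and picking any such $\ve$ finishes the construction.
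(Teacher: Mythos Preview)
Your overall strategy --- apply Lemma \ref{lem:phol} first, then make a second localized holomorphic perturbation on one deep piece --- matches the paper's. The observation about the identity principle is nice context but is not actually used in either argument. The genuine gap in your proposal is precisely the step you flag as ``the most delicate'': you never construct the pair $\ute^{0},\ute^{1}$ nor verify that the first variation is nonzero, and this is essentially the entire content of the lemma. Saying that the two backward itineraries should ``visit $G(\und{a}^{*})$ with different asymptotic combinatorics'' is not enough: if both itineraries pass through the perturbed piece infinitely often, the infinite sums defining the variation of $D^{2}k^{\ute^{j}}$ could well cancel, and nothing in your outline rules this out. Moreover, your perturbation may move the pre-periodic base points $c_a$, which would further scramble the comparison of limit geometries; you do not address this.

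The paper avoids a variational computation altogether by investing the work in combinatorics rather than analysis. It first builds a special word $\und{a}\in\Sigma(\rho)$ in which no subword of scale $\rho^{1/3}$ repeats, and then (this is the Claim in the proof) exhibits $\tb^{0}\in\Sigma^{-}$ ending with $\und{a}$ in which $\und{a}$ occurs exactly once, and $\tb^{1}\in\Sigma^{-}$ ending with the same letter in which no $\Sigma(\rho^{1/3})$-subword of $\und{a}$ ever occurs. The perturbation is then $\ti{g}=\hat{g}\circ\phi$ with $\phi$ holomorphic on $G(\und{a})$, fixing $c_{\und{a}}$ to first order; the combinatorics force the base point $c_a$ to be unchanged, force $\ti{k}^{\tb^{1}}=\hat{k}^{\tb^{1}}$ near $c_a$, and force $\ti{k}^{\tb^{0}}=\hat{k}^{\tb^{0}}\circ f_{\und{a}}^{-1}\circ\phi^{-1}\circ f_{\und{a}}$ near $c_a$. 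One then simply chooses $\phi$ with $D^{2}\bigl(f_{\und{a}}^{-1}\circ\phi^{-1}\circ f_{\und{a}}\bigr)(c_a)\neq 0$, and non-essential affineness follows directly --- no first-variation argument, no cancellation issue, no series to sum. To complete your proof you would need either to carry out that combinatorial construction (or an equivalent one guaranteeing ``exactly one visit versus no visits'') or to actually execute and justify the first-variation computation; as written, the proposal stops just before the real work begins.
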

\begin{proof}

Let $(\hat{K},\hat{g})$ be the perturbed Cantor set from lemma \ref{lem:phol}. If $\hat{K}$ is non-essentially affine we are done. Otherwise, choose a piece $G(a) $, $a \in \mathbb{A}$, and let $c_a$ be the corresponding base point. As previously mentioned, it is pre-periodic.

\begin{claim} If $\rho > 0$ is sufficiently small, we can chose $\und{a} \in \Sigma(\rho)$ ending with $a$ so that no word in $\Sigma(\rho^{1/3})$ appears more than once in $\und{a}$. Given any word $\und{a}$ with this property, if $\rho > 0$ is sufficiently small, there is $\tb^0 \in \Sigma^-$ ending with $\und{a}$ and such that $\und{a} $ does not appear elsewhere in $\tb^0$. Furthermore, there is $\tb^{1} \in \Sigma^-$ ending with $a$ such that no subword of $\und{a}$ belonging to $\Sigma(\rho^{1/3})$ appears in it. 
\end{claim} 

\begin{proof}

Since the shift is mixing over $\Sigma$, there must be at least two sequences of distinct lengths (both larger than 1) $\und{a}^1, \und{a}^2  \in \Sigma^{fin}$ such that both end and begin with $a$ and no other letter in these words is $a$. Now construct $\und{a}$ as 
\[
\und{a} = \underbrace{\und{a}^1\, \und{a}^1 \, \dots\,\und{a}^1}_{N_1 \text{ times}} \underbrace{\und{a}^1\, \und{a}^2 \, \dots\,\und{a}^1 \und{a}^2}_{N_2 \text{ times}} \underbrace{\und{a}^1\, \und{a}^2 \und{a}^2 \, \dots\,\und{a}^1 \und{a}^2 \und{a}^2 }_{N_3 \text{ times}} \underbrace{\und{a}^2\, \dots \und{a}^2 \und{a}^2 }_{N_4 \text{ times}}.
\]
If $\rho$ is sufficiently small, a suitable choice of $N_1$, $N_2$, $N_3$ and $N_4$ can be done so that no subword in $\Sigma(\rho^{1/3})$ appears more than once. This can be seen by analysing the behaviour of the distance between two consecutive letters $a$ in any two subwords of $\und{a}$.

Now, choose a subword $\und{a}' \in \Sigma(\rho^{1/3})$ such that $\und{a}$ begins with $\und{a}'$. If $\und{a}'$ never appears in $\tb' \in \Sigma^-$, then we can make $\tb^0 = \tb'\und{a}$. Indeed, for $\und{a}$ to appear more than once, the word $\und{a} / \und{a}'$ must be contained in $\und{a}$ but in another position. This implies that a subword in $\Sigma(\rho^{1/3})$, corresponding to the beginning of $\und{a} / \und{a}'$ for example, appears more than once in $\und{a}$. But this does not happen.

Now, suppose we are given a word $\und{b} \in \Sigma(\rho)$. We want to prove there is some $\tb \in \Sigma^-$ such that $\und{b}$ never appears in $\tb$. Choose a beginning $\und{b}_1 \in \Sigma({\rho}^{1/3})$ of $\und{b}$ and an ending $\und{b}_2 \in \Sigma({\rho}^{1/3})$. Let $\und{b}' \in \Sigma({\rho}^2)$ be such that $\und{b}_1$ and $\und{b}_2$ never appear in it and also suppose that the first letter of $\und{b}'$ is the same as its last letter. Then, similarly to the analysis before, we can make $\tb = \dots \und{b}'\dots \und{b}' \in \Sigma^-$. 

The existence of $\und{b}'$ comes from a counting argument, in which we show that the words in which $\und{b}_1$ or $\und{b}_2$ appear do not account for all possible words $\und{b}' \in \Sigma(\rho^2)$. Remember that if $d$ is the Hausdorff dimension of $K$, then $\# \Sigma(\rho) \approx \rho^{-d} $. 

The number of words in $\Sigma(\rho^2)$ ending with $\und{b}_1$ is $\lesssim \rho^{-5d/3}$. More than that, if we fix a starting position for the appearance of $\und{b}_1$, such as it beginning in the $1000^{th}$ letter of $\und{b}'$ (remember $\rho$ is very small), then the same estimate remains true. Notice however that the number of letters of $\und{b}'$ is $\lesssim \log{\rho^{-1}}$, and so the number of words in $\Sigma(\rho)$ that fail our requirements is $\lesssim \rho^{-5d/3} \log{\rho^{-1}}$, thus, for $\rho$ small enough, there must be $\und{b'} \in  \Sigma(\rho^2)$ that satisfies our requirements.

To construct $\tb^1$, we can use the same argument, all we need to observe is that the number of subwords of $\und{a}$ in $\Sigma (\rho^{1/3})$ is also $\lesssim \log{\rho}$.

\end{proof}

By maybe shrinking $\rho$ even further, we can assume that the periodic part of the itinerary of $c_a$ is a word very small when compared to $\und{a}$. The combinatorial conditions above imply that $f_{\tb^0_n} (c_a) $ belongs to $G(\und{a})$ only when $\tb^0_n = \und{a}$. Besides, $f_{\tb^1_n} (c_a) $ never belongs to this set.

Let $\phi: G(\und{a}) \to \C$ be an holomorphic map $C^{[m]}$ close to the identity, and suppose it fixes the point $c_{\und{a}}$ and has derivative equal to the identity at this point. Similar to the construction on lemma \ref{lem:phol}, define a new Cantor set given by an expanding map $\ti{g}$ that is equal to $\hat{g}$ outside a small neighborhood of $G(\und{a})$ and equal to $\hat{g} \circ \phi$ in $G(\und{a})$. Note that the perturbed base point $\ti{c}_a$ is equal to $c_a$, thanks to the pre-periodicity of $c_a$ and the combinatorial properties of $\und{a}$. Moreover, the limit geometry corresponding to $\tb^1$ stays the same close to $c_a$, that is, we can choose a neighbourhood $V = G(\und{b})$ of the base point $\ti{c}_a=c_a$, with $\und{b} \in \Sigma^{fin}$ sufficiently large, such that $\ti{k}^{\tb^1}|_V = \hat{k}^{\tb^1}|_V$. On the other hand, for $\tb^0$, 
\[
\ti{k}^{\tb^0}|_V =\hat{k}^{\tb^0}|_V \circ f_{\und{a}}^{-1} \circ  \phi^{-1} \circ f_{\und{a}},
\]
since the affine reescalings $\Phi_{\tb_n^{0}}$ stay the same.
Notice that the map $\ti{g}$ is still holomorphic in a neighbourhood of $\ti{K}$ and, because of lemma \ref{lem:pesr}, it is non-essentially real if $\phi$ is $C^1$ sufficiently close to the identity. However, we can still choose $\phi$ so that 
\[
D^2\left(f_{\und{a}}^{-1} \circ  \phi^{-1} \circ f_{\und{a}}\right)(c_a) \neq 0.
\]
Hence $D^2 \ti{k}^{\tb^0} (c_a) \neq D^2 \hat{k}^{\tb^0} (c_a)$. This implies that 
\[
D^2\left[\ti{k}^{\ute^1}\circ(\ti{k}^{\ute^0})^{-1}\right](0) \neq 0
\]
and so the new Cantor set is also non-essentially affine.

%Consider $(\hat{K},\hat{g})$ be the perturbed Cantor set from lemma \ref{lem:phol}. If $\hat{K}$ is non-essentially affine we are done. Otherwise, choose $\tb^1,\, \tb^2\in \Sigma^-$ and $\und{a}\in \Sigma^{fin}$ such that $\tb^1$ ends with $\und{a}$ and $\tb^2$ does not contain the word $\und{a}$. Define a new Cantor set given by a expanding map $\ti{g}$ such that $\ti{g}$ coincides with $\hat{g}$ outside a small neighborhood of $\hat{G}(\und{a})$ and $\ti{g}(z)=\hat{g}\circ \psi(z)$ for all $z\in \hat{G}(\und{a})$, where $\psi$ is a function very close to the identity.
\end{proof}

\begin{remark}\label{remk:pholder}
We observe that the $C^{\infty}$ Cantor set $\ti{K}$ constructed in lemma \ref{lem:phol} can be also assumed to be close to $K$ in the $C^{m'}$ topology for all $m' \in (1,m)$. All one needs to do is to choose $\hat{g}$ close to $g$ in this topology. This can be done using a mollifier $\varphi$ supported in a very small neighbourhood of the origin and making $\hat{g} = g * \varphi$. 
Notice that the $C^m$ proximity between the maps $p_{\und{a}}$ and $g$ in $supp(\phi_{\und{a}})$ comes from the fact that $g$ is $C^m$ and $supp(\phi_{\und{a}})$ has diameter of order $\rho$. Moreover, using the fact that $D^{[m]}g$ is $\ve$-Holder, for $\ve=m-[m]$, one gets the improved estimate $\|D^{m-j}(g-p_{\und{a}})|_{V_{\ti{c}\rho}(G(\und{a}))}\|=O(\rho^{j+\ve})$ for all $0\leq j\leq [m]$. This can be seen analysing the Taylor approximation of the derivatives of $g$ in this domain.

It follows that, given any Cantor set $K \in \Omega_{\Sigma}$, essentially real or not, arbitrarily close to $K$ in the topology of $\Omega_{\Sigma}$, there is a Cantor set $\ti{K} \in \Omega^{\infty}_{\Sigma} \subset \Omega_{\Sigma}$ that satisfies the conclusion of lemma \ref{lemma:c8approx}. Indeed, suppose that $K$ is $C^{1+\ve}$ for some $\ve\in (0,1)$. Then the only part of the argument (polynomial approximation in the $C^{1+\ve}$ topology) in lemma \ref{lem:phol} which uses the non-essentially real hypothesis can be done using the conformality of the Cantor set. Moreover, one can also choose $\ti{K}$ to be non-essentially real. To do so, we first suppose that the expanding function $\ti{g}$ is already holomorphic in a neighborhood of $\ti{K}$. Then one chooses a periodic point $p\in \ti{K}$, of period $n$, and observes that if $D \ti{g}^n(p)\notin \R$ then $\ti{K}$ can not be essentially real. Thus, if $D \ti{g}^n(p)\notin \R$ we are done, otherwise we perturb $\ti{K}$ along the periodic orbit $\{p,...,\ti{g}^{n-1}(p)\}$ to get such property. One execution of this idea of perturbing along a periodic orbit can be found in the proof of theorem \ref{thm:main}.
\end{remark}
%As a consequence of this discussion we have the following lemma:

%\begin{lemma} Arbitrarily close to a conformal Cantor set $(K,g)$ we can construct another conformal Cantor set $(\ti{K},\ti{g})$ with the following properties:
%\begin{itemize}
 %   \item Every point $x \in K$ is accumulated by points in $K$ through at least two different directions, that is, $K^{dir}_x$ has two linear independent vectors.  
 %   \item $\ti{g}$ is holomorphic on a neighbourhood of $\ti{K}$.
%\end{itemize}
%\end{lemma}

%We will call a Cantor set with these properties \emph{tame}.
\end{subsection}

\subsection{Main theorems}

Here we state our main theorems. The proof of theorem \ref{thm:main2} will be given throughout the remaining sections. Using this theorem we will prove theorem \ref{thm:main}. In particular, we will get that there is an open and dense set, among pairs of conformal Cantor sets $K$, $K'$ with $HD(K)+HD(K')>2$, such that all elements in this set verify $\text{int}(K-K')\neq \emptyset$. Before stating the theorems, we remark that the Hausdorff dimension varies continuously with the Cantor set. This is proven in \cite{PT95} for Cantor sets in the real line and the argument there can be adapted to our context.   

\begin{theorem}\label{thm:main2}
Given a pair  of non-essentially real conformal Cantor sets $(K, K')$ in $\Omega^{\infty}_{\Sigma}\times \Omega^{\infty}_{\Sigma'}$, such that $HD(K)+HD(K')> 2$. Arbitrarily close to $K$, $K'$, in the $C^{\infty}$ topology, we can find conformal Cantor sets $\tilde{K}$, $\tilde{K}'$ in $\Omega^{\infty}_{\Sigma}$, $\Omega^{\infty}_{\Sigma'}$ respectively, such that $\tilde{K}$, $\tilde{K}'$ has a non empty recurrent compact set.
\end{theorem}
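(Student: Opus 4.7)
The plan is to adapt the Moreira--Yoccoz strategy to the complex setting, combining a scale recurrence statement with a random perturbation argument to build a recurrent compact set in $\CC$. As a preliminary reduction, Lemma \ref{lemma:c8approx} and Remark \ref{remk:pholder} let me replace $(K,K')$ by arbitrarily close $C^{\infty}$ approximations that are non-essentially real, non-essentially affine, and defined by expanding maps which are holomorphic in a neighbourhood of their Cantor sets. This puts us in position both to apply the multidimensional scale recurrence lemma of \cite{MZ} and to perform free small perturbations of individual small pieces, which is what the random family of Section \ref{sec:randpert} will provide.

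The first substantial step is to produce a scale-level recurrent compact set with strong quantitative bounds. This is the content of Proposition \ref{prop:bigL}: using $HD(K)+HD(K')>2$, one obtains a compact set $L$ in a slice $\Sigma^-\times {\Sigma'}^-\times J_R$ of $\si$ such that for every $(\tb,\tb',s)\in L$ a definite fraction of pairs $(\und{a},\und{a}')$ of matched approximate size renormalize the scale coordinate into $L$. I would extract $L$ by combining the scale recurrence lemma of \cite{MZ} with a counting argument: by \eqref{eq:sigrho} the number of good word pairs at scale $\rho$ is $\gtrsim \rho^{-d-d'}$ while the $J$-direction has effective dimension $2$, so the assumption $d+d'>2$ forces a bounded-below expected density of returns at arbitrarily small scales, which is exactly the input needed to run a recurrent-set construction on $\si$.

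The second step is to lift $L$ to a recurrent compact $\mathcal{L}\subset\CC$. Using the trivialization $\CC\equiv\si\times\C$ from the discussion preceding Definition \ref{reccompact}, the natural candidate is $\mathcal{L}=L\times D$ for a closed disk $D\subset\C$ of appropriate radius. By \eqref{eq:formularenorma} the translation coordinate transforms as
\[
t\mapsto \frac{1}{DF^{\ute'}_{\und{a}'}}\left(s\,c^{\ute}_{\und{a}}+t-c^{\ute'}_{\und{a}'}\right),
\]
so, given $(\tb,\tb',s,t)\in\mathcal{L}$, I need to find a pair of words producing a good scale renormalization (which Proposition \ref{prop:bigL} supplies in abundance) for which $s\,c^{\ute}_{\und{a}}+t$ additionally lands within distance $\approx\rho$ of $c^{\ute'}_{\und{a}'}$. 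This is precisely what Proposition \ref{prop:probes} will guarantee after the random perturbation: with positive probability over the random parameter $\omega$, the joint distribution of base points $(c^{\ute\und{a}}_{\omega},c^{\ute'\und{a}'}_{\omega})$ along the scale-good pairs is fine enough that for every admissible $(s,t)$ at least one such ``probe'' falls inside the target disk of radius $\approx\rho$. Combining the two ingredients yields $T_{\und{a}}T'_{\und{a}'}(\tb,\tb',s,t)\in\text{int}(\mathcal{L})$, proving recurrence of $\mathcal{L}$ for positively many random realizations and hence providing the desired $(\ti{K},\ti{K}')$ after a final small $C^{\infty}$ adjustment.

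The step I expect to be the main obstacle is Proposition \ref{prop:probes}. In the real Moreira--Yoccoz setting the translation variable is one-dimensional and the probe count reduces to a Fubini-type estimate against Lebesgue measure on $\R$. In the complex conformal case the translations live in $\C$ and the scale $s$ takes values in $J\cong\R\times\T$, so one must simultaneously control the \emph{angular} component of renormalized scales and rule out the possibility that all available probes concentrate on a low-dimensional real-analytic subset of the target disk, which would destroy the probabilistic density estimate. Preventing such concentration is precisely the role of the non-essentially affine hypothesis, together with the uniform control on limit geometries provided by Proposition \ref{lemma:limgeo} and Remark \ref{rmk:dgl}; the holomorphicity secured by Lemma \ref{lem:phol} is what will allow the probe argument to be quantitatively sharp enough to survive the randomization and the passage to arbitrarily small $\rho$.
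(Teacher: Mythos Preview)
Your overall architecture is the same as the paper's, but several of the pieces are misassigned, and one structural choice would not work as written.

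First, you have swapped the roles of the two main propositions. What you describe under ``Proposition~\ref{prop:bigL}'' --- obtaining a set $L\subset\si_R$ such that a definite fraction of word pairs renormalize the scale coordinate back into $L$ --- is not the content of Proposition~\ref{prop:bigL}; that is exactly the Scale Recurrence Lemma (Lemma~\ref{lem:scl}), and it is where the non-essentially affine hypothesis is actually used (as a hypothesis of the lemma, not as an anti-concentration device for translations). Proposition~\ref{prop:bigL} lives entirely in the \emph{translation} fibre: it says that for every $(\tb,\tb',s)\in\tilde{\LL}$ the set $L^{-1}_{\uom}(\tb,\tb',s)\subset\C$ has Lebesgue measure bounded below by a fixed constant $c_{17}$. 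The key input there is a Marstrand-type theorem (controlling $\|\chi_{\tb,\tb',s}\|_{L^2}$ for most $s$), which you do not mention; the hypothesis $d+d'>2$ enters the argument through Marstrand and through the counting $N\approx\rho^{-\frac{1}{2k}(d+d'-2)}$, not through the scale recurrence step.

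Second, the candidate $\mathcal{L}=L\times D$ with a fixed disk $D$ is too rigid and is not what the paper uses. The fibre $L^0_{\uom}(\tb,\tb',s)$ genuinely depends on $(\tb,\tb',s)$ and on $\uom$: it is the set of $t$ for which there exist $N$ \emph{pairwise independent} pairs $(\und{b}^i,\und{b}^{\prime i})$ whose renormalizations satisfy scale, combinatorial, and boundedness conditions simultaneously. Recurrence then means showing, via Proposition~\ref{prop:probes}, that $\mathbb{P}(\Om^1(u)\setminus\Om^0(u))\le\exp(-c_7\rho^{-\frac{1}{2k}(d+d'-2)})$, so that after discretizing $\tilde{\LL}\times\{|t|<4(1+e^R)\}$ into polynomially many cells one finds a single $\uom_0$ working for all of them. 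The mechanism in the proof of Proposition~\ref{prop:probes} is that each coordinate $\om_i$ moves $\bar t_i(\om_i)$ holomorphically with derivative $\approx c_5$, so by Proposition~\ref{prop:bigL} it lands in $L^{-1}_{\hat\uom}$ with probability $\ge c_7'$; independence over $i=1,\dots,N$ then gives the exponential bound. Holomorphicity is used here to keep the perturbed Cantor sets conformal and to make $\om_i\mapsto\bar t_i(\om_i)$ a genuine holomorphic change of variable, not to sharpen a density estimate.
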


%\begin{theorem}\label{thm:main2}
%Given a pair  of non essentially real conformal Cantor sets $(K, K')$ in $\Omega^{r}_{\Sigma}\times \Omega^{r'}_{\Sigma'}$, such that $HD(K)+HD(K')> 2$. Arbitrarily close to $K$, $K'$, in the $C^{r},\, C^{r'}$ topologies, we can find conformal Cantor sets $\tilde{K}$, $\tilde{K}'$ in $\Omega^{\infty}_{\Sigma}$, $\Omega^{\infty}_{\Sigma'}$ respectively, such that $\tilde{K}$, $\tilde{K}'$ has a non empty recurrent compact set.
%\end{theorem}

Define the set $U$ as the pairs of conformal Cantor sets $(K,K')$ in $\Omega_{\Sigma}\times \Omega_{\Sigma'}$ such that for every relative configuration $(\tb,\tb',s,t)$, there is $\ti{t}\in \C$ such that the configuration $(\tb,\tb',s,\ti{t})$ has stable intersection. 

\begin{theorem}\label{thm:main}
The set $U$ is open in $\Omega_{\Sigma}\times \Omega_{\Sigma'}$ and $U\cap \Omega^{\infty}_{\Sigma}\times \Omega^{\infty}_{\Sigma'}$ is dense, in the $C^{\infty}$ topology, in $\{(K,K')\in \Omega^{\infty}_{\Sigma}\times \Omega^{\infty}_{\Sigma'}:\, HD(K)+HD(K')>2,\, K,\,K'\text{ are non-essentially real}\}$. Moreover, for any $(K,K')$ in $U$ and $(h,h')\in \mathcal{P}\times \mathcal{P'}$ such that $Dh(z)$ and $Dh'(z')$ are conformal for all $(z,z')\in K\times K'$, the set
\[\mathcal{I}_s=\{\lambda\in \C: (h+\lambda,h')\text{ has stable intersection}\}\]
is dense in
\[\mathcal{I}=\{\lambda\in \C: (h+\lambda,h')\text{ is intersecting}\}.\]
In particular, $\text{int}(K-K')\neq \emptyset$.
\end{theorem}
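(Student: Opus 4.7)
The plan is to establish in sequence: openness of $U$ in $\Omega_{\Sigma}\times \Omega_{\Sigma'}$; that the perturbations produced by Theorem \ref{thm:main2} belong to $U$ (giving density); and that membership in $U$ implies density of $\mathcal{I}_s$ in $\mathcal{I}$, from which $\text{int}(K-K')\neq \emptyset$ is immediate. Openness is essentially tautological: if $(K,K')\in U$ and $(\tb,\tb',s)$ is given with witness $\tilde t$, then for any $(\tilde K,\tilde K')$ sufficiently close the configuration $(A \circ \tilde k^{\tb}, A' \circ \tilde k^{\tb'})$ (with $A,A'$ determined by $(s,\tilde t)$) is a small perturbation of the original by continuous dependence of limit geometries on the Cantor set (Remark \ref{rmk:dgl}), so it lies in the stability neighbourhood and the same $\tilde t$ witnesses stable intersection for $(\tilde K,\tilde K')$.

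For density, take a non-essentially real pair $(K_0,K_0')$ with $HD(K_0)+HD(K_0')>2$ and apply Theorem \ref{thm:main2} to obtain $(\tilde K,\tilde K')\in \Omega^{\infty}_{\Sigma}\times \Omega^{\infty}_{\Sigma'}$, arbitrarily $C^{\infty}$-close, admitting a non-empty recurrent compact $\mathcal{L}\subset \CC$. To verify $(\tilde K,\tilde K')\in U$, I use the fact that $\mathcal{L}$ is constructed so that its projection $L_{\si}\subset \si$ absorbs every relative scale under some renormalization: for each $(\tb,\tb',s)$ there exist $\und{a},\und{a}'$ with $(\tb\und{a},\tb'\und{a}', s\cdot r)\in L_{\si}$ where $r=DF^{\tb}_{\und{a}}/DF^{\tb'}_{\und{a}'}$, a property delivered by the scale recurrence lemma of \cite{MZ} via equidistribution of the $r$'s in $J=\R\times\T$. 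I then pick $t^*$ with $(\tb\und{a},\tb'\und{a}', sr,t^*)\in \mathcal{L}$ and, using that the translation part of \eqref{eq:formularenorma} is an affine bijection in the translation coordinate, solve uniquely for $\tilde t\in\C$ with $T_{\und{a}}T'_{\und{a}'}(\tb,\tb',s,\tilde t)=(\tb\und{a},\tb'\und{a}', sr,t^*)$. By Theorem B of \cite{AM} the renormalized configuration has stable intersection, which descends to $(\tb,\tb',s,\tilde t)$ because an intersection of a refined pair is \emph{a fortiori} an intersection of the coarser pair, and this property persists under perturbation of the Cantor sets since the renormalization operators depend continuously on them.

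For (iii), fix $(K,K')\in U$ and $(h,h')\in \mathcal{P}\times \mathcal{P}'$ with $Dh,Dh'$ conformal on $K\times K'$, and let $\lambda\in \mathcal{I}$, so $h(x)+\lambda=h'(x')$ for some $(x,x')\in K\times K'$. Let $\tb,\tb'$ denote the backward itineraries of $x,x'$ and $\und{a}^{(n)},\und{a}'^{(n)}$ their length-$n$ forward itineraries. The conformality of $Dh,Dh'$ on the Cantor sets combined with the attractor property of affine configurations of limit geometries (Lemma 3.11 of \cite{AM}) implies that the renormalized relative configurations $T_{\und{a}^{(n)}}T'_{\und{a}'^{(n)}}(h+\lambda,h')$ converge in $\CC$ to an affine configuration of limit geometries $(\tb,\tb',s,t)$ that intersects at $0$. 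Applying the $U$-property to $(\tb,\tb',s)$ furnishes $\tilde t\in\C$ with $(\tb,\tb',s,\tilde t)$ stably intersecting. The translation coordinate $t_n$ of the $n$-th renormalization depends affinely on $\lambda$ with derivative of order $|DF^{\tb}_{\und{a}^{(n)}}|^{-1}\asymp \mu^n$ (Lemma \ref{lemma: decay}), so for large $n$ there is $\delta\lambda\in\C$ with $|\delta\lambda|\lesssim \mu^{-n}|\tilde t-t|$ making the $n$-th renormalization of $(h+\lambda+\delta\lambda,h')$ arbitrarily close to $(\tb,\tb',s,\tilde t)$, hence stably intersecting by openness of stable intersection. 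Descending back to the uniterated pair gives $\lambda+\delta\lambda\in \mathcal{I}_s$ with $\delta\lambda$ arbitrarily small, proving density of $\mathcal{I}_s$ in $\mathcal{I}$. Taking $h=h'=\text{Id}$ gives $\mathcal{I}=K'-K$ non-empty and $\mathcal{I}_s$ a non-empty open subset of it, so $\text{int}(K-K')\neq \emptyset$.

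The main obstacle will be the density step: verifying that the recurrent compact of Theorem \ref{thm:main2} can be arranged so its $\si$-projection is reached by some renormalization from any $(\tb,\tb',s)$. This is essentially an equidistribution statement about the ratios $DF^{\tb}_{\und{a}}/DF^{\tb'}_{\und{a}'}$ in $J=\R\times \T$, which is the content of the scale recurrence mechanism of \cite{MZ} and is where non-essentially real (and ultimately non-essentially affine) Cantor sets become indispensable. A secondary technicality is checking the convergence of iterated renormalizations of a general $(h,h')$ with conformal derivatives on the Cantor sets to an affine configuration of limit geometries, which should follow in the spirit of the attractor argument of \cite{AM}.
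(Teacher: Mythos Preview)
Your density argument has a genuine gap. You assert that the recurrent compact $\mathcal{L}$ produced by Theorem~\ref{thm:main2} has the property that its scale projection $L_{\si}$ \emph{absorbs} every $(\tb,\tb',s)$ under some renormalization, attributing this to the scale recurrence lemma of \cite{MZ}. But that lemma delivers only \emph{recurrence} within the family $E^*(\und{a},\und{a}')$: it says nothing about reaching these sets from an arbitrary starting scale. Indeed the $\tilde{\LL}$ actually constructed in the paper is supported on $\Sigma_{nr}^-$ and on the $E^*$-sets, and there is no reason a general $(\tb,\tb',s)$ can be pushed into it by renormalization alone.

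The paper closes this gap by a different and essential idea: after obtaining the recurrent compact one makes a \emph{further} $C^\infty$ perturbation so that there exist periodic points $p,p'$ (with associated words $\und{a},\und{a}'$) for which the ratios
\[
a_{i,j}=\frac{Df_{\und{a}^i}(p)}{Df_{\und{a}'^j}(p')}=\frac{DF^{\tb_0}_{\und{a}^i}}{DF^{\tb'_0}_{\und{a}'^j}}
\]
form a dense subset of $\C^*$. This is arranged via Lemma~\ref{lem:genhyp} (a Kronecker-type criterion) together with an explicit one-parameter family $(g_x,g'_y)$ that moves $Dg^m(p)$ and $Dg'^{m'}(p')$ freely. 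Density of $\{a_{i,j}\}$, combined with the fact that $\mathcal{L}$ has nonempty interior in $\CC$, then allows any $(\tb,\tb',s,t)$ to be renormalized by $T_{\und{a}^n\und{b}}T'_{\und{a}'^m\und{b}'}$ into $\mathcal{L}$ after a translation in the $t$-coordinate; this is what yields $(\tilde K,\tilde K')\in U$. Your appeal to ``equidistribution of the $r$'s'' via scale recurrence does not substitute for this step.

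A secondary point: your openness argument fixes a single $(\tb,\tb',s)$ and produces a neighbourhood depending on it; membership in $U$ requires the witness $\tilde t$ to exist for \emph{all} $(\tb,\tb',s)$, so you need the neighbourhood to be uniform. The paper handles this by first noting that any relative configuration can be renormalized into $\Sigma^-\times\Sigma'^-\times J_R\times\C$ for some fixed $R$, and then invoking compactness of $\Sigma^-\times\Sigma'^-\times J_R$. Your argument for the final part (density of $\mathcal{I}_s$ in $\mathcal{I}$) is in the right spirit and close to the paper's, though note the scaling factor in the $\lambda$-dependence of the renormalized translation is governed by $DF^{\tb'}_{\und{a}'}$, not $DF^{\tb}_{\und{a}}$.
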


\begin{proof}
The proof is very similar to the one for the corresponding result in \cite{MY}, except for the use of lemma \ref{lem:genhyp}. For the openness of $U$, one observes that, if $R$ is big enough, then any relative configuration $(\tb,\tb',s,t)$ can be transported, using a renormalization operator, to the set $\Sigma^-\times \Sigma^{\prime -}\times \{e^{-R}\leq |s|\leq e^{R}\}\times \C$. Given $(K,K')$ in $U$, from compactness of the set $\Sigma^-\times \Sigma^{\prime -}\times \{e^{-R}\leq |s|\leq e^{R}\}$ one sees that there is a neighborhood of $(K,K')$ such that for any pair in this neighborhood, we have $(\tb,\tb',s,t)\in \Sigma^-\times \Sigma^{\prime -}\times \{e^{-R}\leq |s|\leq e^{R}\}\times \C$ implies there is $\ti{t}\in \C$ such that $(\tb,\tb',s,\ti{t})$ has stable intersection. Thus the same happens for the whole $\Sigma^-\times \Sigma^{\prime -}\times \C^*\times \C$, and the neighborhood is contained in $U$.   

Furthermore, in the same context of the previous paragraph, from compactness of the set $\Sigma^-\times \Sigma^{\prime -}\times \{e^{-R}\leq |s|\leq e^{R}\}$, for each $r>1$ there is some $\delta > 0$ such that if $\ti{h}$ and $\ti{h}'$ are maps $\delta$-close to the identity in the $C^r$ metric, then  $(\ti{h}\circ B \circ k^{\ute}, \ti{h}' \circ k^{\ute'}) $ has stable intersections, where $B(z) = sz + \ti{t}$ and $e^{-R}\leq |s|\leq e^{R}$. We will need this later.

For the denseness we use theorem \ref{thm:main2}. Given $(\tilde{K},\tilde{K}')\in \Omega^{\infty}_{\Sigma}\times \Omega^{\infty}_{\Sigma'}$, with $HD(K)+HD(K')>2$ and both of them non-essentially real, arbitrarily close to it there is $(K,K')\in \Omega^{\infty}_{\Sigma}\times \Omega^{\infty}_{\Sigma'}$ having a non empty recurrent compact set $\LL$. Perturbing, we may assume that $(K,K')$ also has periodic points $p$, $p'$, associated to finite words $\und{a}$, $\und{a}'$, that is $f_{\und{a}}(p)=p$, $f_{\und{a}'}(p')=p'$, such that if we write
\[a_{i,j}=Df_{\und{a}^i}(p)/Df_{\und{a}^{\prime j}}(p')=DF^{\tb_0}_{\und{a}^i}/DF^{\tb_0'}_{\und{a}^{\prime j}}\in \C^*,\]
where $\und{a}^i$ is concatenation of $\und{a}$ with itself $i$ times, similarly for $\und{a}'$, $\tb_0=(...,\und{a},\und{a})$ and $\tb'_0=(...,\und{a}',\und{a}')$, then the set $\{a_{i,j}\}_{i,j\in \Z^+}$ is dense in $\C^*$ (see lemma \ref{lem:genhyp}).

More precisely, in order to get the density of $\{a_{i,j}\}$, we need to perturb $(g,g')$ such that $Dg^{m}(p),\, Dg^{\prime m'}(p')\in \C^*\approx \R\times \T$ have the property in lemma \ref{lem:genhyp}, where $m$, $m'$ are the periods of $p$, $p'$, respectively. To do this, we define a family of conformal Cantor sets given by expanding functions $(g_{x},g'_{y})$ depending in complex parameters $x$, $y$, such that the pairs $(Dg_x^{m}(p_x), Dg_y^{\prime m'}(p_y))\in \C^*\times \C^*$ form an open set. Choose a word $\und{d}\in \Sigma(\alpha)$ such that $p\in G(\und{d})$, $\alpha$ is small enough such that there are not more points of the periodic orbit of $p$ contained in $G(\und{d})$ and $g$ is holomorphic in $V_{\alpha}(K)$ (by lemma \ref{lem:phol} we may assume this). For $0<c<1$ small enough one has that $K\cap V_{c\alpha}(G(\und{d}))=K\cap G(\und{d})$. We choose a $C^{\infty}$ function $\psi_x$ such that
\[\psi_x(z)= \begin{cases} x\cdot (z-p)+p & \text{ if } z\in V_{c\alpha/3}(G(\und{d})),\\
z & \text{ if } z\notin V_{2c\alpha/3}(G(\und{d})),
\end{cases}\]
where $x$ is in the ball of center $1$ and radius $\delta'$ in $\C$. Define $g_x=g\circ \psi_x$, notice that we can take $g_x$ as close as we want to $g$ in the $C^{\infty}$ topology by choosing $\delta'$ small enough. If we choose $\delta'$ small enough then we can guarantee that the Cantor set $K_x$, associated to $g_x$, and the set $\psi_x(K_x)$ are contained in $V_{c\alpha/3}(K)$ and therefore $K_x\cap V_{2c\alpha/3}(G(\und{d}))\setminus V_{c\alpha/3}(G(\und{d}))=\emptyset$, which implies that
\[Dg_x(z)=Dg(\psi_x(z))\cdot D\psi_x(z)\]
is conformal for all $z\in K_x$. This proves that $K_x$ is a conformal Cantor set. Moreover, notice that $p
_x=p$ is still a periodic point of $g_x$ with the same period $m$, and $Dg_x^{m}(p_x)=x\cdot Dg^{m}(p)$. Doing the same construction for $g'$ one sees that for some value of $x$, $y$ the pair $(Dg_x^{m}(p_x), Dg_y^{\prime m'}(p'_y))$ satisfy the hypothesis of lemma \ref{lem:genhyp}.

Using equation \eqref{eq:formularenorma}, the denseness of the set $\{a_{n,m}\}_{n,m\in \Z^+}$ and the fact that $\LL$ has non-empty interior, one concludes that for any $(\tb,\tb',s,t)$ there is $m,\,n\in \Z^+$, $\und{b}$, $\und{b}'$ in $\Sigma^{fin}$, $\Sigma^{\prime fin}$, respectively, and $\lambda\in \C$ such that $T_{\und{a}^n\und{b}}T'_{\und{a}^{\prime m}\und{b}'}(\tb,\tb',s,t+\lambda)\in \LL$. Therefore $(K,K')\in U\cap \Omega^{\infty}_{\Sigma}\times \Omega^{\infty}_{\Sigma}$.

For the final part, let $\lambda \in \C$ be such that $(h + \lambda, h')$ is intersecting and take any $\ve > 0$. Then there is at least one pair of words $\und{a} = (a_0,\,a_1,\, \dots,\,a_n) \in \Sigma^{fin}$ and $\und{a}'= (a'_0, \,,a'_1,\,\dots,\,a'_m) \in {\Sigma'}^{fin}$ sufficiently large such that $((h + \lambda) \circ f_{\und{a}}, h'\circ f_{\und{a}'})$ are still intersecting and the diameters of the sets $G(\und{a})$ and $G(\und{a}')$ are smaller than $\ve$. Indeed, if it was not the case, the sets $(h+\lambda)(G(\und{a}))$ and $h'(G(\und{a}))$ would be disjoint for all $\und{a}$ and $\und{a}'$ sufficiently large, a contradiction with the intersecting hypothesis. We will prove that if $\ve$ is small there exists $\ti{\lambda} \in \C$ such that $|\ti{\lambda} - \lambda| \lesssim \ve$ and $(h + \ti{\lambda}, h')$ has stable intersections.

Now, following definition 3.10 of \cite{AM}, we can scale these pairs of configurations by normalizing in the second coordinate, obtaining another pair with intersection. More precisely, we choose $A' \in Aff(\C)$ such that 
\[
(A' \circ h' \circ f_{\und{a}'}) (c_{a'_m}) = 0 \quad \text{ and } \quad D (A' \circ h' \circ f_{\und{a}'}) (c_{a'_m}) = Id  \]
and consider now the pair of configurations $(A'\circ (h+\lambda) \circ f_{\und{a}}, A' \circ h' \circ  f_{\und{a}'})$. This pair of configurations is intersecting, because this property is clearly preserved under composition on the left by affine transformations.

Let $r \in (1,2)$ be such that $h$ and $h'$ are both $C^r$. Reasoning as in the proof of lemma 3.11 of \cite{AM} (see claim 3.12), we observe that this pair of configurations can be written as 
\[
(A'\circ (h+\lambda) \circ f_{\und{a}}, A' \circ h' \circ  f_{\und{a}'}) = (\ti{h}\circ B \circ k^{\ute}, \ti{h}' \circ k^{\ute'} ),
\]
where: $\ute \in \Sigma^-$ ends with $\und{a}$; $\ute' \in {\Sigma'}^-$ ends with $\und{a}'$; the maps $\ti{h}$ and $\ti{h}'$ are close to the identity in the $C^{r}$ topology,
and $B$ is a bounded affine transformation in $Aff(\C)$. More precisely, if we set\footnote{Notice that if $Dh(x),\, Dh'(y)$ were not conformal for all $x\in K,\, y\in K'$ we could not guarantee that $B\in Aff(\C)$.} 
\[
DB = DA' \cdot Dh(c_{\und{a}}) \cdot Df_{\und{a}}(c_{a_n})  = s \in \C \quad \text{and} \quad B(0) = A' \circ (h+\lambda) \circ f_{\und{a}}(c_{a_n}) = t \in \C,
\] 
and choose $\und{a}$ and $\und{a}'$ with appropriate lengths, then $ e^{-R} \le \lv s \rv \le e^{R} $ and there is some constant $c>0$ (independent from $\ve$) such that $\lv B(0) \rv < c\,e^{R}$ and the distance of the maps $\ti{h}$ and $\ti{h}'$ to the identity is bounded by $c\, \diam(G(\und{a}))^{r-1}$ and $c \, \diam(G(\und{a}'))^{r-1}$

Consider now the relative configuration $(\ute, \ute', s, t)$. By the previous part, there is some $\ti{t} \in \C$ such that, writing $\ti{B}(z) = sz +  \ti{t}$, the pair of configurations $(\hat{h}\circ \ti{B} \circ k^{\ute}, \hat{h}' \circ k^{\ute'}) $  has stable intersections for every pair of maps $\hat{h}, \hat{h}'$ $\delta$-close to the identity in the $C^{r} $ metric. Notice that, since $(\ti{h}\circ B \circ k^{\ute}, \ti{h}' \circ k^{\ute'} )$ is intersecting, $\lv t-  \ti{t}\rv$ is bounded by $e^R\left(\diam(k^{\ute}) + \diam(k^{\ute'})\right) $. Therefore, by compactness of the set of all pairs of limit geometries $(k^{\ute}, k^{\ute'})$, we may enlarge $c$, still independently of $\ve$, so that $\lv t- \ti{t} \rv$ is bounded by $c\, e^R$.

Now we choose $\ti{\lambda}$ such that $A'(h(c_{\und{a}}) + \ti{\lambda}) =  \ti{t}$; it follows that \[
(A'\circ (h + \ti{\lambda}) \circ f_{\und{a}}, A' \circ h' \circ  f_{\und{a}'}) = (\hat{h} \circ \ti{B} \circ k^{\ute}, \ti{h}' \circ k^{\ute'}),
\]
where $D\ti{B} = s$, $\ti{B}(0) = \ti{t}$, and the distances of $\hat{h}$ and $\ti{h}'$ to the identity are bounded from above by $c\, \diam(G(\und{a}))^{r-1}$ and $c \, \diam(G(\und{a}'))^{r-1}$. Choosing $\ve$ sufficiently small, and so $\und{a}$ and $\und{a}'$ very big, these distances to the identity are less than $\delta$. This implies that $(h+\ti{\lambda}, h')$ have stable intersections. Notice, finally, that
\[
\lv \ti{\lambda} - \lambda \rv \le \lv {DA'}\rv^{-1} \cdot \lv \ti{t} - t \rv \lesssim \diam(G(\und{a}')) c\, e^R \lesssim \ve c\, e^R.
\]
Hence, making $\ve$ very small, we approximate $(h+\lambda, h')$ by $(h+\ti{\lambda}, h')$ that has stable intersections, completing the proof.
\end{proof}

\begin{remark}
Notice that thanks to remark \ref{remk:pholder}, the set $U\subset \Omega_{\Sigma}\times \Omega_{\Sigma'}$ is dense,  with respect to the topology of $\Omega_{\Sigma}\times \Omega_{\Sigma'}$, inside the set $\{(K,K'): HD(K)+HD(K')>2\}$. 
\end{remark}

\begin{lemma}\label{lem:genhyp}
Let $(t,v),(t',v')\in (\R\setminus\{0\})\times (\R/(2\pi \Z))$, and consider the subgroup $E=\{m(t,v)+n(t',v'):\,m,n\in \Z\}$. Let $w,\,w'\in \R$ be representatives of $v,\,v'$, respectively. Then $E\subset \R\times \T$ is dense if and only there is not $(\beta_1,\beta_2)\in \Z^2\setminus\{0\}$ such that
\[\beta_1\cdot \frac{t}{t'}+\beta_2\left(w-w'\frac{t}{t'}\right)\in \Z.\]
Moreover, if $E$ is dense and $t/t'>0$ then $\{m(t,v)-n(t',v'):\,m,n\in \Z^+\}$ is dense. The set of pairs $((t,v),(t',v'))\in (\R\times \T)^2$ such that $E$ is dense is a countable intersection of open and dense sets. 
\end{lemma}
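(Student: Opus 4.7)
The plan is to analyze the subgroup $E\le \R\times \T$ by Pontryagin duality. The dual of $G=\R\times \T$ (with $\T=\R/(2\pi\Z)$) is $\widehat{G}\cong\R\times \Z$ via the pairing $\langle(s,k),(x,y)\rangle=\exp(2\pi i s x+iky)$, and a closed subgroup is dense iff its annihilator is trivial. So $E$ is dense iff the only $(s,k)\in \R\times\Z$ satisfying
\[
st+\tfrac{kw}{2\pi}\in \Z\quad\text{and}\quad st'+\tfrac{kw'}{2\pi}\in \Z
\]
is $(s,k)=(0,0)$. To match the form of the stated obstruction I would eliminate $s$ from the second equation ($s=(n_2-kw'/(2\pi))/t'$ with $n_2$ the integer on the right) and substitute in the first, obtaining
\[
n_2\,\tfrac{t}{t'}+\tfrac{k}{2\pi}\bigl(w-w'\tfrac{t}{t'}\bigr)=n_1\in \Z.
\]
This is exactly the lemma's condition with $\beta_1=n_2$ and $\beta_2=k$, once the factor $1/(2\pi)$ is absorbed into the torus normalization (i.e.\ viewing $\T\cong \R/\Z$ via $w\mapsto w/(2\pi)$). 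Conversely, any nontrivial integer pair $(\beta_1,\beta_2)$ produces a nonzero annihilator element; the degenerate case $\beta_2=0$ picks out $t/t'$ rational and is handled by $k=0$ on the dual side.

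For the second assertion, assume $t,t'>0$ (the other sign is symmetric). Density of $E$ supplies nonzero $(a_n,b_n)\in \Z^2$ with images $(a_n t+b_n t',\,a_n v+b_n v')$ tending to $0$ in $\R\times\T$. Since $t,t'>0$ and $a_n t+b_n t'\to 0$, the ratio $a_n/b_n\to -t'/t<0$, so eventually $a_n$ and $b_n$ have opposite signs; flipping $(a_n,b_n)\mapsto(-a_n,-b_n)$ if needed, I may assume $a_n>0$ and $b_n<0$. A short argument shows $a_n\to\infty$ and $b_n\to-\infty$, for otherwise the pairs $(a_n,b_n)$ would take only finitely many values and their images could not accumulate at $0$ without being $0$. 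Now, given a target $(x,y)\in\R\times\T$ and $\ve>0$, use density of $E$ to pick $(m_0,n_0)\in\Z^2$ with image within $\ve/2$ of $(x,y)$, then pick $(a,b)$ from the above sequence with image of norm $<\ve/2$, $a\ge 1-m_0$ and $b\le -1-n_0$; the pair $(m,n):=(m_0+a,-(n_0+b))$ then lies in $\Z^+\times\Z^+$ and $(mt-nt',mv-nv')$ is within $\ve$ of $(x,y)$, giving the desired density.

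For the final statement, each fixed $(\beta_1,\beta_2)\in \Z^2\setminus\{0\}$ gives a continuous real-valued function of the parameters (with representatives chosen locally and coherently) whose value must avoid the discrete set $\Z$; the set of parameters where it does avoid $\Z$ is open by continuity and dense by local surjectivity onto intervals. Intersecting these countably many open dense conditions yields the claimed dense $G_{\delta}$. The main bookkeeping subtlety I expect to dominate the write-up is the handling of the $2\pi$ in the torus: the condition in the lemma is not strictly invariant under the change of representative $w\mapsto w+2\pi$ when $\beta_2\ne 0$, so it is cleanest to carry out the argument on the dual side (where characters of $\T$ are intrinsic) and only translate back to the representative formulation at the very end, absorbing the factor $2\pi$ into the identification $\beta_2\leftrightarrow k$.
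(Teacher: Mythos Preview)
Your argument is correct, and it takes a genuinely different route from the paper's in two of the three parts.

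For the characterization of density, the paper does not use Pontryagin duality directly. Instead it observes that $E\subset\R\times\T$ is dense iff the lift $\langle (t,w),(t',w'),(0,2\pi)\rangle$ is dense in $\R^2$, applies the invertible linear map $A$ sending $(t',w')\mapsto(1,0)$ and $(0,1)\mapsto(0,1)$, and reduces to Kronecker's theorem in $\T^2$ for the single vector $A(t,w)=(t/t',\,w-w't/t')$. Your duality computation and the paper's linear-algebra reduction arrive at the same obstruction; your version is a bit more conceptual, while theirs is completely elementary once Kronecker is taken for granted. You are also right that the $2\pi$ bookkeeping in the statement (and in the paper's proof, where the kernel generator is written as $(0,1)$) is not quite consistent with $\T=\R/(2\pi\Z)$; your remedy of carrying the argument on the dual side and only translating back at the end is the clean fix.

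For the semigroup statement, the paper again exploits the map $A$: it notes that density of $E$ forces $\{m\,A(t,w):m\ge 1\}$ to be dense in $\T^2$, and then reads off from the explicit coordinates $(m\,t/t'-n,\;m(w-w't/t')+r)$ that when $t/t'>0$ one can hit any target with $m,n\in\Z^+$. Your approach is different and more hands-on: you manufacture small nonzero elements of $E$ whose integer coefficients have opposite signs and grow without bound, then translate an arbitrary near-hit into $\Z^+\times\Z^+$. One small point worth tightening in your write-up: ``$a_n\to\infty$'' should really be ``along a subsequence''---what your pigeonhole argument gives is that the set of admissible $(a,b)$ is infinite with $a$ unbounded, which is exactly what you use. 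It may also streamline things to note up front that density forces $t/t'\notin\mathbb{Q}$ (the $\beta_2=0$ case), so no nonzero $(a,b)$ has image exactly $0$ and the accumulation argument is unobstructed.

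For the $G_\delta$ statement the two proofs are essentially the same.
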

\begin{proof}
The lemma is proved using Kronecker's theorem. It states that a vector $(w_1,...,w_k)\in \T^k$ generates a dense subgroup if and only if there is not $(a_1,...,a_k)\in \Z^k\setminus \{0\}$ such that $a_1w_1+...+a_kw_k=0$.

Let $p:\R\to \T$ be the canonical projection and choose $w,\,w'\in \R$ such that $p(w)=v$, $p(w')=v'$. Note that $E$ is dense if and only if the set 
\[\{(t,w),(t',w'),(0,1)\}\]
generates a dense subgroup in $\R^{2}$. 

Moreover, this last property is invariant under invertible linear transformations in $\R^{2}$. Let $A:\R^{2} \to \R^{2}$ be the linear map such that $A(t',w')=(1,0)$ and $A(0,1)=(0,1)$. Then, $E$ is dense if and only if the set
\[\{A(t,w),(1,0),(0,1)\}\]
generates a dense subgroup in $\R^{2}$. It is clear that this happens if and only if the projection of $A(t,w)$ to $\T^2$ generates a dense subgroup in $\T^{2}$. Thus, using Kronecker's theorem we see that $E$ is dense if and only if there is not $(\beta_1,\beta_{2})\in \Z^{2}\setminus \{0\}$ such that
\[ \langle (\beta_1,\beta_2), A(t,w)\rangle \in \Z.\] 
Moreover, it is not difficult to see that $A(t,w)=(t/t',w-(w't/t'))$. This proves the first part of the lemma.

Notice that the set of pairs $((t,v),(t',v'))$ such that $E$ is dense corresponds to the intersection, varying $a\in \Z^{2}\setminus \{0\}$, of the sets
\[\{((t,p(w)),(t',p(w'))): \langle a, A(t,w)\rangle \notin \Z\},\]
and each one of these sets is open and dense. 

Finally, $\{m(t,v)-n(t',v'):\,m,n\in \Z^+\}$ will be dense if and only if
\[\{mA(t,w)-n(1,0)+r(0,1):\,m,n\in\Z^+,\,r\in \Z\}\]
is dense in $\R^2$. If $E$ is dense, then the projection of $\{mA(t,w):\,m\in \Z^+\}$ to $\T^2$ is dense. If we also have $t/t'>0$, then from the expression
\[mA(t,w)-n(1,0)+r(0,1)=(m(t/t')-n,m[w-w'(t/t')]+r),\]
it is not difficult to see that $\{m(t,v)-n(t',v'):\,m,n\in \Z^+\}$ is dense in this case.
\end{proof}

\subsection{Scale Recurrence Lemma}\label{se:scrl}

In this section we will see how to adapt the scale recurrence lemma from \cite{MZ} to our context. First we note that limit geometries in \cite{MZ} were defined slightly different, they were defined as the limit of the function
\[\tilde{k}^{\tb}_n=\tilde{\Phi}_{\tb_n}\circ f_{\tb_n},\]
where $\tilde{\Phi}_{\tb_n}$ is the unique affine function satisfying $diam(\tilde{\Phi}_{\tb_n}(G(\tb_n)))=1$, $D\tilde{\Phi}_{\tb_n}\cdot Df_{\tb_n}(c_{\theta_0})\in \R^+$ and $\tilde{\Phi}_{\tb_n}(f_{\tb_n}(c_{\theta_0}))=0$. Denote those limit geometries by $\tilde{k}^{\tb}$. It is clear that there is a complex number $R(\tb_n)$ such that $\tilde{k}^{\tb}_n=R(\tb_n)\cdot k^{\tb}_n$. It is not difficult to prove that one can go to the limit and find a complex number $R(\tb)$ such that $\tilde{k}^{\tb}=R(\tb)\cdot k^{\tb}$. Moreover, $R(\tb)$ is uniformly bounded from above and below, i.e there is $c>0$ such that $c^{-1}\leq|R(\tb)|\leq c$. One can also show that $R(\tb)$ depends Lipschitz in $\tb$ in the sense that there is a constant $C$ such that
\[ \left| \frac{R(\tb^1)}{R(\tb^2)}-1\right|\leq C d(\tb^1, \tb^2).\]
We will denote by $\tilde{F}^{\tb}_{\und{a}}$ the affine function defined by
\[\tilde{k}^{\tb}\circ f_{\und{a}}=\tilde{F}^{\tb}_{\und{a}}\circ \tilde{k}^{\tb\und{a}}.\]
This affine function can be written in terms of the numbers $\tilde{r}^{\tb}_{\und{a}}\in \R^+$, $\tilde{v}^{\tb}_{\und{a}}\in \T$ and $\tilde{c}^{\tb}_{\und{a}}\in \C$ by the formula
\[\tilde{F}^{\tb}_{\und{a}}(z)=\tilde{r}^{\tb}_{\und{a}}\exp(i \tilde{v}^{\tb}_{\und{a}})z+\tilde{c}^{\tb}_{\und{a}}.\]
The maps $\tilde{F}^{\tb}_{\und{a}}$ and $F^{\tb}_{\und{a}}$ are related by the equations
\[DF^{\tb}_{\und{a}}=\frac{R(\tb\und{a})}{R(\tb)}D\tilde{F}^{\tb}_{\und{a}},\,\, F^{\tb}_{\und{a}}(0)=\frac{1}{R(\tb)}\tilde{F}^{\tb}_{\und{a}}(0).\]
Now we assume we have two Cantor sets, $K$ and $K'$, and discuss how to go from the renormalization operators in \cite{MZ} to ours. Define $\phi_{\tb,\tb'}:J\to J$ and $L:\R\times \T^2\to J$ by $$\phi_{\tb,\tb'}(s)=\frac{R(\tb)}{R'(\tb')}\cdot s,\,\,\,L(t,v,v')=(t,v-v').$$
Remember that we identify $J=\C^*$ with $\R\times \T$ through $(t,v)\to e^{t+iv}$, and define $\phi:\Sigma^-\times\Sigma^{\prime -}\times \R\times \T^2 \to \Sigma^-\times\Sigma^{\prime -}\times J$ given by
\[\phi(\tb,\tb',t,v,v')=(\tb,\tb', \phi_{\tb,\tb'}(L(t,v,v'))).\]
From the previous equations, one easily proves that the renormalization operators of \cite{MZ}, which are given by
\[T_{\und{a},\und{a}'}(\tb,\tb',t,v,v')=(\tb\und{a},\tb'\und{a}', t+\log\frac{\tilde{r}^{\tb}_{\und{a}}}{\tilde{r}^{\tb'}_{\und{a}'}},v+\tilde{v}^{\tb}_{\und{a}},v'+\tilde{v}^{\tb'}_{\und{a}'})\]
and act on $\Sigma^-\times \Sigma^{\prime -}\times \R\times \T^2$, are related to our renormalization operators $T_{\und{a}}T'_{\und{a}'}$ by the ``semiconjugacy'' $\phi$, i.e. $\phi \circ T_{\und{a},\und{a}'}=T_{\und{a}}T'_{\und{a}'}\circ \phi$. Using $\phi$, it is not difficult to transport the scale recurrence lemma from \cite{MZ} to one for $T_{\und{a}}T'_{\und{a}'}$:
\begin{lemma}\label{lem:scl}
Suppose that $K$, $K'$ are non-essentially affine and non-essentially real. If $R,c_0$ are conveniently large, there exist $c_1,c_2,c_3,c_4,\rho_0>0$ with the following properties: given $0<\rho<\rho_0$, and a family $E(\und{a},\und{a}')$ of subsets of $J_{R}$, $(\und{a},\und{a}')\in \Sigma(\rho)\times \Sigma'(\rho)$, such that
\[m(J_R\setminus E(\und{a},\und{a}'))\leq c_1, \forall (\und{a},\und{a}'),\] 
there is another family $E^*(\und{a},\und{a}')$ of subsets of $J_R$ satisfying:
\begin{itemize}
\item[(i)] For any $(\und{a},\und{a}')$, $E^*(\und{a},\und{a}')$ is contained in the $c_2 \rho$-neighborhood of $E(\und{a},\und{a}')$.
\item[(ii)] Let $(\und{a},\und{a}')\in \Sigma(\rho)\times \Sigma'(\rho)$, $s\in E^*(\und{a},\und{a}')$; there exist at least $c_3\rho ^{-(d+d')}$ pairs $(\und{b},\und{b}')\in \Sigma(\rho)\times \Sigma'(\rho)$ (with $\und{b}$, $\und{b}'$ starting with the last letter of $\und{a}$, $\und{a}'$) such that, if $\tb \in \Sigma^{-}$, $\tb' \in \Sigma^{\prime -}$ end respectively with $\und{a}$, $\und{a}'$ and
\[T_{\und{b}}T'_{\und{b}'}(\tb,\tb',s)=(\tb\und{b},\tb'\und{b}',\tilde{s}),\]
the $c_4\rho$-neighborhood of $\tilde{s}\in J$ is contained in $E^*(\und{b},\und{b}')$.
\item[(iii)] $m(E^*(\und{a},\und{a}'))\geq m(J_R)/c_2$ for at least half of the $(\und{a},\und{a}')\in \Sigma(\rho)\times \Sigma'(\rho)$.
\end{itemize}
\end{lemma}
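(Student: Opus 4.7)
The strategy is to leverage the semiconjugacy $\phi$ established in the paragraphs preceding the statement, in order to transport the scale recurrence lemma from \cite{MZ} into the present coordinates. The idea is to pull back the family $E(\und{a},\und{a}')\subset J_R$ to a family $\hat E(\und{a},\und{a}')\subset \R\times \T^2$, apply the lemma of \cite{MZ} there to obtain a recurrent family $\hat E^*(\und{a},\und{a}')$, and push it forward by $\phi\circ L$ to obtain the desired $E^*(\und{a},\und{a}')$.

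For the pull-back, I would fix for each $\und{a}\in \Sigma^{fin}$ a single sequence $\tb_0(\und{a})\in \Sigma^-$ ending with $\und{a}$, and similarly $\tb_0'(\und{a}')\in \Sigma^{\prime -}$ ending with $\und{a}'$. Set
\[
\hat E(\und{a},\und{a}') := \bigl(\phi_{\tb_0(\und{a}),\tb_0'(\und{a}')}\circ L\bigr)^{-1}\bigl(E(\und{a},\und{a}')\bigr) \cap D,
\]
where $D\subset \R\times \T^2$ is a fixed bounded box chosen large enough that $\phi_{\tb,\tb'}\circ L(D) \supset J_R$ for every admissible $\tb,\tb'$; such a $D$ exists because $|R(\tb)|$ and $|R'(\tb')|$ are uniformly bounded above and below. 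The hypothesis $m(J_R\setminus E)\le c_1$ translates to a small-complement hypothesis for $\hat E$ inside $D$, since $\phi_{\tb_0,\tb_0'}\circ L$ is a smooth submersion with uniformly bounded Jacobian on $D$. Applying the scale recurrence lemma of \cite{MZ} to $\hat E$, with a slightly smaller threshold in place of $c_1$, produces a family $\hat E^*(\und{a},\und{a}')\subset D$ verifying the three analogous properties for the operators $T_{\und{a},\und{a}'}$.

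I would then define $E^*(\und{a},\und{a}') := (\phi_{\tb_0(\und{a}),\tb_0'(\und{a}')}\circ L)(\hat E^*(\und{a},\und{a}'))$. Property (i) is immediate from the Lipschitz character of this push-forward on $D$, enlarging $c_2$ if needed, and property (iii) follows from a direct measure comparison under the push-forward, the fibres of $L$ being one-dimensional and bounded on $D$. The delicate point is property (ii): the semiconjugacy $\phi\circ T_{\und{a},\und{a}'} = T_{\und{a}}T'_{\und{a}'}\circ \phi$ is phrased with the true sequences $\tb,\tb'$, whereas $E^*$ is defined using the fixed base proxies $\tb_0(\und{a}),\tb_0'(\und{a}')$. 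The discrepancy is controlled by the uniform Lipschitz estimate
\[
\Bigl|\tfrac{R(\tb)}{R(\tb_0(\und{a}))}-1\Bigr| \le C\cdot \mathrm{diam}(G(\und{a})) \lesssim \rho,
\]
and its analogue for $R'$, together with the Lipschitz dependence of $\tilde F^{\tb}_{\und{b}}$ on $\tb$. Concretely, for $s = \phi_{\tb_0,\tb_0'}(L(t,v,v'))$ with $(t,v,v')\in \hat E^*(\und{a},\und{a}')$, the image $\tilde s$ of $s$ under $T_{\und{b}}T'_{\und{b}'}(\tb,\tb',\cdot)$ differs only by $O(\rho)$ in $J$ from the point $\phi_{\tb_0(\und{b}),\tb_0'(\und{b}')}(L(T_{\und{b},\und{b}'}(t,v,v')))$, which lies in $E^*(\und{b},\und{b}')$ by construction, and this error is absorbed into $c_4$.

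I expect the main obstacle to be precisely this bookkeeping: tracking how the base-sequence approximation errors propagate through both $\phi$ and the renormalization, and choosing the constants $c_1,\ldots,c_4$ in the right order so that the neighborhoods appearing in (i), (ii) and (iii) remain consistent. Once this is arranged, the remainder of the argument is a direct transport through the smooth semiconjugacy $\phi$.
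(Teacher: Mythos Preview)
Your approach is essentially the same as the paper's: transport the scale recurrence lemma of \cite{MZ} through the semiconjugacy $\phi\circ T_{\und{a},\und{a}'} = T_{\und{a}}T'_{\und{a}'}\circ\phi$, pulling the family $E$ back to $\R\times\T^2$, applying the \cite{MZ} lemma there, and pushing the resulting family forward.

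The one technical difference is worth noting. You pin down a single representative $\tb_0(\und{a}),\tb_0'(\und{a}')$ and define both the pull-back $\hat E$ and the push-forward $E^*$ through the single chart $\phi_{\tb_0,\tb_0'}\circ L$; the discrepancy between this chart and the ones attached to the actual sequences $\tb,\tb'$ appearing in item (ii) is then absorbed by the Lipschitz estimate $|R(\tb)/R(\tb_0)-1|\lesssim\rho$. The paper instead takes \emph{unions} over all $\tb,\tb'$ ending with $\und{a},\und{a}'$, setting
\[
\tilde E(\und{a},\und{a}')=\bigcup_{\tb,\tb'} L^{-1}\bigl(\phi_{\tb,\tb'}^{-1}(E(\und{a},\und{a}'))\bigr)\cap \tilde J_{\tilde r},\qquad
E^*(\und{a},\und{a}')=\bigcup_{\ttb,\ttb'}\phi_{\ttb,\ttb'}\bigl(L(\tilde E^*(\und{a},\und{a}'))\bigr).
\]
This buys them a cleaner verification of (ii): given $s\in E^*(\und{a},\und{a}')$ one simply picks the particular $(\ttb,\ttb')$ that witnesses membership and runs the semiconjugacy exactly, with no base-point correction needed at that step; the Lipschitz estimate on $R$ is only used once, in checking (i). Your approach requires the same estimate, but invoked in two places (once on each side of the renormalization), which is precisely the bookkeeping you flag as the main obstacle. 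Both routes are correct and lead to the same constants up to harmless factors.
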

\begin{remark}
Here $m$ is the unique measure in $\R\times \T$ giving measure $2\pi$ to $J_{1/2}$ and invariant by translations. We notice that the lemma remains true if we change $m$ by Lebesgue measure in $\C^*\subset \C$. Since all sets are in $J_R$, one just would need to redefine the constants $c_1$ and $c_2$. The same happens for the metric on $J$, we prove the lemma with the metric
\[d((t,v),(t',v'))=\max\{|t-t'|,\|v-v'\|\}\]
and $\|[x]\|=\min_{n\in \Z}|x-2\pi n|$ ($[x]\in \T=\R/(2\pi\Z)$ is the class generated by $x\in\R$). In $J_R$ we can change this metric for the usual metric in $\C$.

We remark that it can be assumed the sets $E^*(\und{a},\und{a}')$ are closed. To do this we just have to redefine $E^*(\und{a},\und{a}')$ by taking their closure and increase the parameter $c_2$.
\end{remark}
\begin{proof}
We are in the hypothesis of the scale recurrence lemma in \cite{MZ}. Let $\tilde{r}$, $\tilde{c_0}$, $\tilde{c_1}$, $\tilde{c_2}$, $\tilde{c_3}$ and $\tilde{\rho}_0$ be the constants given by the lemma. We choose $R$ big enough and $c_1$ small enough such that 
$$\nu(\tilde{J}_{\tilde{r}}\setminus L^{-1}(\phi_{\tb,\tb'}^{-1}(E)))<\tilde{c}_1$$
for any set $E\subset J_{R}$ with $m(J_R\setminus E)<c_1$, where $\tilde{J}_{\tilde{r}}=\{(t,v,v'):|t|\leq \ti{r}\}$ and $\nu$ is the Haar measure in $\R\times \T^2$ such that $\nu(\ti{J}_{1/2})=1$. This can be done since $\phi_{\tb,\tb'}$ is multiplication by a complex number, whose norm is uniformly bounded and away from zero. Indeed, if one chooses $R$ big such that $J_{\tilde{r}}\subset \phi_{\tb,\tb'}^{-1}(J_R)$ and $c_1$ small enough such that $m(\phi_{\tb,\tb'}^{-1}(J_R\setminus E))<\tilde{c}_1$, then using that $L_*\nu=m$ one gets
\[\nu(\tilde{J}_{\tilde{r}}\setminus L^{-1}(\phi_{\tb,\tb'}^{-1}(E)))=\nu(L^{-1}(J_{\tilde{r}}\setminus \phi_{\tb,\tb'}^{-1}(E)))=m(J_{\tilde{r}}\setminus \phi_{\tb,\tb'}^{-1}(E))\leq m(\phi_{\tb,\tb'}^{-1}(J_R\setminus E))<\tilde{c}_1.\]
We choose $c_0=\tilde{c}_0$, $c_3=\ti{c}_3$ and $\rho_0=\ti{\rho}_0$, the other constants will be chosen along the proof. Suppose we are given a family of sets $E(\und{a},\und{a}')$ as in the setting of the lemma, define a new family $\tilde{E}(\und{a},\und{a}')$ by
\[\tilde{E}(\und{a},\und{a}')= \bigcup_{\tb,\tb'} L^{-1}(\phi_{\tb,\tb'}^{-1}(E(\und{a},\und{a}')))\cap \tilde{J}_{\tilde{r}},\]
where the union is over all $\tb,\, \tb'$ finishing in $\und{a},\,\und{a}'$, respectively. Notice that thanks to the previous discussion one gets that $\nu(\tilde{J}_{\tilde{r}}\setminus \tilde{E}(\und{a},\und{a}'))<\tilde{c}_1$. Then we can apply the scale recurrence lemma from \cite{MZ} to get a new family $\tilde{E}^*(\und{a},\und{a}')$, which satisfies the properties given in \cite{MZ}. Now we go back to the space $J_R$, define a new family $E^*(\und{a},\und{a}')$ by
\[E^*(\und{a},\und{a}')= \bigcup_{\ttb,\ttb'} \phi_{\ttb,\ttb'}(L(\ti{E}^*(\und{a},\und{a}'))),\]
where the union is over all pairs ending in $\und{a}$, $\und{a}'$ respectively. We will prove that the family $E^*(\und{a},\und{a}')$ satisfies the desired properties:
\begin{itemize}
\item[(i)] Note that $L$ is Lipschitz with constant $2$. Enlarging the constants $c$, $C$ we can suppose that $\phi_{\ttb,\ttb'}$, $\phi_{\ttb,\ttb'}^{-1}$ are Lipschitz with constant $c$ and $|\phi_{\ttb,\ttb'}\circ \phi_{\tb,\tb'}^{-1} - Id|\leq \frac{C}{2}[d(\tb,\ttb)+d(\tb',\ttb')]$. Using this we get
\begin{align*}
E^*(\und{a},\und{a}')&= \bigcup_{\ttb,\ttb'} \phi_{\ttb,\ttb'}(L(\ti{E}^*(\und{a},\und{a}')))\\
&\subset \bigcup_{\ttb,\ttb'} \phi_{\ttb,\ttb'}(L(V_{\tilde{c_2}\rho}(\ti{E}(\und{a},\und{a}'))))\\
&\subset \bigcup_{\ttb,\ttb'}V_{2c\tilde{c_2}\rho}\left(\phi_{\ttb,\ttb'}\circ L \left(\bigcup_{\tb,\tb'}L^{-1}\circ \phi_{\tb,\tb'}^{-1}(E(\und{a},\und{a}'))\right)\right)\\
&= \bigcup_{\ttb,\ttb'}V_{2c\tilde{c_2}\rho}\left( \bigcup_{\tb,\tb'} \phi_{\ttb,\ttb'}\circ \phi_{\tb,\tb'}^{-1} \left(E(\und{a},\und{a}')\right)\right)\\
&\subset \bigcup_{\ttb,\ttb'}V_{2c\tilde{c_2}\rho}\left( V_{Rc_0C\rho} \left(E(\und{a},\und{a}')\right)\right)=V_{(2c\tilde{c}_2+Rc_0C)\rho}(E(\und{a},\und{a}')).
\end{align*}
Taking $c_2>2c\tilde{c}_2+Rc_0C$ gives the desired property.

\item[(ii)] Let $s\in E^*(\und{a},\und{a}')$, then $s\in \phi_{\ttb,\ttb'}(L(\ti{E}^*(\und{a},\und{a}')))$ for some $(\ttb,\ttb')$ ending in $(\und{a},\und{a}')$. Let $\tilde{s}\in \ti{E}^*(\und{a},\und{a}')$ such that $s=\phi_{\ttb,\ttb'}(L(\ti{s}))$. Let $(\und{b},\und{b}')$ be one of the $\tilde{c}_3\rho^{-(d+d')}$ pairs, associated to $\ti{s}$, given by the scale recurrence lemma in \cite{MZ}. If we write
\[T_{\und{b}}T'_{\und{b}'}(\ttb,\ttb',s)=T_{\und{b}}T'_{\und{b}'}\circ \phi(\ttb,\ttb',\ti{s})=\phi\circ T_{\und{b},\und{b}'}(\ttb,\ttb',\ti{s})=(\ttb\und{b},\ttb'\und{b}',\phi_{\ttb\und{b},\ttb'\und{b}'}(L(s^*))), \]
we know that the ball $B(s^*,\rho)$ is contained in $\ti{E}^*(\und{b},\und{b}')$. This implies
\begin{align*}
B(\phi_{\ttb\und{b},\ttb'\und{b}'}(L(s^*)),c^{-1}\rho)&\subset \phi_{\ttb\und{b},\ttb'\und{b}'}(B(L(s^*),\rho))\\
&\subset \phi_{\ttb\und{b},\ttb'\und{b}'}\circ L (B(s^*,\rho))\subset \phi_{\ttb\und{b},\ttb'\und{b}'}\circ L (\ti{E}^*(\und{b},\und{b}'))\subset E^*(\und{b},\und{b}').
\end{align*}
Thus it is enough to take $c_4<c^{-1}$.
\item[(iii)] Let $(\und{a},\und{a}')$ such that $\nu(\ti{E}^*(\und{a},\und{a}'))\geq \nu(\tilde{J}_{\tilde{r}})/2$ and $(\ttb,\ttb')$ ending in $(\und{a},\und{a}')$, we have
\begin{align*}
m(E^*(\und{a},\und{a}'))\geq m(\phi_{\ttb,\ttb'}(L(\ti{E}^*(\und{a},\und{a}'))))&\gtrsim m(L(\ti{E}^*(\und{a},\und{a}')))\\
&= \nu(L^{-1}(L(\ti{E}^*(\und{a},\und{a}'))))\\
&\geq \nu(\ti{E}^*(\und{a},\und{a}'))\geq \nu(\tilde{J}_{\tilde{r}})/2.
\end{align*}
The fact that $\nu(\ti{E}^*(\und{a},\und{a}'))\geq \nu(\tilde{J}_{\tilde{r}})/2$ for at least half of the $(\und{a},\und{a}')$ implies immediatly the desired property with $c_2$ big enough.
\end{itemize}
\end{proof}

\section{Random Perturbations of conformal Cantor sets}\label{sec:randpert}

%As a consequence of this discussion we have the following lemma:

%\begin{lemma} Arbitrarily close to a conformal Cantor set $(K,g)$ we can construct another conformal Cantor set $(\ti{K},\ti{g})$ with the following properties:
%\begin{itemize}
 %   \item Every point $x \in K$ is accumulated by points in $K$ through at least two different directions, that is, $K^{dir}_x$ has two linear independent vectors.  
 %   \item $\ti{g}$ is holomorphic on a neighbourhood of $\ti{K}$.
%\end{itemize}
%\end{lemma}

%We will call a Cantor set with these properties \emph{tame}.

 \subsection{Random perturbations}

From now on we will focus on the proof of theorem \ref{thm:main2}. Let $(K,K')$ be a pair of non-essentially real $C^{\infty}$ conformal regular Cantor sets such that $HD(K)+HD(K')>2$. We first perturb, in the $C^{\infty}$ topology, the pair of Cantor sets $(K,K')$ so they satisfy the hypothesis of the Scale Recurrence Lemma and the map $g$ defining the Cantor set $K$ is holomorphic on a neighborhood $V$ of $K$. All this can be done thanks to lemma \ref{lemma:c8approx}. Applying now the scale recurrence lemma gives constants $R,\, c_0,\, c_1,\, c_2,\, c_3,\, c_4$ verifying the conclusions of the lemma. With the aim of reducing the number of constants, we will also assume, without loss of generality, that the diameters of the sets $G^{\tb}(\theta_0)=k^{\tb}(G(\theta_0))$ are all less than one. This can be achieved by changing the metric. To prove theorem \ref{thm:main2}, we will now only perturb the Cantor set $K$, leaving $K'$ unaltered. 

Notice that a neighbourhood in $\Omega^{\infty}_{\Sigma}$ contains a neighbourhood in $\Omega^{k}_{\Sigma}$ for some integer $k \ge 2$. So from now on we fix this integer $k$. The desired $C^k$ perturbation for $g$ will be picked by a probabilistic argument out of a family of random perturbations that we will now construct.

The following constructions and arguments are made having a parameter $\rho > 0 $ in mind. All constants from now on are independent of this parameter, and everything fits together in the end  by choosing $\rho$ sufficiently small.
 
We first pick a subset $\Sigma_0$ of $\Sigma(\rho^{1/k})$ such that
$$
K = \bigcup_{\und{a}\in\Sigma_0} K(\und{a})
$$
is a \emph{partition} of $K$ into disjoint cylinders.

We then define $\Sigma_1$ as the subset of $\Sigma_0$ formed of the words
$\und{a} \in \Sigma_0$ such that no word in $\Sigma(\rho^{1/3k})$ appears twice
in $\und{a}$. To see that $\Sigma_1 \neq \emptyset$ check the claim in the proof of lemma \ref{lemma:c8approx}. 

Let $\ti{c}_4 > 0$ be a constant\footnote{This constant corresponds to $c_4$ in \cite{MY}, since we already used this symbol in the scale recurrence lemma then we changed it to $\ti{c}_4$.} sufficiently close to $0$ to have the following: let \[ 
\hat{G}(\und{a}) \coloneqq V_{\ti{c}_4 \cdot \text{diam}(G(\und{a}))}(G(\und{a})),
\]
for $\und{a} \in \Sigma_0$; then the $\hat{G}(\und{a})$, $\und{a} \in \Sigma_0$, are pairwise disjoint.

For each $\und{a} \in \Sigma_0$ we choose a smooth function $\chi_{\und{a}}\colon \C\to\R$
satisfying:
\begin{align*}
&\chi_{\und{a}}(z) = 1 \quad\text{for}\quad z \in V_{\frac{\ti{c}_4}{2} \cdot \text{diam}(G(\und{a}))}(G(\und{a})) \,,\\
&\chi_{\und{a}}(z) = 0 \quad\text{for}\quad z \not\in \hat{ G}(\und{a})\,.
\end{align*}

Notice that, since $\und{a} \in \Sigma(\rho^{1/k})$, we can choose these functions in a way that $\|D^j\chi_{\und{a}}\|\leq \ti{C}\,\rho^{-j/k}$, for all $0\le j\le k$ and $\ti{C}$ some constant independent of $\rho$ (but not from $\ti{c}_4$).

The probability space underlying the family of random perturbations is $\Om =
\D^{\Sigma_1}$, where $\D$ is the unitary disk in $\C$, equipped with the normalized Lebesgue measure.

For $\uom = (\om(\und{a}))_{\und{a}\in\Sigma_1} \in \Om$, we define
$\Phi_{\uom}$ to be the time-one map of the vector field 

\[X_{\uom}(z) = -c_5\,\rho^{(1+1/2k)}\,\sum\limits_{\und{a}} \chi_{\und{a}}(z)\,\om(\und{a}), \]

where $c_5 > 0$ is a conveniently large constant, to be chosen later. Finally, we define $g^{\uom}$ to be $g\circ \Phi_{\uom}$. 

By our previous estimative on $\|D^j\chi_{\und{a}}\|$ we have that $\|\Phi_{\uom}-Id\|_{C^k}$ is $O(\rho^{1/2k})$.

%Notice that $\|D^j\chi_{\und{a}}\|\leq \ti{C}\,\rho^{\frac{-j}{k}}$ since $\und{a} \in \Sigma(\rho^{\frac{1}{k}})$ 

%Let us discuss some basic properties of the maps $g^{\uom}$, $\uom \in \Om$. 

%For $\und{a} \in \Sigma_1$\,, we define the vector field $X_{\und{a}}$\,, with support $\subset\, \widehat G(\und{a})$ by
%$$
%X_{\und{a}}(z) = c_5\,\rho^{(1+1/2k)}\, \frac{\po}{\po z}
%$$
%where $c_5 > 0$ is a conveniently large constant, to be chosen later, and $B$is an affine map sending $I(\und{a})$ onto $[-1,+1]$; there are two such maps, but as $\chi$ is even, they give the same $X_{\und{a}}$\,.

Since $\Phi_{\uom}$\,, for any $\uom \in \Om$, is close to the identity in
the $C^k$-topology, then $g^{\uom}$ is close to $g$. Taking $\rho$ small enough we can suppose that $g^{\uom}$ generates a Cantor set  (with the same family of sets $G(a)$, $a\in \mathbb{A}$), which we denote by $K^{\uom}$. Moreover, taking $\rho$ small it can be proven that this Cantor set is in fact a conformal Cantor set. Indeed, let $V$ be the open set containing $K$ where the function $g$ is holomorphic. If $\rho$ is sufficiently small then\footnote{This is consequence of lemma \ref{lem:plg} part (ii), note that the proof of this part of the lemma does not use the conformality of $g$ at the points in the Cantor set. We can also get this from the fact that $K$ is an hyperbolic set for $g$ and use continuation of the hyperbolic set (see Theorem 7.8 in \cite{S}).}, for any $x \in K^{\uom}(\und{a})$ and $\und{a} \in \Sigma_0$, $\Phi_{\uom}(x) \in V $ and $x\in V_{\frac{\ti{c}_4}{2} \cdot \text{diam}(G(\und{a}))}(G(\und{a}))$.
It follows that $$D(g\circ \Phi_{\uom})(x)=Dg(\Phi_{\uom}(x))\cdot D\Phi_{\uom}(x)=Dg(\Phi_{\uom}(x))$$ which is a conformal linear transformation.

Our task will be to find $\uom \in \Om$ such that the pair of Cantor sets determined by $(g^{\uom},g')$ have a non empty recurrent compact set of relative configurations.

\begin{remark}\label{homeomorphism}
 All the objects introduced in section \ref{sec:def} are well defined for the Cantor sets $K^{\uom}$ and we will denote them by adding a superscript indicating the corresponding value of $\uom \in \Omega$,  such as $G^{\uom}(\und{a})$, $k^{\ute, \uom}$, $c^{\ute, \uom}_{\und{a}}$ and $F^{\ute, \uom}_{\und{a}}$ for example. Notice however that these Cantor sets have the same type as $K$, and therefore are close to $K$ in the $C^k$ topology. Besides, we consider for each $\uom \in \Omega$ the natural conjugation between the dynamical systems $(K^{\uom},g^{\uom}|_K)$ and $(\Sigma, \sigma)$
\begin{equation*}
    H^{\uom}: K^{\uom} \to  \Sigma,
\end{equation*}
which carries each point $x \in K^{\uom}$ to the sequence $\{a_n\}_{n \geq 0}$ that satisfies $(g^{\uom})^n(x) \in G(a_n)$.
For each $a \in \mathbb{A}$, we have a pre-periodic sequence $\und{x}_a \in \Sigma$ that begins with $a$, defined by $\und{x}_a \coloneqq H(c_a)$. The set of \emph{base points} $c^{\uom}_a \in G(a)$ for $a \in \mathbb{A}$ satisfies 
\[c^{\uom}_a = (H^{\uom})^{-1} (\und{x}_a)\]
for every $\uom \in \Omega$. This is important for the study of limit geometries.
\end{remark}

\subsection{Some properties of the family $g^{\uom}$}

Let $\und{a}' \in \Sigma_0$ and $a_{-1} \in \A$ be such that $(a_{-1},a_0) \in B$ and $\und{a}'$ begins with $(a_{-1},a_0)$; let $ (a_{-1},a_0)\und{a}=\und{a}'$. Any perturbed inverse branch $f^{\uom}_{a_{-1},a_0}$ is well defined in the neighborhood $V_{\rho}(G(\und{a}))$ and for any $x \in V_{\rho}(G(\und{a}))$
 
\begin{equation}\label{f}
f^{\uom}_{a_{-1},a_0}(x) =
\begin{cases}
f_{a_{-1},a_0}(x) \quad\text{if}\quad \und{a}' \in \Sigma_0 \setminus \Sigma_1,\\
f_{a_{-1},a_0}(x) + c_5\rho^{1+1/2k}\,\om(\und{a}') \text{ if } \und{a}' \in\Sigma_1.
\end{cases}
\end{equation}
Notice that $\|\Phi_{\om}-Id\|_{C^0}=O(\rho^{1+1/2k})$, therefore 
\[V_{\rho}(G(\und{a}'))\subset \Phi_{\uom}(V_{\frac{\ti{c}_4}{4}diam(G(\und{a}'))}(G(\und{a}')))\subset V_{\frac{\ti{c}_4}{2}diam(G(\und{a}'))}(G(\und{a}')),\]
for $\rho$ small enough. This implies that 
\[V_{\rho}(G(\und{a})) \subset  g(V_{\rho}(G(\und{a}'))) \subset g^{\uom}(V_{\frac{\ti{c}_4}{4}diam(G(\und{a}'))}(G(\und{a}')))\]
and 
$$
g^{\uom}(z) =
\begin{cases}
g(z) \quad\text{if}\quad \und{a}' \in \Sigma_0-\Sigma_1,\\
g(z - c_5\rho^{1+1/2k}\,\om(\und{a}')) \text{ if } \und{a}' \in\Sigma_1,
\end{cases}
$$
for all $z\in V_{\frac{\ti{c}_4}{4}diam(G(\und{a}'))}(G(\und{a}'))$. This in turn immediately implies the formula for $f^{\uom}_{a_{-1},a_0}$.

\begin{lemma}\label{lem:plg}
Let $\uom \in \Om$ and $H^{\uom}: K^{\uom}  \to \Sigma $ be the homeomorphism defined in remark \ref{homeomorphism}. If $\rho$ is sufficiently small,
\begin{itemize}
\item[(i)] for any $\und{a} \in \Sigma^{fin}$, we have $\lV f^{\uom}_{\und{a}} - f_{\und{a}}\rV_{C^0} \le c_{18} c_5\ro^{1+\frac{1}{2k}}$;
\item[(ii)] for any $\und{a} \in \Sigma$, we have $\lv (H^{\uom})^{-1}(\und{a}) - H^{-1}(\und{a}) \rv \le c_{18}c_5\,\ro^{1+\frac{1}{2k}}$;
\item[(iii)] for $\ute \in \Sigma^-$, we have
$$
\lV k^{\ute,\uom} - k^{\ute}\rV_{C^0}
\le c_{18}c_5\,\ro^{1-\frac{1}{2k}};
$$
\item[(iv)] for $\ute \in \Sigma^-$ and a word $\und{a} = (a_0, \,a_1,\,\dots,\,a_m)$ with $a_0=\te_0$ such that $diam(G(\und{a})) >
c_0^{-1}\,\ro$, we have
\begin{align*}
\lv  \frac{ DF^{\ute}_{\und{a}}}{DF^{\ute, \, \uom}_{\und{a}}} - 1 \rv & \le c_{18}c_5\,\ro^{1-\frac{1}{2k}}; \\
\left| \log
r^{\ute,\uom}_{\und{a}}-\log r^{\ute}_{\und{a}}\right|
& \le c_{18}c_5\,\ro^{1-\frac{1}{2k}}.
\end{align*}
\end{itemize}
The constant $c_{18}$ is independent of $\ute$, $\uom$, $\und{a}$,
$\ro$, and the size $c_5$ of the perturbation.
\end{lemma}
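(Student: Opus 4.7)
The plan is to prove the four items in sequence, each building on the previous. For (i), I would telescope the composition $f^{\uom}_{\und{a}} = f^{\uom}_{(a_0, a_1)} \circ \cdots \circ f^{\uom}_{(a_{n-1}, a_n)}$ against $f_{\und{a}}$. By formula \eqref{f}, each single-step difference $f^{\uom}_{(a_i, a_{i+1})} - f_{(a_i, a_{i+1})}$ is either zero or an additive constant vector of modulus at most $c_5\, \ro^{1+1/2k}$, depending on whether the corresponding word of $\Sigma_0$ lies in $\Sigma_1$. When one inserts hybrid intermediates, the outer inverse branches contract each new contribution by a factor at most $\mu^{-1}$, so the accumulated error is dominated by a geometric series and bounded by $C\, c_5\, \ro^{1+1/2k}$, uniformly in $\und{a}$.

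For (ii), I would write both $H^{-1}(\und{a})$ and $(H^{\uom})^{-1}(\und{a})$ as $C^0$-limits of $f_{\und{a}_n}(y_n)$ and $f^{\uom}_{\und{a}_n}(y_n)$ respectively, for a common sequence $y_n \in G(a_n)$. Lemma \ref{lemma: decay} ensures that $\diam(G(\und{a}_n))$ and $\diam(G^{\uom}(\und{a}_n))$ decay geometrically, so both sequences are Cauchy and the diameter contribution vanishes in the limit; passing to the limit in the bound from (i) yields the estimate.

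The main difficulty is (iii), where the direct estimate from (i) is useless because $k^{\ute}_n = \Phi_{\ute_n} \circ f_{\ute_n}$ carries an affine amplification $A_n := D\Phi_{\ute_n} = \bigl(Df_{\ute_n}(c_{\te_0})\bigr)^{-1}$ whose norm $|A_n| \approx \diam(G(\ute_n))^{-1}$ grows without bound in $n$. Setting $\psi_n := f^{\uom}_{\ute_n} - f_{\ute_n}$, an algebraic manipulation gives
\[
k^{\ute, \uom}_n(x) - k^{\ute}_n(x) = A_n\bigl[\psi_n(x) - \psi_n(c_{\te_0})\bigr] + A_n\bigl[f^{\uom}_{\ute_n}(c_{\te_0}) - f^{\uom}_{\ute_n}(c^{\uom}_{\te_0})\bigr] + (A^{\uom}_n - A_n)\bigl(f^{\uom}_{\ute_n}(x) - c^{\uom}_{\ute_n}\bigr).
\]
The decisive observation is that although $|\psi_n|$ is only of order $c_5 \ro^{1+1/2k}$, the difference $\psi_n(x) - \psi_n(c_{\te_0})$ is much smaller: the additive perturbation introduced by the inverse of $\Phi_{\uom}$ at the $(j+1)$-st inverse branch is a constant on each piece $\hat{G}(\und{a}')$ with $\und{a}' \in \Sigma_1$; once the level $j$ satisfies $\diam\bigl(G((\te_{-j}, \ldots, \te_0))\bigr) < \ro^{1/k}$, both iterates of $x$ and $c_{\te_0}$ lie in the same $\Sigma_0$-cell, so the additive constants cancel. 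Only the contributions from the (logarithmically many) levels $j < j_0 \approx (\log \ro^{-1})/(k \log \mu)$ survive, and each such contribution is further contracted by $\mu^{-(n-j)}$ by the remaining inverse branches. Summing yields $|\psi_n(x) - \psi_n(c_{\te_0})| \lesssim c_5\, \ro^{1+1/2k} \cdot \diam(G(\ute_n)) \cdot \ro^{-1/k}$, and multiplication by $|A_n|$ produces exactly the required $c_5 \ro^{1-1/2k}$ bound. The second summand is of smaller order by (ii), and the third is handled in parallel by a similar telescoping argument at the derivative level.

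For (iv), Lemma \ref{simpleformula} expresses $DF^{\ute}_{\und{a}}$ as a limit of ratios of the derivatives $Df_{\ute_n}$ and $Df_{(\ute\und{a})_{n+m}}$, and the same level-by-level bookkeeping as in (iii) applied to the multiplicative error yields the ratio estimate; the hypothesis $\diam(G(\und{a})) > c_0^{-1}\ro$ guarantees that the cutoff scale $\ro^{1/k}$ lies strictly inside the word $\und{a}$ so that the cancellation argument still applies. The logarithmic estimate for $r^{\ute,\uom}_{\und{a}}$ versus $r^{\ute}_{\und{a}}$ follows at once from $|\log(1+x)| \lesssim |x|$ for small $x$. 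The principal obstacle throughout is the careful bookkeeping in (iii): precisely identifying the cutoff level $j_0$, verifying that the nonlinear (rather than purely affine) part of the inverse branches contributes only a Taylor remainder of the target order, and controlling the mismatch $A^{\uom}_n - A_n$ by a completely analogous telescoping at the level of derivatives.
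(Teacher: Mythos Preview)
Your treatment of (i) and (ii) is essentially the paper's argument, and your identification of the cancellation mechanism in (iii)---that once $\diam(G(\ute_j)) < \rho^{1/k}$ the iterates of $x$ and $c_{\te_0}$ lie in the same $\Sigma_0$-cell, so the additive translations in \eqref{f} cancel in the difference---is exactly the key observation.

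The gap is in your claimed bound $|\psi_n(x)-\psi_n(c_{\te_0})| \lesssim c_5\rho^{1+1/2k}\diam(G(\ute_n))\rho^{-1/k}$ uniformly in $n$. After the additive constants cancel at level $j\ge j_0$, the recursion for $\Delta_j := \psi_j(x)-\psi_j(c_{\te_0})$ still carries a nonlinear remainder of size $\approx |y_j-c_j|\cdot|\delta_j|\approx |A_j|^{-1}c_5\rho^{1+1/2k}$. Propagated to level $n$ and multiplied by $|A_n|$, each such term contributes $|A_{j+1}|/|A_j|\cdot c_5\rho^{1+1/2k}\approx c_5\rho^{1+1/2k}$, and these do not decay: the total is $\approx (n-j_0)c_5\rho^{1+1/2k}$, which is unbounded as $n\to\infty$. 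Your proposed bound therefore cannot hold for all $n$, and since $k^{\ute}$ is a limit, you need uniform control.

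The paper avoids this by writing a recursion directly for $a_n=|k^{\ute,\uom}_n(z)-k^{\ute}_n(z)|$: exploiting conformality (so that the matrices $A_n,B_n,I_n,I^{\uom}_n$ commute), one obtains
\[
a_{n+1}\le (1+c|A_n|^{-1})a_n + c\min\{c_5\rho^{1+1/2k},\,|A_n|^{-1}\}.
\]
The multiplicative factors have bounded product (since $\sum|A_n|^{-1}<\infty$), and the additive error switches to the geometrically decaying $|A_n|^{-1}$ once $n\gtrsim -(1+\tfrac{1}{2k})\log\rho$; this is what closes the argument uniformly. Your three-term splitting loses precisely this refinement: the error in your first term stays at $c_5\rho^{1+1/2k}$ forever. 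One could patch your route by stopping at a well-chosen finite $n$ and invoking $\|k^{\ute}-k^{\ute}_n\|\lesssim\diam(G(\ute_n))$, but you do not mention this, and it requires its own care.

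A minor point on (iv): the hypothesis $\diam(G(\und{a}))>c_0^{-1}\rho$ is used in the paper only to bound the \emph{length} $m$ of $\und{a}$ by $O(\log\rho^{-1})$, so that the derivative-ratio estimates extend from level $n_1$ to $n_1+m$. Your phrasing about the cutoff scale $\rho^{1/k}$ lying ``strictly inside $\und{a}$'' does not match this role.
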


\begin{proof} 
(i): Let $\displaystyle{x_n=\max_{|\und{a}|=n}{\lV f^{\uom}_{\und{a}}-f_{\und{a}}\rV_{C^0}}}$ be the maximum distance between corresponding inverse branches of $g^n$ and $(g^{\uom})^n$. We will prove that $x_n \le c_{18}c_5\,\ro^{1+\frac{1}{2k}} $ by induction on $n$. For $n=1$ we have $x_1\le c_5\rho^{1+1/2k}$, this is a direct consequence of $\|\Phi_{\uom}-Id\|\leq c_5\rho^{1+1/2k}$. 

Observe that $g(G(a))$ covers all the pieces $G(b)$ it intersects, therefore there exists $\delta> 0$ sufficiently small such that if $(a,b) \in B$, then $V_{\delta}(G(b)) \subset g(G(a)) $. Consequently, if $x \in G(b)$, any point $x'$ such that $\lv x-x'\rv < \delta$ \emph{and the line segment joining} $x$ and $x'$ are contained in the extended domain $V_{\delta}(G(b))$ of $f_{(a,b)} = (g|_{G(a)})^{-1}|_{V_{\delta}(G(b))}$. In this domain, $\lV Df_{(a,b)} \rV \le \mu^{-1}$. Suppose $x_n \le c_{18} c_5\ro^{1+\frac{1}{2k}}$. If $\rho $ is sufficiently small, then $x_n < \delta$. Given a word $\und{b}=(b_0, b_1, \dots, b_n, b_{n+1})$, we write $\und{b}'=(b_1,b_2,\dots,b_{n+1})$. Given a point $x \in G(b_{n+1})$, 
\begin{align*}
   f^{\uom}_{\und{b}}(x)-f_{\und{b}}(x)=&f^{\uom}_{(b_0,b_1)}(f^{\uom}_{\und{b}'}(x))-f_{(b_0,b_1)}(f_{\und{b}'}(x))=\\
    &f_{(b_0,b_1)}(f^{\uom}_{\und{b}'}(x))-f_{(b_0,b_1)}(f_{\und{b}'}(x))+f^{\uom}_{(b_0,b_1)}(f^{\uom}_{\und{b}'}(x))-f_{(b_0,b_1)}(f^{\uom}_{\und{b}'}(x)).
\end{align*}
Of course, when writing this, we are assuming that $f^{\uom}_{\und{b}'}(x)$ belongs to the domain $V_{\delta}(G(b_1))$ of $f_{(b_0,b_1)}$. But this is the case when $\lv f^{\uom}_{\und{b}'}(x)-f_{\und{b}'}(x) \rv \le x_n < \delta$, which is true by hypothesis. More than that, because the segment joining the two points is inside this domain,
\[|f^{\uom}_{\und{b}}(x)-f_{\und{b}}(x)|\le \mu^{-1}|f^{\uom}_{\und{b}'}(x)-f_{\und{b}'}(x)|+c_5\ro^{1+\frac{1}{2k}} \le \mu^{-1} x_n + c_5\ro^{1+\frac{1}{2k}}  .\]

In this manner, choosing $c_{18} \ge \frac{1}{1 - \mu^{-1}}$, we obtain $x_{n+1}\le c_{18}c_5\ro^{1+\frac{1}{2k}}$, finishing this part.

(ii): Let $\und{a} = (a_0, \, a_1,\,\dots) \in \Sigma$. It follows that $H^{-1}(\und{a}) = \lim_{n \rightarrow \infty }{f_{\und{a}_n}(G(a_n))}$ and $(H^{\uom})^{-1}(\und{a}) = \lim_{n \rightarrow \infty }{f^{\uom}_{\und{a}_n}(G(a_n))}$. As the diameters of these sets converge exponentially to zero, the result follows from (i).

(iii): We now study the perturbed limit geometries. Notice that the base point used to define $k^{\ute,\uom}$ is not the same as the one for $k^{\ute}$, but the estimate of (ii) gives us control over this displacement. 

Fix $\ute\in \Sigma^{-}$ and let $z \in G(\te_{0})=G^{\uom}(\te_0)$. Let the base point $c_{\te_0}\in K(\te_0)$ be given by $c_{\te_0}=H^{-1}(\und{x}_a)$ and the base point $c^{\uom}_{\te_0}\in K^{\uom}(\te_0)$ be given by $c^{\uom}_{\te_0}=(H^{\uom})^{-1}(\und{x}_a)$ for some fixed sequence $\und{x}_a \in \Sigma$. From (ii), \[\lv c^{\uom}_{\te_0} - c_{\te_0}  \rv \le c_{18}c_5\ro^{1+\frac{1}{2k}}.\] Write $c_n=f_{\ute_n}(c_{\te_0})$ and $z_n=f_{\ute_n}(z)$; and $c^{\uom}_n = f^{\uom}_{\ute_n}(c^{\uom}_{\te_0})$ and $z^{\uom}_n = f^{\uom}_{\ute_n}(z)$ for $n \ge 1$. Notice that \[\lv z^{\uom}_n - z_n \rv = \lv (f^{\uom}_{\ute_n} - f_{\ute_n}) (z) \rv \le c_{18}c_5\ro^{1+\frac{1}{2k}} \] by (i). Likewise, seeing that $c^{\uom}_{\te_0}  \in V_{\delta}(G(\te_0)) $,
\begin{equation}\label{bases}
    \lv  c^{\uom}_n - c_n \rv \le \lv (f^{\uom}_{\ute_n} - f_{\ute_n})(c^{\uom}_{\te_0}) \rv + \lv f_{\ute_n}(c^{\uom}_{\te_0}) - f_{\ute_n}( c_{\te_0}) \rv \lesssim c_5\ro^{1+\frac{1}{2k}},
\end{equation}
by (i) again and the estimate for $\lv c^{\uom}_{\te_0} - c_{\te_0} \rv$ above ($f_{\ute_n}$ is a contraction).

Remember that $k^{\ute}_n=\Phi_{\ute_n} \circ f_{{\ute_n}}$, where $\Phi_{\ute_n}$ is an affine transformation, and $k^{\ute}=\lim_{n \rightarrow \infty}{k^{\ute}_n}$. Hence  
$$k^{\ute}_n(z)=\Phi_{\ute_n}(z_n)-\Phi_{\ute_n}(c_n)= \left(Df_{\ute_n}(c_{\te_0})\right)^{-1}(z_n-c_n)$$
and similarly 
$$k^{\ute, \uom}_n(z)=\Phi^{\uom}_{\ute_n}(z^{\uom}_n)-\Phi^{\uom}_{\ute_n}(c^{\uom}_n)=\left(Df^{\uom}_{\ute_n}(c^{\uom}_{\te_0})\right)^{-1}(z^{\uom}_n-c^{\uom}_n).$$
The difference $  k^{\ute, \uom}_n(z)  -  k^{\ute}_n(z) $ is thus equal to 
\begin{equation}\label{expression}
    \left(Df_{\ute_n}(c_{\te_0})\right)^{-1}(z^{\uom}_n - z_n + c_n - c^{\uom}_n)+\left[\left(Df^{\uom}_{\ute_n}(c^{\uom}_{\te_0})\right)^{-1}-\left(Df_{\ute_n}(c_{\te_0})\right)^{-1}\right](z^{\uom}_n-c^{\uom}_n).
\end{equation}
Let us analyze this expression for $n$ not very large. Define  
\begin{align*}
    A_n & \coloneqq \left(Df_{\ute_n}(c_{\te_0})\right)^{-1}=Dg^n(c_n),\\
    B_n & \coloneqq \left(Df^{\uom}_{\ute_n}(c^{\uom}_{\te_0})\right)^{-1}=D(g^{\uom})^n(c^{\uom}_n),
\end{align*} 
for $n \ge 1$. If $n$ is such that $\lv A_n \rv \le c_0c^{\prime}_0\ro^{-1/k}$ (remember that $c_0'$ was defined in eq. \eqref{eq:c0prima}), then, by the previous estimates, the first term of \eqref{expression} is $\lesssim \ro^{-1/k}c_5\ro^{1+1/2k}=c_5\ro^{1-1/2k}$. On the other hand, for every $m \ge 0$,
$$          
A_{m+1}=A_m \cdot Dg(c_{m+1}) \qquad \text{and} \qquad B_{m+1}=B_m \cdot Dg^{\uom}(c^{\uom}_{m+1}),            
$$
therefore 
\[          A_{m+1}-B_{m+1}=(A_m-B_m)\cdot Dg(c_{m+1})+B_m \cdot (Dg(c_{m+1})-Dg^{\uom}(c^{\uom}_{m+1})),\]
from which one can deduce, by induction on $n$, that for any $m,\,n \ge 0$, 
 \begin{align}\label{c_n}
 \begin{split}
 (A_{m+n}-B_{m+n}) - &(A_m-B_m)\cdot Dg^{n}(c_{m+n})  \\
   &  = \sum_{j=0}^{n-1}B_{m+j} \cdot \left(Dg(c_{m+j+1}) - Dg^{\uom}(c^{\uom}_{m+j+1})\right) \cdot Dg^{n-1-j}(c_{m+n})\\
  &  =  \sum_{j=0}^{n-1}B_{m+j} \cdot \left(Dg(c_{m+j+1}) - Dg^{\uom}(c^{\uom}_{m+j+1})  \right)\cdot{A_{m+j+1}}^{-1} \cdot A_{m+ n }.
\end{split}
 \end{align}
By \eqref{bases}, the fact that $D\Phi_{\uom}(c^{\uom}_{m+j+1}) = Id$, and the fact that the maps are $C^\infty$, \[\lv Dg(c_{m+j+1}) - Dg^{\uom}(c^{\uom}_{m+j+1}) \rv = \lv Dg(c_{m+j+1}) - Dg(\Phi_{\uom}(c^{\uom}_{m+j+1}))\cdot D\Phi_{\uom}(c^{\uom}_{m+j+1}) \rv \lesssim c_5\rho^{1+\frac{1}{2k}}.\] Now write $C_n \coloneqq -  (A_n-B_n)\cdot (A_n)^{-1}$. It follows that $B_n = (C_n + Id) \cdot A_n$. Then, making $m=0$ in \eqref{c_n} and dividing it by $\lv A_n \rv$, we get that
\begin{equation}\label{quotacn}
    \lv C_n \rv \lesssim  \sum_{j=0}^{n-1} c_5\rho^{1+\frac{1}{2k}} \cdot |Id + C_{j}|\cdot|A_{j}|\cdot|{A_{j+1}}|^{-1} \lesssim   c_5\rho^{1+\frac{1}{2k}}\cdot \sum_{j=0}^{n-1}{1+|C_j|}.
\end{equation}

Let $m_0$ be the largest value such that $\lv A_{m_0} \rv \le c_0 c^\prime_0 \rho^{-1/k}$. Thus $m_0 \lesssim -\frac{1}{k}\log{\ro}$ and, again by induction on $n$, $|C_n| \lesssim  1$ for $n \le m_0$. Indeed, if it is true for all $j \le n-1$, then
\[
|C_n| \lesssim c_5\rho^{1+\frac{1}{2k}}\cdot \sum_{j=0}^{n-1}{1+|C_j|} \lesssim c_5\rho^{1+\frac{1}{2k}}\cdot n \le m_0 c_5\rho^{1+\frac{1}{2k}} \lesssim -\frac{1}{k}\log{\ro}\cdot c_5\rho^{1+\frac{1}{2k}} \lesssim 1,
\]

if $\rho$ is sufficiently small. Plugging this estimate for $|C_j|$ with $0 \le j \le n-1$ in \eqref{quotacn} again yields $|C_n| \lesssim  c_5 \ro^{1-\frac{1}{2k}}$ for $n \le m_0$ if $\rho$ is sufficiently small. We also know that $|z_n-c_n|\lesssim |A_n|^{-1}$ and hence $|z^{\uom}_n-c^{\uom}_n| \lesssim |A_n|^{-1}$ for $n \le m_0$. Hence the second term in \eqref{expression} is $ \lesssim |C_n| \lesssim c_5 \ro^{1-\frac{1}{2k}}$. 

We are left with controlling the difference $k^{\ute,\uom}_n(z)-k^{\ute}_n(z)$ for $n > m_0$. Notice that if $\lv A_n \rv > c_0 c^{\prime}_0\ro^{-\frac{1}{k}}$, then the four points $z_n$, $c_n$, $z^{\uom}_n$ and $c^{\uom}_n$ belong to the same piece $G(\und{a})$ where $\und{a} \in \Sigma_0$. Thus, 
\[z^{\uom}_{n+1}-c^{\uom}_{n+1}=f^{\uom}_{(\te_{-n-1},\te_{-n})}(z^{\uom}_n)-f^{\uom}_{(\te_{-n-1},\te_{-n})}(c^{\uom}_n)=f_{(\te_{-n-1},\te_{-n})}(z^{\uom}_n)-f_{(\te_{-n-1},\te_{-n})}(c^{\uom}_n).\]
This way, if $\ro$ is small enough so that the segments joining $z^{\uom}_n$ to $c^{\uom}_n$ and $z_n$ to $c_n$ belong to the domain of $f_{(\te_{-n-1},\te_{-n})}$,
    \begin{align*}
        k^{\ute,\uom}_{n+1}(z)&=B_{n+1}\cdot \int_{0}^1{Df_{(\te_{-n-1},\te_{-n})}(c^{\uom}_n+(z^{\uom}_n-c^{\uom}_{n})t)dt}\cdot(z^{\uom}_n-c^{\uom}_{n}) \\
        &=B_{n+1}\cdot \int_{0}^1{Df_{(\te_{-n-1},\te_{-n})}(c^{\uom}_n+(z^{\uom}_n-c^{\uom}_{n})t)dt}\cdot B_{n}^{-1}\cdot k^{\ute,\uom}_n(z),\\
        k^{\ute}_{n+1}(z)&=A_{n+1}\cdot\int_{0}^1{Df_{(\te_{-n-1},\te_{-n})}(c_n+(z_n-c_{n})t)dt}\cdot A_{n}^{-1}\cdot k^{\ute}_n(z).
  \end{align*}
Write \begin{align*}
 I_n & \coloneqq   \int_{0}^1{Df_{(\te_{-n-1},\te_{-n})}(c_n+(z_n-c_{n})t)dt}, \\
 I^{\uom}_n & \coloneqq \int_{0}^1{Df_{(\te_{-n-1},\te_{-n})}(c^{\uom}_n+(z^{\uom}_n-c^{\uom}_{n})t)dt}.
\end{align*}
Notice that $I_n$ and $I^{\uom}_n$ are both conformal matrices. This happens because $c_n+(z_n-c_{n})t$ and $c^{\uom}_n+(z^{\uom}_n-c^{\uom}_{n})t$ belong, for every $t \in [0,1]$, to the domain in which $f_{(\te_{-n-1},\te_{-n})}$ is holomorphic, provided $\rho$ is sufficiently small. Besides, the difference between these two integrals is $\lesssim c_5\rho^{1+\frac{1}{2k}}$, because $f$ is $C^{\infty}$ and 
\[
\lv (1-t) (c^{\uom}_n- c_n) + t (z^{\uom}_n-z^{\uom}_{n}) \rv \lesssim c_5\rho^{1+\frac{1}{2k}}.
\] 
Furthermore, $Df_{(\te_{-n-1},\te_{-n})}(c^{\uom}_n)  = Df^{\uom}_{(\te_{-n-1},\te_{-n})}(c^{\uom}_n) $ by \eqref{f} and so
\[ 
\lv I^{\uom}_n - Df_{(\te_{-n-1},\te_{-n})}(c^{\uom}_n) \rv = \lv I^{\uom}_n - Df^{\uom}_{(\te_{-n-1},\te_{-n})}(c^{\uom}_n) \rv \lesssim |z^{\uom}_n-c^{\uom}_n|\lesssim\lv B_n \rv^{-1}  \text{ and}
\] 
\[
\lv I_n - Df_{(\te_{-n-1},\te_{-n})}(c_n)\rv \lesssim |z_n-c_n| \lesssim \lv A_n \rv^{-1} 
\]
respectively. This implies that there exists some constant $c>0$ independent of $c_5$ such that 
\begin{align*}
    \lv A_{n+1} \cdot I_n \cdot A_n^{-1} - Id \rv & \le  \frac{c}{2}\lv A_n \rv^{-1}, \\
    \lv B_{n+1} \cdot I^{\uom}_n \cdot B_n^{-1} - Id \rv & \le \frac{c}{2}\lv B_n \rv^{-1}\qquad \text{ and } \\
    \lv A_{n+1} \cdot I_n \cdot A_n^{-1} -  B_{n+1}  \cdot I^{\uom}_n \cdot B_n^{-1} \rv & \le \lv I_n (A_n^{-1}  \cdot A_{n+1}  -  B_n^{-1}  \cdot B_{n+1})\rv + \lv (I_n - I^{\uom}_n) B_n^{-1}  \cdot B_{n+1} \rv \\  & \le \lv I_n \rv \cdot \lv Dg(c_{n+1}) - Dg^{\uom}(c^{\uom}_{n+1}) \rv +  \lv I_n - I^{\uom}_n \rv \cdot \lv Dg^{\uom}(c^{\uom}_{n+1}) \rv \\ & \le  c c_5\ro^{1+\frac{1}{2k}},
\end{align*}  
since the matrices $A_n^{-1}$, $B_n^{-1}$, $A_{n+1}$, $B_{n+1}$ and $I_n$ commute (they are all conformal). Therefore, defining $a_n \coloneqq \lv  k^{\ute,\uom}_{n}(z) - k^{\ute}_{n}(z) \rv $, for $n \ge m_0$
\begin{align*}
    a_{n+1} = & \lv  k^{\ute,\uom}_{n+1}(z) - k^{\ute}_{n+1}(z) \rv \\ 
        &  \le \lv A_{n+1} \cdot I_n \cdot A_n^{-1} \cdot  \left(k^{\ute,\uom}_{n}(z) - k^{\ute}_{n}(z)\right) \rv  +  \lv (A_{n+1} \cdot I_n \cdot A_n^{-1} - B_{n+1} \cdot I^{\uom}_n \cdot B_n^{-1}) \cdot  k^{\ute,\uom}_{n}(z)  \rv \\
        &  \le (1+c|A_n|^{-1}) \lv k^{\ute,\uom}_{n}(z) - k^{\ute}_{n}(z) \rv +c\min{\{c_5\ro^{1+\frac{1}{2k}}, \max{\{\lv A_n \rv^{-1} ,\lv B_n \rv^{-1} \}}\}} \\
        & =  (1+c|A_n|^{-1}) a_n +c\min{\{c_5\ro^{1+\frac{1}{2k}}, \max{\{\lv A_n \rv^{-1} ,\lv B_n \rv^{-1} \}}\}}.
\end{align*}

For $n \gtrsim -(1+\frac{1}{2k})\log \ro$ the minimum above is equal to $\max{\{\lv A_n \rv^{-1} ,\lv B_n \rv^{-1} \}}$, which decays exponentially. Up to such a value, the formula above implies that $a_n \lesssim a_{m_0}+ c_5\rho^{1+\frac{1}{2k}}( -\log \ro)$. Using the fact shown before that $a_{m_0}\lesssim c_5\ro^{1-\frac{1}{2k}}$ and choosing $\ro$ sufficiently small, it follows that the sequence $a_n$ is $\lesssim c_5\rho^{1-\frac{1}{2k}}$. Making $n \rightarrow \infty$ we conclude (iii).

(iv) By lemma \ref{simpleformula},
\[
 r^{\ute}_{\und{a}} = \diam (k^{\ute} \circ f_{\und{a}}(G(a_m))) = \diam (F^{\ute}_{\und{a}} \circ k^{\ute}( G(a_m))) = \lv DF^{\ute}_{\und{a}} \rv \diam (k^{\ute}( G(a_m))) 
\]
and the analogous relation is valid for the perturbed version. To show that $ \left| \log r^{\ute,\uom}_{\und{a}}-\log r^{\ute}_{\und{a}}\right| \le c_{18}c_5\,\ro^{1-\frac{1}{2k}}$ it is thus sufficient to prove that 
\[
\lv \frac{\lv DF^{\ute}_{\und{a}} \rv \diam (k^{\ute}( G(a_m)))}{\lv DF^{\ute, \, \uom}_{\und{a}} \rv \diam (k^{\ute,\uom}( G(a_m)))} - 1\rv \lesssim c_5\,\ro^{1-1/2k}.
\]
Notice that by (iii), 
\[
\diam (k^{\ute}( G(a_m))) - \diam (k^{\ute,\uom}( G(a_m))) \lesssim c_5\,\ro^{1-1/2k},
\]
and so, as these diameters are uniformly bounded away from zero, 
\[
\lv \frac{\diam (k^{\ute,\uom}( G(a_m)))  } { \diam (k^{\ute}( G(a_m))) } -1 \rv \lesssim c_5\,\ro^{1-1/2k}.
\]
This way, we are left with analysing the derivatives of the affine maps $F^{\ute}_{\und{a}}$ and $F^{\ute\,\und{a}, \uom}$. Also from lemma \ref{simpleformula},
\begin{align*} 
  DF^{\ute}_{\und{a}}  & = \lim_{n \rightarrow \infty} \left(Df_{\ute_n}(c_{\te_0})\right)^{-1} \cdot Df_{(\ute\und{a})_{n+m}}(c_{a_m}) \\
  DF^{\ute, \, \uom}_{\und{a}}  & = \lim_{n \rightarrow \infty} \left(Df^{\uom}_{\ute_n}(c^{\uom}_{\te_0})\right)^{-1} \cdot Df^{\uom}_{(\ute\und{a})_{n+m}}(c^{\uom}_{a_m}).
\end{align*}

To meet our objectives we need only to show that for all $n \ge 0$
\begin{equation}\label{r}
   \lv \frac{ \left(Df_{\ute_n}(c_{\te_0})\right)^{-1} \cdot Df_{(\ute\und{a})_{n+m}}(c_{a_m})}{\left(Df^{\uom}_{\ute_n}(c^{\uom}_{\te_0})\right)^{-1} \cdot Df^{\uom}_{(\ute\und{a})_{n+m}}(c^{\uom}_{a_m})} - 1 \rv \lesssim c_5\,\ro^{1-1/2k}.
\end{equation}

To each $\tb' \in \Sigma^{-}$ let $ n_0(\ute')$ be the largest integer $n$ such that $\lv Df_{\ute'_n}(c_{{\te'_0}}) \rv \ge c_0^{-1}(c_{0}^{\prime})^{-1}\rho^{1/k}$. The analysis of $C_n$ in (iii) implies that, uniformly on $\ute' \in \Sigma^{-}$, 
\[
\frac{\lv \left(Df^{\uom}_{\ute'_n}(c^{\uom}_{\te'_0})\right)^{-1} - \left(Df_{\ute'_n}(c_{\te'_0})\right)^{-1} \rv}{\lv Df_{\ute'_n}(c_{\te'_0} )\rv^{-1} } \lesssim c_5\,\ro^{1-1/2k}.
\]
for all $n \le n_0(\ute')$. This also implies that for all $n \le n_0(\ute')$
\[
\frac{\lv Df^{\uom}_{\ute'_n}(c^{\uom}_{\te'_0}) - Df_{\ute'_n}(c_{\te'_0})\rv}{\lv Df_{\ute'_n}(c_{\te'_0} )\rv } \lesssim c_5\,\ro^{1-1/2k}.
\]
Let us now show that these estimates remain valid for a much larger value of $n$, that is, $n_1=\ceil{\rho^{-1/k}}$ when $\rho$ is sufficiently small. Define for each $\ute' \in \Sigma^{-}$ and $n \ge 0$
\[
x_n(\ute') \coloneqq \frac{ Df^{\uom}_{\ute'_{n}}(c^{\uom}_{\te'_0})}{Df_{\ute'_n}(c_{\te'_0} )}, \quad  c^{\uom}_n(\ute') \coloneqq f^{\uom}_{\ute'_{n}}(c^{\uom}_{\te'_0}), \quad  c_n(\ute') \coloneqq f_{\ute'_{n}}(c_{\te'_0}).
\]
Notice that for $n \ge n_0(\ute')$, the points $c^{\uom}_n(\ute')$ and $c_n(\ute')$ are always on the same piece $G(\und{b})$, with $\und{b} \in \Sigma(\rho^{1/k})$, because of the definition of this number. Thus, in a neighborhood of $c^{\uom}_n(\ute')$, $f^{\uom}_{(\te_{-n-1}, \te_{-n})}$ is just $f_{(\te_{-n-1}, \te_{-n})}$ composed with a translation (see \eqref{f}), and therefore
\begin{align*}
    \lv \frac{x_{n+1}(\ute')}{x_n(\ute')} - 1 \rv & = \lv \frac{Df^{\uom}_{(\te_{-n-1}, \te_{-n})}(c^{\uom}_n(\ute'))}{Df_{(\te_{-n-1}, \te_{-n})}(c_n(\ute'))} - 1 \rv \\ & = \lv \frac{Df_{(\te_{-n-1}, \te_{-n})}(c^{\uom}_n(\ute')) - Df_{(\te_{-n-1}, \te_{-n})}(c_n(\ute')) }{Df_{(\te_{-n-1}, \te_{-n})}(c_n(\ute'))} \rv \lesssim c_5 \,\ro^{1+1/2k},
\end{align*}
because of (i) and the fact that the $f_{(\te_{-n-1}, \te_{-n})}$ are $C^\infty$ with uniformly bounded derivatives. It follows that for every $n$ such that $n_0(\ute') \le n \le n_1$
\[
\lv \frac{x_{n}(\ute')}{x_{n_0(\ute')}(\ute')} -1 \rv \lesssim c_5 \,\ro^{1+1/2k} \rho^{-1/k} \lesssim c_5 \,\ro^{1-1/2k}.
\]
Hence $\lv x_{n}(\ute') -1 \rv \lesssim c_5 \,\ro^{1-1/2k} $ for all $n \le n_1$, because $\lv x_{n}(\ute') -1 \rv \lesssim c_5 \,\ro^{1-1/2k}$ for all $n \le n_0(\ute')$ by the discussion above. Moreover, since $m \lesssim \log{\rho}$, the same estimations imply that $\lv x_{n}(\ute') -1 \rv \lesssim c_5 \,\ro^{1-1/2k} $ for all $n \le n_1 +m$ (this is the only part we use $\diam(G(\und{a})) \ge c_0^{-1}\rho$).

Observe that $n_1 \gg \max{\{n_0(\ute), n_0(\ute\und{a})\}} \approx \log{\rho}$ if $\rho $ is sufficiently small. The estimates above imply that 
\[
\lv \frac{ \left(Df_{\ute_n}(c_{\te_0})\right)^{-1} \cdot Df_{(\ute\und{a})_{n+m}}(c_{a_m})}{\left(Df^{\uom}_{\ute_n}(c^{\uom}_{\te_0})\right)^{-1} \cdot Df^{\uom}_{(\ute\und{a})_{n+m}}(c^{\uom}_{a_m})} - 1 \rv = \lv \frac{x_n(\ute)}{x_{n+m}(\ute\und{a})}- 1\rv \lesssim c_5 \,\ro^{1-1/2k},
\]
and so \eqref{r} is true for $n \le n_1$.

For $n \ge n_1$, let 
\[
y_n = \left(Df_{\ute_n}(c_{\te_0})\right)^{-1} \cdot Df_{(\ute\und{a})_{n+m}}(c_{a_m}) \quad \text{ and } \quad y^{\uom}_n = \left(Df^{\uom}_{\ute_n}(c^{\uom}_{\te_0})\right)^{-1} \cdot Df^{\uom}_{(\ute\und{a})_{n+m}}(c^{\uom}_{a_m}).
\]
Following this notation,
\[
\lv \frac{y_{n+1}}{y_n} -1 \rv= \lv \frac{Df_{(\te_{-n-1},\te_{-n})}(c_{n+m}(\ute\und{a}))}{Df_{(\te_{-n-1},\te_{-n})}(c_n(\ute))} -1 \rv \lesssim \lv c_{n+m}(\ute\und{a}) -  c_n(\ute) \rv \le \diam(G(\ute_n)) 
\]
and the analogous relation is valid for the perturbed versions. However, remember that there is $C>0$ such that $\diam(G(\ute_n)), \diam(G^{\uom}(\ute_n)) \le C \mu^{-n}$. This geometric control implies that there is a positive constant $C'$ such that
\[
\lv \frac{y_n}{y^{\uom}_n} - 1 \rv \le C'(\mu^{-n_1} + c_5 \rho^{1-1/2k}), 
\]
for every $n \ge n_1$. Since $ n_1 \ge \rho^{-1/k}$, if $\rho$ is sufficiently small, then $ \mu^{-n_1} \ll c_5\, \rho^{1-1/2k}$, and so the estimate \eqref{r} is valid for all $n \ge 0$, concluding the first part of the proof.

\end{proof}

\begin{remark}\label{rem:per}
Some remarks relating the perturbation:
\begin{itemize}
\item Note that since $\|f^{\uom}_{\und{a}}-f_{\und{a}}\|_{C^0}\lesssim c_5\rho^{1+1/2k}$ then, supposing $\rho$ is small enough, we have $G^{\uom}(\und{a})\subset V_{\rho}(G(\und{a}))$.

\item Remember that we assume the base points $c_a$, $a\in \mathbb{A}$, are pre-periodic. From this, it is easy to prove that the base points do not depend on $\uom$, i.e. $c^{\uom}_a=c_a$. Indeed, let $\und{\alpha}\und{\beta}\und{\beta}...\und{\beta}...$ be the symbolic sequence associated to the points $c^{\uom}_a$. Then we can write
\[c^{\uom}_a=\lim_{n\to \infty} f^{\uom}_{\und{\alpha}\und{\beta}^n}(x),\]
where $x$ is any element in $G(\beta)$, and $\beta$ is the last letter of $\und{\beta}$. Notice that if $n_0$ is big enough then $\und{\beta}^{n_0}$ contains a word of $\Sigma(\rho^{1/3k})$ repeated twice, thus any $\und{\gamma}\in \Sigma(\rho^{1/k})$ containing $\und{\beta}^{n_0}$ can not be in $\Sigma_1$. This implies that
\[f^{\uom}_{\und{\alpha}\und{\beta}^n}(x)=f_{\und{\alpha}\und{\beta}^{n-n_0}}(f^{\uom}_{\und{\beta}^{n_0}}(x)),\]
for all $n>n_0$. Making $n$ go to infinity we conclude that $c^{\uom}_a$ does not depend on $\uom$.
\item If $diam(G(\und{a}))\ge c_0^{-1}\rho$ then $|DF^{\tb}_{\und{a}}|\approx |DF^{\tb, \uom}_{\und{a}}|$ and using
\[DF^{\tb, \uom}_{\und{a}}=Dk^{\tb, \uom}(c_{\und{a}})\cdot Df^{\uom}_{\und{a}}(c_{a})\]
one sees that $ |Df_{\und{a}}(c_{a})|\approx |Df^{\uom}_{\und{a}}(c_{a})|$ and then $diam(G(\und{a}))\approx dim(G^{\uom}(\und{a}))$. On can arrive to a similar estimate if $diam(G(\und{a}))\geq c_0^{-1}\rho^{3}$, in this case we can decompose $\und{a}$ as a concatenation of at most $4$ words in $\Sigma(\rho)$ and use the fact that $diam(G(\und{a}_1\und{a}_2))\approx diam(G(\und{a}_1)) diam(G(\und{a}_2))$. Notice that with this approach the constants get worse if we increase the power of $\rho$ in which we are interested, $\rho^3$ will be enough for us.
\item Let $\uom_1, \uom_2 \in \Omega$ and $\tb\in \Sigma^-$, suppose that
\[f^{\uom_1}_{\tb_n}(z)=f^{\uom_2}_{\tb_n}(z)\]
for all $n\leq N$ and all $z$ in a neighborhood of $z_0$, and in a neighborhood of $c_{\theta_0}$. Remember that limit geometries are defined by $k^{\tb, \uom}=\lim_{n\to \infty} k^{\tb,\uom}_n$, where
\[k^{\tb, \uom}_n(z)=Df^{\uom}_{\tb_n}(c_{\theta_0})^{-1} (f^{\uom}_{\tb_n}(z)-f^{\uom}_{\tb_n}(c_{\theta_0})).\]
By our assumption we have  $k^{\tb, \uom_1}_n(z)=k^{\tb, \uom_2}_n(z)$, for all $n\leq N$ and $z$ in a neighborhood of $z_0$. From the proof of the existence of limit geometries (see \cite{AM}) one has that there is a constant $C$ such that
\[\|k^{\tb,\uom}-k^{\tb,\uom}_n\|\leq C diam(G^{\uom}(\tb_n))\]
and
\[ \|D(k^{\tb,\uom}\circ (k^{\tb,\uom}_n)^{-1})-Id\| \leq C diam(G^{\uom}(\tb_n)),\]
the same constant $C$ works for all Cantor sets $K^{\uom}$, since they depend continuously on $\uom$.
It follows easily that there is a constant $C'$ such that
\[|k^{\tb,\uom_1}(z)-k^{\tb,\uom_2}(z)|\leq C' diam(G^{\uom_1}(\tb_N))\]
and
\[|Dk^{\tb,\uom_1}(z)-Dk^{\tb,\uom_2}(z)|\leq C' diam(G^{\uom_1}(\tb_N)),\]
for all $z$ in a neighborhood of $z_0$. Notice that since $diam(G^{\uom_j}(\tb_n))\approx |Df^{\uom_j}_{\tb_n}(c_{\theta_0})|$, $j=1,2$, and $Df^{\uom_1}_{\tb_N}(c_{\theta_0})=Df^{\uom_2}_{\tb_N}(c_{\theta_0})$ then $diam(G^{\uom_1}(\tb_N))\approx diam(G^{\uom_2}(\tb_N))$.
\end{itemize}
\end{remark}

\section{Proof of theorem \ref{thm:main2}}\label{sec:proof}

In this section we will define the set of relative configurations $\LL = \LL_{\uom}$, which will be a recurrent compact set for at least one of the Cantor sets in the family of random perturbations. We first give a primary description of $\LL$ and prove that assuming a probabilistic estimate, proposition \ref{prop:probes}, then we can prove theorem \ref{thm:main2}. The proof of the probabilistic estimate will be given in later sections.  

\subsection{The recurrent compact set}

The set $\LL = \LL_{\uom}$ will depend on $\uom$, but only the translation
coordinate $t$. The image of $\LL_{\uom}$ under the projection
map: $\CC \to \mathcal{S}$ will be a subset $\tilde{\LL}$ of $\mathcal{S}$ independent of
$\uom$.

We will choose a subset of $\Sigma^-$ with good combinatorial properties, this will be crucial to prove the estimate of proposition \ref{prop:probes}. First, let $\Sigma_{nr}(\rho^3)$ be the subset of $\Sigma(\rho^3)$ formed by words
$\und{a}$ such that:

\begin{enumerate}

\item no word $\und{b}\in\Sigma(\rho^{1/3k})$ appears twice in $\und{a}$;
\item if $\und{c} \in \Sigma(\rho^{1/6k})$ appears at the end of
$\und{a}$, then it does not appear elsewhere in $\und{a}$.
\end{enumerate}

We next define $\Sigma_{nr}^-$ as the subset of $\Sigma^-$ formed by
$\tb$ which end with a word in $\Sigma_{nr}(\rho^3)$. This is an open and
closed subset in $\Sigma^-$.

A family of subsets $E(\und{a}, \und{a}')$ of $J_R$\,, for $(\und{a}, \und{a}')
\in \Sigma(\rho^{1/2}) \times \Sigma'(\rho^{1/2})$ will be constructed
in subsection \ref{sec:gscl}, in relation to Marstrand's theorem, and it will satisfy the
hypothesis
$$
Leb(J_R \setminus E(\und{a}, \und{a}')) \le c_1\,, \quad \forall(\und{a}, \und{a}').
$$
Then, the Scale recurrence Lemma gives us another family $E^*(\und{a}, \und{a}')$, $(\und{a}, \und{a}') \in
\Sigma(\rho^{1/2}) \times \Sigma'(\rho^{1/2})$, with the properties indicated in the statement of the lemma.

The set $\widetilde{\LL}$ is defined to be the subset of $\mathcal{S}_R$ formed by the
$(\tb, \tb', s)$ such that $\tb \in \Sigma_{nr}^-$\,, and
there exists $\und{a} \in \Sigma(\rho^{1/2})$, $\und{a}' \in \Sigma'(\rho^{1/2})$
with $s \in E^*(\und{a}, \und{a}')$ and $\tb$, $\tb'$ ending with
$\und{a}$, $\und{a}'$ respectively.

For every $(\tb, \tb', s)$ in
$\widetilde{\LL}$, we will define in subsection \ref{sec:L0}, considering the properties given by Marstrand's theorem, a non empty subset
$L_{\uom}^0(\tb, \tb',s)$, depending on $\uom \in \Om$, of the fiber of $\CC$ over $(\tb, \tb',s)$.

Let
$$
\LL_{\uom}^0 = \{(\tb, \tb', s,t):\, (\tb, \tb',s) \in \widetilde{\LL}, t \in L_{\uom}^0(\tb, \tb',s)\};
$$
consider next the $\rho$-neighbourhood $\LL_{\uom}^1$ of $\LL_{\uom}^0$ in $\widetilde{\LL} \times \C$:
\begin{align*}
&\quad\LL_{\uom}^1 = \{(\tb, \tb',s,t):\, (\tb, \tb',s) \in \widetilde{\LL} \text{ and } \exists\, (\tb_0, \tb_0',s_0,t_0) \in \LL_{\uom}^0\\
&\text{with } d(\tb, \tb_0) < 2\rho^{5/2}, d(\tb', \tb_0') < 2\rho^{5/2}, |s-s_0| < \rho, |t-t_0| < \rho\}. 
\end{align*}

Fix $u = (\tb, \tb',s,t) \in \widetilde{\LL}\times\C$.
We define two subsets $\Om^0(u)$, $\Om^1(u)$ of $\Om$. First,
$$
\Om^1(u) = \{\uom \in \Om:\, (\tb, \tb',s,t) \in
\LL_{\uom}^1\}.
$$
Second, $\Om^0(u)$ is the set of $\uom \in \Om$ such that there exists $\und{b} \in \Sigma(\rho)$, $\und{b}' \in \Sigma'(\rho)$, with $b_0 = \theta_0$, $b_0' = \theta_0'$ and the image $T^{\uom}_{\und{b}}T'_{\und{b}'}(u) = (\tilde{\tb}, \tilde{\tb}',\tilde{s}, \tilde{t}$) satisfies:
\begin{itemize}
\item[(i)] for any $\tilde{s}_1$ with $|\tilde{s}-\tilde{s}_1| < \dfrac 12 c_4 \rho^{1/2}$, we have $(\tilde{\tb}, \tilde{\tb}', \tilde {s}_1) \in \widetilde{\LL}$;
\item[(ii)] $\tilde{t} \in L_{\uom}^0(\tilde{\tb},
\tilde{\tb}',\tilde{s})$.
\end{itemize}
The following crucial estimate will be proven in section \ref{sec:proofprobes}.

\vglue .2in

\begin{proposition}\label{prop:probes}
Assume that $c_5$ is chosen conveniently large. Then there exists $c_7 > 0$, such that, for any $u \in {\widetilde\LL}\times\C$, one has
$$
\bp(\Om^1(u) \setminus \Om^0(u)) \le \exp\big(-c_7\rho^{-\frac{1}{2k}(d+d'-2)}\big).
$$
\end{proposition}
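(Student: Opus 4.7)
The plan is to adapt the probabilistic strategy of Moreira--Yoccoz to the conformal setting. The event $\Om^1(u) \setminus \Om^0(u)$ consists of those $\uom$ for which $u$ lies within $\rho$ of $\LL^0_{\uom}$ yet no renormalization $T^{\uom}_{\und{b}} T'_{\und{b}'}(u)$ with $\und{b} \in \Sigma(\rho)$, $\und{b}' \in \Sigma'(\rho)$ returns to $\LL^0_{\uom}$ in the sense of conditions (i)--(ii). I will produce many almost independent candidate pairs $(\und{b},\und{b}')$, each enjoying a uniform positive probability of success, and use a product estimate for the exponential bound.

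First, combining the definition of $\widetilde{\LL}$ with Lemma \ref{lem:scl} applied at scale $\rho^{1/2}$, I would identify a family $\mathcal{F}$ of pairs $(\und{b}, \und{b}') \in \Sigma(\rho) \times \Sigma'(\rho)$, of cardinality $\gtrsim \rho^{-(d+d')}$, such that condition (i) of $\Om^0(u)$ holds deterministically for every $\uom$: the scale coordinate of $T^{\uom}_{\und{b}}T'_{\und{b}'}(u)$ stays $\tfrac{1}{2}c_4\rho^{1/2}$-inside some $E^*(\cdot,\cdot)$ indexed by $\rho^{1/2}$-endings of $\tb\und{b}$, $\tb'\und{b}'$. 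Lemma \ref{lem:plg}(iv), controlling $DF^{\tb,\uom}_{\und{b}}$ up to an error of $c_{18}c_5\rho^{1-1/2k}$, together with the $c_2\rho$-neighbourhood stability of $E^*$, ensures this uniformly in $\uom$.

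The key combinatorial step is to extract a subfamily $\mathcal{F}' \subset \mathcal{F}$ of size $\gtrsim \rho^{-(d+d'-2)/(2k)}$ on which the events ``condition (ii) holds for $(\und{b},\und{b}')$'' are conditionally independent. From \eqref{f}, the coordinate $\uom(\und{c})$ influences $f^{\uom}_{\und{b}}$ only when $\und{c}\in\Sigma_1 \subset \Sigma(\rho^{1/k})$ occurs as a subword of $\tb\und{b}$; the combinatorial properties defining $\Sigma_{nr}^-$ (no word in $\Sigma(\rho^{1/3k})$ repeating in the final $\Sigma_{nr}(\rho^3)$-segment of $\tb$, with the analogue for $\tb'$) and a pigeonhole argument on the $\Sigma_1$-subwords of $\und{b}$ allow us to pick disjoint perturbation signatures for pairs in $\mathcal{F}'$. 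For each $(\und{b}, \und{b}') \in \mathcal{F}'$, the Marstrand-type construction of $L^0_{\uom}$ in subsection \ref{sec:L0}, together with Lemmas \ref{lem:plg} and \ref{simpleformula}, implies that varying a single distinguished coordinate $\uom(\und{c}_{\und{b},\und{b}'})$ through the unit disk translates $\tilde{t}$ through a complex disk of radius $\asymp c_5\rho$, and this disk intersects $L^0_{\uom}(\tilde{\tb}, \tilde{\tb}', \tilde{s})$ in a set of conditional probability at least $p_0 > 0$, uniformly in $\rho$, provided $c_5$ is chosen sufficiently large.

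The conclusion then follows: $\bp(\Om^1(u) \setminus \Om^0(u)) \le (1-p_0)^{|\mathcal{F}'|} \le \exp\bigl(-c_7\rho^{-(d+d'-2)/(2k)}\bigr)$. The main obstacle I expect will be extracting $\mathcal{F}'$ with both the correct size and the required disjoint-support structure. Unlike in \cite{MY}, every perturbation coordinate $\uom(\und{c})$ moves the pair $(\tilde{s}, \tilde{t})$ simultaneously through an affine action (see \eqref{eq:formularenorma}), so the scale-effect must be disentangled from the translation-effect; fortunately Lemma \ref{lem:plg}(iv) shows the scale effect is of order $c_5\rho^{1-1/2k}$, hence absorbed in the $c_2\rho$-thickness of $E^*$, leaving only the translation effect to be treated by Fubini on $\Om$. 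The hypothesis $d + d' > 2$ is used precisely to guarantee that $|\mathcal{F}'|$ is large enough for the product estimate to yield an exponentially small bound.
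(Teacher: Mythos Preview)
Your high-level plan is correct---many nearly independent trials, each with uniform positive success probability, yielding an exponential bound---but two structural pieces of the paper's argument are missing, and without them the proof does not go through.

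First, you never use the hypothesis $\uom\in\Om^1(u)$. In the paper the $N=[c_{16}^2\rho^{-(d+d'-2)/2k}]$ pairs $(\und{b}^i,\und{b}^{\prime i})$ are \emph{not} constructed from scratch via the scale recurrence lemma; they are handed to you for free by the definition of $L^0$. Indeed, $\uom\in\Om^1(u)$ means $u$ is $\rho$-close to some $(\tilde\tb,\tilde\tb',\tilde s,\tilde t)\in\LL^0_{(\und{0},\uom'')}$, and the very definition of $L^0$ says that from this point there exist $N$ pairwise independent pairs satisfying (i)--(iv). Because $\tb\in\Sigma^-_{nr}$, each $\und{b}^i$ determines a distinct word $\und{a}^i\in\Sigma_1(\tb)$ (the concatenation of the terminal $\und{a}\in\Sigma(\rho^{1/2k})$ of $\tb$ with the beginning of $\und{b}^i$), and it is the single coordinate $\om_i=\uom(\und{a}^i)$ that controls the renormalized translation $t_i$. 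Your proposed pigeonhole extraction of $\mathcal{F}'$ from a large $\mathcal{F}$ is both unnecessary and incomplete: without the boundedness (iv) already built into $L^0$, there is no reason a pair from your $\mathcal{F}$ produces a $\tilde t$ near the support of $L^0$.

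Second, the positive conditional probability $p_0$ requires Proposition~\ref{prop:bigL}, which you do not invoke. The target for $\tilde t_i$ is $L^{-1}_{\hat\uom}(\tb^i,\tb^{\prime i},s_i(\hat\uom))$, and Proposition~\ref{prop:bigL} is exactly the statement that this set has Lebesgue measure $>c_{17}$ independently of $\uom$. Your appeal to ``the Marstrand-type construction'' is too vague; that construction is what underlies Proposition~\ref{prop:bigL}, not the present proposition. Two further points: (a) the definition of $L^0_{\uom}$ uses $\uom^*=(\und{0},\uom'')$, so the target set is actually independent of $\uom'$---this is what makes Fubini clean and what the paper exploits via Lemmas~\ref{lem:si} and~\ref{lem:tib}, passing through an auxiliary $\overline\tb$ in which $\und{a}$ occurs only once so that $\overline t_i$ depends on $\om_i$ alone; (b) your scaling is off: the derivative $|D_{\om_i}\overline t_i|$ is $\asymp c_5$, not $\asymp c_5\rho$, because the $(DF^{\tb'}_{\und{b}^{\prime i}})^{-1}$ factor contributes $\rho^{-1}$, the $(DF^{\hat\tb}_{\und{a}})^{-1}$ factor contributes $\rho^{-1/2k}$, and the perturbation size is $c_5\rho^{1+1/2k}$. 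Choosing $c_5$ large then makes the image of $\om_i\mapsto\overline t_i(\om_i)$ cover the bounded set $L^{-1}$ (Lemma~\ref{lem:cover}), which is how $c_5$ enters.
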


Using the previous estimate, we will prove that for some $\uom$ the pair $K^{\uom},\, K$ has a recurrent compact set, thus obtaining theorem \ref{thm:main2}. We proceed as in \cite{MY}. We discretize the set $\tilde\LL\times \{t:|t|<4(1+e^R)\}$, i.e. we choose subsets $\Delta_i$, $i\in I$, such that
\[\tilde\LL\times \{t:|t|<4(1+e^R)\}= \bigcup_{i\in I} \Delta_i,\]
and for all $(\tb_1,\tb'_1,s_1,t_1),\,(\tb_2,\tb'_2,s_2,t_2)\in \Delta_i$ one has
\[d(\tb_1,\tb_2)< \rho^{5/2},\,d(\tb'_1,\tb'_2)< \rho^{5/2},\, |s_1-s_2|< \rho^3,\, |t_1-t_2|< \rho^3.\]
It is not difficult to see that this can be done in such a way that $\# I$ is polynomial in $\rho^{-1}$. For each $i\in I$, choose $u_i\in \Delta_i$. If $\rho$ is small enough, we have
\[\mathbb{P}(\cup_{i\in I} \Om^1(u_i) \setminus \Om^0(u_i))\leq \# I\cdot \exp\left(-c_7\rho^{-\frac{1}{2k}(d+d'-2)}\right)<1.\]
Therefore, the set $\cup_{i\in I} \Om^1(u_i) \setminus \Om^0(u_i)$ is not the whole $\Omega$ and we can choose $\uom_0$ outside of it. Observe that for all $i\in I$ we have that $\uom_0\in \Om^1(u_i)$ implies $\uom_0\in \Om^0(u_i)$. Now define
\begin{align*}
\LL_{\uom_0}=\{&(\tb,\tb',s,t)\in \tilde\LL\times \C:\,\exists (\tilde\tb,\tilde\tb',\tilde s,\tilde t)\in \LL^0_{\uom_0}\\
&\text{with } d(\tb,\tilde\tb)\leq \rho^{5/2},\,d(\tb',\tilde\tb')\leq \rho^{5/2},\, |s-\tilde s|\leq \rho/2,\, |t-\tilde t|\leq \rho/2\}.
\end{align*}
We will prove that $\LL_{\uom_0}$ is a recurrent compact set for $K^{\uom_0},\, K'$. First, notice that $\LL^0_{\uom_0}\subset \LL_{\uom_0}\subset \LL^1_{\uom_0}$. Then $\LL_{\uom_0}$ is not empty. Now, let $u=(\tb,\tb',s,t)\in \LL_{\uom_0}$, we have that there is $(\tilde\tb,\tilde\tb',\tilde s,\tilde t)\in \LL^0_{\uom_0}$ with the properties in the definition of $\LL_{\uom_0}$. Since $\tilde t\in L^0_{\uom_0}(\tilde\tb,\tilde\tb',\tilde s)$ then $|\tilde t|\leq 3 (1+e^R)$ (this is clear from the definition of $L^0$ in subsection \ref{sec:L0} and equation \eqref{eq:formularenorma}). Thus $|t|< 4(1+e^R)$ and $u\in \Delta_i$ for some $i\in I$. For $u_i=(\tb_i,\tb'_i,s_i,t_i)$ we have that
\[d(\tb,\tb_i)< \rho^{5/2},\,d(\tb',\tb'_i)< \rho^{5/2},\, |s-s_i|< \rho^3,\, |t-t_i|< \rho^3.\]
Therefore $u_i\in \LL^1_{\uom_0}$ and $\uom_0\in \Om^1(u_i)$, this implies that $\uom_0\in \Om^0(u_i)$ and there is some pair $(\und{b},\und{b}')\in \Sigma(\rho)\times \Sigma'(\rho)$ such that $T^{\uom_0}_{\und{b}}T'_{\und{b}'}(u_i) = (\tilde{\tb}_i, \tilde{\tb}'_i,\tilde{s}_i, \tilde{t}_i)$ satisfies the properties (i) and (ii) described above.

We will prove that $T^{\uom_0}_{\und{b}}T'_{\und{b}'}(u)$ is in the interior of $\LL_{\uom_0}$. Write $T^{\uom_0}_{\und{b}}T'_{\und{b}'}(u)=(\tb\und{b},\tb'\und{b}',\hat{s},\hat{t})$, using equation \eqref{eq:formularenorma} we have
\[\tilde{t}_i= \left(DF^{\tb'_i}_{\und{b}^{\prime}}\right)^{-1}\cdot (t_i+s_i c^{\tb_i,\uom_0}_{\und{b}}-c^{\tb'_i}_{\und{b}^{\prime}}),\text{ and } \hat{t}= \left(DF^{\tb'}_{\und{b}^{\prime}}\right)^{-1}\cdot (t+s c^{\tb,\uom_0}_{\und{b}}-c^{\tb'}_{\und{b}^{\prime}}).\]
Therefore $|\tilde{t}_i-\hat t|\lesssim \rho^{3/2}$. Analogously one has
\[\hat s= \frac{DF^{\tb,\uom_0}_{\und{b}}}{DF^{\tb'}_{\und{b}'}} \cdot s,\text{ and } \tilde{s}_i= \frac{DF^{\tb_i,\uom_0}_{\und{b}}}{DF^{\tb'_i}_{\und{b}'}} \cdot s_i.\]
In this case we get $|\tilde{s}_i-\hat s|\lesssim \rho^{5/2}$. One also has $d(\tb\und{b},\tilde\tb_i)\lesssim \rho^{7/2}$ and $d(\tb'\und{b}',\tilde{\tb}'_i)\lesssim \rho^{7/2}$. 

Thanks to property (ii), we know that $(\tilde{\tb}_i, \tilde{\tb}'_i,\tilde{s}_i, \tilde{t}_i)\in \LL^0_{\uom_0}$. Moreover, for any $(\und\eta,\und\eta',r,x)$ such that
\[d(\und\eta,\tb\und{b})<\rho^{7/2},\, d(\und\eta',\tb'\und{b}')<\rho^{7/2},\,|r-\hat s|<\rho/4,\,|x-\hat t|<\rho/4,\]
we have
\[d(\und\eta,\tilde\tb_i)\leq \rho^{5/2},\,d(\und\eta',\tilde\tb'_i)\leq \rho^{5/2},\, |r-\tilde s_i|\leq \rho/2,\, |x-\tilde t_i|\leq \rho/2.\]
To conclude that $(\und\eta,\und\eta',r,x)\in \LL_{\uom_0}$ we only need to show that $(\und\eta,\und\eta',r)\in \tilde\LL$. Property (i) above implies that $(\tilde\tb_i,\tilde\tb'_i,r)\in \tilde\LL$, by the definition of the set $\tilde\LL$ this means that $\tilde\tb_i\in \Sigma^-_{nr}$ and $r\in E^*(\und{a},\und{a}')$ for a pair $(\und{a},\und{a}')$ in $\Sigma(\rho^{1/2})\times \Sigma'(\rho^{1/2})$ such that $(\tilde\tb_i,\tilde\tb'_i)$ ends in it. However, since $d(\und\eta,\tilde\tb_i)\lesssim \rho^{7/2},\,d(\und\eta',\tilde\tb'_i)\lesssim \rho^{7/2}$ then $\und\eta\in \Sigma^-_{nr}$ and $(\und\eta,\und\eta')$ also ends in $(\und{a},\und{a}')$. Therefore $(\und\eta,\und\eta',r)\in \tilde\LL$ and $(\und\eta,\und\eta',r,x)\in \LL_{\uom_0}$, which shows that $T^{\uom_0}_{\und{b}}T'_{\und{b}'}(u)$ is in the interior of $\LL_{\uom_0}$. From the fact that the sets $E^*(\und{a},\und{a}')$ are closed and the definition of the sets $L^0_{\uom}$, it is not difficult to prove that $\LL_{\uom_0}$ is a compact set. Therefore, $\LL_{\uom_0}$ is a recurrent compact set for the pair $K^{\uom_0},\,K'$.

\subsection{Set of good scales}\label{sec:gscl}

In this subsection we will define the sets $E(\und{a},\und{a}')$ which we use to construct $\ti{\LL}$. Let $(\tb, \tb',s)$ in the space of relative scales, and points $x \in K(\theta_0)$, $x' \in K'(\theta_0')$. Consider
\begin{align*}
\lambda &= \pi_{\tb,\tb',s}(x,x'):= k^{\tb'}(x') - sk^{\tb}(x).
\end{align*}
Then $(\tb, \tb',s,\lambda)$ is the unique relative configuration above $(\tb, \tb',s)$ such
that
$$
A(k^{\tb}(x)) = A'(k^{\tb'}(x')),
$$
(where $(\tb,A), (\tb',A')$ represents this relative configuration).

Remember that, for some previously fixed $R>0$ (given by the scale recurrence lemma),
\[J_R=\{s\in J: e^{-R}\le |s|\le e^R\}\]
and $\si_R=\Sigma^{-} \times {\Sigma'}^- \times J_R$.

Let $d$, $d'$ be the Hausdorff dimension of $K$, $K'$, respectively. We equip each set $K(\theta_0)$ (resp. $K'(\theta_0'))$ with
the $d$-dimensional (resp. $d'$-dimensional) Hausdorff measure $\mu_d$ (resp. $\mu_{d'}$).

Then, for $(\ute,\ute',s) \in \si$, we denote by
$\mu(\ute,\ute',s)$ the image under $\pi_{\ute,\ute',s}$ of
$\mu_d \times \mu_{d'}$ on $K(\te_0) \times K'(\te_0')$.

As in the theory of Cantor sets in the real line, there are constants $c_{11} > c_{10} > 0$ such that,
for $\te_0 \in \A$, $\te_0' \in \A'$:
$$
c_{10} < \mu_d \times \mu_{d'}(K(\te_0) \times K'(\te_0')) < c_{11}\,.
$$

This can be proven using the results appearing in Zamudio's thesis (\cite{Z}). Indeed, for a given conformal Cantor set $K$ of dimension $d$, using equation \eqref{eq:sigrho}, one can find a sequence of coverings with size converging to zero and $d$-volume bounded by $Cc_0^d$, namely the covering by the pieces of $\Sigma(\rho)$, showing that $\mu_d(K)<\infty$. 

On the other hand, lemma 1.2.3 of \cite{Z} gives that there exist constants $C_1>0$, $L>0$, $r_0>0$, independent of $\rho$, such that for all $\und{a}\in \Sigma(\rho)$
\[C_1^{-1}\left( \frac{\rho}{r}\right)^{-d}\leq \#\{\und{b}\in \Sigma(\rho): \frac{r}{l}\leq d(G(\und{a}),G(\und{b})) < r\} \leq C _1\left( \frac{\rho}{r}\right)^{-d},\]
for all $l\geq L$, $r<r_0$. Using this lemma, we conclude that given a finite cover of $K$ by balls $U_i$ of radii $r_i>0$, $i=1,\dots,n$, each $U_i$ intersects at most $C_1\rho^{-d}r_i^d$ pieces $G(\und{a})$ of $\Sigma(\rho)$ if $\rho$ and $r_i$ are sufficiently small. Since $U_i$ is a cover and $\#\Sigma(\rho)>C^{-1}\rho^{-d}$, summing for all $i$ yields:
$$C_1\rho^{-d}\left(\sum r_i^d\right)\geq C^{-1}\rho^{-d}$$
and so $\sum r_i^d$ (and $\mu_d(K)$) is always bounded from zero. To obtain the statement just restrict the arguments to $K(\te_0)$ and ${K'}(\te'_0)$ and take their product. 

Notice that the same lemma 1.2.3 implies that there is a constant $c>0$ such that for $\mu:=\mu_d\times\mu_{d'}$, the product measure in $\C^2$,
$$ \mu(B(x,r))<cr^{d+d'} $$
for any ball of radius $r>0$. If $d+d'>2$ this condition implies that:
    $$I_2(\mu):=\int\int|u-v|^{-2}d\mu(u)d\mu(v)< \infty$$
This way, the proof of Theorem 9.7 in Mattila's book (a Marstrand-type theorem) \cite{mattila} can be adapted\footnote{all one needs to verify is that for any points $u,v \in \C^2$, $Leb(\{s: s \in J_R,\, |\pi_{\tb,\tb',s}(u)-\pi_{\tb,\tb',s}(v)|< \delta\})<c\delta^2|u-v|^{-2}$, where 
%$m$ is Lebesgue measure on $J$ and 
$c>0$ is some constant depending only on $R$. Notice that $\pi_{\tb,\tb',s}=\pi_s\circ F_{\tb,\tb'} $, where $\pi_s(u)=u_2-s\cdot u_1$ for $u=(u_1,u_2)$ and $F_{\tb,\tb'}=(k^\tb,k^{\tb'})$ are diffeomorphisms that distort area in a uniformly bounded way. A simple manipulation shows that the measure is bounded above by $c\delta^{2}|u_1-v_1|^{-2}$. If $|u_1-v_1|\le \delta$ then $\delta^2|u-v|^{-2}>\ti{c}$ for some constant $\ti{c}$ and the desired inequality follows choosing $c$ big enough. If $|u_1-v_1|> \delta$, using that $s$ is in $J_R$ a bounded set, one sees that $|u_1-v_1|^{-1}$ is bounded by $c_R|u-v|^{-1}$, for some constant $c_R$ depending on $R$.  } to our context to show that for fixed $(\ute, \ute')$ the measure $\mu(\ute,\ute',s)$ is absolutely continuous with respect to the Lebesgue measure for Lebesgue almost every $s$, with density $\chi_{\ute,\ute',s}$ in $L^2$ satisfying
$$
\int_{J_R} \| \chi_{\ute,\ute',s}\|_{L^2}^2\,ds \le c_{12}(R),
$$
where $c_{12}(R)$ is independent of $\ute$, $\ute'$.

When one controls $\|\chi_{\ute,\ute',s}\|_{L^2}$\,, this gives, by Cauchy-Schwarz inequality, a lower bound for the Lebesgue measure of $\pi_{\ute,\ute',s}(X)$,\,\, $X$ being a subset of $K \times K'$ with positive $(d+d')$-dimensional Hausdorff measure; indeed we have:
\begin{align*}
\mu_d\times\mu_{d'}(X) &\leq \int_{\pi_{\tb,\tb',s}(X)}
\chi_{\tb,\tb',s}(t)\,dt\\
&\le Leb(\pi_{\tb,\tb',s}(X))^{1/2} 
\| \chi_{\ute,\ute',s}\|_{L^2}
\end{align*}
and therefore
\begin{equation}\label{eq:cauchy}
Leb(\pi_{\ute,\ute',s}(X)) \ge (\mu_d\times\mu_{d'}(X))^2 
\| \chi_{\ute,\ute',s}\|_{L^2}^{-2}\,.
\end{equation}

Fix $(\ute,\ute')$ in $\Sigma^- \times
\Sigma^{\prime -}$. Let $\und{a} \in \Sigma(\rho^{1/2k})$, $\und{a}' \in
\Sigma'(\rho^{1/2k})$, with $a_0 = \te_0$\,, $a_0' = \te_0'$\,. One has
$$
T_{\und{a}} T'_{\und{a}'}(\ute,\ute',s) = \left(\ute\und{a}, \ute' \und{a}',
s\cdot DF^{\tb}_{\und{a}}/DF^{\tb'}_{\und{a}'} \right)
$$
and
$$
c_{13}^{-1} \le \frac{|DF^{\tb}_{\und{a}}|}{|DF^{\tb'}_{\und{a}'}|} \le
c_{13}\,.
$$

We therefore have
$$
\int_{J_R} \lVert \chi_{{T_{\und{a}}
T'_{\und{a}'}(\ute,\ute',s)}}\rVert_{L^2}^2 \,ds \le c_{12}'(R),
$$
with $c_{12}'(R)$ independent of $\ute$, $\ute'$, $\und{a}$,
$\und{a}'$. On the other hand, one has

\begin{align*}
   & \#\,\Sigma(\rho^{1/2k}) \le c_{14}\,\rho^{-d/2k}\,,\\
   & \#\,\Sigma'(\rho^{1/2k}) \le c_{14}\,\rho^{-d'/2k}\,. 
\end{align*}

We conclude that
$$
\int_{J_R} \sum_{\und{a},\und{a}'} \| \chi_{{T_{\und{a}}
T'_{\und{a}'}(\ute,\ute',s)}}\|_{L^2}^2 \,ds \le
c_{14}^2\,\rho^{-\frac{d+d'}{2k}}\, c_{12}'(R).
$$
We now define, with $c_{15} > 0$ conveniently large to be determined later:

\begin{align*}
    E(\ute,\ute') &= \bigg\{s \in J_R:\, \|
\chi_{\ute,\ute',s}\|_{L^2}^2  \le c_{15}\\
&\quad\text{and }\quad \sum_{\und{a},\und{a}'} \| \chi_{{T_{\und{a}}
T'_{\und{a}'}(\ute,\ute',s)}}\|_{L^2}^2 \le c_{15}\,\rho^{-
\frac{d+d'}{2k}}\bigg\}.
\end{align*}

For $\und{c} \in \Sigma(\rho^{1/2})$, $\und{c}' \in \Sigma'(\rho^{1/2})$, we
define $E(\und{c}, \und{c}')$ as the set of $s \in J_R$ such that there exists
$\ute$, $\ute'$ ending respectively with $\und{c}$, $\und{c}'$ such
that $s \in E(\ute, \ute')$.

One has, for any $\ute \in \Sigma^-$, $\ute' \in \Sigma^{\prime -}$:
$$
Leb(J_R \setminus E(\ute,\ute')) \le c_{15}^{-1}(c_{12}(R) +
c_{14}^2\,c_{12}'(R));
$$
therefore, provided that
$$
c_{15} > c_1^{-1}(c_{12}(R) + c_{14}^2\,c_{12}'(R)),
$$
we will have
$$
Leb(J_R \setminus E(\und{c},\und{c}')) \le c_1
$$
for all $\und{c} \in \Sigma(\rho^{1/2})$, $\und{c}' \in \Sigma'(\rho^{1/2})$. 
This means that we can apply the Scale recurrence Lemma with the family
$E(\und{c},\und{c}')$ of subsets of $J_R$. The sets $E^*(\und{c},\und{c}')$ are then defined using this lemma (see section \ref{se:scrl}), we can assume they are closed, this is justified in the remark after the lemma.

\subsection{Construction of $L^0_{\uom}$}\label{sec:L0}

We now consider the family of random perturbations $g^{\uom}$ again and proceed to construct the sets $L_{\uom}^0(\ute,\ute',s)$,
for $(\ute,\ute',s) \in \tilde\LL$. For $\und{a} \in
\Sigma(\ro^{1/2k})$, let $\Sigma^-(\und{a})$ be the open and closed subset of $\Sigma^-$ formed by the $\ute$ ending with $\und{a}$. Choose a subset $\Sigma_2^-$ of $\Sigma(\ro^{1/2k})$ such that
$$
\Sigma^- = \bigcup_{\Sigma_2^-} \Sigma^-(\und{a})
$$
is a partition of $\Sigma^-$.

For $\und{a} \in \Sigma_2^-$, define a subset $\Sigma_1(\und{a})$ of the subset $\Sigma_1$ (recall $\Sigma_1 \subset \Sigma(\rho^{1/k})$), as the set of words in $\Sigma_1$ starting with $\und{a}$. For $\ute \in \Sigma^-(\und{a})$, we also define $\Sigma_1(\ute) = \Sigma_1(\und{a})$.

Let $\ute \in \Sigma^-$. We write
\begin{align*}
\Om &= [-1,+1]^{\Sigma_1(\ute)} \times [-1,+1]^{\Sigma_1 \setminus
\Sigma_1(\ute)},\\
\uom &= (\uom',\uom'')\\
\intertext{and for such an $\uom$, we set}
\uom^* &= (\und{0},\uom'').
\end{align*}
This depends on $\ute$, but nearby $\und{\widehat\te}$ (with
$d(\ute, \und{\widehat\te}) < c_0^{-1}\,\ro^{1/2k}$) will belong to the same $\Sigma^-(\und{a})$ and give the same projection $\uom^*$ of $\uom$.

For $(\ute,\ute',s) \in \tilde\LL$, the set
$L_{\uom}^0(\ute,\ute',s)$ will actually only depend (as far as $\uom$ is concerned) on the projection $\uom^*$ of $\uom$ associated to $\ute$.

We will say that two words $\und{b}^0,\und{b}^1 \in \Sigma(\ro)$ are {\it independent\/} if there is no word $\und{b} \in \Sigma(\ro^{1/2k})$ such that both $\und{b}^0$ and $\und{b}^1$ start with $\und{b}$.

With $c_{16} > 0$ conveniently small, to be chosen in the following, let
$$
N = \bigg[c_{16}^2\,\ro^{-\frac{1}{2k}(d+d'-2)}\bigg].
$$

Let $(\ute,\ute',s) \in \tilde\LL$ and $\uom \in \Om$.

We define $L_{\uom}^0(\ute,\ute',s)$ to be the set of points $(\ute,\ute',s,t)$ in the fiber for which there exist pairs $(\und{b}^1,\und{b}^{\prime 1}),\dots,(\und{b}^N,\und{b}^{\prime N})$ in $\Sigma(\ro) \times \Sigma'(\ro)$, with $b_0^i = \te_0$\,,\,\, $b_0^{\prime i} = \te_0'$ such that, if we set
$$
T_{\und{b}^i}^{\uom^*}\,T'_{\und{b}^{\prime i}}(\ute,\ute',s,t)
= (\ute^i, \ute^{\prime i}, s_i,t_i),
$$
the following hold:
\begin{itemize}
\item[(i)] the words $\und{b}^1,\dots,\und{b}^N$ are pairwise independent;
\item[(ii)] for $1 \le i \le N$, \,\,\, $\ute^i \in \Sigma_{nr}^-$\,;
\item[(iii)] for $1 \le i \le N$, and $|\tilde s - s_i| \le  \dfrac 23 c_4
\ro^{1/2}$,\,\, $(\ute^i, \ute^{\prime i}, \tilde s) \in
\tilde\LL$;
\item[(iv)] for $1 \le i \le N$,\,\,\, $|t_i| \le 2(1+e^R)$.
\end{itemize}
We will use also a slightly smaller set $L_{\uom}^{-
1}(\ute,\ute',s)$; it is defined in the same way than
$L_{\uom}^0(\ute,\ute',s)$, but with (iii), (iv) replaced by:
\begin{itemize}
\item[(iii)'] for $1 \le i \le N$, and $|\tilde s-s_i| \le \dfrac 34 c_4
\ro^{1/2}$, $(\ute^i, \ute^{\prime i}, \tilde s) \in \tilde\LL$
\item[(iv)'] for $1 \le i \le N$, \,\, $|t_i| \le 1+e^R$.
\end{itemize}
In the next section, we will prove the following estimate.

\vglue .2in

\begin{proposition}\label{prop:bigL}
If $c_{16}$ has been chosen sufficiently small, there exists $c_{17} > 0$ such that, for any $(\ute, \ute',s) \in \tilde\LL$ and any $\uom \in \Om$, the Lebesgue measure of $L_{\uom}^{-1}(\ute,\ute',s)$  is $> c_{17}$\,.
\end{proposition}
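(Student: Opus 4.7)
The argument is a density/counting argument that brings together the $L^2$-bounds built into the definition of $E^*$, the Scale Recurrence Lemma, and the combinatorial properties of $\Sigma_{nr}^-$. I would first re-express conditions (ii)--(iv') geometrically: formula \eqref{eq:formularenorma} together with $|DF^{\ute'}_{\und{b}'}|\approx\rho$ for $\und{b}'\in\Sigma'(\rho)$ shows that (iv') is equivalent to $|t-\pi_{\ute,\ute',s}(c^{\ute,\uom^*}_{\und{b}},c^{\ute'}_{\und{b}'})|\le C\rho$ for a universal constant $C$, and Lemma \ref{lem:plg} lets us replace $c^{\ute,\uom^*}_{\und{b}}$ by $c^{\ute}_{\und{b}}$ up to an error of order $c_5\rho^{1+1/2k}\ll\rho$. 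Condition (iii') is a similar geometric statement on the renormalized scale, and (ii) is a purely combinatorial condition on $\ute\und{b}$.

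I would next assemble a pool $\mathcal{G}\subset\Sigma(\rho)\times\Sigma'(\rho)$ of candidate pairs satisfying (ii) and (iii'). Starting from $s\in E^*(\und{a},\und{a}')$ with $\und{a},\und{a}'$ ending $\ute,\ute'$, iterating Lemma \ref{lem:scl}(ii) from scale $\rho^{1/2}$ down to $\rho$ (or applying it directly at scale $\rho$) produces $\gtrsim\rho^{-(d+d')}$ pairs whose renormalized scale, together with its $\tfrac{3}{4}c_4\rho^{1/2}$-neighbourhood, lies in $E^*$; a combinatorial count analogous to the Claim in the proof of Lemma~\ref{lemma:c8approx} shows that only an $o(1)$-fraction of first coordinates $\und{b}$ fail $\ute\und{b}\in\Sigma_{nr}^-$, so still $|\mathcal{G}|\gtrsim\rho^{-(d+d')}$. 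Now introduce the landing count $\tilde\nu(t):=\#\{(\und{b},\und{b}')\in\mathcal{G}\,:\,\pi_{\ute,\ute',s}(c_{\und{b}},c_{\und{b}'})\in B(t,C\rho)\}$, so that $\int\tilde\nu\approx\rho^{2-(d+d')}$.

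The induced density $\chi^{\mathcal{G}}(t):=\sum_{(\und{b},\und{b}')\in\mathcal{G}}\chi_{\und{b},\und{b}'}(t)$ on $\C$, where $\chi_{\und{b},\und{b}'}$ is the pushforward density of $(\mu_d\times\mu_{d'})|_{K(\und{b})\times K'(\und{b}')}$ under $\pi_{\ute,\ute',s}$, is pointwise dominated by $\chi_{\ute,\ute',s}$, which satisfies $\|\chi_{\ute,\ute',s}\|_{L^2}^2\le c_{15}$ by the first clause in the definition of $E$. A Paley--Zygmund inequality applied to $\chi^{\mathcal{G}}$ on the bounded region where it is supported then yields a set of $t$'s of Lebesgue measure bounded below by a positive constant on which $\chi^{\mathcal{G}}(t)\gtrsim 1$, equivalently $\tilde\nu(t)\gtrsim\rho^{2-(d+d')}$; all such pairs automatically verify (iii'), (iv') and the $\Sigma_{nr}^-$ condition.

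The decisive step is the extraction of $N\approx c_{16}^2\rho^{-(d+d'-2)/(2k)}$ \emph{pairwise independent} first coordinates from these $\approx\rho^{2-(d+d')}$ landing pairs, i.e.\ $N$ distinct $\rho^{1/2k}$-prefixes. Since a single $\rho^{1/2k}$-branch can a priori hold up to $\rho^{-d(1-1/(2k))-d'}$ pairs, a naive greedy selection fails. Here I would invoke the second clause in the definition of $E$, $\sum_{(\und{a}_0,\und{a}_0')\in\Sigma(\rho^{1/2k})\times\Sigma'(\rho^{1/2k})}\|\chi_{T_{\und{a}_0}T'_{\und{a}_0'}(\ute,\ute',s)}\|_{L^2}^2\le c_{15}\rho^{-(d+d')/(2k)}$. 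Decomposing $\tilde\nu(t)=\sum_{\und{a}_0\in\Sigma(\rho^{1/2k})}\tilde\nu_{\und{a}_0}(t)$ according to the $\rho^{1/2k}$-prefix of the first coordinate and using the scaling $\chi^{\und{a}_0,\und{a}_0'}\leftrightarrow\chi_{T_{\und{a}_0}T'_{\und{a}_0'}(\ute,\ute',s)}$ by the factor $DF^{\ute'}_{\und{a}_0'}\approx\rho^{1/2k}$, the $L^2$-bound above translates into a second-moment control of $\sum_{\und{a}_0}\tilde\nu_{\und{a}_0}(t)^2$. Combining with the Cauchy--Schwarz inequality $\#\{\und{a}_0:\tilde\nu_{\und{a}_0}(t)>0\}\ge \tilde\nu(t)^2/\sum_{\und{a}_0}\tilde\nu_{\und{a}_0}(t)^2$ and a further Paley--Zygmund step forces the landing pairs at a typical $t$ to span at least $N$ distinct branches, once $c_{16}$ is fixed small enough; picking one pair per branch yields the $N$ pairwise independent pairs. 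The main obstacle I expect is in this last step: carefully tracking the scaling and perturbation errors (Lemma~\ref{lem:plg}) in the $\rho^{1/2k}$-branch decomposition, and simultaneously optimizing the many constants so that both Paley--Zygmund estimates --- for $\chi^{\mathcal{G}}$ and for the branch-spread --- hold on an intersection of $t$'s of positive measure.
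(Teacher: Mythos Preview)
Your setup is sound: the geometric rewriting of (iv') via \eqref{eq:formularenorma}, the replacement of $\uom^*$ by the unperturbed data via Lemma~\ref{lem:plg}, and the construction of the candidate pool $\mathcal{G}$ with the $\Sigma_{nr}^-$ count are all correct and match what the paper does. The gap is squarely in your ``decisive step''.

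The second clause in the definition of $E$ bounds $\sum_{(\und{a}_0,\und{a}_0')}\|\chi_{T_{\und{a}_0}T'_{\und{a}_0'}(\ute,\ute',s)}\|_{L^2}^2$, a sum over \emph{pairs} $(\und{a}_0,\und{a}_0')\in\Sigma(\rho^{1/2k})\times\Sigma'(\rho^{1/2k})$. Your quantity $\tilde\nu_{\und{a}_0}(t)$ sums over all $\und{a}_0'$ inside, so controlling $\sum_{\und{a}_0}\tilde\nu_{\und{a}_0}^2$ from the pairwise $L^2$ sum forces a Cauchy--Schwarz step that costs a factor $\#\Sigma'(\rho^{1/2k})\approx\rho^{-d'/2k}$. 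Tracking the scalings, your Cauchy--Schwarz lower bound on the number of occupied branches becomes $\gtrsim\rho^{(2-d)/2k}$, which tends to $0$ as $\rho\to 0$ since $d<2$. So the Paley--Zygmund/second-moment route to the branch spread, as you wrote it, does not close.

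What the paper does instead is get an $L^\infty$ (not $L^2$) control on the branch spread, and this is where the \emph{first} clause in $E$ does the heavy lifting. Working at scale $\rho^{1/2k}$, a pair $(\und{a},\und{a}')\in\Sigma_2\times\Sigma'_2$ is declared \emph{good} if at most $c_{16}^{-1}\rho^{-(d+d'-2)/2k}$ other pairs have centres $c(\tilde{\und a},\tilde{\und a}')$ within $\approx\rho^{1/2k}$ of $c(\und a,\und a')$; the bound $\|\chi_{\ute,\ute',s}\|_{L^2}^2\le c_{15}$ shows (via a Vitali/Cauchy--Schwarz count) that bad pairs are $O(c_{16}\rho^{-(d+d')/2k})$, hence negligible. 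The second clause is then used not as a global second moment but via Markov: for most $(\und a,\und a')$ one has $\|\chi_{T_{\und a}T'_{\und a'}(\ute,\ute',s)}\|_{L^2}^2\le c_{16}^{-1}$, and these \emph{excellent} pairs get, via \eqref{eq:cauchy}, a lower bound $\mathrm{Leb}(J_2^{\uom}(\und a,\und a'))\gtrsim c_{16}\rho^{1/k}$ for the union of landing disks indexed by the $(\und b^{ij},\und b^{\prime ij})$ starting with $(\und a,\und a')$. (The pairs $(\und b^{ij},\und b^{\prime ij})$ are built as $\und c^i\und d^{ij}$ by iterating the scale-recurrence lemma \emph{twice} at scale $\rho^{1/2}$, not once at scale $\rho$, since the $E^*$ live at scale $\rho^{1/2}$.) Now set $\varphi_2(t)=\sum_{(\und a,\und a')\text{ excellent}}\mathbf{1}_{J_2^{\uom}(\und a,\und a')}(t)$: one has $\int\varphi_2\gtrsim c_{16}\rho^{-(d+d'-2)/2k}$ from the measure lower bound, while the \emph{good} condition gives the pointwise bound $\varphi_2\le c_{16}^{-1}\rho^{-(d+d'-2)/2k}$. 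This $L^1$/$L^\infty$ pair (rather than $L^1$/$L^2$) yields $\mathrm{Leb}\{\varphi_2\ge c_{16}^2\rho^{-(d+d'-2)/2k}\}\ge c_{17}$, and on that set a simple pigeonhole extracts $N$ pairs with distinct first $\rho^{1/2k}$-prefixes, i.e.\ pairwise independent $\und b^l$. The key idea you are missing is this pointwise ``good pair'' bound coming from the first $L^2$ clause; without it the branch-spread extraction cannot be done without loss.
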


\section{Proof of proposition \ref{prop:bigL}}\label{sec:bigL}

In this section we will prove proposition \ref{prop:bigL}. First we prove some lemmas that are necessary for the proposition. We follow the same argument as \cite{MY} sections 4.8-4.12 with some modifications.

Fix $(\ute,\ute',s) \in \tilde\LL$, we will work with this triple throughout this section and at the end we will prove that $Leb(L^{-1}_{\und{\om}}(\ute,\ute',s))>c_{17}$.

Choose a subfamily $\Sigma_2$ of $\Sigma(\ro^{1/2k})$ of words starting with $\te_0$ such that
$$
K(\te_0) = \bigcup_{\Sigma_2} K(\und{a})
$$
is a partition of $K(\te_0)$. Similarly, choose a subfamily $\Sigma_2'$ of $\Sigma'(\ro^{1/2k})$ of words starting with $\te_0'$ such that
$$
K'(\te_0') = \bigcup_{\Sigma_2'} K'(\und{a}').
$$
There is a constant $c_{19}>0$ such that, for each $(\und{a},\und{a}') \in \Sigma_2 \times \Sigma_2'$, we have
$$
c_{19}^{-1}\, \ro^{\frac{1}{2k}(d+d')} \le \mu_d \times \mu_{d'}(K(\und{a})
\times K'(\und{a}')) \le c_{19}\, \ro^{\frac{1}{2k}(d+d')}\,.
$$

Let $J(\und{a},\und{a}') := \pi_{\und{\te},\und{\te}',s}(G(\und{a}) \times
G(\und{a}'))$ and $c(\und a, \und a') \coloneqq \pi_{\und{\te},\und{\te}',s}(c_{\und{a}},
c_{\und{a}'}) \in \C$ for $\und{a}  \in \Sigma_2$ and $\und{a}' \in \Sigma'_2$.
%$\tilde{J}(\und{a},\und{a}') := \pi_{\tilde{\und{\te}},\tilde{\und{\te}}',\tilde{s}}(I(\und{a}) \times I(\und{a}'))$, for $\und{a} \in \Sigma_2$, $\und{a}' \in \Sigma_2$\,; 
Since $s$ is bounded above and below, we have
\begin{equation}\label{eq:bt}
B(c(\und a, \und a'),c_{19}^{-1}\,\ro^{\frac{1}{2k}}) \subset J(\und a,\und a') \subset B(c(\und a, \und a'), c_{19}\,\ro^{\frac{1}{2k}})\,,
\end{equation}
if $c_{19}$ is sufficiently large. We assume that the previous relations involving $c_{19}$ hold for any other triples in $\Sigma^-\times \Sigma^{\prime -}\times J_R$ and for any value of $\und{\om}$, choosing $c_{19}$ large enough this can be easily guaranteed.

Say $(\und{a},\und{a}')$ is {\it good\/} if there are no more than $c_{16}^{-
1}\, \ro^{-1/2k(d+d'-2)}$ pairs $(\und{\tilde a},\und{\tilde a}')$ such that the distance between the points $c(\und{a}, \und{a}')$ and $c(\und{\ti{a}}, \und{\ti{a}}')$ is less than $(2+1/10)c_{19}\,\ro^{1/2k}$. Otherwise, say it is \emph{bad}.

\begin{lemma}
The number of bad pairs $(\und{a},\und{a}')$ is less than $2^2 \cdot 21^2 \cdot\pi\, c_{19}^{4}\, c_{15}\, c_{16}\, \ro^{-1/2k(d+d')}$.
\end{lemma}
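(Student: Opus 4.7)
The plan is to combine a lower bound on the $\mu(\ute,\ute',s)$-mass of a ball around each bad pair with the $L^2$ control of the density $\chi_{\ute,\ute',s}$ inherited from the definition of $\widetilde{\LL}$ and from the construction of the sets $E(\und c,\und c')$ via Marstrand's theorem. I would carry this out in three steps: a mass lower bound near each bad pair, a Vitali-type extraction of a disjoint subfamily, and a Cauchy--Schwarz step controlling the count.

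First I would produce, for each bad pair $p=(\und a,\und a')$, a lower bound on the $\mu$-mass of the ball $B(c(\und a,\und a'),R)$ with $R:=(3+\tfrac{1}{10})c_{19}\rho^{1/2k}$. By \eqref{eq:bt}, every $J(\tilde{\und a},\tilde{\und a}')$ whose center lies within distance $(2+\tfrac{1}{10})c_{19}\rho^{1/2k}$ of $c(\und a,\und a')$ is contained in $B(c_p,R)$; by the bound $\mu_d\times\mu_{d'}(K(\tilde{\und a})\times K'(\tilde{\und a}')) \ge c_{19}^{-1}\rho^{(d+d')/2k}$ each such pair contributes at least this amount of mass to $\mu(B(c_p,R))$. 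Since there are more than $c_{16}^{-1}\rho^{-(d+d'-2)/2k}$ of them by the bad-pair hypothesis, we obtain
\[
\mu(B(c_p,R)) > c_{16}^{-1} c_{19}^{-1} \rho^{1/k}.
\]

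Next I would extract via a maximal/Vitali selection a subfamily $S\subset B$ of bad pairs such that the balls $\{B(c_p,R)\}_{p\in S}$ are pairwise disjoint and the doubled balls $\{B(c_p,2R)\}_{p\in S}$ cover the centers of all pairs in $B$. Applying $\mu(B)^2 \le \mathrm{Leb}(B)\int_B \chi_{\ute,\ute',s}^2$ on each disjoint ball and summing, together with $\|\chi_{\ute,\ute',s}\|_{L^2}^2 \le c_{15}$, gives
\[
\sum_{p\in S}\mu(B(c_p,R))^2 \le \pi R^2\, c_{15},
\]
which, combined with the mass lower bound from Step~1, bounds $\#S$. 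A second Cauchy--Schwarz argument estimates $\mu(B(c_{p^*},3R)) \le 3R\sqrt{\pi c_{15}}$ for each $p^*\in S$, and dividing by the minimal pair mass $c_{19}^{-1}\rho^{(d+d')/2k}$ bounds the number of pair-centers inside $B(c_{p^*},2R)$; in particular this bounds the number of bad pairs associated to each $p^*\in S$ in the Vitali covering. Multiplying this bound with the bound on $\#S$ yields the estimate on $\#B$.

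The main obstacle will be tracking geometric constants carefully so the final bound has the stated form. The factor $21^2$ arises from $(2+\tfrac{1}{10})=\tfrac{21}{10}$, which appears squared in Lebesgue measures of the relevant balls; the factor $2^2$ comes from the doubling step in the Vitali argument; the factor $\pi c_{19}^4$ comes from $\mathrm{Leb}(B(\cdot,R))$ and from the pair-center count per ball via Cauchy--Schwarz. Some additional care is needed to justify the $L^2$ bound $\|\chi_{\ute,\ute',s}\|_{L^2}^2 \le c_{15}$ at the given triple: by definition of $\widetilde{\LL}$ one has $s\in E^*(\und c,\und c')$, which by the scale recurrence lemma lies in the $c_2\rho^{1/2}$-neighborhood of some $s_0\in E(\und c,\und c')$ for which the $L^2$ bound holds, and one should verify (using the continuity and uniform estimates from subsection \ref{sec:gscl}) that this is enough to supply the inequality with a slightly enlarged constant.
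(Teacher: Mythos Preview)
Your outline has two genuine gaps.

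First, the two-stage count you propose (bound $\#S$, then multiply by the number of pairs per $S$-ball) yields the wrong power of $\rho$. From your Step~3 one gets $\#S\lesssim c_{16}^2\rho^{-1/k}$, and from your Step~4 the number of pair-centers in $B(c_{p^*},2R)$ is $\lesssim R\sqrt{c_{15}}\,c_{19}\,\rho^{-(d+d')/2k}\approx \rho^{-(d+d'-1)/2k}$. Their product is $\lesssim c_{16}^2\,\rho^{-(d+d'+1)/2k}$, off from the statement by a factor $\rho^{-1/2k}$ and therefore useless in Lemma~\ref{lem:exp}, where one needs bad pairs to be $o(\rho^{-(d+d')/2k})$ for small $c_{16}$. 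The paper avoids this loss with a single-stage argument: setting $J^*=\bigcup_{\text{bad}}J^*(\und a,\und a')$ for disks of radius $\tfrac{7}{2}c_{19}\rho^{1/2k}$, it writes $B\cdot c_{19}^{-1}\rho^{(d+d')/2k}\le \int_{J^*}\chi\le \sum_{V}\int_{3J^*_p}\chi$ for a Vitali subfamily $V$, and on each ball combines the \emph{lower} bound $\int_{J^*_p}\chi\ge (c_{16}c_{19})^{-1}\rho^{1/k}$ with Cauchy--Schwarz to get $\int_{3J^*_p}\chi\le \tfrac{21^2}{2^2}\pi c_{19}^3 c_{16}\int_{3J^*_p}\chi^2$; summing and using that tripled equal-radius disks overlap at most $16$-fold gives the stated bound. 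In particular $21=3\cdot 7$ comes from tripling the radius $\tfrac{7}{2}c_{19}\rho^{1/2k}$, and $2^2=16/4$; your reading of the constants is not consistent with either argument.

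Second, you cannot assume $\|\chi_{\ute,\ute',s}\|_{L^2}^2\le c_{15}$ at the given triple: the density $\chi_{\ute,\ute',s}$ is only defined for Lebesgue-a.e.\ $s$, and no continuity of $s\mapsto\|\chi_{\ute,\ute',s}\|_{L^2}$ is available from subsection~\ref{sec:gscl}, so your proposed fix does not work. The paper handles this by transferring the \emph{geometric} bad-pair condition rather than the $L^2$ bound: it passes to a nearby $(\tilde{\ute},\tilde{\ute}',\tilde s)$ with $\tilde s\in E(\tilde{\ute},\tilde{\ute}')$ (where the $L^2$ bound holds by definition), observes via remark~\ref{rmk:dgl} that the centers $c(\und a,\und a')$ move only by $O(\rho^{1/2})\ll\rho^{1/2k}$ under this change, so each bad pair remains bad at the tilded triple with radius $\tfrac{5}{2}c_{19}\rho^{1/2k}$, and runs the entire argument there.
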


Before begining the proof, we remember the Vitali covering lemma. Let $B_1,\,B_2,\,\dots,\,B_n \subset \R^d$ be a finite collection of balls. For $i=1,\,\dots,n$, denote by $3B_i$ the ball with same center as $B_i$ but having radius three times larger. The lemma states that there exists a subcollection of balls $B_{j_1},\,B_{j_2},\,\dots,\,B_{j_k}$ with the Vitali property, this is
\begin{itemize}
    \item The balls $B_{j_1},\,B_{j_2},\,\dots,\,B_{j_k}$ are pairwise disjoint and
    \item The union $B_1\cup B_2 \cup \dots \cup B_n$ is contained in $3B_{j_1}\cup 3B_{j_2} \cup \dots \cup 3B_{j_k}$.
\end{itemize}

In the case that the balls $B_i$ are subsets of the complex plane and have the same radius $R$, one can see that every point $z \in \C$ is covered by no more than $16$ of the balls $3B_{j_1},\,3B_{j_2},\,\dots,\,3B_{j_k}$. Indeed, consider the ball $B$ centered at $z$ with radius $4R$. It contains all the balls $B_{i_1},\,\dots,\,B_{i_l}$ such that $3B_{i_1},\,\dots,\,3B_{i_l}$ cover $z$. However, the balls $B_{i_1},\,\dots,\,B_{i_l}$ are pairwise disjoint, and so there are no more than $(4R)^2/R^2=16$ of them inside $B$, otherwise they would overlap.

\begin{proof} By construction of $\widetilde{\LL}$, there exists $\widetilde{\ute}$, $\widetilde{\ute}'$, $\tilde s$ with $d(\ute,\widetilde{\ute}) \le c_0\ro^{1/2}$, $d(\ute',
\widetilde{\ute}') \le c_0\ro^{1/2}$, $|s-\tilde s| \le
c_2\,\ro^{1/2}$ such that
$$
\left\| \chi_{\widetilde{\ute},\widetilde{\ute}',\tilde s} \right\|_{L^2}^2 \le
c_{15}\,.
$$

%Therefore, if we replace $\und{\te}$, $\und{\te}'$, $s$ by
%$\und{\widetilde\te}$, $\und{\widetilde\te}'$, $\tilde s$ the points of $J(\und{a},\und{a}')$ only move by a distance of order at most $\ro^{1/2}$. 
Because of remark \ref{rmk:dgl}, the distance between the points $k^{\ute}(c_{\und{a}})$ and $k^{\tilde{\ute}}(c_{\und{a}})$ is of order $\rho^{1/2}$ for every $(\und{a}, \und{a}') \in \Sigma_2 \times \Sigma'_2$ and the same is true for their $K'$ versions. Thus, if $c_{19}$ is sufficiently large, for each bad pair $(\und{a}, \und{a}')$ there are more than $c_{16}^{- 1}\, \ro^{-1/2k(d+d'-2)}$ pairs $(\tilde{\und{a}}, \und{\tilde{a}}')$ satisfying 
\[|     \pi_{\tilde{\ute},{\ti{\ute}}',\tilde{s}}(c_{\und{a}}, c_{\und{a}'})  - \pi_{\tilde{\ute},\tilde{\ute}',\tilde{s}} (c_{\und{\tilde{a}}}, c_{{\und{\ti{a}}'}}
)	|	\le \frac{5}{2}c_{19}\rho^{\frac{1}{2k}} . \]
From now on, we denote $ \pi_{\tilde{\ute},{\ti{\ute}}',\tilde{s}}(c_{\und{a}}, c_{\und{a}'})$ as $ \ti{c}(\und a, \und{a}')$ for any $(\und a, \und{a}') \in \Sigma_2 \times \Sigma'_2$.

For each bad pair $(\und{a}, \und{a}')$, consider the disk $J^{*}(\und{a}, \und{a}')$ of radius $\frac{7}{2}\,c_{19}\rho^{\frac{1}{2k}} $ and center at $\ti{c}(\und a, \und{a}')$. Then the corresponding $c_{16}^{- 1}\, \ro^{-1/2k(d+d'-2)}$  sets $ \pi_{\tilde{\ute},{\ti{\ute}}',\tilde{s}} (G(\ti{\und{a}}) \times G(\ti{\und{a}}')) $ are subsets of $J^{*}(\und{a}, \und{a}')$. This way,

\begin{align*}
    \int_{J^{*}(\und{a}, \und{a}')}    {\chi_{\tilde{\ute},\tilde{\ute}',\tilde{s} } ds}  = (\mu_d \times \mu_{d'})\left(\pi^{-1}_{\tilde{\ute},\tilde{\ute}',\tilde{s}}   (J^{*}(\und{a}, \und{a}')) \right) \ge \sum_{(\ti{\und{a}}, \ti{\und{a}}' )}{(\mu_d \times \mu_{d'}) (G(\ti{\und{a}}) \times G(\ti{\und{a}}')) } \\ \ge c_{16}^{- 1}\, \ro^{-1/2k(d+d'-2)} \cdot c_{19}^{-1} \rho^{\frac{d+d'}{2k}}= (c_{16}c_{19})^{-1} \ro^{1/k}. 
\end{align*} 

Let $J^*$ be the union of all the disks $J^{*}(\und{a}, \und{a}')$ corresponding to bad pairs and $B$ be the number of these pairs. Choose a subcover of $J^*$ as in the Vitali lemma, indexed by the pairs $(\und{a},\und{a}')$ belonging to a subset $V$ of the set of bad pairs. It follows that  
\[B\cdot c_{19}^{-1}\ro^{\frac{(d+d')}{2k}} \le 
(\mu_d \times \mu_{d'})(\pi^{-1}_{(\tilde{\ute},\tilde{\ute}',\tilde{s})}   (J^{*}))   = 
\int_{J^{*}}   {\chi_{\tilde{\ute},\tilde{\ute}',\tilde{s} } ds}   
\le \sum_{(\und{a},\und{a}') \in V} \int_{3\,J^{*}(\und{a},\und{a}')}   {\chi_{\tilde{\ute},\tilde{\ute}',\tilde{s} } ds}.\]

On the other hand, by Cauchy-Schwartz theorem, 
\begin{align*}
    \left( \int_{3\,J^{*}(\und{a}, \und{a}')}   {\chi_{\tilde{\ute},\tilde{\ute}',\tilde{s} } ds}\right) \cdot (c_{16}c_{19})^{-1} \ro^{1/k}  & \le      
    \left( \int_{3\,J^{*}(\und{a}, \und{a}')}   {\chi_{\tilde{\ute},\tilde{\ute}',\tilde{s} } ds}\right)^2 \\ \le  \text{Leb}(3\,J^{*}(\und{a}, \und{a}')) \cdot & \int_{3\,J^{*}(\und{a}, \und{a}')}   {\chi^2_{\tilde{\ute},\tilde{\ute}',\tilde{s} }ds} =
\frac{21^2}{2^2}\pi c^2_{19} \ro^{1/k} \cdot \int_{3\,J^{*}(\und{a}, \und{a}')}   {\chi^2_{\tilde{\ute},\tilde{\ute}',\tilde{s} }ds},
\end{align*}    

for every bad pair $(\und{a},\und{a}')$. But the Vitali covering covers each point $z \in \C$ at most $16$ times, so 
    \[\sum_{(\und{a},\und{a}') \in V} \int_{3\,J^{*}(\und{a},\und{a}')}   {\chi_{\tilde{\ute},\tilde{\ute}',\tilde{s} } ds} \le 
    \frac{21^2}{2^2}\pi\, c^3_{19}\,c_{16}\sum_{(\und{a},\und{a}') \in V}\int_{3\,J^{*}(\und{a}, \und{a}')}   {\chi^2_{\tilde{\ute},\tilde{\ute}',\tilde{s} }ds} \le 
    2^2 \cdot 21^2 \cdot\pi\, c^3_{19}\,c_{16} \int_{\C}   {\chi^2_{\tilde{\ute},\tilde{\ute}',\tilde{s} }ds}.\]

It follows that $B \le 2^2 \cdot 21^2 \cdot \pi\, c_{19}^4\,c_{16}\,c_{15}\, \ro^{-(d+d')/2k}$, concluding the proof.
\end{proof}

Now, we construct the pairs $(\und{b},\und{b}')$ amongst which the pairs $(\und{b}^i, \und{b}^{\prime i})$ of \ref{prop:bigL} must be looked for. We make the following observation:

\begin{lemma}
Let $\und{\te} \in \Sigma_{nr}^-$. The number of words $\und{c} \in \Sigma(\ro^{1/2})$ with $c_0=\te_0$, such that $\und{\te}\und{c} \notin \Sigma_{nr}^-$ is $o(\ro^{-d/2})$ as $\ro\to0$, uniformly in $\und{\te}$.
\end{lemma}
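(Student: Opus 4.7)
The plan is to show that the set of bad $\und{c}$ has size $o(\rho^{-d/2})$, while by \eqref{eq:sigrho} the total number of $\und{c} \in \Sigma(\rho^{1/2})$ with $c_0 = \theta_0$ is $\approx \rho^{-d/2}$. Since $\tb \in \Sigma_{nr}^-$, it ends in some $\und{a} \in \Sigma_{nr}(\rho^3)$, and because for $k \ge 2$ the symbolic length of a word in $\Sigma(\rho^s)$ is a decreasing function of $s$, words of approximate size $\rho^{1/2}$ are strictly shorter than those of size $\rho^3$; hence the size-$\rho^3$ suffix $\und{d}$ of $\tb\und{c}$ can be written uniquely as $\und{d} = \und{a}^{\ast}\und{c}$, where $\und{a}^{\ast}$ is a suffix of $\und{a}$ joined to $\und{c}$ at $\theta_0 = c_0$. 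Now $\tb\und{c} \notin \Sigma_{nr}^-$ iff $\und{d} \notin \Sigma_{nr}(\rho^3)$, which by definition happens when either (I) some $\und{b} \in \Sigma(\rho^{1/3k})$ occurs at two distinct positions in $\und{d}$, or (II) the $\Sigma(\rho^{1/6k})$-suffix of $\und{d}$ also occurs elsewhere in $\und{d}$.

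The main counting tool I will use is the following elementary estimate: for a prescribed word $\und{w}$ of approximate size $\rho^s$ and a prescribed symbolic position $p$, the number of $\und{c} \in \Sigma(\rho^{1/2})$ starting with $\theta_0$ and having $\und{w}$ at position $p$ is $\lesssim \rho^{-d/2 + ds}$, with a constant depending only on the Cantor set data. This follows from decomposing $\und{c} = \und{c}_1\und{w}\und{c}_2$ with diameters multiplying to $\approx \rho^{1/2}$ up to bounded distortion, and applying \eqref{eq:sigrho} to each piece while summing over the logarithmically many admissible intermediate diameters. In Case (I), the sub-case ``both occurrences inside $\und{a}^{\ast}$'' is automatically excluded since $\und{a} \in \Sigma_{nr}(\rho^3)$. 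For the remaining sub-cases (both in $\und{c}$, one in each piece, or straddling the junction), the constraint of two matching subwords of size $\rho^{1/3k}$ yields a saving of $\rho^{d/(3k)}$; summing over the $O((\log\rho^{-1})^2)$ pairs of symbolic positions, and where needed over the $O(\log\rho^{-1})$ choices of $\und{b}$ determined by an occurrence inside $\und{a}^{\ast}$, the total contribution of Case (I) is $\lesssim (\log\rho^{-1})^2 \rho^{-d/2 + d/(3k)} = o(\rho^{-d/2})$.

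Case (II) is handled analogously: the suffix $\und{e}$ is pinned by the last letters of $\und{c}$, and each candidate second-occurrence site (inside $\und{a}^{\ast}$, inside $\und{c}$, or straddling) imposes a matching constraint saving $\rho^{d/(6k)}$; summing over the $O(\log \rho^{-1})$ such positions yields $\lesssim \log\rho^{-1} \cdot \rho^{-d/2 + d/(6k)} = o(\rho^{-d/2})$. Uniformity in $\tb$ is automatic because all constants arising in the counting step come from \eqref{eq:sigrho} and the bounded distortion properties of pieces, both intrinsic to the Cantor set. The main technical obstacle I expect is a clean treatment of the straddling sub-cases, where $\und{b}$ or $\und{e}$ crosses the boundary between $\und{a}^{\ast}$ and $\und{c}$ and thereby pins some initial letters of $\und{c}$; in every such configuration, however, the total number of pinned letters is at least that of the non-straddling cases, so the same $o(\rho^{-d/2})$ bound persists.
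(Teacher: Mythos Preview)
Your proof is correct, and in fact the paper does not supply a proof of this lemma at all---it simply states it and moves on. Your counting argument is the natural one, and it is exactly the same technique the paper itself uses in the claim inside the proof of Lemma~\ref{lemma:c8approx}: fix positions, observe that a matching constraint on a subword of approximate size $\rho^{s}$ pins roughly $\rho^{-ds}$ worth of choices, and then absorb the $O(\log\rho^{-1})$ combinatorics of positions into the polynomial saving. Two minor remarks: first, your parenthetical ``the symbolic length of a word in $\Sigma(\rho^{s})$ is a decreasing function of $s$'' is backwards (length \emph{increases} with $s$ since $\rho<1$), though your conclusion that $\und{c}\in\Sigma(\rho^{1/2})$ is shorter than the $\Sigma(\rho^{3})$-suffix is correct; second, the qualifier ``for $k\ge 2$'' is irrelevant here. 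Neither affects the argument.
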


%%%%%%Comment%%%%%%
% How do we prove this? Maybe not so easy
%%%%%%%%%%%%%%%%%%%

Remember that $(\und{\te},\und{\te}',s) \in \widetilde{\LL}$. It follows from conclusion (ii) of the Scale recurrence Lemma (lemma \ref{lem:scl}) and the last observation that we can find at least $\frac{1}{2} c_3\,\ro^{-1/2(d+d')}$ pairs $(\und{c}^i, \und{c}^{\prime i}) \in
\Sigma(\ro^{1/2}) \times \Sigma'(\ro^{1/2})$ such that, writing $T_{\und{c}^i} T'_{\und{c}^{\prime i}}(\und{\te},\und{\te}',s) = (\und{\te}^i,\und{\te}^{\prime i},s_i)$, we have:

\begin{itemize}
\item $\und{\te}^i \in \Sigma_{nr}^-$\,;
\item $B(s_i,c_4\ro^{1/2}) \subset E^*(\und{c}^i, \und{c}^{\prime i})$.
\end{itemize}

\noindent As $(\und{\te}^i, \und{\te}^{\prime i},s_i)$ again belongs to $\widetilde{\LL}$, we can for each $i$ find at least $\frac 12 c_3\,\ro^{-1/2(d+d')}$ pairs $(\und{d}^{ij}, \und{d}^{\prime ij}) \in \Sigma(\ro^{1/2}) \times \Sigma'(\ro^{1/2})$ (with the first letter of $\und{d}^{ij}$, $\und{d}^{\prime ij}$ being the last one of $\und{c}^i$, $\und{c}^{\prime i}$ respectively), such that writing $T_{\und{d}^{ij}} T'_{\und{d}^{\prime ij}}(\und{\te}^i,\und{\te}^{\prime i},s_i) = (\und{\te}^{ij}, \und{\te}^{\prime ij}, s_{ij})$, we have

\begin{itemize}
\item $\und{\te}^{ij} \in \Sigma_{nr}^-$\,;
\item $B(s_{ij},c_4\ro^{1/2}) \subset E^*(\und{d}^{ij},
\und{d}^{\prime ij})$.
\end{itemize}

\noindent Concatenation of the $\und{c}^i$, $\und{c}^{\prime i}$ and 
$\und{d}^{ij}, \und{d}^{\prime ij}$ gives a family of words
$(\und{b}^{ij},\und{b}^{\prime ij})$ in $\Sigma(\ro) \times \Sigma'(\ro)$ with at least $\frac 14 c_3^2\,\ro^{-(d+d')}$ elements.

We now consider the perturbed operators. In this case $T^{\und{\om}}_{\und{c}^i} T'_{\und{c}^{\prime i}}(\und{\te},\und{\te}',s) = (\und{\te}^i,\und{\te}^{\prime i},s_i(\und{\om}))$ and by lemma \ref{lem:plg} the distance between $s_i(\und{\om})$ and $s_i$ is of order $c_5 \rho^{1-1/2k}$. Similarly one has 
\[
T^{\und{\om}}_{\und{d}^{ij}} T'_{\und{d}^{\prime ij}}(\und{\te}^i,\und{\te}^{\prime i},s_i(\und{\om})) = (\und{\te}^{ij}, \und{\te}^{\prime ij}, s_{ij}(\und{\om}))
\]
and again the distance between $s_{ij}(\und{\om})$ and $s_{ij}$ is of order $c_5 \rho^{1-1/2k}$.

Now we fix $(\und{\widetilde{\te}},\und{\widetilde{\te}}',\tilde{s})$ such that
$d(\und{\te},\und{\widetilde{\te}}) \le c_0 \ro^{1/2}$, $d(\und{\te}',\und{\widetilde\te}') \le c_0 \ro^{1/2}$, $|s-\tilde s|\le c_2\,\ro^{1/2}$ and  $\tilde{s} \in E(\widetilde{\und \te}, \widetilde{\und \te}')$.

\begin{lemma}\label{lem:exp}
If $c_{16}$ has been chosen sufficiently small, there are at least $\frac 16 c_3\,c_{19}^{-2}\, \ro^{-\frac{d+d'}{2k}}$ pairs $(\und{a},\und{a}') \in \Sigma_2 \times \Sigma_2'$ which are good and satisfy
$$
\left\| \chi_{T_{\und{a}}T'_{\und{a}'}(\widetilde{\und \te}, \widetilde{\und \te}',\tilde s)}\right\|_{L^2}^2 \le c_{16}^{-1}
$$
and such that at least $\frac 16 c_3c_{19}^{-1}\,\ro^{-(d+d')(\frac 12-\frac{1}{2k})}$ pairs $(\und{c}^i, \und{c}^{\prime i})$ start with $(\und{a}, \und{a}')$.
\end{lemma}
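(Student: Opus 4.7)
The approach is a counting argument on $\Sigma_2\times\Sigma_2'$ using three filters, one for each required condition: being good, having $L^2$ density bounded by $c_{16}^{-1}$, and being the prefix of at least $\tfrac{1}{6}c_3 c_{19}^{-1}\rho^{-(d+d')(1/2-1/2k)}$ of the chosen pairs $(\und{c}^i,\und{c}^{\prime i})$. I would show each filter discards only $O(c_{16})\rho^{-(d+d')/2k}$ or a small fraction of $\#(\Sigma_2\times\Sigma_2')$ pairs, leaving the desired lower bound $\tfrac{1}{6}c_3 c_{19}^{-2}\rho^{-(d+d')/2k}$.

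The first two filters are short. The previous lemma already bounds the number of bad pairs by a constant multiple of $c_{16}\rho^{-(d+d')/2k}$. For the second, since $\tilde s\in E(\widetilde{\und \te},\widetilde{\und \te}')$, the defining property of $E$ yields
\[
\sum_{(\und{a},\und{a}')}\bigl\|\chi_{T_{\und{a}}T'_{\und{a}'}(\widetilde{\und \te},\widetilde{\und \te}',\tilde s)}\bigr\|_{L^2}^2 \le c_{15}\rho^{-(d+d')/2k},
\]
so Markov's inequality bounds the number of offenders by $c_{15}c_{16}\rho^{-(d+d')/2k}$.

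The crux is the third filter. Since $\Sigma_2$ partitions $K(\te_0)$ into cylinders of approximate size $\rho^{1/2k}$, every $\und{c}^i\in\Sigma(\rho^{1/2})$ has a unique prefix $\und{a}\in\Sigma_2$, and similarly for $\und{c}^{\prime i}$. Letting $N(\und{a},\und{a}')$ denote the number of chosen pairs with this prefix, the cardinality estimate \eqref{eq:sigrho} applied uniformly to the sub-Cantor sets beyond $\und{a}$ and $\und{a}'$ gives $N(\und{a},\und{a}')\le C_2\rho^{-(d+d')(1/2-1/2k)}$, while by construction $\sum N(\und{a},\und{a}')\ge \tfrac{1}{2}c_3\rho^{-(d+d')/2}$. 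Writing $A$ for the set of pairs meeting the threshold of filter (c),
\[
\sum_{(\und{a},\und{a}')\notin A}N(\und{a},\und{a}') < \tfrac{1}{6}c_3 c_{19}^{-1}c_{14}^2\rho^{-(d+d')/2},
\]
from $\#(\Sigma_2\times\Sigma_2')\le c_{14}^2\rho^{-(d+d')/2k}$. Choosing $c_{19}$ large relative to $c_{14}$ (available since $c_{14}$ is fixed before $c_{19}$), this is at most $\tfrac{1}{4}c_3\rho^{-(d+d')/2}$, so $\sum_A N\ge\tfrac{1}{4}c_3\rho^{-(d+d')/2}$ and hence $|A|\ge\tfrac{1}{4}c_3 C_2^{-1}\rho^{-(d+d')/2k}$.

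Absorbing $C_2$ into $c_{19}$ (they come from the same combinatorial estimate), we get $|A|\gtrsim c_{19}^{-1}\rho^{-(d+d')/2k}$; subtracting the $O(c_{16})\rho^{-(d+d')/2k}$ pairs removed by filters (a) and (b) and taking $c_{16}$ small enough leaves at least $\tfrac{1}{6}c_3 c_{19}^{-2}\rho^{-(d+d')/2k}$ pairs meeting all three conditions. The main bookkeeping challenge I foresee is the sequential ordering of the constants $c_{14},c_{15},c_{16},c_{19}$ and checking that the uniform prefix bound $N(\und{a},\und{a}')\lesssim\rho^{-(d+d')(1/2-1/2k)}$ genuinely holds with constants independent of $(\und{a},\und{a}')$, which follows from the standard form of \eqref{eq:sigrho}.
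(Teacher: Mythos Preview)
Your proposal is correct and follows exactly the counting/pigeonhole argument from \cite{MY} that the paper invokes (the paper gives no details of its own, simply writing ``The proof is the same as in \cite{MY}, bearing in mind the different but similar bound in the number of bad pairs''). Your three filters---bad pairs via the previous lemma, large $L^2$ density via Markov on the sum defining $E(\widetilde{\ute},\widetilde{\ute}')$, and low prefix-count via averaging over $\Sigma_2\times\Sigma_2'$---are precisely the intended ones, and your observation that $c_{19}$ may be enlarged to absorb the auxiliary combinatorial constants is legitimate since the paper explicitly allows $c_{19}$ to be taken ``sufficiently large''.
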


\begin{proof}
The proof is the same as in \cite{MY}, bearing in mind the different but similar bound in the number of bad pairs.
\end{proof}

We call the pairs verifying the properties of the previous lemma \emph{excellent pairs}. The following general lemma will be used later to estimate the measure of the union of the perturbed version of the sets $J(\und{b}^{ij},\und{b}^{\prime ij})$.

\begin{lemma}\label{lem:pb}
Let $J_{\alpha}, J_{\alpha}', K_{\alpha}$, $\alpha \in A$, be families of sets in $\C$ such that for some $\lambda, \ve, \nu, \sigma \in \R^+$, and $c_{\alpha}, c_{\alpha}' \in \C$:
\begin{itemize}
\item $B(c_{\alpha},\ve)\subset J_{\alpha} \subset B(c_{\alpha},\lambda \ve)$, $K_{\alpha} \subset J_{\alpha}$, $B(c_{\alpha}',\ve) \subset J_{\alpha}'$.
\item $d(c_{\alpha},c_{\alpha}')\leq \nu \ve$, $Leb(K_{\alpha})\geq \sigma^{-1} Leb(J_{\alpha})$.
\end{itemize}
Then
\[Leb\left(\bigcup_{\alpha \in A} J_{\alpha}'\right)\geq \frac{1}{9(\lambda+\nu)^2} Leb\left(\bigcup_{\alpha \in A} J_{\alpha}\right),\]
and
\[Leb\left(\bigcup_{\alpha \in A} K_{\alpha}\right) \geq \frac{\sigma^{-1}}{9\lambda ^2} Leb\left(\bigcup_{\alpha \in A} J_{\alpha}\right).\]
\end{lemma}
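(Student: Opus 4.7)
The plan is to apply the Vitali covering lemma (just recalled in the text preceding the statement) to an appropriate rescaling of the balls $B(c_\alpha,\cdot)$. In both inequalities the strategy is identical: extract a pairwise disjoint subfamily $\{B(c_{\alpha_j},r\ve)\}_j$ whose triples still cover $\bigcup_\alpha B(c_\alpha,r\ve)\supset\bigcup_\alpha J_\alpha$; upper bound $Leb(\bigcup_\alpha J_\alpha)$ by $9r^2\pi\ve^2$ times the cardinality of $\{\alpha_j\}$; and then lower bound the union on the target side by the same cardinality times $\pi\ve^2$, using that the corresponding pieces are forced to be pairwise disjoint by the choice of $r$. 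By inner regularity of Lebesgue measure we may assume $A$ is finite so that the Vitali statement recalled in the text applies directly.

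For the first inequality, the radius $r=\lambda+\nu$ is dictated by the requirement that each selected ball $B(c_{\alpha_j},r\ve)$ contain both $J_{\alpha_j}\subset B(c_{\alpha_j},\lambda\ve)$ and $B(c_{\alpha_j}',\ve)$. The latter inclusion follows from $d(c_{\alpha_j},c_{\alpha_j}')\le\nu\ve$ together with $\lambda\ge 1$ (which is forced by $B(c_\alpha,\ve)\subset B(c_\alpha,\lambda\ve)$). Disjointness of the $B(c_{\alpha_j},r\ve)$ then transfers to disjointness of the $B(c_{\alpha_j}',\ve)\subset J_{\alpha_j}'$, yielding
\[
Leb\Bigl(\bigcup_{\alpha\in A}J_\alpha\Bigr)\le 9(\lambda+\nu)^2\sum_j\pi\ve^2\le 9(\lambda+\nu)^2\,Leb\Bigl(\bigcup_{\alpha\in A}J_\alpha'\Bigr).
\]

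For the second inequality I would run the same scheme with $r=\lambda$: now $J_{\alpha_j}\subset B(c_{\alpha_j},\lambda\ve)$ already forces the $K_{\alpha_j}\subset J_{\alpha_j}$ to be pairwise disjoint. The density hypothesis $Leb(K_{\alpha_j})\ge\sigma^{-1}Leb(J_{\alpha_j})\ge\sigma^{-1}\pi\ve^2$ then gives
\[
Leb\Bigl(\bigcup_{\alpha\in A}J_\alpha\Bigr)\le 9\lambda^2\sum_j\pi\ve^2\le 9\lambda^2\sigma\sum_j Leb(K_{\alpha_j})\le 9\lambda^2\sigma\,Leb\Bigl(\bigcup_{\alpha\in A}K_\alpha\Bigr).
\]

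The proof is a routine application of Vitali and I do not anticipate any real obstacle; the only point requiring care is the choice of covering radius, which must be large enough to simultaneously absorb the displacement $c_\alpha\mapsto c_\alpha'$ and the outer radius $\lambda\ve$ of each $J_\alpha$.
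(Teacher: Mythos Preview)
Your proof is correct and follows essentially the same Vitali-covering argument as the paper. The only cosmetic difference is that in the first inequality the paper applies Vitali to the balls $B(c_\alpha',(\lambda+\nu)\ve)$ centered at the shifted points $c_\alpha'$, whereas you center at $c_\alpha$ and then observe $B(c_{\alpha_j}',\ve)\subset B(c_{\alpha_j},(\lambda+\nu)\ve)$; both routes are equivalent.
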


%The lemma is saying that if we have family of ball-type sets $J_{\alpha}$ and if we perturb them into a family $J_{\alpha}'$ with similar form and close to $J_{\alpha}$, then the union $\cup J_{\alpha}'$ has measure at least of the order of $\cup J_{\alpha}$. Moreover, the lemma also states that if we have subsets $K_{\alpha} \subset J_{\alpha}$ with comparable measure to $J_{\alpha}$ then the union $\cup K_{\alpha}$ is comparable to $\cup J_{\alpha}$.

\begin{proof}
Notice that $J_{\alpha} \subset B(c_{\alpha}', (\lambda +\nu )\ve )$. Let $\tilde{A}$ be a subset of $A$ such that the balls $\{B(c_{\alpha}', (\lambda +\nu )\ve )\}_{\alpha \in \tilde{A}}$ have the Vitali property. Thus
\begin{align*}
Leb\left( \bigcup_{\alpha \in A} J_{\alpha}\right) &\leq Leb\left( \bigcup_{\alpha \in A} B(c_{\alpha}', (\lambda +\nu )\ve )\right)\\ 
&\leq  Leb\left( \bigcup_{\alpha \in \tilde{A}} B(c_{\alpha}', 3(\lambda +\nu )\ve )\right)\\
&\leq [3(\lambda +\nu)]^2  Leb\left( \bigcup_{\alpha \in \tilde{A}} B(c_{\alpha}', \ve )\right)\\
&\leq 9(\lambda+\nu)^2  Leb\left( \bigcup_{\alpha \in A} B(c_{\alpha}', \ve )\right)\leq 9(\lambda+\nu)^2  Leb\left( \bigcup_{\alpha \in A} J_{\alpha}'\right),
\end{align*}
where in the passage from the second to the third line we use the fact that the sets $B(c_{\alpha}', \ve )$, $\alpha \in \ti{A}$, are disjoint. This proves the first inequality. For the second, use again Vitali to find a subset $A'$ of $A$ such that the balls $\{B(c_{\alpha}, \lambda \ve)\}_{\alpha \in A'}$ have the Vitali property. Then
\begin{align*}
Leb\left( \bigcup_{\alpha \in A} J_{\alpha} \right)&\leq Leb\left( \bigcup_{\alpha \in A'} B(c_{\alpha}, 3\lambda \ve) \right)\\
&\leq 9\lambda^2 \sum_{\alpha \in A'} Leb\left( B(c_{\alpha}, \ve) \right)\\
&\leq 9\lambda^2 \sigma \sum_{\alpha \in A'} Leb\left( K_{\alpha} \right)=9\lambda^2 \sigma\cdot Leb\left(\bigcup_{\alpha \in A'} K_{\alpha} \right) \leq 9\lambda^2 \sigma \cdot Leb\left(\bigcup_{\alpha \in A} K_{\alpha} \right).
\end{align*}
\end{proof}

\begin{lemma}\label{lem:j2}
Let $(\und{a}, \und{a}')$ be an excellent pair. Consider all the pairs $(\und{c}^i, \und{c}^{\prime i})$ described above which begin with $(\und{a}, \und{a}')$, and for each pair $(\und{c}^i, \und{c}^{\prime i})$ consider the corresponding pairs $(\und{b}^{ij},\und{b}^{\prime ij})$. Define the sets
\begin{align*}
J^{\und{\om}}(\und{b}^{ij},\und{b}^{\prime ij})&= \pi^{\und{\om}}_{\tb, \tb', s}(G^{\und{\om}}(\und{b}^{ij})\times G(\und{b}^{\prime ij})),\\
J_2^{\und{\om}}(\und{a}, \und{a}')&= \bigcup_{(\und{c}^i,\und{c}^{\prime i})} \bigcup_{(\und{b}^{ij},\und{b}^{\prime ij})} J^{\und{\om}}(\und{b}^{ij},\und{b}^{\prime ij}).
\end{align*}
Then
\[Leb\left(J_2^{\und{\om}}(\und{a}, \und{a}')\right) \gtrsim c_{16}\rho^{1/k},\]
and the constant can be chosen to be independent of $\uom$. 
\end{lemma}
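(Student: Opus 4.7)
The plan is to combine the excellent pair's $L^2$-density bound with a Cauchy--Schwarz argument at the renormalized scale, then transfer the resulting lower bound back to the original scale and to the perturbed setting.

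I would first write each pair as $\und{b}^{ij}=\und{a}\und{f}^{ij}$ and $\und{b}^{\prime ij}=\und{a}'\und{f}^{\prime ij}$, so that the ``tails'' $\und{f}^{ij}$, $\und{f}^{\prime ij}$ begin with the last letters of $\und{a}$, $\und{a}'$ and are of approximate size $\rho^{1-1/(2k)}$. Let
\[
Y'=\bigcup_{i,j} K(\und{f}^{ij})\times K'(\und{f}^{\prime ij})\subset K(a_n)\times K'(a_m'),
\]
which is a disjoint union since the pairs $(\und{b}^{ij},\und{b}^{\prime ij})$ are distinct. Since at least $\tfrac{1}{12}c_3^2 c_{19}^{-1}\rho^{-(d+d')(1-1/(2k))}$ such pairs exist and each $K(\und{f}^{ij})\times K'(\und{f}^{\prime ij})$ has $\mu_d\times\mu_{d'}$-measure $\approx\rho^{(d+d')(1-1/(2k))}$, one has $\mu_d\times\mu_{d'}(Y')\gtrsim 1$ uniformly in $\rho$.

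Next I would invoke the excellent hypothesis $\|\chi_{T_{\und{a}}T'_{\und{a}'}(\widetilde{\ute},\widetilde{\ute}',\tilde{s})}\|_{L^2}^2\le c_{16}^{-1}$ together with the Cauchy--Schwarz inequality \eqref{eq:cauchy} applied at the renormalized scale to conclude
\[
Leb\bigl(\pi_{T_{\und{a}}T'_{\und{a}'}(\widetilde{\ute},\widetilde{\ute}',\tilde{s})}(Y')\bigr)\gtrsim c_{16}.
\]
Using lemma \ref{simpleformula}, which gives $\pi_{\widetilde{\ute},\widetilde{\ute}',\tilde{s}}\circ(f_{\und{a}},f_{\und{a}'})=DF^{\widetilde{\ute}'}_{\und{a}'}\cdot\pi_{T_{\und{a}}T'_{\und{a}'}(\widetilde{\ute},\widetilde{\ute}',\tilde{s})}+\text{const}$, and the fact that $|DF^{\widetilde{\ute}'}_{\und{a}'}|\approx\rho^{1/(2k)}$, one translates this back to the unrenormalized scale:
\[
Leb\Bigl(\bigcup_{i,j}\pi_{\widetilde{\ute},\widetilde{\ute}',\tilde{s}}(G(\und{b}^{ij})\times G'(\und{b}^{\prime ij}))\Bigr)\gtrsim c_{16}\rho^{1/k}.
\]

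The main obstacle is the final transfer from the unperturbed configuration $(\widetilde{\ute},\widetilde{\ute}',\tilde{s})$ to the perturbed one $(\ute,\ute',s,\uom)$ that defines $J_2^{\uom}(\und{a},\und{a}')$. I would perform this transfer via lemma \ref{lem:pb} applied to the sets $\pi_{\widetilde{\ute},\widetilde{\ute}',\tilde{s}}(G(\und{b}^{ij})\times G'(\und{b}^{\prime ij}))$ and $\pi^{\uom}_{\ute,\ute',s}(G^{\uom}(\und{b}^{ij})\times G'(\und{b}^{\prime ij}))$, both of which contain balls of radius $\approx\rho$ about their respective centers by \eqref{eq:bt}. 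The distances between the corresponding centers are controlled by lemma \ref{lem:plg} (parts (iii) and (iv)) combined with the closeness $d(\widetilde{\ute},\ute)\le c_0\rho^{1/2}$, $d(\widetilde{\ute}',\ute')\le c_0\rho^{1/2}$, $|\tilde s-s|\le c_2\rho^{1/2}$ furnished by the definition of $\widetilde{\LL}$. To keep the loss in lemma \ref{lem:pb} at most a constant factor (not a factor of $\rho^{1/k}$), the comparison must be set up at the renormalized scale $T_{\und{a}}T'_{\und{a}'}$, where lemma \ref{lem:plg} yields a perturbation of the $\pi$-map of order $O(c_5\rho^{1-1/(2k)})$, comparable to the size of the renormalized pieces $K(\und{f}^{ij})\times K'(\und{f}^{\prime ij})$. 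Transferring back via the affine factor $|DF^{\widetilde{\ute}'}_{\und{a}'}|^2\approx\rho^{1/k}$ then yields $Leb(J_2^{\uom}(\und{a},\und{a}'))\gtrsim c_{16}\rho^{1/k}$, with the constant independent of $\uom$ since the estimates in lemma \ref{lem:plg} are uniform in $\uom$.
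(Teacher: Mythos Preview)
Your approach contains a genuine gap at the final transfer step. You apply the Cauchy--Schwarz bound once, at the renormalized scale $T_{\und{a}}T'_{\und{a}'}(\widetilde{\ute},\widetilde{\ute}',\tilde s)$, and then want to compare the resulting union over the $\und{f}^{ij}$-pieces to the corresponding $\uom$-perturbed union at $T^{\uom}_{\und{a}}T'_{\und{a}'}(\ute,\ute',s)$ via lemma \ref{lem:pb}. But the displacement of centers at that scale is not $O(c_5\rho^{1-1/(2k)})$ as you claim: lemma \ref{lem:plg} only controls the $\uom$-contribution, while the passage from $(\widetilde{\ute},\widetilde{\ute}',\tilde s)$ to $(\ute,\ute',s)$ contributes a term of order $\rho^{1/2}$ (coming from $|\tilde s-s|\le c_2\rho^{1/2}$ and the Lipschitz dependence of limit geometries on $\ute,\ute'$), and this term persists after renormalization because $|\tilde s_*-s_*|\approx\rho^{1/2}$ at the $T_{\und{a}}T'_{\und{a}'}$ level. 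Since the $\und{f}^{ij}$-pieces have size $\approx\rho^{1-1/(2k)}$ (with $k\ge 2$, so exponent $\ge 3/4$), the ratio $\nu$ in lemma \ref{lem:pb} is $\approx\rho^{-1/2+1/(2k)}$, and the loss factor $(\lambda+\nu)^{-2}\approx\rho^{1-1/k}$ destroys your estimate.

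The paper circumvents this by a two-scale argument exploiting the intermediate level $\und{c}^i$. First it applies lemma \ref{lem:pb} and Cauchy--Schwarz at the $\und{c}^i$-scale $\rho^{1/2}$, where the displacement $\rho^{1/2}$ matches the piece size, obtaining $Leb(J_1^{\uom}(\und{a},\und{a}'))\gtrsim c_{16}\rho^{1/k}$. Then, \emph{separately}, it shows that inside each $J^{\uom}(\und{c}^i,\und{c}^{\prime i})$ the union of the $J^{\uom}(\und{b}^{ij},\und{b}^{\prime ij})$ occupies a uniformly positive proportion of the measure; this requires a \emph{second} $L^2$-density bound, namely $\|\chi_{\tilde{\tb}^i,\tilde{\tb}^{\prime i},\tilde s_i}\|_{L^2}^2\le c_{15}$, which comes from the fact that $(\tb^i,\tb^{\prime i},s_i)\in\tilde\LL$ (not from the excellent-pair hypothesis). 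The two estimates are then combined via the $K_\alpha\subset J_\alpha$ variant of lemma \ref{lem:pb}. Your single Cauchy--Schwarz application at one scale cannot substitute for this nested structure.
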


\begin{proof}
First we make the following observation: 
\begin{equation}\label{eq:rpi}
\pi_{\und{\eta},\und{\eta}', w}\circ (f_{\und{d}},f_{\und{d}'})=A\circ \pi_{T_{\und{d}}T'_{\und{d}'}(\und{\eta},\und{\eta}', w)},
\end{equation}
where $A$ is an affine function with $|DA|\approx diam(G(\und{d}))$, and this holds for any $(\und{d},\und{d}')\in \Sigma(\alpha)\times \Sigma'(\alpha)$, for some $\alpha\in \R^+$, and any $(\und{\eta},\und{\eta}', w)$. 

For an excellent pair $(\und{a},\und{a}')$ we consider the associated pairs $(\und{c}^i,\und{c}^{\prime i})$ and the sets
\begin{align*}
J^{\und{\om}}(\und{c}^i, \und{c}^{\prime i})&= \pi^{\und{\om}}_{\tb, \tb', s}(G^{\und{\om}}(\und{c}^i)\times G(\und{c}^{\prime i})),\\
J(\und{c}^i, \und{c}^{\prime i})&= \pi_{\tb, \tb', s}(G(\und{c}^i)\times G(\und{c}^{\prime i})),\\
\tilde{J}(\und{c}^i, \und{c}^{\prime i})&= \pi_{\tilde\tb, \tilde\tb', \tilde{s}}(G(\und{c}^i)\times G(\und{c}^{\prime i})),\\
J_1^{\und{\om}}(\und{a}, \und{a}')&= \bigcup_{(\und{c}^i,\und{c}^{\prime i})} J^{\und{\om}}(\und{c}^i, \und{c}^{\prime i}).
\end{align*}
We will prove that the measure of $J^{\und{\om}}_1(\und{a}, \und{a}')$ is at least of the order $c_{16}\rho^{1/k}$. To do this, we use lemma \ref{lem:pb} to see that the measures of the sets $\bigcup J^{\und{\om}}(\und{c}^i, \und{c}^{\prime i})$, $\bigcup J(\und{c}^i, \und{c}^{\prime i})$ and $\bigcup \tilde{J}(\und{c}^i, \und{c}^{\prime i})$ are of the same order. It is clear from equation \eqref{eq:bt} that the sets $J^{\und{\om}}(\und{c}^i, \und{c}^{\prime i})$, $J(\und{c}^i, \und{c}^{\prime i})$ and $\tilde{J}(\und{c}^i, \und{c}^{\prime i})$ are all contained in, and contain, balls with radius of order $\rho^{1/2}$, and centered at the points $\pi^{\und{\om}}_{\tb, \tb', s}(c^{\und{\om}}_{\und{c}^i},c_{\und{c}^{\prime i}})$, $\pi_{\tb, \tb', s}(c_{\und{c}^i},c_{\und{c}^{\prime i}})$ and $\pi_{\tilde\tb, \tilde\tb', \tilde s}(c_{\und{c}^i},c_{\und{c}^{\prime i}})$ respectively. We remark that
\begin{align*}
|\pi^{\und{\om}}_{\tb, \tb', s}(c^{\und{\om}}_{\und{c}^i},c_{\und{c}^{\prime i}})-\pi_{\tb, \tb', s}(c_{\und{c}^i},c_{\und{c}^{\prime i}})|&= |s|\cdot |k^{\tb}(c_{\und{c}^i})-k^{\tb,\und{\om}}(c^{\und{\om}}_{\und{c}^i})|\\
&\leq |s|\cdot \left[|k^{\tb}(c_{\und{c}^i})-k^{\tb,\und{\om}}(c_{\und{c}^i})|+|k^{\tb,\und{\om}}(c_{\und{c}^i})-k^{\tb,\und{\om}}(c^{\und{\om}}_{\und{c}^i})|\right]\\
&\lesssim c_5 \rho^{1-1/2k} \lesssim \rho^{1/2},
\end{align*}
we also have
\begin{align*}
|\pi_{\tilde\tb, \tilde\tb', \tilde s}(c_{\und{c}^i},c_{\und{c}^{\prime i}})-\pi_{\tb, \tb', s}(c_{\und{c}^i},c_{\und{c}^{\prime i}})|&\leq |k^{\tilde\tb'}(c_{\und{c}^{\prime i}})-k^{\tb'}(c_{\und{c}^{\prime i}})|+|s-\tilde{s}|\cdot |k^{\tb}(c_{\und{c}^{i}})|+ |\tilde s|\cdot |k^{\tb}(c_{\und{c}^{i}})-k^{\tilde \tb}(c_{\und{c}^{i}})|\\
&\lesssim \rho^{1/2},
\end{align*}
given that $d(\tb,\tilde \tb)\leq c_0 \rho^{1/2}$, $d(\tb',\tilde \tb')\leq c_0 \rho^{1/2}$ and $|s-\tilde s|\leq c_2\rho^{1/2}$. All this allows us to conclude that we can use lemma \ref{lem:pb}.

Now we can estimate the measure of $J^{\und{\om}}_1(\und{a}, \und{a}^{\prime})$. In the following lines of equations we will be using: lemma \ref{lem:pb} for the first three lines, observation in equation \eqref{eq:rpi} for the fifth and sixth line, equation \eqref{eq:cauchy} in the seventh line, in the last line we use that $(\und{a},\und{a}')$ is an excellent pair and the fact that $diam(G(\und{c}^{i}/\und{a}))$ is of order $\rho^{ \frac{1}{2}-\frac{1}{2k}}$.

\begin{align*}
Leb\left(J^{\und{\om}}_1(\und{a}, \und{a}^{\prime})\right)&= Leb\left( \bigcup J^{\und{\om}}(\und{c}^i, \und{c}^{\prime i}) \right)\\
&\gtrsim Leb\left( \bigcup J(\und{c}^i, \und{c}^{\prime i}) \right)\\
&\gtrsim Leb\left( \bigcup \tilde{J}(\und{c}^i, \und{c}^{\prime i}) \right)\\
&= Leb\left( \pi_{\tilde\tb, \tilde\tb', \tilde{s}}\left(\bigcup G(\und{c}^i)\times G(\und{c}^{\prime i})\right) \right)\\
&= Leb\left( A\circ \pi_{T_{\und{a}}T'_{\und{a}'}(\tilde\tb, \tilde\tb', \tilde{s})}\left(\bigcup G(\und{c}^i/\und{a})\times G(\und{c}^{\prime i}/\und{a}')\right) \right)\\
&\approx diam(G(\und{a}))^2 \cdot Leb\left( \pi_{T_{\und{a}}T'_{\und{a}'}(\tilde\tb, \tilde\tb', \tilde{s})}\left(\bigcup G(\und{c}^i/\und{a})\times G(\und{c}^{\prime i}/\und{a}')\right) \right)\\
&\gtrsim diam(G(\und{a}))^2 \cdot \mu_d \times \mu_{d'}\left( \bigcup G(\und{c}^i/\und{a})\times G(\und{c}^{\prime i}/\und{a}')\right)^2 \cdot \left\| \chi_{T_{\und{a}}T'_{\und{a}'}(\tilde\tb, \tilde\tb', \tilde{s})} \right\|_{L^2}^{-2}\\
&\gtrsim c_{16}\rho^{1/k} \cdot \left(\rho^{\left(\frac{1}{2}-\frac{1}{2k} \right)(d+d')}\cdot \ro^{-(d+d')(\frac 12-\frac{1}{2k})}\right)^2 \approx c_{16}\rho^{1/k}
\end{align*}

We now consider the sets 
\begin{align*}
J^{\und{\om}}(\und{b}^{ij},\und{b}^{\prime ij})&= \pi^{\und{\om}}_{\tb, \tb', s}(G^{\und{\om}}(\und{b}^{ij})\times G(\und{b}^{\prime ij})),\\
J_1^{\und{\om}}(\und{c}^i, \und{c}^{\prime i})&= \bigcup_{(\und{b}^{ij},\und{b}^{\prime ij})} J^{\und{\om}}(\und{b}^{ij}, \und{b}^{\prime ij}),\\
J_2^{\und{\om}}(\und{a}, \und{a}')&= \bigcup_{(\und{c}^i,\und{c}^{\prime i})} J^{\und{\om}}_1(\und{c}^i, \und{c}^{\prime i}).
\end{align*}

We will estimate the measure of $J_2^{\und{\om}}(\und{a}, \und{a}')$. Notice that $J^{\und{\om}}_1(\und{c}^i, \und{c}^{\prime i}) \subset J^{\und{\om}}(\und{c}^i, \und{c}^{\prime i})$ and if we are able to prove that $Leb\left(J^{\und{\om}}_1(\und{c}^i, \und{c}^{\prime i}) \right) \geq \sigma^{-1} \cdot Leb\left( J^{\und{\om}}(\und{c}^i, \und{c}^{\prime i})\right)$, for some constant $\sigma$, then we can use lemma \ref{lem:pb} with $K_{\alpha}$ being $J^{\und{\om}}_1(\und{c}^i, \und{c}^{\prime i})$ and $J_{\alpha}$ being $J^{\und{\om}}(\und{c}^i, \und{c}^{\prime i})$ to conclude that
\[Leb\left(J_2^{\und{\om}}(\und{a}, \und{a}')\right)=Leb\left(\bigcup J^{\und{\om}}_1(\und{c}^i, \und{c}^{\prime i}) \right)\gtrsim Leb\left(\bigcup J^{\und{\om}}(\und{c}^i, \und{c}^{\prime i})\right) \gtrsim c_{16}\rho^{1/k}.\]
To prove that there is such $\sigma$ we will proceed similarly to what we did when estimating $Leb\left(J^{\und{\om}}_1(\und{a}, \und{a}^{\prime})\right)$. Note that $T_{\und{c}^{i}}T'_{\und{c}^{\prime i}}(\tb,\tb',s)=(\tb^i,\tb^{\prime i}, s_i) \in \tilde \LL$ and then there exists $(\tilde{\tb}^i,\tilde{\tb}^{\prime i}, \tilde{s}_i)$ such that $|s_i-\tilde{s}_i|\leq c_2 \rho^{1/2}$, $d(\tilde{\tb}^i,\tb^i)\leq c_0 \rho^{1/2}$, $d(\tilde{\tb}^{\prime i},\tb^{\prime i})\leq c_0 \rho^{1/2}$ and
\begin{equation}\label{eq:eschi}
\left\|\chi_{\tilde{\tb}^i,\tilde{\tb}^{\prime i}, \tilde{s}_i} \right\|_{L^2}^{2}\leq c_{15}.
\end{equation}

The sets 
\[
\pi^{\und{\omega}}_{\tb^i,\tb^{\prime i}, s_i(\und{\om})}\left( G^{\und{\om}}(\und{d}^{ij})\times G(\und{d}^{\prime ij})\right),
\pi_{\tb^i,\tb^{\prime i}, s_i}\left( G(\und{d}^{ij})\times G(\und{d}^{\prime ij})\right) \text{ and } \pi_{\tilde{\tb}^i,\tilde{\tb}^{\prime i}, \tilde{s}_i}\left( G(\und{d}^{ij})\times G(\und{d}^{\prime ij})\right)
\]
are all contained in, and contain, balls with radius of order $\rho^{1/2}$, and centered at the points 
\[\pi^{\und{\omega}}_{\tb^i,\tb^{\prime i}, s_i(\und{\om})}( c_{\und{d}^{ij}}^{\und{\om}},c_{\und{d}^{\prime ij}}), \pi_{\tb^i,\tb^{\prime i}, s_i}( c_{\und{d}^{ij}},c_{\und{d}^{\prime ij}}) \text{ and } \pi_{\tilde{\tb}^i,\tilde{\tb}^{\prime i}, \tilde{s}_i}( c_{\und{d}^{ij}},c_{\und{d}^{\prime ij}})
\]
respectively. By lemma \ref{lem:plg} we know that 
$$\|k^{\tb^i,\und{\om}}-k^{\tb^i}\|_{C^0}\lesssim c_5\rho^{1-1/2k},\, \,|c_{\und{d}^{ij}}^{\und{\om}}-c_{\und{d}^{ij}}|\lesssim c_5\rho^{1+1/2k},$$
on the other hand we also have $|s_i-s_i(\und{\om})|\lesssim \rho^{1/2}$, $\|k^{\tb^i}-k^{\tilde{\tb}^i}\|_{C^0}\lesssim \rho^{1/2}$ and $\|k^{\tb^{\prime i}}-k^{\tilde{\tb}^{\prime i}}\|_{C^0}\lesssim \rho^{1/2}$, thus we can conclude that the distance between any two centers is of order less than $\rho^{1/2}$. Therefore, we can apply lemma \ref{lem:pb} taking $J_{\alpha}$ as one of the families 
$$\pi^{\und{\omega}}_{\tb^i,\tb^{\prime i}, s_i(\und{\om})}\left( G^{\und{\om}}(\und{d}^{ij})\times G(\und{d}^{\prime ij})\right),\,\, \pi_{\tb^i,\tb^{\prime i}, s_i}\left( G(\und{d}^{ij})\times G(\und{d}^{\prime ij})\right),\,\, \pi_{\tilde{\tb}^i,\tilde{\tb}^{\prime i}, \tilde{s}_i}\left( G(\und{d}^{ij})\times G(\und{d}^{\prime ij})\right)$$
and $J_{\alpha}'$ as another of these families.

Using the previous analysis together with equations \eqref{eq:rpi}, \eqref{eq:cauchy}, \eqref{eq:eschi} and the fact that the number of $(\und{d}^{ij}, \und{d}^{\prime ij})$ is a positive proportion of $\Sigma(\rho^{1/2})\times \Sigma'(\rho^{1/2})$ we obtain 
\begin{align*}
Leb\left(J^{\und{\om}}_1(\und{c}^i, \und{c}^{\prime i})\right)&= Leb\left( \bigcup J^{\und{\om}}(\und{b}^{ij}, \und{b}^{\prime ij})\right)\\
&=Leb\left( \pi^{\und{\om}}_{\tb, \tb', s}\left(\bigcup G^{\und{\om}}(\und{b}^{ij})\times G(\und{b}^{\prime ij})\right) \right)\\
&= Leb\left( A\circ \pi^{\und{\om}}_{T^{\und{\om}}_{\und{c}^{i}}T'_{\und{c}^{\prime i}}(\tb, \tb', s)}\left(\bigcup G^{\und{\om}}(\und{d}^{ij})\times G(\und{d}^{\prime ij})\right) \right)\\
&\approx Leb\left(J(\und{c}^i, \und{c}^{\prime i})\right) \cdot Leb\left( \bigcup \pi^{\und{\omega}}_{\tb^i,\tb^{\prime i}, s_i(\und{\om})}\left( G^{\und{\om}}(\und{d}^{ij})\times G(\und{d}^{\prime ij})\right) \right)\\
&\approx Leb\left(J(\und{c}^i, \und{c}^{\prime i})\right) \cdot Leb\left( \bigcup \pi_{\tb^i,\tb^{\prime i}, s_i}\left( G(\und{d}^{ij})\times G(\und{d}^{\prime ij})\right) \right)\\
&\approx Leb\left(J(\und{c}^i, \und{c}^{\prime i})\right) \cdot Leb\left( \bigcup \pi_{\tilde{\tb}^i,\tilde{\tb}^{\prime i}, \tilde{s}_i}\left( G(\und{d}^{ij})\times G(\und{d}^{\prime ij})\right) \right)\\
&\gtrsim Leb\left(J(\und{c}^i, \und{c}^{\prime i})\right) \cdot \mu_d \times \mu_{d'}\left( \bigcup G(\und{d}^{ij})\times G(\und{d}^{\prime ij})\right)^2 \cdot \left\| \chi_{\tilde{\tb}^i,\tilde{\tb}^{\prime i}, \tilde{s}_i} \right\|_{L^2}^{-2}\\
&\gtrsim Leb\left(J(\und{c}^i, \und{c}^{\prime i})\right) \cdot \left(\rho^{\frac{1}{2}(d+d')}\cdot \ro^{-\frac 12 (d+d')}\right)^2 \approx Leb\left(J(\und{c}^i, \und{c}^{\prime i})\right).
\end{align*}

This guarantees the existence of the desired constant $\sigma$ and finishes the proof of the lemma.
\end{proof}

Now we can prove proposition \ref{prop:bigL}. Consider the function
\[\varphi^{\uom}_2= \sum_{(\und{a},\und{a}')}1_{J^{\und{\om}}_2(\und{a},\und{a}')},\]
where $1_B$ means indicator function of the set $B$ and the sum is over all excellent pairs. We want to estimate the measure of the set
\[X^{\uom}=\{t\in \C: \varphi^{\uom}_2(t)\geq c''c_{16}^2 \rho^{-\frac{1}{2k}(d+d'-2)}\},\]
where $c''$ is a constant defined in the following way. Suppose that we have two excellent pairs with the same first coordinate $(\und{a},\und{a}')$, $(\und{a},\tilde{\und{a}}')$ and such that $J^{\und{\om}}(\und{a},\und{a}')\cap J^{\und{\om}}(\und{a},\tilde{\und{a}}')\neq \emptyset$. Then
\[k^{\tb'}(y)-sk^{\tb, \und{\om}}(x)=k^{\tb'}(\tilde{y})-sk^{\tb, \und{\om}}(\tilde{x}),\]
for some $(x,y)\in G^{\und{\om}}(\und{a})\times G(\und{a}')$, $(\tilde{x},\tilde{y})\in G^{\und{\om}}(\und{a})\times G(\tilde{\und{a}}')$. Thus
\[|y-\tilde{y}|\approx |x-\tilde{x}|\lesssim \rho^{1/2k},\]
which shows that
\[d(G(\und{a}'),G(\tilde{\und{a}}'))\lesssim \rho^{1/2k}.\]
This implies that if we fix $(\und{a},\und{a}')$, then the number of possible pairs $(\und{a},\tilde{\und{a}}')$ such that $J^{\und{\om}}(\und{a},\und{a}')\cap J^{\und{\om}}(\und{a},\tilde{\und{a}}')\neq \emptyset$ is bounded by a uniform constant, independent of $\rho$ and $(\und{a},\und{a}')$, we denote this constant by $c''$ (this last statement is a consequence of lemma 1.2.3 in \cite{Z}).

Notice that since $s\in J_R$ then $\varphi^{\uom}_2$ is supported in a ball of radius proportional to $1+e^R$ centered at $0$, thus there is a constant $c$ such that
\[\int \varphi^{\uom}_2 dt \leq (\sup \varphi^{\uom}_2) \cdot Leb(X^{\uom})+c c''c_{16}^2 \rho^{-\frac{1}{2k}(d+d'-2)}.\]
Now we estimate $\sup \varphi^{\uom}_2$ from above and $\int \varphi^{\uom}_2 dt$ from below. By lemmas \ref{lem:j2} and \ref{lem:exp} there is a constant $c'$ such that
\begin{equation}\label{eq:chfi}
\int \varphi^{\uom}_2 dt\geq c' c_{16}\rho^{1/k}\cdot \ro^{-\frac{d+d'}{2k}}=c'c_{16}\rho^{-\frac{1}{2k}(d+d'-2)}.
\end{equation}
Let $x\in \C$ and excellent pairs $(\und{a},\und{a}')$, $(\tilde{\und{a}},\tilde{\und{a}}')$ such that $x\in J^{\und{\om}}(\und{a},\und{a}')\cap J^{\und{\om}}(\tilde{\und{a}},\tilde{\und{a}}')$. Remember that
\[|\pi^{\und{\om}}_{\tb,\tb',s}(c^{\und{\om}}_{\und{a}},c_{\und{a}'})-\pi_{\tb,\tb',s}(c_{\und{a}},c_{\und{a}'})|\lesssim c_5\rho^{1-1/2k}=o(\rho^{1/2k}),\]
where the $o$ notation means that, once we have chosen $c_5$, we can choose any $\ve>0$ and $c_5\rho^{1-1/2k}\leq \ve \rho^{1/2k}$ will hold provided $\rho$ is small enough. With this in mind we obtain
\begin{align*}
|\pi_{\tb,\tb',s}(c_{\und{a}},c_{\und{a}'})-\pi_{\tb,\tb',s}(c_{\tilde{\und{a}}},c_{\tilde{\und{a}}'})| &\leq o(\rho^{1/2k})+|\pi^{\und{\om}}_{\tb,\tb',s}(c^{\und{\om}}_{\und{a}},c_{\und{a}'})-x|+|x-\pi^{\und{\om}}_{\tb,\tb',s}(c^{\und{\om}}_{\tilde{\und{a}}},c_{\tilde{\und{a}}'})|\\
&\leq o(\rho^{1/2k})+2c_{19} \rho^{1/2k}.
\end{align*}
Given that $(\und{a},\und{a}')$, $(\tilde{\und{a}},\tilde{\und{a}}')$ are excellent, we conclude that there can be no more than $c_{16}^{-1}\rho^{-\frac{1}{2k}(d+d'-2)}$ excellent pairs intersecting at $x$. Since $\varphi^{\uom}_2= \sum_{(\und{a},\und{a}')} 1_{J^{\und{\om}}(\und{a},\und{a}')}$ we get that
\[\varphi^{\uom}_2(x)\leq c_{16}^{-1}\rho^{-\frac{1}{2k}(d+d'-2)}.\]
We are ready to bound $Leb(X^{\uom})$, using equation \eqref{eq:chfi} and the previous estimates
\[c_{16}^{-1}\rho^{-\frac{1}{2k}(d+d'-2)}Leb(X^{\uom})+cc''c_{16}^2\rho^{-\frac{1}{2k}(d+d'-2)}\geq c'c_{16}\rho^{-\frac{1}{2k}(d+d'-2)},\]
and from this we get
\[Leb(X^{\uom})\geq c_{16}^2(c'-cc''c_{16}).\]
We fix $c_{16}$ small enough such that $c_{17}:=c_{16}^2(c'-cc''c_{16})>0$.

To finish the proof of the proposition we will show that $\{(\tb,\tb',s,t):t\in X^{\uom^*}\}\subset L_{\uom}^{-1}(\ute,\ute',s)$. Let $t\in X^{\uom^*}$, then there are at least $\left[c''c_{16}^2\rho^{-\frac{1}{2k}(d+d'-2)}\right]$ excellent pairs $(\und{a},\und{a}')$, each one with an associated pair $(\und{b}^{ij},\und{b}^{\prime ij})$ which starts with $(\und{a},\und{a}')$ and such that $t\in J^{\und{\om}^*}(\und{b}^{ij},\und{b}^{\prime ij})$. By the definition of $c''$, we can extract from this family of excellent pairs a subfamily $(\und{a}^l,\und{a}^{\prime l})$, $l=1,...,\left[c_{16}^2\rho^{-\frac{1}{2k}(d+d'-2)}\right]$, such that all firsts coordinates are different. For $(\und{a}^l,\und{a}^{\prime l})$ we denote the associated pair $(\und{b}^{ij},\und{b}^{\prime ij})$ by $(\und{b}^l,\und{b}^{\prime l})$. We will prove the pairs $(\und{b}^1,\und{b}^{\prime 1}),...,(\und{b}^N,\und{b}^{\prime N})$ have the properties necessary to guarantee that $(\tb,\tb',s,t)\in L_{\uom}^{-1}(\ute,\ute',s)$. Write
\[T^{\und{\om}^*}_{\und{b}^l}T'_{\und{b}^{\prime l}}(\tb, \tb', s,t)=(\tb^l, \tb^{\prime l}, s_l,t_l)\]
then:
\begin{itemize}
\item[(i)] Since all firsts coordinates of the excellent pairs are different we conclude that $\und{b}^1,...,\und{b}^N$ are pairwise independent.
\item[(ii)] By the way in which $\und{d}^{ij}$ was defined we get that all $\tb^l\in \Sigma^-_{nr}$.
\item[(iii)'] By the scale recurrence lemma we know that $B(s_{ij},c_4\rho^{1/2})\subset E^*(d^{ij},d^{\prime ij})$. We also know that $|s_{ij}-s_{ij}(\und{\om}^*)|\lesssim c_5\rho^{1-1/2k}=o(\rho^{1/2})$, then, if $\rho$ is small enough, we have
\[\{\tilde{s}: |\tilde{s}-s_{ij}(\und{\om}^*)|\leq \frac{3}{4}c_4\rho^{1/2}\}\subset E^*(d^{ij},d^{\prime ij}).\]
We conclude that $(\tb^l, \tb^{\prime l}, \tilde{s})\in \tilde \LL $ if $|\tilde{s}-s_{l}|\leq \frac{3}{4}c_4\rho^{1/2}$ (remember that for every $l$ there is $i,j$ such that $s_l=s_{ij}(\und{\om}^*)$).
\item[(iv)'] Given that $t\in J^{\und{\om}^*}(\und{b}^l,\und{b}^{\prime l})$, there is $(x,y)\in \C^2$ such that
\begin{align*}
t&=k^{\tb'}(f_{\und{b}^{\prime l}}^{\und{\om}^*}(y))-sk^{\tb,\und{\om}^*}(f_{\und{b}^l}^{\und{\om}^*}(x))\\
F^{\tb'}_{\und{b}^{\prime l}}(k^{\tb'\und{b}^{\prime l}}(y))&= t+s\cdot F^{\tb,\und{\om}^*}_{\und{b}^l}(k^{\tb\und{b}^l,\und{\om}^*}(x))\\
k^{\tb'\und{b}^{\prime l}}(y)&=\left( F^{\tb'}_{\und{b}^{\prime l}} \right)^{-1}\left(t+s\cdot F^{\tb,\und{\om}^*}_{\und{b}^l}(k^{\tb\und{b}^l,\und{\om}^*}(x))\right)\\
k^{\tb'\und{b}^{\prime l}}(y)&=t_l+s_l\cdot k^{\tb\und{b}^l,\und{\om}^*}(x).
\end{align*}
Since $s_l\in J_R$ we conclude that $|t_l|\leq 1+e^R$.
\end{itemize}

\section{Proof of Proposition \ref{prop:probes}}\label{sec:proofprobes}

In this section we will prove proposition \ref{prop:probes}. Given $u=(\tb,\tb',s, t) \in \tilde{\LL}\times \C$, remember the decomposition $\uom=(\uom',\uom'')$ where $\uom'\in \mathbb{D}^{\Sigma_1(\tb)}$ and $\uom'' \in \mathbb{D}^{\Sigma_1\setminus \Sigma_1(\tb)}$. Recall that the set $\Sigma_1(\tb)$ is given by the words in $\Sigma_1$ starting with the same word, in $\Sigma(\rho^{1/2k})$, in which $\tb$ finishes. In the same way as in \cite{MY}, one uses Fubini's theorem to reduce the proof of proposition \ref{prop:probes} to proving
\begin{equation}\label{eq:fubini1}
\mathbb{P}'(\Omega'\setminus\Omega^{\prime 0}(u))\leq \exp(-c_7\rho^{-\frac{1}{2k}(d+d'-2)}),
\end{equation}
where we have fixed $\uom''$ such that $u\in \LL^1_{(\und{0},\uom'')}$ and $\Omega'=\mathbb{D}^{\Sigma_1(\tb)}$, $\Omega^{\prime 0}(u)=\{\uom':(\uom',\uom'')\in \Omega^0(u)\}$, $\mathbb{P}'$ is normalized Lebesgue measure in $\Omega'$.

The fact that $u=(\tb,\tb',s, t)\in \LL^1_{(\und{0},\uom'')}$ means that $(\tb,\tb',s)\in \tilde\LL$ and there is $(\tilde\tb,\tilde{\tb}',\tilde s, \tilde t)$ for which
\[d(\tb,\tilde\tb)<2\rho^{5/2},\, d(\tb',\tilde{\tb}')<2\rho^{5/2},\, |s-\tilde s|<\rho,\, |t-\tilde t|<\rho,\]
$(\tilde\tb,\tilde{\tb}',\tilde s)\in\tilde\LL$ and $\tilde t\in L_{(\und{0},\uom'')}^0(\tilde\tb,\tilde{\tb}',\tilde s)$. Notice that $\Sigma_1(\tb)=\Sigma_1(\tilde \tb)$, moreover $u\in \LL^1_{(\und{0},\uom'')}$ if and only if $u\in \LL^1_{(\und{\om}',\uom'')}$ for any $\uom' \in \Sigma_1(\tb)$.

Next, $\tilde t\in L_{(\und{0},\uom'')}^0(\tilde\tb,\tilde{\tb}',\tilde s)$ means that there are pairs $(\und{b}^1,\und{b}^{\prime 1}),...,\, (\und{b}^N,\und{b}^{\prime N})$ in $\Sigma(\rho)\times \Sigma'(\rho)$ such that if we set
\[T^{(\und{0},\uom'')}_{\und{b}^i}T'_{\und{b}^{\prime i}}(\tilde\tb,\tilde{\tb}',\tilde s, \tilde t)=(\tilde{\tb}^i,\tilde{\tb}^{\prime i},\tilde{s}_i, \tilde{t}_i)\]
then:
\begin{itemize}
\item[(i)] the words $\und{b}^1,...,\, \und{b}^N$ are pairwise independent;
\item[(ii)] $\tilde{\tb}^i\in \Sigma^-_{nr}$;
\item[(iii)] $(\tilde{\tb}^i,\tilde{\tb}^{\prime i},\tilde{s}_{i}')\in \tilde{\LL}$ if $|\tilde{s}_{i}-\tilde{s}_{i}'|<\frac{2}{3}c_4\rho^{1/2}$;
\item[(iv)] $|\tilde{t}_i|\leq 2(1+e^R)$.
\end{itemize}

Let $\und{a}$ be the word in $\Sigma(\rho^{1/2k})$ in which $\tb$ ends, for each $\und{b}^i$ define
$\und{a}^i$ in $\Sigma_1(\tb)$ given by the concatenation of $\und{a}$ and a word at the beginning of $\und{b}^i$, in such a way that $\und{a}^i\in \Sigma_1(\tb)$ (this can be done since $\tb\und{b}^i \in \Sigma_{nr}^{-}$). Notice that the independence of the words $\und{b}^i$ imply that the words $\und{a}^i$ are all different.

Now we consider the decomposition of $\uom'\in \Omega'$ as $\uom'=(\om_1,...,\om_N, \tilde{\uom}')$, where $\tilde{\uom}'\in \mathbb{D}^{\Sigma_1(\tb)\setminus\{a_1,...,a_N\}}$ and $\om_i$ is the component of $\uom'$ corresponding to $\und{a}^i$. We use again Fubini's theorem to reduce the proof of equation \eqref{eq:fubini1} to a similar formula in a smaller space. For $\tilde{\uom}'$ fixed, we will prove that the set of $(\om_1,..,\om_N)$ such that $\uom'\notin \Omega^{\prime 0}(u)$
has measure $\leq \exp(-c_7\rho^{-\frac{1}{2k}(d+d'-2)})$.

To prove the desired inequality, we will prove that for each $\om_i$ there is a set with positive measure such that whenever $\om_i$ is in this set we have $\uom'\in \Omega^{\prime 0}(u)$ (no matter the value of $\om_j$, $j\neq i$). More precisely, we will prove that if $\om_i$ is in this set then $\und{b}^i$, $\und{b}^{\prime i}$ verify that if we set 
$$T_{\und{b}^i}^{\uom}T'_{\und{b}^{\prime i}}(u)=(\tb^i,\tb^{\prime i},s_i(\uom),t_i(\uom))$$
then:
\begin{itemize}
\item[(i)]  $(\tb^i,\tb^{\prime i},s_i')\in \tilde\LL$ if $|s_i'-s_i(\uom)|<\frac{1}{2}c_4\rho^{1/2}$;
\item[(ii)] $t_i(\uom)\in L^0_{\uom}(\tb^i,\tb^{\prime i},s_i(\uom))$.
\end{itemize}
The first property can be easily obtained. In fact, we already know (from (iii) above) that $(\tilde{\tb}^i,\tilde{\tb}^{\prime i},\tilde{s}_{i}')\in \tilde{\LL}$ if $|\tilde{s}_{i}-\tilde{s}_{i}'|<\frac{2}{3}c_4\rho^{1/2}$. Notice that since $d(\tb^i,\tilde{\tb}^i)\lesssim \rho^{7/2}$, $d(\tb^{\prime i},\tilde{\tb}^{\prime i})\lesssim \rho^{7/2}$ then $\tilde{\tb}^i\in \Sigma^-_{nr}$ and the fiber of $\tilde\LL$ over $(\tb^i,\tb^{\prime i})$ is the same as the one over $(\tilde{\tb}^{i},\tilde{\tb}^{\prime i})$, thus we only need to estimate $|\tilde{s}_{i}-s_i(\uom)|$. Using that $|s-\tilde s|<\rho$, $d(\tb,\tilde{\tb})<2\rho^{5/2}$, $d(\tb',\tilde{\tb}')<2\rho^{5/2}$ and lemma \ref{lem:plg} one gets that $|\tilde{s}_{i}-s_i(\uom)|=o(\rho^{1/2})$, then choosing $\rho$ sufficiently small gives the desired property (for any value of $\om_i$).

For the second property we choose $\overline{\tb}\in \Sigma^-$ such that $d(\overline{\tb},\tb)<\rho^{5/2}$ (then $\overline{\tb}$ ends with $\und{a}$) and such that it does not contain $\und{a}$ anywhere else (this is possible since $\tb\in\Sigma^-_{nr}$). Set
\[T^{\uom}_{\und{b}^i} T'_{\und{b}^{\prime i}}(\overline{\tb},\tb',s,t)=(\overline{\tb}^i,\tb^{\prime i},\overline{s}_i(\uom),\overline{t}_i(\uom)).\]
We will prove the following lemmas:

\begin{lemma}\label{lem:si}
Once $\uom''$ has been fixed, the number $\overline{t}_i(\uom)$ only depends on $\om_i$, not in $\tilde{\uom}'$ or $\om_j$ for $j\neq i$. Moreover, if $c_5$ is big enough then there is a constant $c_7'>0$ such that
\[Leb\left(\{\om_i:\, \overline{t}_i(\uom)\in L^{-1}_{\hat{\uom}}(\tb^i,\tb^{\prime i},s_i(\hat{\uom}))  \}\right)\geq c_7',\]
where $\hat{\uom}=(\und{0}, \tilde{\uom}',\uom'')$.
\end{lemma}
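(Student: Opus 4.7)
The plan is to establish the two assertions of the lemma separately. Starting from \eqref{eq:formularenorma} one has
\[
\overline{t}_i(\uom) = \frac{1}{DF^{\tb'}_{\und{b}^{\prime i}}}\bigl(s\,c^{\overline{\tb},\uom}_{\und{b}^i} + t - c^{\tb'}_{\und{b}^{\prime i}}\bigr),
\]
so the only factor that feels $\uom$ is $c^{\overline{\tb},\uom}_{\und{b}^i} = k^{\overline{\tb},\uom}(c^{\uom}_{\und{b}^i})$. Every word in $\Sigma_1(\tb)$ starts with the word $\und{a} \in \Sigma(\rho^{1/2k})$ in which $\tb$ and $\overline{\tb}$ both end; by the choice of $\overline{\tb}$, the word $\und{a}$ occurs in $\overline{\tb}$ only at its final position, and by the $\Sigma_{nr}^-$ property of $\tb\und{b}^i$ the word $\und{a}$ does not occur inside $\und{b}^i$ either. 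Consequently, the only location in $\overline{\tb}\und{b}^i$ at which a $\Sigma_1(\tb)$-word can begin is the end of $\overline{\tb}$, and the particular word sitting there is dictated by the initial letters of $\und{b}^i$, which by construction give exactly $\und{a}^i$. Remark \ref{rem:per} then shows that, on a neighbourhood of all relevant iterates, both $f^{\uom}_{\overline{\tb}_n}$ and $f^{\uom}_{\und{b}^i}$ depend on $\uom'$ only through $\om_i$, and hence so do $k^{\overline{\tb},\uom}$, $c^{\uom}_{\und{b}^i}$ and $\overline{t}_i(\uom)$. This establishes the first assertion.

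For the measure bound the task is to understand the map $\om_i \mapsto \overline{t}_i(\uom)$ quantitatively. The key observation is that the perturbation $c_5\rho^{1+1/2k}\om_i$ enters the defining iteration of $k^{\overline{\tb},\uom}(z)$ at one single step, the one corresponding to going through the cylinder $\und{a}^i$ (step $n_* \approx |\und{a}|$), while the pre-periodic base point $c_{\theta_0}$ never passes through $\und{a}^i$ along its own inverse orbit following $\overline{\tb}$, so $f^{\uom}_{\overline{\tb}_n}(c_{\theta_0})$ is independent of $\om_i$. Using
\[
k^{\overline{\tb},\uom}(z)=\lim_{n\to\infty} \bigl(Df^{\uom}_{\overline{\tb}_n}(c_{\theta_0})\bigr)^{-1}\bigl(f^{\uom}_{\overline{\tb}_n}(z)-f^{\uom}_{\overline{\tb}_n}(c_{\theta_0})\bigr),
\]
the chain rule, bounded distortion, and $|Df^{\uom}_{\overline{\tb}_{n_*+1}}(c_{\theta_0})|\approx \diam(G(\und{a}))\approx \rho^{1/2k}$, one obtains
\[
\bigl|\partial c^{\overline{\tb},\uom}_{\und{b}^i}/\partial\om_i\bigr| \;\approx\; c_5\,\rho^{1+1/2k}/\rho^{1/2k}\;=\;c_5\,\rho.
\]
Combined with $|DF^{\tb'}_{\und{b}^{\prime i}}|\approx \rho$, $|s|\approx 1$, and the observation that the $\om_i$-dependence of $c^{\uom}_{\und{b}^i}$ contributes only lower-order terms (since $\und{a}^i$ does not appear inside $\und{b}^i$), this gives $|\partial \overline{t}_i(\uom)/\partial \om_i|\approx c_5$ uniformly in $\om_i\in\mathbb{D}$.

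If $c_5$ is chosen sufficiently large, the map $\om_i \mapsto \overline{t}_i(\uom)$ is therefore a small perturbation of an affine diffeomorphism of $\mathbb{D}$ onto a disk of radius comparable to $c_5$, centered close to $\tilde{t}_i$. Since $c_5$ can be taken larger than $2(1+e^R)$, this disk contains $\{|t|\le 1+e^R\}$, hence in particular the target set $L^{-1}_{\hat{\uom}}(\tb^i,\tb^{\prime i},s_i(\hat{\uom}))$. The triple $(\tb^i,\tb^{\prime i},s_i(\hat{\uom}))$ lies in $\tilde{\LL}$ by the argument already used for property (i)' above, using $|s_i(\uom)-s_i(\hat{\uom})|=o(\rho^{1/2})$ from Lemma \ref{lem:plg}; so Proposition \ref{prop:bigL} gives $Leb\bigl(L^{-1}_{\hat{\uom}}(\tb^i,\tb^{\prime i},s_i(\hat{\uom}))\bigr)\ge c_{17}$. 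Pulling back through the diffeomorphism, whose Jacobian is bounded above by a constant multiple of $c_5^2$, yields the desired positive lower bound $c_7'$, for instance $c_7' = c_{17}/(C\,c_5^2)$.

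The main obstacle is the derivative computation in the second paragraph: one must pinpoint the single step at which $\om_i$ enters the defining limit of $k^{\overline{\tb},\uom}$, verify (using pre-periodicity of $c_{\theta_0}$ and the combinatorial properties of $\overline{\tb}$ and of $\tb\und{b}^i\in\Sigma^-_{nr}$) that the base point escapes that step, and carry out the bounded-distortion estimates so that the normalization $(Df^{\uom}_{\overline{\tb}_n}(c_{\theta_0}))^{-1}$ cancels the post-perturbation Jacobian up to the scale of $\und{a}$. This is the conformal analogue of the calculation in \cite{MY}, with the added complication that one must argue through the limit geometries, and their two-dimensional conformal derivatives, rather than with inverse branches on the line.
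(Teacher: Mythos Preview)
Your overall strategy matches the paper's: identify that only $\om_i$ matters through the combinatorics of $\overline{\tb}\und{b}^i$, compute $|D_{\om_i}\overline{t}_i|\approx c_5$, invoke Proposition~\ref{prop:bigL} on the target set, and pull back. The paper carries this out by factoring $k^{\overline{\tb},\uom}=(F^{\hat{\tb},\uom}_{\und{a}})^{-1}\circ k^{\hat{\tb},\uom}\circ f^{\uom}_{\und{a}}$ with $\overline{\tb}=\hat{\tb}\und{a}$, which cleanly isolates the single translation $x_0\mapsto x_0+c_5\rho^{1+1/2k}\om_i$ and shows that $F^{\hat{\tb},\uom}_{\und{a}}$ and $k^{\hat{\tb},\uom}|_{G^{\uom}(\und{a})}$ are independent of $\uom'$. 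Your direct route through the limit formula for $k^{\overline{\tb},\uom}$ reaches the same derivative estimate, though the factoring makes the bookkeeping (and the next point) more transparent. A small imprecision: $c^{\uom}_{\und{b}^i}$ does not depend on $\uom'$ at all, so there is no ``lower-order'' contribution from it.

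There is one genuine gap. You assert that $\om_i\mapsto\overline{t}_i$ is ``a small perturbation of an affine diffeomorphism'' once $|\partial\overline{t}_i/\partial\om_i|\approx c_5$, but in two real dimensions a bound on $|D\overline{t}_i|$ does not give a local diffeomorphism; you need $m(D\overline{t}_i)\approx c_5$. This is exactly where the conformal hypothesis earns its keep, and the paper addresses it explicitly: the point $x_0+c_5\rho^{1+1/2k}\om_i=f^{\uom}_{\und{a}\und{b}^i}(c_b)$ lies in $K^{\uom}$, and since $k^{\hat{\tb},\uom}=k^{\hat{\tb},(\und{0},\uom'')}$ on $G^{\uom}(\und{a})$, the derivative $Dk^{\hat{\tb},(\und{0},\uom'')}$ at that point is conformal. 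Hence $D_{\om_i}\overline{t}_i$ is a product of conformal matrices, so $\om_i\mapsto\overline{t}_i$ is holomorphic and $m(D\overline{t}_i)=|D\overline{t}_i|\approx c_5$. Only then does the covering argument (the paper's Lemma~\ref{lem:cover}, your ``image contains the target disk'') apply. You should insert this verification; without it the passage from the derivative bound to surjectivity onto $L^{-1}_{\hat{\uom}}$ is unjustified.
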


\begin{lemma}\label{lem:tib}
If $\overline{t}_i(\om_i)\in L^{-1}_{\hat{\uom}}(\tb^i,\tb^{\prime i},s_i(\hat{\uom}))$ then $t_i(\uom)\in L^{0}_{\uom}(\tb^i,\tb^{\prime i},s_i(\uom))$.
\end{lemma}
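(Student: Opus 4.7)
The natural strategy is to exhibit witnesses for $t_i(\uom)\in L^0_{\uom}(\tb^i,\tb^{\prime i},s_i(\uom))$ by reusing the pairs $(\und d^j,\und d^{\prime j})_{j=1}^N$ that certify the hypothesis $\overline{t}_i(\om_i)\in L^{-1}_{\hat{\uom}}(\tb^i,\tb^{\prime i},s_i(\hat{\uom}))$. Setting
\[
(\tb^{ij},\tb^{\prime ij},s_{ij},t_{ij})\coloneqq T^{\uom^*}_{\und d^j}T'_{\und d^{\prime j}}(\tb^i,\tb^{\prime i},s_i(\uom),t_i(\uom))
\]
and analogously $(\tb^{ij},\tb^{\prime ij},\overline s_{ij},\overline t_{ij})\coloneqq T^{\hat{\uom}^*}_{\und d^j}T'_{\und d^{\prime j}}(\tb^i,\tb^{\prime i},s_i(\hat{\uom}),\overline t_i(\om_i))$, the output symbolic sequences coincide. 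Hence the combinatorial conditions (i) (pairwise independence of $\und d^j$) and (ii) ($\tb^{ij}\in\Sigma^-_{nr}$) of $L^0$ are inherited verbatim from (i), (ii) of the $L^{-1}$-hypothesis. It remains to upgrade (iii)$'$ to (iii) and (iv)$'$ to (iv).

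For condition (iii) I would estimate $|s_{ij}-\overline s_{ij}|$ via \eqref{eq:formularenorma} together with lemma \ref{lem:plg}(iv) applied twice, to $K^{\uom}$ vs $K$ and to $K^{\hat{\uom}}$ vs $K$, and the triangle inequality. This yields $|s_i(\uom)-s_i(\hat{\uom})|\lesssim c_5\rho^{1-1/2k}$, and the analogous bound for the ratio $DF^{\tb^i,\uom^*}_{\und d^j}/DF^{\tb^i,\hat{\uom}^*}_{\und d^j}$, giving $|s_{ij}-\overline s_{ij}|\lesssim c_5\rho^{1-1/2k}=o(\rho^{1/2})$ for $k\geq 2$. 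Since this is much smaller than the slack $(\tfrac34-\tfrac23)c_4\rho^{1/2}=\tfrac{1}{12}c_4\rho^{1/2}$ between the two tolerance windows, the inclusion $\{|\tilde s-s_{ij}|<\tfrac23 c_4\rho^{1/2}\}\subset\{|\tilde s-\overline s_{ij}|<\tfrac34 c_4\rho^{1/2}\}$ holds, and (iii)$'$ passes into (iii).

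For condition (iv) I would decompose, via \eqref{eq:formularenorma},
\[
t_{ij}-\overline t_{ij}=\frac{1}{DF^{\tb^{\prime i}}_{\und d^{\prime j}}}\Bigl[s_i(\uom)c^{\tb^i,\uom^*}_{\und d^j}-s_i(\hat{\uom})c^{\tb^i,\hat{\uom}^*}_{\und d^j}+\bigl(t_i(\uom)-\overline t_i(\om_i)\bigr)\Bigr].
\]
By lemma \ref{lem:si} the last difference equals $t_i(\uom)-\overline t_i(\uom)$, which by \eqref{eq:formularenorma}, remark \ref{rmk:dgl} (available in $\Omega^{\infty}_{\Sigma}$) and $d(\tb,\overline{\tb})<2\rho^{5/2}$ is $O(\rho^{3/2})$; upon division by $|DF^{\tb^{\prime i}}_{\und d^{\prime j}}|\approx\rho$ it contributes $O(\rho^{1/2})$.

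The main obstacle lies in the $\uom$-vs-$\hat{\uom}$ terms in the bracket: the naïve estimates $|s_i(\uom)-s_i(\hat{\uom})|\lesssim c_5\rho^{1-1/2k}$ and $|c^{\tb^i,\uom^*}_{\und d^j}-c^{\tb^i,\hat{\uom}^*}_{\und d^j}|\lesssim c_5\rho^{1-1/2k}$ coming from lemma \ref{lem:plg}(iii) blow up under $\rho^{-1}$-scaling. To circumvent this I would invoke the sharper localization result (last bullet of remark \ref{rem:per}): the only coordinates where $\uom^*$ and $\hat{\uom}^*$ disagree are $\om_1,\dots,\om_N$ at the words $\und a^1,\dots,\und a^N$, each beginning with $\und a$; the condition $\tb^i\und d^j\in \Sigma^-_{nr}$ together with the combinatorial choice of the $\und a^j$ (Claim in the proof of lemma \ref{lemma:c8approx}) ensures that the first step of the iteration for $k^{\tb^i,\cdot}$ at which these perturbations become active corresponds to a piece of diameter $\lesssim \rho$ in $G(\tb^i_N)$, and analogously for $k^{\tb,\cdot}$. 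Feeding these sharpened diameters into remark \ref{rem:per} yields perturbation-only contributions to the bracket of order $\rho$, so the full bracket is $O(\rho)$ and $|t_{ij}-\overline t_{ij}|=O(1)$, a constant independent of $\rho$. Taking $R$ conveniently large in the scale-recurrence setup makes this constant $\leq 1+e^R$, so $|t_{ij}|\leq|\overline t_{ij}|+|t_{ij}-\overline t_{ij}|\leq 2(1+e^R)$, giving (iv) and completing the proof.
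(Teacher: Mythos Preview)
Your overall strategy matches the paper's: reuse the pairs $(\und d^j,\und d^{\prime j})$ and show that the renormalized data for $\uom$ is close enough to that for $\hat\uom$ that (iii)$'$, (iv)$'$ upgrade to (iii), (iv). Conditions (i), (ii), (iii) are handled essentially as in the paper.

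The genuine gap is in (iv). You conclude only $|t_{ij}-\overline t_{ij}|=O(1)$ and then propose to absorb this by ``taking $R$ conveniently large''. This is circular: $R$ is fixed at the very beginning by the scale recurrence lemma, and every subsequent construction depends on that choice; moreover your $O(1)$ constant itself carries a factor $e^R$ (from $|s_i|$) and a factor $c_0$ (from $|DF^{\tb^{\prime i}}_{\und d^{\prime j}}|^{-1}\approx c_0\rho^{-1}$), so enlarging $R$ does not help. The paper instead obtains $|t_{(j)}-\hat t_{(j)}|\lesssim\rho^{1/2k}\to 0$, which is $<1+e^R$ for $\rho$ small regardless of the constants.

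To get that vanishing bound one needs the sharper estimates $|c^{\tb^i,\uom(i)}_{\und d^j}-c^{\tb^i,\hat\uom(i)}_{\und d^j}|\lesssim\rho^{1+1/2k}$ and $|s_i(\uom)-s_i(\hat\uom)|\lesssim\rho^{1+1/2k}$, not merely $\lesssim\rho$. For the first, the relevant piece is $G(\und a\und b^i)$, of diameter $\approx\rho^{1+1/2k}$, not $\rho$. For the second, your ``analogously for $k^{\tb,\cdot}$'' does not go through: unlike $\overline\tb$, the sequence $\tb$ is only in $\Sigma^-_{nr}$, so $\und a$ may reappear in $\tb$ outside the $\Sigma(\rho^3)$ window and remark~\ref{rem:per} gives no useful control there. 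The paper avoids this by passing to $\overline\tb$ (chosen so that $\und a$ occurs exactly once), using the explicit formula from lemma~\ref{lem:si} to compute $|DF^{\overline\tb,\uom}_{\und b^i}-DF^{\overline\tb,\hat\uom}_{\und b^i}|\lesssim\rho^{2+1/2k}$ directly (via the $C^2$ bound on $k^{\hat\tb,\uom}$ and the increment $c_5\rho^{1+1/2k}\om_i$), and then transferring back to $\tb$ through $|s_i-\overline s_i|\lesssim\rho^{5/2}$. Without this detour the $s_i$-estimate, and hence the bracket, is not small enough for the argument to close.
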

These two lemmas imply that
\[Leb(\{(\om_1,...,\om_N):\,\,(\om_1,...,\om_N,\tilde\uom',\uom'')\notin \Omega^{0}(u)\})\leq (1-c_7')^{N}=e^{N\log (1-c_7')},\]
this finishes the proof of proposition \ref{prop:probes}.
\begin{proof}[Proof of lemma \ref{lem:si}:]
From equation \eqref{eq:formularenorma} we know that
\[\overline{t}_i(\uom)= \left(DF^{\tb'}_{\und{b}^{\prime i}}\right)^{-1}\cdot (t+sc^{\overline{\tb},\uom}_{\und{b}^{i}}-c^{\tb'}_{\und{b}^{\prime i}}).\]
The dependency on $\uom$ is on the term $c^{\overline{\tb},\uom}_{\und{b}^{i}}$. Let $\hat{\tb}$ such that $\overline{\tb}=\hat{\tb}\und{a}$, note that $\hat{\tb}$ does not contain the word $\und{a}$, let $b$ be the last letter of $\und{b}^i$, we have
\begin{equation}\label{eq:c}
c^{\overline{\tb},\uom}_{\und{b}^{i}}= k^{\overline{\tb},\uom}(c^{\uom}_{\und{b}^{i}})=k^{\hat{\tb}\und{a},\uom}(f^{\uom}_{\und{b}^i}(c^{\uom}_{b}))= \left(F^{\hat{\tb},\uom}_{\und{a}}\right)^{-1}\circ k^{\hat{\tb},\uom}(f^{\uom}_{\und{a}\und{b}^i}(c^{\uom}_{b})).
\end{equation}
We will prove that, assuming $\uom''$ is fixed, this expression only depends on $\om_i$ and not in $\om_j$ for $j\neq i$ or $\tilde{\uom}'$, and it depends in a very specific way. Remember, see remark \ref{rem:per}, that the base points $c^{\uom}_a$ were chosen to be pre-periodic points and that in fact they do not depend on $\uom$, i.e. $c^{\uom}_a=c_a$. Notice that $\overline{\tb}^i=\overline{\tb}\und{b}^i$ and $d(\overline{\tb}^i,\tb^i)\lesssim \rho^{7/2}$, then $\overline{\tb}^i\in \Sigma^-_{nr}$ and $\und{a}$ appears only once in $\overline{\tb}^i$. We will study the dependency on $\om_i$ for the different terms in equation \eqref{eq:c}:
\begin{itemize}
\item  Let $\und{\alpha}$ be a finite word at the end of $\overline{\tb}^i$ strictly shorter than $\und{a}\und{b}^i$. It is easy to prove by induction that $f^{\uom}_{\und{\alpha}}(c^{\om}_b)$ does not depend on $\uom'$. Indeed, suppose that $\und{\alpha}$ and $\und{\beta}=(c,d)\und{\alpha}$ are two such consecutive words. Assume $f^{\uom}_{\und{\alpha}}(c^{\om}_b)$ does not depend on $\uom'$, we have 
\[f^{\uom}_{\und{\beta}}(c^{\om}_b)=f^{\uom}_{(c,d)}\left( f^{\uom}_{\und{\alpha}}(c^{\om}_b)\right).\]
If the word $\und{\beta}$ is shorter than some word in $\Sigma(\ro^{1/2k})$, then $f^{\uom}_{\und{\alpha}}(c^{\om}_b)\in G^{\uom}(\und{\gamma})\subset V_{\rho}(G(\und{\gamma}))$ such that $(c,d)\und{\gamma}\in \Sigma(\rho^{1/k})$ and $\und{\gamma}$ contains a word in $\Sigma(\rho^{1/3k})$ repeated (remember that $c^{\om}_b$ is pre-periodic, with a uniform bounded period). Thus $(c,d)\und{\gamma}$ cannot belong to $\Sigma_1$ and using equation \eqref{f} one gets that $f^{\uom}_{\und{\beta}}(c^{\om}_b)$ does not depend on $\uom'$ (in fact, in this case it does not depend on $\uom$).

If the word $\und{\beta}$ is longer than any word in $\Sigma(\rho^{1/2k})$ then it begins with a word in $\Sigma(\rho^{1/2k})$ which can not be $\und{a}$. This implies that $f^{\uom}_{\und{\alpha}}(c^{\om}_b)\in G^{\uom}(\und{\gamma})\subset V_{\rho}(G(\und{\gamma}))$ such that $(c,d)\und{\gamma}\in \Sigma(\rho^{1/k})$, and the word $(c,d)\und{\gamma}$ is not in $\Sigma_1(\tb)$. Thus $f^{\uom}_{\und{\beta}}(c^{\om}_b)$ does not depend on $\uom'$.

\item Let $(a_0,a_1)$ be the first two letters of $\und{a}$ and define $\und{\alpha}_0$ by $\und{a}\und{b}^i=(a_0,a_1)\und{\alpha}_0$. Define $x_0=f_{a_0,a_1}(f^{\uom}_{\und{\alpha}_0}(c^{\uom}_b))$, we already proved that $x_0$ does not depend on $\uom'$. We have
\[f^{\uom}_{\und{a}\und{b}^i}(c^{\uom}_b)=f^{\uom}_{a_0,a_1}(f^{\uom}_{\und{\alpha}_0}(c^{\uom}_b))=x_0+c_5\rho^{1+1/2k}\om_i.\]
\item We now study $k^{\hat{\tb},\uom}$. By definition
\[k^{\hat{\tb},\uom}(z)=\lim_{n\to \infty} Df^{\uom}_{\hat{\tb}_n}(c_{\hat{\theta}_0})^{-1} (f^{\uom}_{\hat{\tb}_n}(z)-f^{\uom}_{\hat{\tb}_n}(c_{\hat{\theta}_0})).\]
Using the same arguments as before we see that, since $\hat{\tb}$ does not contain $\und{a}$ and $c_{\hat{\theta}_0}$ is pre-periodic, $f^{\uom}_{\hat{\tb}_n}(c_{\hat{\theta}_0})$ does not depend on $\und{\om}'$. This also happens for $z$ in a neighborhood of $c_{\hat{\theta}_0}$, hence $Df^{\uom}_{\hat{\tb}_n}(c_{\hat{\theta}_0})$ is independent of $\uom'$. Again, the same arguments prove that if $z\in G^{\uom}(\und{a})$ then $f^{\uom}_{\hat{\tb}_n}(z)=f^{(\und{0},\uom'')}_{\hat{\tb}_n}(z)$. We conclude that $$k^{\hat{\tb},\uom}(z)=k^{\hat{\tb},(\und{0},\uom'')}(z),\text{ and } Dk^{\hat{\tb},\uom}(z)=Dk^{\hat{\tb},(\und{0},\uom'')}(z),$$
for all $z\in G^{\uom}(\und{a})$.
\item We now treat $F^{\hat{\tb},\uom}_{\und{a}}$. One has that
\[F^{\hat{\tb},\uom}_{\und{a}}(0)=c^{\hat{\tb},\uom}_{\und{a}}=k^{\hat{\tb},\uom}(c^{\uom}_{\und{a}}),\,\,DF^{\hat{\tb},\uom}_{\und{a}}=Dk^{\hat{\tb},\uom}(c^{\uom}_{\und{a}})\cdot Df^{\uom}_{\und{a}}(c^{\uom}_a),\]
where $a$ is the last letter in $\und{a}$. Given that $\und{a}\in \Sigma(\rho^{1/2k})$ and $c^{\uom}_a=c_a$ is pre-periodic we conclude that $c^{\uom}_{\und{a}}=f^{\uom}_{\und{a}}(c^{\uom}_a)$ and $Df^{\uom}_{\und{a}}(c^{\uom}_a)$ are independent of $\uom$. On the other hand, given that $c^{\uom}_{\und{a}}\in G^{\uom}(\und{a})$ and $\hat{\tb}$ does not contain $\und{a}$ we obtain that $k^{\hat{\tb},\uom}(c^{\uom}_{\und{a}})$ does not depend on $\uom'$. We conclude that $F^{\hat{\tb},\uom}_{\und{a}}$ is independent of $\uom'$. 
\end{itemize}
From the previous analysis we get that $\overline{t}_i(\uom)$ only depends on $\om_i$ and not in $\om_j$ for $j\neq i$ or $\uom'$. Moreover, we have
\[c^{\overline{\tb},\uom}_{\und{b}^{i}}=\left(F^{\hat{\tb},(\und{0},\uom'')}_{\und{a}}\right)^{-1}\circ k^{\hat{\tb},(\und{0},\uom'')}(x_0+c_5\rho^{1+1/2k}\om_i),\]
taking derivative respect to $\om_i$ we get
\[D_{\om_i}c^{\overline{\tb},\uom}_{\und{b}^{i}}=\left(DF^{\hat{\tb},(\und{0},\uom'')}_{\und{a}}\right)^{-1}\cdot Dk^{\hat{\tb},(\und{0},\uom'')}(x_0+c_5\rho^{1+1/2k}\om_i)\cdot c_5\rho^{1+1/2k}.\]
Observe that since $x_0+c_5\rho^{1+1/2k}\om_i=f^{\uom}_{\und{a}\und{b}^i}(c^{\uom}_b)$ belongs to the Cantor set $K^{\uom}$ then the matrix $Dk^{\hat{\tb},(\und{0},\uom'')}(x_0+c_5\rho^{1+1/2k}\om_i)$ is conformal. Moreover
\[D_{\om_i}\overline{t}_i(\uom)= \left(DF^{\tb'}_{\und{b}^{\prime i}}\right)^{-1} \cdot s D_{\om_i}c^{\overline{\tb},\uom}_{\und{b}^{i}},\]
therefore $\om_i\to \overline{t}_i(\uom)$ defines a holomorphic function, which we will denote by $\overline{t}_i(\om_i)$. From the previous formulas it is not difficult to see that $\|D_{\om_i}\overline{t}_i(\om_i)\|\approx c_5$.

Now we show that $\overline{t}_i(0)=\overline{t}_i(\und{0},\uom'')$ is uniformly bounded. We already know that $|\tilde{t}_i|\leq 2(1+e^R)$, using this inequality together with $d(\overline{\tb},\tilde{\tb})<3\rho^{5/2}$, $d(\tb',\tilde{\tb}')<2\rho^{5/2}$, $|t-\tilde{t}|<\rho$, $|s-\tilde{s}|<\rho$ and equation \eqref{eq:formularenorma} one gets that $|\overline{t}_i(0)|\lesssim 1$. Therefore, choosing $c_5$ big enough one guarantees that the image of $\overline{t}_i(\om_i)$ contains $L^{-1}_{\hat{\uom}}(\tb^i,\tb^{\prime i},s_i(\hat{\uom}))$ (see lemma \ref{lem:cover}). Having chosen $c_5$ this way, the fact that $\|D_{\om_i}\overline{t}_i(\om_i)\|\approx c_5$ and proposition \ref{prop:bigL} gives that there is a constant $c_7'$ such that
\[Leb\left(\{\om_i:\, \overline{t}_i(\uom)\in L^{-1}_{\hat{\uom}}(\tb^i,\tb^{\prime i},s_i(\hat{\uom}))  \}\right)\geq c_7'.\]
\end{proof}
In the previous proof we used the following lemma, it is proven using standard arguments in analysis, for completeness we present the proof.
\begin{lemma}\label{lem:cover}
Let $f:B(0,1)\subset \R^n\to \R^n$ be $C^1$ on $B(0,1)$ and such that $m(Df(x))>c_5$ for all $x\in B(0,1)$. Given $r$, if $c_5$ is big enough, depending on $r$, then $B(0,r)\subset f(B(0,1))$.
\end{lemma}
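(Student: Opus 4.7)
The strategy is the standard path-lifting / quantitative open mapping argument. The hypothesis $m(Df(x)) > c_5$ says that each $Df(x)$ is invertible with $\|Df(x)^{-1}\| \leq c_5^{-1}$, so the inverse function theorem makes $f$ a local $C^1$ diffeomorphism at every point of $B(0,1)$. What remains is to promote this to a quantitative global statement on the size of $f(B(0,1))$ around $f(0)$.

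I would prove the sharper statement $B(f(0), c_5) \subseteq f(B(0,1))$. Fix $y$ with $|y - f(0)| < c_5$ and consider the segment $\gamma(t) = f(0) + t(y - f(0))$, $t \in [0,1]$. Let $T$ be the supremum of $t^{\ast} \in [0,1]$ for which a $C^1$ lift $\tilde\gamma : [0, t^{\ast}] \to B(0,1)$ exists with $\tilde\gamma(0) = 0$ and $f \circ \tilde\gamma = \gamma$. The inverse function theorem shows $T > 0$; differentiating the defining relation gives $\tilde\gamma'(t) = Df(\tilde\gamma(t))^{-1}(y - f(0))$, so $|\tilde\gamma'(t)| \leq c_5^{-1}|y - f(0)| < 1$. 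Hence $|\tilde\gamma(t)| \leq t\,c_5^{-1}|y - f(0)| < 1$ on $[0,T)$, keeping $\tilde\gamma$ uniformly bounded away from $\partial B(0,1)$; the inverse function theorem then extends the lift past $T$, forcing $T = 1$, and thus $y \in f(B(0,1))$.

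The lemma itself follows immediately. In the intended application $f$ plays the role of $\overline{t}_i$, whose value at the origin was already shown to satisfy $|\overline{t}_i(0)| \lesssim 1$ independently of $c_5$; hence $B(0, r) \subseteq B(f(0), c_5) \subseteq f(B(0,1))$ as soon as $c_5 > r + |f(0)|$, which only depends on $r$. Equivalently, one may translate to reduce to the case $f(0) = 0$, in which $B(0, c_5) \subseteq f(B(0,1))$ automatically, and any prescribed $B(0,r)$ is covered as soon as $c_5 > r$. There is no substantive obstacle: the only mild point is verifying that the straight-segment lift stays inside the open unit ball, which is immediate from the derivative bound, and tacitly invoking the boundedness of $|f(0)|$ supplied by the application.
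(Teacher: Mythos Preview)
Your proposal is correct and follows essentially the same path-lifting argument as the paper: both use that $m(Df)>c_5$ makes $f$ a local diffeomorphism with $\|Df^{-1}\|\le c_5^{-1}$, then lift a straight segment from $f(0)$ and use the derivative bound to keep the lift inside the ball. The only cosmetic difference is that the paper lifts inside $B(0,1/2)$ and argues by contradiction with the nearest point of the complement (obtaining radius $c_5/2$), whereas you lift directly in $B(0,1)$ and obtain the sharper radius $c_5$; both yield the lemma immediately.
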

\begin{proof}
Redefining $f$ as $f-f(0)$ and taking $r$ as $r+f(0)$, one can assume $f(0)=0$. Since $f$ is $C^1$ and $m(Df(x))>c_5>0$ then $f$ is a $C^1$ local diffeomorphism, hence its image $f(B(0,1))$ is open. Let $h$ be a point in $\R^n\setminus f(B(0,1/2))$ which is closest to $0$, thus $\lambda h\in f(B(0,1/2))$ for all $0\leq\lambda<1$. We can cover $\overline{B}(0,1/2)$ by a finite number of open sets such that in each one of these sets $f$ is a diffeomorphism onto its image. We can use this cover to lift curves in $\overline{f(B(0,1/2))}$ (In fact, one can prove that $f|_{B(0,1/2)}:B(0,1/2)\to f(B(0,1/2))$ is a covering map). Consider $\alpha:[0,1]\to \overline{f(B(0,1/2))}$ given by $\alpha(t)=th$. Denote by $\beta$ a lifting of $\alpha$, i.e. $f\circ \beta=\alpha$, such that $\beta(0)=0$. The curve $\beta$ is $C^1$ and $\beta(1)\notin B(0,1/2)$, otherwise this would contradict the choice of $h$. Therefore
\[|h|=length(\alpha)= \int_0^1 \left|\frac{d}{dt}(f\circ \beta)\right| dt\geq \int_0^1 m(Df(\beta(t)))\left|\frac{d}{dt} \beta\right| dt\geq c_5 \int_0^1\left|\frac{d}{dt} \beta \right| dt\geq c_5/2.\]
If $c_5>2r$ then $|h|>r$ and the desired result follows.
\end{proof}

\begin{proof}[Proof of lemma \ref{lem:tib}:]
Let $\overline{t}_i(\om_i)\in L^{-1}_{\hat{\uom}}(\tb^i,\tb^{\prime i},s_i(\hat{\uom}))$, then there exists pairs $(\und{d}^j,\und{d}^{\prime j})$, $1\leq j\leq N$, such that if we write
\[T^{\hat{\uom}(i)}_{\und{d}^j}T'_{\und{d}^{\prime j}} (\tb^i,\tb^{\prime i},s_i(\hat{\uom}),\overline{t}_i(\om_i))=(\tb^i\und{d}^j,\tb^{\prime i}\und{d}^{\prime j},\hat{s}_{(j)},\hat{t}_{(j)}),\]
where $\hat{\uom}(i)$ is obtained from $\hat{\uom}$ by setting the value $0$ in the coordinates belonging to $\Sigma_1(\tb^i)$, then
\begin{itemize}
\item[(i)] $\und{d}^1$,...,$\und{d}^N$ are pairwise disjoint.
\item[(ii)] $\tb^i\und{d}^j \in \Sigma^-_{nr}$.
\item[(iii)'] $|s^*-\hat{s}_{(j)}|<\frac{3}{4}c_4\rho^{1/2}$ implies $(\tb^i\und{d}^j,\tb^{\prime i}\und{d}^{\prime j},s^*)\in \tilde\LL$.
\item[(iv)'] $|\hat{t}_{(j)}|<1+e^R$.
\end{itemize}
We will prove that $t_i(\uom)\in L^{0}_{\uom}(\tb^i,\tb^{\prime i},s_i(\uom))$. To do this, we will prove that if write
\[T^{\uom(i)}_{\und{d}^j}T'_{\und{d}^{\prime j}} (\tb^i,\tb^{\prime i},s_i(\uom),t_i(\uom))=(\tb^i\und{d}^j,\tb^{\prime i}\und{d}^{\prime j},s_{(j)},t_{(j)}),\]
where $\uom(i)$ is obtained from $\uom$ by setting the value $0$ in the coordinates belonging to $\Sigma_1(\tb^i)$, then
\begin{itemize}
\item[(iii)] $|s^*-s_{(j)}|<\frac{2}{3}c_4\rho^{1/2}$ implies $(\tb^i\und{d}^j,\tb^{\prime i}\und{d}^{\prime j},s^*)\in \tilde\LL$.
\item[(iv)] $|t_{(j)}|<2(1+e^R)$.
\end{itemize}
First, notice that by lemma \ref{lem:plg} we have that $|s_i(\uom)-s_i(\hat{\uom})|\lesssim \rho^{1-1/2k}$. To obtain $s_{(j)}$ and $\hat{s}_{(j)}$ we applied the same renormalizations, with the same limit geometries but with different values of the perturbation parameter, therefore $|s_{(j)}-\hat{s}_{(j)}|\lesssim \rho^{1-1/2k}=o(\rho^{1/2})$. We conclude that, taking $\rho$ small enough, item (iii)' implies item (iii).

Now, to prove that (iv)' implies (iv) we need stronger estimates. We have
\begin{align*}
\hat{t}_{(j)}&= \left(DF^{\tb^{\prime i}}_{\und{d}^{\prime j}}\right)^{-1} \cdot (\bar{t}_i(\om_i)+s_i(\hat{\uom})c^{\tb^i,\hat{\uom}(i)}_{\und{d}^{j}}-c^{\tb^{\prime i}}_{\und{d}^{\prime j}}),\\
t_{(j)}&= \left(DF^{\tb^{\prime i}}_{\und{d}^{\prime j}}\right)^{-1} \cdot (t_i(\uom)+s_i(\uom)c^{\tb^i,\uom(i)}_{\und{d}^{j}}-c^{\tb^{\prime i}}_{\und{d}^{\prime j}}).
\end{align*}
We will compare the corresponding terms:
\begin{itemize}
\item Using that $d(\tb,\overline{\tb})<\rho^{5/2}$, one has
\[|\overline{t}_i(\om_i)-t_i(\uom)|= \left|\left(DF^{\tb'}_{\und{b}^{\prime i}} \right)^{-1} \cdot s \left[c^{\overline{\tb},\uom}_{\und{b}^i}-c^{\tb,\uom}_{\und{b}^i}\right]\right|\lesssim \rho^{3/2}.\]
\item Notice that $\uom(i)$ and $\hat\uom(i)$ only differ at their values in the coordinates $(\uom_1,...,\uom_N)$. Using that $\tb^i\und{d}^j\in \Sigma_{nr}^-$ one sees that $\und{a}$ is not contained in $\und{d}^j$. Then, from the arguments used in lemma \ref{lem:si}, we see that $c^{\uom(i)}_{\und{d}^{j}}=c^{\hat\uom(i)}_{\und{d}^{j}}$. Moreover, since $\tb^i$ ends in $\und{a}\und{b}^i$ and $\tb^i\und{d}^j\in \Sigma^-_{nr}$ then
\[f^{\uom(i)}_{\tb^i_n}(c^{\uom(i)}_{\und{d}^{j}})=f^{\hat\uom(i)}_{\tb^i_n}(c^{\hat\uom(i)}_{\und{d}^{j}})\]
for all $n$ such that $\tb^i_n$ is strictly shorter that $\und{a}\und{b}^i$. For the same reasons we also have
\[f^{\uom(i)}_{\tb^i_n}(c_{\theta_0})=f^{\hat\uom(i)}_{\tb^i_n}(c_{\theta_0})\]
for all $n$ such that $\tb^i_n$ is strictly shorter than $\und{a}\und{b}^i$. Both equalities still hold for $z$ in a neighborhood of either of the points. Thus we can use remark \ref{rem:per} and lemma \ref{lem:plg} to obtain that
\begin{align*}
|c^{\tb^i,\uom(i)}_{\und{d}^{j}}-c^{\tb^i,\hat\uom(i)}_{\und{d}^{j}}|&=|k^{\tb^i,\uom(i)}(c^{\uom(i)}_{\und{d}^{j}})-k^{\tb^i,\hat\uom(i)}(c^{\hat\uom(i)}_{\und{d}^{j}})|\\
&\lesssim diam(G^{\uom(i)}(\und{a}\und{b}^i))\lesssim diam(G^{\uom(i)}(\und{a}))diam(G^{\uom(i)}(\und{b}^i))\\
&\lesssim diam(G(\und{a}))diam(G(\und{b}^i))\approx \rho^{1+1/2k}.
\end{align*}
\item Notice that $\uom$ and $\hat\uom$ only differ in the values associated to the coordinates $(\om_1,...,\om_N)$. We will use $\overline{\tb}$ again. Consider
\[T^{\uom}_{\und{b}^i}T'_{\und{b}^{\prime i}} (\overline\tb,\tb',s)=(\overline{\tb}\und{b}^i,\tb'\und{b}^{\prime i},\overline{s}_i(\uom))\]
and
\[T^{\hat\uom}_{\und{b}^i}T'_{\und{b}^{\prime i}} (\overline\tb,\tb',s)=(\overline{\tb}\und{b}^i,\tb'\und{b}^{\prime i},\overline{s}_i(\hat\uom)).\]
Since $d(\tb,\overline\tb)<\rho^{5/2}$ one gets $|s_i(\uom)-\overline{s}_i(\uom)|\lesssim \rho^{5/2}$ and $|s_i(\hat\uom)-\overline{s}_i(\hat\uom)|\lesssim \rho^{5/2}$. Thus we only need to estimate $|\overline{s}_i(\uom)-\overline{s}_i(\hat\uom)|$. Remember $\hat{\tb}$ which verified $\overline\tb=\hat\tb\und{a}$, write
\[k^{\overline{\tb},\uom}=(F^{\hat\tb,\uom}_{\und{a}})^{-1}\circ k^{\hat\tb,\uom}\circ f^{\uom}_{\und{a}},\]
then
\[Dk^{\overline{\tb},\uom}=(DF^{\hat\tb,\uom}_{\und{a}})^{-1}\cdot (Dk^{\hat\tb,\uom}\circ f^{\uom}_{\und{a}})\cdot Df^{\uom}_{\und{a}}.\]
Using the analysis and notation from lemma \ref{lem:si} we see that: $F^{\hat\tb, \uom}_{\und{a}}$ does not depend on $\uom'$, $Dk^{\hat\tb,\uom}(z)=Dk^{\hat\tb,\hat\uom}(z)$ for all $z\in G^{\uom}(\und{a})\cup G^{\hat\uom}(\und{a})$ (in particular $c^{\uom}_{\und{a}\und{b}^{i}}$ and $c^{\hat\uom}_{\und{a}\und{b}^{i}}$). We also have $$c^{\hat\uom}_{\und{a}\und{b}^i}=x_0,\,\, c^{\uom}_{\und{a}\und{b}^i}=x_0+c_5\rho^{1+1/2k}\om_i$$ and $Df^{\uom}_{\und{a}}(c^{\uom}_{\und{b}^i})=Df^{\hat\uom}_{\und{a}}(c^{\hat\uom}_{\und{b}^i})$.
Therefore
\begin{align*}
|Dk^{\overline{\tb},\uom}(c^{\uom}_{\und{b}^{i}})-Dk^{\overline{\tb},\hat\uom}(c^{\hat\uom}_{\und{b}^{i}})|&= \frac{|Df^{\uom}_{\und{a}}(c^{\uom}_{\und{b}^i})|}{|DF^{\hat\tb,\uom}_{\und{a}}|}\cdot |Dk^{\hat\tb,\uom}(x_0+c_5\rho^{1+1/2k}\om_i)-Dk^{\hat\tb,\uom}(x_0)|\\
&\lesssim \rho^{1+1/2k},
\end{align*}
Here we used that limit geometries are $C^2$ and the norm of $D^2 k^{\hat{\tb},\uom}$ can be uniformly bounded. Using that $DF^{\tb,\uom}_{\und{d}}= Dk^{\tb,\uom}(c^{\uom}_{\und{d}})\cdot Df^{\uom}_{\und{d}}(c_d)$ for any word $\und{d} \in \Sigma^{fin}$ that ends with the letter $d$, one can conclude that
\begin{align*}
|DF^{\overline\tb,\uom}_{\und{b}^i}-DF^{\overline\tb,\hat\uom}_{\und{b}^i}|&=|Df^{\uom}_{\und{b}^i}(c_{b^i})|\cdot|Dk^{\overline\tb,\uom}(c^{\uom}_{\und{b}^{i}})-Dk^{\overline\tb,\hat\uom}(c^{\hat\uom}_{\und{b}^{i}})|\lesssim \rho^{2+1/2k}.
\end{align*}
And from this
\[\left|\frac{DF^{\overline\tb,\uom}_{\und{b}^i}}{DF^{\tb'}_{\und{b}^{\prime i}}}-\frac{DF^{\overline\tb,\hat\uom}_{\und{b}^i}}{DF^{\tb'}_{\und{b}^{\prime i}}}\right|\lesssim \rho^{1+1/2k},\]
therefore $|\overline{s}_i(\uom)-\overline{s}_i(\hat{\uom})|\lesssim \rho^{1+1/2k}$ and $|s_i(\uom)-s_i(\hat{\uom})|\lesssim \rho^{1+1/2k}$.
\end{itemize}
From the previous estimates we conclude that $|\hat{t}_{(j)}-t_{(j)}|\lesssim \rho^{1/2k}$, then if $\rho$ is small enough (iv)' implies (iv).
\end{proof}
\bibliographystyle{unsrt}
\bibliography{bibliography}

\end{document}